\renewcommand\@biblabel[1]{}
\numberwithin{equation}{section}
\newcommand{\beq}{\begin{equation}}
\newcommand{\eeq}{\end{equation}}
\newcommand{\beqs}{\begin{eqnarray*}}
\newcommand{\eeqs}{\end{eqnarray*}}
\newcommand{\beqn}{\begin{eqnarray}}
\newcommand{\eeqn}{\end{eqnarray}}
\newcommand{\beqa}{\begin{array}}
\newcommand{\eeqa}{\end{array}}
\def\lra{\longrightarrow}
\def\bc{\begin{center}}
\def\ec{\end{center}}
\def\begeq{\begin{equation}}
\def\endeq{\end{equation}}
\def\and{\quad{\rm and}\quad}
\let\lra=\longrightarrow
\def\mapright\#1{\,\smash{\mathop{\lra}\limits^{\#1}}\,}
\newtheorem{prop}{Proposition}[section]
\newtheorem{theo}[prop]{Theorem}
\newtheorem{lem}[prop]{Lemma}
\newtheorem{claim}[prop]{Claim}
\newtheorem{cor}[prop]{Corollary}
\newtheorem{rem}[prop]{Remark}
\newtheorem{defi}[prop]{Definition}
\title  {Structure of spaces with  Bakry-\'{E}mery  Ricci curvature  bounded below}
\author   {Feng Wang }
\author { Xiaohua $\text{Zhu}^*$}
\thanks {* Partially supported by the NSFC Grants 10990013 and 11271022}
 \subjclass[2000]{Primary: 53C25; Secondary:  53C55,
 58J05}
\keywords { Bakry-\'{E}mery  Ricci curvature,  Gromov-Hausdorff topology,
tangent cone structure}
\address{ Feng Wang\\School of Mathematical Sciences, Peking University,
Beijing, 100871, China}
\address{ Xiaohua Zhu\\School of Mathematical Sciences and BICMR, Peking University,
Beijing, 100871, China\\
 xhzhu@math.pku.edu.cn}
\begin{document}
\bibliographystyle{plain}

\begin{abstract} In this paper,  we explore  the   limit    structure of  a sequence  of Riemannian manifolds with   Bakry-\'{E}mery Ricci curvature bounded  below  in   the Gromov-Hausdorff topology.    By  extending  the techniques  established  by Cheeger-Cloding for Riemannian manifolds with   Ricci curvature bounded  below, we  prove that  each  tangent  space  at a point of  the limit space is a metric cone.    We also analyze  the singular  structure of
the    limit space analogous  to  a  work  of Cheeger-Colding-Tian.  Our results will be  applied to study the limit space 
of  a sequence of  K\"ahler  metrics arising from solutions of  certain complex Monge-Amp\`ere equations  for the existence of K\"ahler-Ricci solitons
  on a Fano manifold  via  the continuity method.

\end{abstract}

\maketitle

\tableofcontents

\setcounter{section}{-1}

\section{Introduction}

In a series of papers [CC1], [CC2], [CC3],  Cheeger-Colding  study  the   limit space of  a sequence  of Riemannian manifolds with   Ricci curvature bounded below  in  the Gromov-Hausdorff topology.  As one of fundamental results, they prove the existence of  metric cone structure for each  tangent cone on  the limit space  [CC2].  Namely,

\begin{theo}([CC2])\label{thm-cc1}
Let $(M_i,g_i;p_i)$ be a sequence of $n$-dimentional  Riemannian manifolds which  satisfy
$${\rm Ric}_{M_i}(g_i)\geq{- (n-1)\Lambda^2g_i}~{\rm and}~{\rm vol}_{g_i}( B_{p_i}(1))\geq v>0.$$
  Then  $(M_i,g_i;p_i)$ converge to  a metric space   $(Y;p_\infty)$  in the pointed  Gromov-Hausdorff topology.  Moreover, for any $y\in Y$,  each tangent cone
 $T_yY$ is a metric cone over another metric space whose diameter is less than $\pi$.
 \end{theo}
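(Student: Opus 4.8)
The plan is to follow the two-stage Cheeger--Colding strategy: first extract a limit and pin down its measure-theoretic structure, then prove a rigidity theorem of ``volume ball implies metric cone'' type for blow-ups. For the first stage, Gromov's precompactness theorem applies because the uniform lower Ricci bound yields, via Bishop--Gromov relative volume comparison, a uniform bound on the number of disjoint $\epsilon$-balls inside a fixed ball; hence after passing to a subsequence $(M_i,g_i;p_i)$ converges in the pointed Gromov--Hausdorff sense to a length space $(Y;p_\infty)$. The non-collapsing hypothesis $\text{vol}_{g_i}(B_{p_i}(1))\ge v>0$ together with Colding's volume convergence theorem then shows that the renormalized Riemannian measures converge to the $n$-dimensional Hausdorff measure $\mathcal H^n$ on $Y$, and that $(Y,\mathcal H^n)$ inherits a Bishop--Gromov type monotonicity: for every $y\in Y$ the ratio $r\mapsto \mathcal H^n(B_r(y))/\mathbf v_{n,-\Lambda^2}(r)$ is non-increasing, where $\mathbf v_{n,-\Lambda^2}(r)$ is the volume of an $r$-ball in the model space form. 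In particular the density $\Theta_y:=\lim_{r\to 0}\mathcal H^n(B_r(y))/(\omega_n r^n)$ exists.

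Next I would construct tangent cones. Fix $y\in Y$ and a sequence $s_j\downarrow 0$. The rescaled pointed spaces $(Y,s_j^{-1}d_Y;y)$ are again non-collapsed and satisfy the lower-Ricci condition with constant $-(n-1)\Lambda^2 s_j^2\to 0$, so a subsequence converges to a tangent cone $(T_yY;o)$, itself a non-collapsed limit of manifolds whose Ricci lower bound tends to $0$. The decisive point is that the volume-ratio monotonicity passes to this limit and degenerates to an identity: $\mathcal H^n(B_r(o))/(\omega_n r^n)=\lim_j \mathcal H^n(B_{s_jr}(y))/(\omega_n (s_jr)^n)=\Theta_y$ for all $r>0$, because blowing up freezes the monotone (hence convergent) ratio at its limiting value.

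The heart of the matter --- and the step I expect to be the main obstacle --- is the rigidity statement that, for a non-collapsed Ricci limit space with nonnegative (generalized) Ricci curvature, a constant volume ratio $\mathcal H^n(B_r(o))/(\omega_n r^n)\equiv\Theta$ forces the space to be the metric cone over its unit cross-section $Z=\partial B_1(o)$. On the approximating manifolds, near-equality in Bishop--Gromov on annuli forces, through the Bochner formula together with the Cheeger--Colding segment inequality, an $L^1$ estimate of the form $\int |\text{Hess}\,(\tfrac12 d_q^2)-g|\to 0$ over annular regions; equivalently the radial field $\nabla d_q$ is almost parallel in an integral sense and the distance spheres are almost homothetic in $r$. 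Passing these estimates to the limit --- which requires the full apparatus of harmonic and distance-function estimates valid on limit spaces, the segment inequality to control the exceptional set where the Hessian estimate fails, and the almost-splitting theorem applied along radial geodesics to identify cone directions --- one obtains on $T_yY$ the family of dilations $(r,z)\mapsto(\lambda r,z)$, $\lambda>0$, fixing $o$, which is precisely the assertion $T_yY=C(Z)$ with the warped-product metric $dr^2+r^2 d_Z^2$. Essentially all the analytic difficulty is concentrated here, and in later sections we will have to re-derive these estimates with the Bakry--\'Emery Ricci tensor in place of the ordinary Ricci tensor.

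Finally, the diameter bound $\text{diam}(Z)\le\pi$ is comparatively soft. Since the cone metric depends on $d_Z$ only through $\min(d_Z,\pi)$, any distance on $Z$ exceeding $\pi$ is invisible to $C(Z)$, so one may take the cross-section to have diameter at most $\pi$; alternatively, in the non-collapsed regime $Z$ itself satisfies a curvature-dimension condition yielding a Bonnet--Myers bound $\text{diam}(Z)\le\pi$. Either route gives the claimed bound, completing the proof.
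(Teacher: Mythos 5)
Your outline is the standard Cheeger--Colding strategy, and it matches what this paper actually does (the theorem itself is only quoted from [CC2]; the paper proves its Bakry--\'Emery analogue, Theorem \ref{existence-metric-cone}, via Sections 1--4). The compactness, volume convergence, and ``almost volume annulus $\Rightarrow$ almost metric annulus'' core --- Hessian of $\tfrac12 d_q^2$ close to $g$ in $L^1$ via Bochner plus the segment inequality, then Cheeger's triangle lemmas to build the cone structure --- is exactly the chain Lemma \ref{harmonic-estimate-annual-1}, Lemma \ref{harmonic-estimate-annual-2}, Lemma \ref{cheeger-lemma-2}, Proposition \ref{segment-inequ-2}. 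The one organizational difference: you argue that the Bishop--Gromov ratio \emph{freezes} on the blow-up, so the tangent cone has exactly constant volume ratio and rigidity applies at every scale; the paper instead runs a contradiction argument, using the monotone quantity and non-collapsing to pigeonhole a single good scale among $N(\omega)$ rescalings (Lemma \ref{volume-sphere}) at which the almost-volume-annulus hypothesis (\ref{volume-condition-2}) holds, contradicting the assumed failure of the cone structure. The two are equivalent here; the paper's finite-pigeonhole version is the one reused later for $\epsilon$-regularity.

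The diameter bound is where your justification is genuinely weaker than the paper's. Your route (a) --- replace $d_Z$ by $\min(d_Z,\pi)$ --- does verify the literal statement, since the cone metric of Definition \ref{def-metric-cone} only sees $\min(d_Z,\pi)$, but it evades the actual content: the cross-section produced by the construction is a length space (the level set with its $\chi$-intrinsic metric), and truncation destroys that property if two points were at distance $>\pi$. Your route (b), a Bonnet--Myers bound on $Z$ from a synthetic curvature-dimension condition on the cross-section, is not available at the technological level of this argument and would itself require a nontrivial theorem. The paper's proof is different and is the one you should adopt: if $\mathrm{diam}(X)>\pi$, two points of $X$ at distance $\pi$ give a line through the vertex of $C(X)$, the splitting theorem (Theorem \ref{splitting-theorem}) peels off an $\mathbb R$ factor whose complement is again a cone with cross-section of diameter $>\pi$, and iterating forces $C(X)$ to be Euclidean and $X$ a round sphere, a contradiction. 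Note this makes the metric-cone theorem logically dependent on the splitting theorem, a dependence absent from your sketch.
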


Based on the above theorem, Cheeger-Colding   introduce  a notion of $\mathcal S_k$-typed ($k\le n-1$)  singularities of  the
 limit space $Y$ as follows.

\begin{defi}\label{singular-type}  Let $(Y;p_{\infty})$ be the  limit of $(M_i,g_i;p_i)$ as in Theorem \ref{thm-cc1}.  We call $y\in (Y;p_\infty)$  a  $\mathcal S_k$-typed singular point if  there exists  a tangent cone at $y$ which can be  split   out an euclidean space   $\mathbb R^k$ isometrically  with dimension  at most  $k$.
\end{defi}

Applying  Theorem \ref{thm-cc1} to appropriate tangent cone spaces,   Cheeger-Colding show that
  the  dimension of  set $\mathcal S_k$   is less than $k$ [CC2].   In  [CCT],  Cheeger-Colding-Tian  do  a significant  work  to determine which kind of singularities  can be excluded  in the limit space $Y$  under certain curvature condition for the sequence of $(M_i,g_i)$.  They  prove

\begin{theo}([CCT])\label{thm-cct}
Let $(M_i, g_i;  p_i)$ be a sequence of $n$-dimensional manifolds  and $(Y,p_\infty)$    its  limit as in Theorem  \ref{thm-cc1}.
Suppose that the integrals  of sectional  curvature
$$\frac{1}{{\rm vol}_{g_i}(B_{p_i}(1))}\int_{B_{p_i}(1)}|{\rm Rm}|^p d{\rm v}$$
are   uniformly bounded.  Then  for any $\epsilon>0$, the following is true:
$${\rm dim} (B_{p_\infty}(1)\setminus R_\epsilon)\leq n-4,~\text{ if} ~p=2$$
 and
 $$\mathcal H^{n-2p}(B_{p_{\infty}}(1)\setminus R_\epsilon)<\infty,  ~\text{if}~ 1\leq p<2.$$
   Here  $R_\epsilon$  consists  of  points  $y$ in $Y$  which satisfy
$${\rm dist}_{GH}(B_y(1), B_0(1))\leq\epsilon$$
 for   the unit ball   $ B_0(1) $ in  $\mathbb R^n$ and a unit distance  ball  $B_y(1)$  in some tangent cone $T_yY$.

\end{theo}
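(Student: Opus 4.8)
The plan is to run the standard $\epsilon$-regularity-plus-packing scheme, so write $\mathcal{S}_\epsilon = B_{p_\infty}(1)\setminus R_\epsilon$. First I would record uniform non-collapsing: Bishop--Gromov on each $M_i$ makes $\mathrm{vol}_{g_i}(B_x(r))/V_{-\Lambda^2}(r)$ nonincreasing in $r$, so with $\mathrm{vol}_{g_i}(B_{p_i}(1))\ge v$ one gets $\mathrm{vol}_{g_i}(B_x(r))\ge c(n,\Lambda,v)\,r^n$ for all $x\in B_{p_i}(1)$, $r\le 1$, and, by Colding's volume continuity theorem, the same bound on $Y$. Consequently every tangent cone at a point of $B_{p_\infty}(1)$ is $n$-dimensional with a definite volume density, and, by the ``almost volume cone $\Rightarrow$ almost metric cone'' rigidity underlying Theorem \ref{thm-cc1}, a tangent cone whose density lies within $\tau(\epsilon)$ of $\omega_n$ already has unit ball $\epsilon$-close to $B_0(1)$; hence each $y\in\mathcal{S}_\epsilon$ has a Euclidean-volume deficit bounded below, independently of $y$.

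The technical heart --- and the step I expect to be the main obstacle --- is an $\epsilon$-regularity theorem for the scale-invariant $L^p$ curvature: there should exist $\delta_0=\delta_0(n,\Lambda,v,p,\epsilon)>0$ and $\theta=\theta(n)\in(0,1)$ such that if $M^n$ has $\mathrm{Ric}\ge-(n-1)\Lambda^2$, $\mathrm{vol}(B_x(r))\ge c(n,\Lambda,v)\,r^n$ with $r\le 1$, and
$$r^{2p}\,\frac{1}{\mathrm{vol}(B_x(r))}\int_{B_x(r)}|Rm|^p<\delta_0,$$
then $d_{GH}(B_x(\theta r),B_0(\theta r))\le\tfrac{\epsilon}{2}\,\theta r$. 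Proving this is where one must combine the Cheeger--Colding almost-rigidity estimates (segment inequality, Abresch--Gromoll excess bound, almost-splitting --- all of which use only the Ricci bound) to force approximately Euclidean geometry at a controlled scale, and then feed the small integral curvature into interior elliptic (Moser-iteration) estimates in harmonic coordinates, treating $|Rm|^p$ as the inhomogeneity. The delicate point is that here $p\le 2$ may lie well below $n/2$, so no pointwise curvature control is available and one must keep every constant scale-invariant; for $p\ge n/2$ the argument additionally produces a harmonic-radius lower bound, the Anderson--Cheeger--Tian mechanism.

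Granting the $\epsilon$-regularity theorem I would finish as follows. If $y\in\mathcal{S}_\epsilon$ then no subsequential limit of the rescalings $(Y,r_j^{-1}d_Y,y)$, $r_j\to0$, has unit ball $\epsilon$-close to $B_0(1)$, so by Gromov precompactness there is $r_0(y)>0$ with $d_{GH}(B_y(r),B_0(r))>\epsilon r$ for all $r\le r_0(y)$; choosing $x_i\to y$ in $M_i$ this descends, for $i$ large, to $d_{GH}(B_{x_i}(r),B_0(r))>\tfrac{\epsilon}{2} r$, and the contrapositive of the $\epsilon$-regularity theorem (applied at the fixed multiple $\theta^{-1}r$ of that scale) together with non-collapsing yields $\int_{B_{x_i}(r)}|Rm|^p\ge c_1(n,\Lambda,v,p,\epsilon)\,r^{\,n-2p}$. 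Now stratify $\mathcal{S}_\epsilon=\bigcup_{k\ge1}\mathcal{S}_\epsilon^{(k)}$ with $\mathcal{S}_\epsilon^{(k)}=\{y:\ d_{GH}(B_y(r),B_0(r))\ge\epsilon r\ \text{for all }r\le1/k\}$, each of which is closed; fixing $k$ and $r\le1/k$, take an $r$-separated set $\{y_a\}_{a=1}^N\subset\mathcal{S}_\epsilon^{(k)}$, so that the balls $B_{x_{a,i}}(r/4)$ are pairwise disjoint for $i$ large and, after the harmless normalization that the integral curvature bound holds on $B_{p_i}(2)$,
$$N\,c_1\,(r/4)^{\,n-2p}\le\sum_{a=1}^N\int_{B_{x_{a,i}}(r/4)}|Rm|^p\le\int_{B_{p_i}(2)}|Rm|^p\le C_2\,\mathrm{vol}_{g_i}(B_{p_i}(1)),$$
whence $N\le C_3(n,\Lambda,v,p,\epsilon)\,r^{-(n-2p)}$, with $C_3$ \emph{independent of $k$}. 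Covering $\mathcal{S}_\epsilon^{(k)}$ by $\le C_3 r^{-(n-2p)}$ balls of radius $2r$ then gives $\mathcal{H}^{n-2p}_{4r}(\mathcal{S}_\epsilon^{(k)})\le 4^{n-2p}C_3$ for every $r\le1/k$, hence $\mathcal{H}^{n-2p}(\mathcal{S}_\epsilon^{(k)})\le 4^{n-2p}C_3$.

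For $1\le p<2$, letting $k\to\infty$ and using continuity from below of the Borel measure $\mathcal{H}^{n-2p}$ along the increasing family $\mathcal{S}_\epsilon^{(k)}\uparrow\mathcal{S}_\epsilon$ gives $\mathcal{H}^{n-2p}(\mathcal{S}_\epsilon)\le 4^{n-2p}C_3<\infty$. For $p=2$ the same packing bound with exponent $n-2p=n-4$ forces $\dim_{\mathcal H}\mathcal{S}_\epsilon^{(k)}\le n-4$ for every $k$, hence $\dim_{\mathcal H}\mathcal{S}_\epsilon\le n-4$; in the borderline case $n=4$ one leans on the finer $L^2$ version of the $\epsilon$-regularity step, which is tied to the Chern--Gauss--Bonnet integrand. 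The only serious work, as noted, is the $\epsilon$-regularity theorem of the second paragraph; everything else is soft once it is in hand.
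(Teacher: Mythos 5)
Your covering/H\"older endgame is essentially the one the paper uses (Vitali covering of the curvature-concentration set, disjointness, and the scale-invariant lower bound $\int_{B(s)}|Rm|^p\gtrsim s^{n-2p}$), but the $\epsilon$-regularity statement on which everything rests is false as you have formulated it, and this is not a technical wrinkle but the whole content of the theorem. Take $M_a=\mathbb{R}^{n-4}\times EH_a$, where $EH_a$ is the Eguchi--Hanson metric at scale $a$: this is Ricci-flat and non-collapsed, converges as $a\to0$ to $\mathbb{R}^{n-4}\times(\mathbb{R}^4/\mathbb{Z}_2)$, and since $|Rm|\sim a^4(a+s)^{-6}$ one computes $r^{2p}\,\mathrm{vol}(B(r))^{-1}\int_{B(r)}|Rm|^p\sim (a/r)^{4-2p}\to0$ for every fixed $r$ when $1\le p<2$. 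So the scale-invariant $L^p$ curvature is arbitrarily small at every scale while $B(r)$ stays a definite Gromov--Hausdorff distance from $B_0(r)$: your packing argument would (wrongly) conclude that such orbifold points lie in $R_\epsilon$. The correct $\epsilon$-regularity (Proposition \ref{epsilon-regularity-1} here, Theorem 1.16 of [CCT]) carries the indispensable extra hypothesis that $B_p(l)$ is already GH-close to a ball in a cone $\mathbb{R}^{n-k}\times C(X)$ with $k=2$ or $3$, and its proof is not elliptic theory in harmonic coordinates --- which, as you yourself observe, cannot get started for subcritical $p<n/2$ --- but a slicing argument: one builds the maps $\Phi,\Gamma$ from $f$-harmonic approximations of the splitting/cone functions, controls the volumes and second fundamental forms of the two-dimensional fibers $\Phi^{-1}(z)\cap U_u$ (Lemmas \ref{level-volume}, \ref{small-second-form}), and applies Gauss--Bonnet on these slices to force the cross-section $X$ to have nearly maximal volume, after which Corollary \ref{hausddorf-closed} (almost maximal volume implies almost Euclidean) closes the loop.

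Because the regularity statement only applies where some tangent cone splits off $\mathbb{R}^{n-2}$ or $\mathbb{R}^{n-3}$, the decomposition cannot be ``$B_{p_\infty}(1)\setminus R_\epsilon\subseteq Q$'' as in your proposal; it must be $B_{p_\infty}(1)\setminus R_\epsilon\subseteq Q\cup\mathcal{S}_{n-4}$, where $Q$ is the curvature-concentration set (handled exactly by your packing estimate) and $\mathcal{S}_{n-4}$ is controlled separately by the stratification bound $\dim\mathcal{S}_k\le k$ of Theorem \ref{dimension-k}. This second piece is precisely what produces the stated dichotomy: for $1\le p<2$ one has $n-4<n-2p$, so $\mathcal{H}^{n-2p}(\mathcal{S}_{n-4})=0$ and only the finite $\mathcal{H}^{n-2p}$-measure of $Q$ survives, whereas for $p=2$ both pieces have dimension at most $n-4$ and one gets only the dimension bound, not finite $(n-4)$-measure. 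Your proposal omits $\mathcal{S}_{n-4}$ entirely, which is exactly the set on which the argument, as written, breaks.
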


  The purpose of the present paper is to  extend   the above Cheeger-Colding theorem and
Cheeger-Colding-Tian theorem   in  the  Bakry-\'Emery geometry.  More precisely,  
we  analyze the  structure of Gromov-Hausdorff   limit  of a sequence of $n$-dimensional  Riemannian manifolds   in  class 
  $\mathcal{M}(A,\Lambda, v)$  which defined by
  \begin{align}
  \mathcal{M} (A,\Lambda,v)=&\{(M,g;p)|~M \text{ is an  $n$-dimensional}\notag\\
  &\text{complete Riemannian manifold which satisfy} \notag\\
   &{\rm Ric}_M(g)+\text{hess}(f)\geq-(n-1)\Lambda^2 g,\notag\\
   &{\rm vol}_g(B_{p}(1))\geq v>0,~
   \text{and}~  |\nabla f|_g\le A\}.\notag
   \end{align}
  Here $f$ is a smooth function on  $M$ and  $\text{hess}(f)$ denotes  Hessian tensor of $f$ with respect to $g$.   ${\rm Ric}_M(g)+\text{hess}(f)$ is called Bakry-\'Emery Ricci curvature associated to $f$ [BE]. For simplicity, we denote it by ${\rm Ric}_{M,g}^f$ or just  ${\rm Ric}_{g}^f$.  Clearly,   $\mathcal{M}(A,\Lambda, v)$ consists of compact Ricci solitons [Ha], [TZh].  We show that
both  Theorem \ref{thm-cc1} and Theorem \ref{thm-cct}  still hold  for a sequence in  $\mathcal{M}(A,\Lambda, v)$   (cf. Section 4, 5).

As in [CC1],   we shall establish  various  integral  comparison results   for  the gradient  and Hessian
estimates  between   appropriate   functions  and coordinate
functions  or  distance functions  on a Riemannian manifold with   Bakry-\'Emery Ricci curvature bounded below.  We will use $f$-harmonic functions   to construct  those appropriate   functions instead of harmonic functions  (cf. Section 2).  Another technique is to generalize  the  segment inequality lemmas   in [CC1]  to our case of  weighted volume form  (cf. Lemma \ref{equ-seg}, Lemma \ref{equ-rad},  Lemma \ref{approxi-1}) so that the  triangle lemmas in [Ch2] are still true on a Riemannian manifold with  almost flat  Bakry-\'Emery Ricci curvature  (cf. Lemma \ref{cheeger-lemma},   Lemma \ref{cheeger-lemma-2}). These triangle lemmas are crucial in  proofs of  the splitting theorem and  the metric cone theorem  (cf. Theorem \ref{splitting-theorem}, Theorem \ref{existence-metric-cone}).   We shall point out that  various  versions of such kind  triangle lemmas  were used   by Colding,  Cheeger-Colding  in earlier  papers  to  study   the rigidity  of  of Riemannian metrics  [Co1],  [Co2],  [CC1].

Another motivation of this paper is to  study the limit  space of   a sequence of  K\"ahler metrics $g_{t_i}$  $(t_i<1)$ arising from  solutions of certain complex Monge-Amp\`ere equations  for the existence 
 of K\"ahler-Ricci  soliton  via  the continuity method [TZ1], [TZ2].  We  show that such metrics are naturally  belonged to   $\mathcal{M}(A,v,\Lambda)$.  As a consequence,   for any sequence  $\{g_{t_i}\}$  there exists a subsequence which converges  to a metric space with  complex codimention of singularities at least one  in the  Gromov-Hausdorff topology  (cf.   Theorem \ref{thm-kahler-1},  Section 6).  Furthermore,  in case of  $t_i\to 1$, 
the  complex codimention of singularities of  limit space is  at least two (cf.   Theorem \ref{thm-kahler-2}). The later  is  corresponding to a sequence of  called weak almost  K\"ahler-Ricc solitons, which is a generalization of 
sequence of  weak almost  K\"ahler-Einstein metrics introduced by    Tian-Wang  in  a recent paper  [TW]  (cf. Definifion \ref{almost-kr-soliton}).  In fact, for such a  kind of K\"ahler metrics sequence, 
 we prove the following result:

\begin{theo}\label{thm-kahler-3}
Let $(M_i, g_{i})$ be a  sequence of
  weak  almost K\"ahler-Ricci solitons.   Suppose that there exists a point $p_i$ at each $M_i$ such that
  \begin{align}{\rm vol}_{M_i}(B_{p_i}(1))\ge  v>0.\end{align}
   Then there exists a subsequence of $(M_i,g_i;p_i)$  which converge to  a  limit metric space $Y$
    in  the pointed Gromov-Hausdorff topology. Moreover $\mathcal{S}(Y)=\mathcal{S}_{2n-4}$.  In particular, the complex codimension of singularities of
    $Y$ is at least 2.
\end{theo}

As a corollary of Theorem \ref{thm-kahler-3},  we show that there exists  a sequence of  weak almost  K\"ahler-Ricc solitons  on $M$ which  converges to a metric space  $(M_\infty, g_\infty)$  with complex codimension of the singular set of $(M_\infty, g_\infty)$  at least two  in  the Gromov-Hausdorff topology  if the modified Mabuchi $K$-energy defined in [TZ1] is bounded from below.  In a sequel of papers [WZ]  and [JWZ],  we will further confirm that   the regular  part of $(M_\infty, g_\infty)$ is in fact a K\"ahler-Ricc soliton while    $(M_\infty, g_\infty)$ admits a $Q$-Fano algebraic structure.  

The organization of paper is as follows.  In Section 1,   we  first recall  a $f$-Lapalace comparison result   of  Wei -Wylie   for distance functions (cf.  Lemma \ref{lapalace-esti-r}).   Then  as applications of  Lemma  \ref{lapalace-esti-r} we  construct a cut-off function with  bounded gradient and $f$-Lapalace (cf. Lemma \ref{cut-off}).
 In Section 2, we  give   various  integral estimates for gradient and  Hessian  of $f$-harmonic functions.
In Section 3  and  Section 4,  we will prove the  splitting theorem (cf. Theorm \ref{splitting-theorem}) and the  metric cone theorem (cf. Theorem \ref{existence-metric-cone}),  respectively.   In  Section 5, we give a generalization  of     Cheeger-Colding-Tian's Theorem  \ref{thm-cct}  in  the  setting of  Bakry-\'{E}mery geometry (cf. Theorem \ref{dimension-n-4}).  In Section 6,  we prove Theorem \ref {thm-kahler-1} and Theorem \ref {thm-kahler-3}.  Section 7 is an appendix  where  we explain how to use the technique of  conformal transformation   in [TZh] to give another proof of Theorem \ref {thm-kahler-1} and Theorem \ref {thm-kahler-2}. Section 8 is another appendix where  the relation (\ref{J-invariant}) in Section 6 is proved.

\noindent {\bf Acknowledgements.}   The authors would like to thank
professor G.  Tian for many valuable discussions on this work.   They  are also appreciated to    professor T. Colding  for his interest to the paper, particularly,   for valuable  comments  on   Lemma \ref{cheeger-lemma} and Lemma \ref{cheeger-lemma-2}.

\vskip3mm

\section{Distance function comparison and  other comparison lemmas}

 The notion of Bakry-\'Emery Ricci curvature  ${\rm Ric}_{M,g}^f$ associated to a smooth function $f$
  on a Riemannian manifold $(M,g)$ was first appeared in [BE].  Related to the conformal geometry,  one can introduce a weighted volume form and a  $f$-Lapalace operator associated to $f$ on $(M,g)$ as follows,
\begin{align}
    d\text{v}^f=e^{-f}d\text{v}~\text{and}~
    \Delta^f=\Delta - \langle\nabla f,\nabla\rangle.\notag
    \end{align}
 Then   $\Delta^f$ is   a   self-adjoint elliptic operator
 under the following weighted inner product,
 $$(u,v)=\int_M uv d\text{v}^f,~~\forall ~u,v\in L^2(M).$$
 That is
$$\int_M \Delta^f u  v d\text{v}^f=\int_M \langle\nabla u,\nabla v\rangle d\text{v}^f=\int_M \Delta^f v u d\text{v}^f.$$
The divergence theorem with respect to  $\Delta^f$ is
$$\int_\Omega \Delta^f u d\text{v}^f=\int_{\partial\Omega} \langle\nabla u,n\rangle e^{-f}d\sigma,$$
where  $\Omega$ is a domain in   $M$ with piece-wise smooth boundary,  $n$ denotes the outer unit normal vector field on  $\partial\Omega$ and $d\sigma$ is an induced area form of $g$
on  $\partial\Omega$.

Let $r=r(x)=\text{dist}(p,x)$ be a distance function on $(M,g)$.   In [WW], Wei-Wylie compute
the  $f$-Laplacian  for $r$ and
   got  the following comparison result under the Bakry-Emery Ricci curvature condition.

\begin{lem}([WW])\label{lapalace-esti-r}
Let $(M,g)$ be  an $n$-dimensional  complete  Riemannian manifold which satisfies
\begin{align}\label{ricci-condition-1} {\rm Ric}_{g}^f\geq-(n-1)\Lambda^2g.\end{align}
Then
\begin{align}\label{lapalace-r-1}
\Delta^fr\leq(n-1+4A)\Lambda\coth\Lambda r, ~\text{ if} ~ |f|\le A,\end{align}
and
\begin{align}
\label{lapalace-r-3}\Delta^fr\leq(n-1)\Lambda\coth\Lambda r+A, ~\text{ if} ~ |\nabla f|\le A.
\end{align}
\end{lem}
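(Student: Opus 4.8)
This is the mean–curvature comparison of Wei and Wylie [WW], and I would prove it by running the usual Bochner–Riccati argument along minimizing geodesics while carefully bookkeeping the drift term $-\langle\nabla f,\nabla\cdot\rangle$. Fix $q\in M$ outside the cut locus of $p$, let $\gamma\colon[0,r]\to M$ be the unit–speed minimizing geodesic from $p$ to $q$, and write $'$ for $\tfrac{d}{dr}$ along $\gamma$ and $\partial_r=\dot\gamma=\nabla r$. Near $\gamma((0,r])$ the function $r=\text{dist}(p,\cdot)$ is smooth with $|\nabla r|\equiv 1$, so Bochner's identity together with $|\text{hess}\,r|^2\ge (\Delta r)^2/(n-1)$ (which holds since $\text{hess}\,r$ annihilates $\nabla r$) gives $(\Delta r)'=-|\text{hess}\,r|^2-\text{Ric}(\partial_r,\partial_r)\le -\tfrac{(\Delta r)^2}{n-1}-\text{Ric}(\partial_r,\partial_r)$. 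Since $\Delta^f r=\Delta r-\partial_r f$ and $(\partial_r f)'=\text{hess}\,f(\partial_r,\partial_r)$, subtracting $\text{hess}\,f(\partial_r,\partial_r)$ from both sides and invoking (\ref{ricci-condition-1}) yields the master inequality
$$(\Delta^f r)'\ \le\ -\frac{(\Delta r)^2}{n-1}-\text{Ric}^f(\partial_r,\partial_r)\ \le\ -\frac{(\Delta r)^2}{n-1}+(n-1)\Lambda^2\qquad(\star)$$
on $(0,r]$. As usual, the estimates derived from $(\star)$ away from the cut locus pass to all of $M$ in the barrier (distributional) sense, so it suffices to argue along $\gamma$.

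For (\ref{lapalace-r-3}), i.e. $|\nabla f|\le A$: here $\partial_r f$ is controlled pointwise, $|\partial_r f|\le A$, so $\Delta r=\Delta^f r+\partial_r f\ge \Delta^f r-A$, and on the set $\{\Delta^f r\ge A\}$ we have $(\Delta r)^2\ge (\Delta^f r-A)^2$. Hence $\psi:=\Delta^f r-A$ satisfies $\psi'\le -\psi^2/(n-1)+(n-1)\Lambda^2$ there. The model function $\psi_0(r):=(n-1)\Lambda\coth(\Lambda r)$ solves this Riccati equation with equality and has the same leading singularity $(n-1)/r$ at $r=0$ as $\Delta^f r$ (indeed $\Delta^f r=\Delta r-\partial_r f\sim (n-1)/r$), so the standard ODE/barrier comparison forces $\psi\le\psi_0$; where $\Delta^f r<A$ the bound is automatic since $\psi_0>0$. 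This is exactly (\ref{lapalace-r-3}).

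For (\ref{lapalace-r-1}), i.e. $|f|\le A$: now $\partial_r f$ is not controlled pointwise, so the naive shift is unavailable and one exploits the bound on $f$ itself through an integrating factor. Expanding $(\Delta r)^2=(\Delta^f r+\partial_r f)^2$ in $(\star)$ and discarding $(\partial_r f)^2\ge 0$, the cross term $\tfrac{2}{n-1}(\Delta^f r)(\partial_r f)$ is precisely the one absorbed by $e^{2f/(n-1)}$: with $w:=e^{2f/(n-1)}\Delta^f r$ one finds $w'\le -\tfrac{e^{-2f/(n-1)}}{n-1}\,w^2+(n-1)e^{2f/(n-1)}\Lambda^2$, and replacing the weights by the constants coming from $|f|\le A$ turns this into a constant–coefficient Riccati inequality for $w$; comparing with its explicit $\coth$–solution (again matched to the $1/r$ singularity at the origin) and unwinding $w=e^{2f/(n-1)}\Delta^f r$ gives an estimate of the form $\Delta^f r\le C(n,A)\,\Lambda\coth(\Lambda r)$. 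The one genuinely delicate point — and the main obstacle — is squeezing $C(n,A)$ down to the sharp value $n-1+4A$ of (\ref{lapalace-r-1}); I would do this via the weighted–Jacobian/effective–dimension refinement of Wei and Wylie [WW], applying the $\sinh$–comparison to $h:=(e^{-f}\mathcal{J})^{1/(n-1+4A)}$ (where $\mathcal{J}$ is the ordinary Jacobian of $\exp_p$ along $\gamma$) and checking from $(\star)$ and $|f|\le A$ that $h''\le\Lambda^2 h$, which is exactly the place where the hypothesis $|f|\le A$, rather than a bound on $|\nabla f|$, must be used in full.
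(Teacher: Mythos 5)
The paper itself offers no proof of this lemma --- it is quoted directly from Wei--Wylie [WW] --- so your proposal is being measured against the original argument there. Your master inequality $(\star)$ is correct (Bochner plus $|\text{hess}\,r|^2\ge(\Delta r)^2/(n-1)$, then subtract $\text{hess}\,f(\partial_r,\partial_r)$ and use the curvature bound), and your treatment of (\ref{lapalace-r-3}) is exactly the [WW] argument: with $|\partial_r f|\le A$ one has $(\Delta r)^2\ge(\Delta^f r-A)^2$ wherever $\Delta^f r\ge A$, and the shifted quantity obeys the standard Riccati comparison against $(n-1)\Lambda\coth(\Lambda r)$. That part is fine.

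The gap is in your route to the sharp constant $n-1+4A$ in (\ref{lapalace-r-1}). The pointwise inequality $h''\le\Lambda^2 h$ for $h=(e^{-f}\mathcal{J})^{1/(n-1+4A)}$ is equivalent to $(\Delta^f r)'+\tfrac{(\Delta^f r)^2}{n-1+4A}\le(n-1+4A)\Lambda^2$, whereas $(\star)$ only gives $(\Delta^f r)'\le-\tfrac{(\Delta^f r+\partial_r f)^2}{n-1}+(n-1)\Lambda^2$; when $\partial_r f\approx-\Delta^f r$ the negative square term disappears while $\tfrac{(\Delta^f r)^2}{n-1+4A}$ can be arbitrarily large, so the claimed ODE is simply false under the hypothesis $|f|\le A$ (which controls neither $\partial_r f$ nor $\Delta f$). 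Your integrating-factor computation is correct but, as you note, only yields a constant exponential in $A$. The actual mechanism in [WW] is different: write $m=\Delta r$, $m_\Lambda=(n-1)\Lambda\coth(\Lambda r)$, multiply the Riccati inequality for $m-m_\Lambda$ by $\text{sn}^2(r)=\sinh^2(\Lambda r)/\Lambda^2$ and integrate from $0$ to $r$ to get
\begin{align}
\text{sn}^2(r)\bigl(m(r)-m_\Lambda(r)\bigr)\le\int_0^r\text{sn}^2(t)\,\text{hess}\,f(\partial_t,\partial_t)\,dt,\notag
\end{align}
then integrate by parts \emph{twice} so that both derivatives land on $\text{sn}^2$:
\begin{align}
\int_0^r\text{sn}^2 f''\,dt=\text{sn}^2(r)\partial_rf(r)-(\text{sn}^2)'(r)f(r)+\int_0^r(\text{sn}^2)''f\,dt.\notag
\end{align}
The first term on the right is exactly $\text{sn}^2(r)(m(r)-m_f(r))$ and cancels into the left side, and the remaining two terms are each bounded by $A(\text{sn}^2)'(r)$ (since $(\text{sn}^2)''\ge0$ for nonpositive curvature bound), giving $m_f\le m_\Lambda+4A\Lambda\coth(\Lambda r)$. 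This double integration by parts --- not a pointwise concavity of a weighted Jacobian --- is where $|f|\le A$ is used, and it is the step your proposal is missing.
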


As an application of Lemma  \ref{lapalace-esti-r},  Wei-Wylie prove the following weighted  volume comparison theorem.

\begin{theo}([WW])\label{volume-comparison}
Let $(M,g)$ be   an $n$-dimensional   complete  Riemannian manifold which satisfies (\ref{ricci-condition-1}).
Then for any $0<r\le R$,
\begin{align}\label{volume-estimate-1}
\frac{{\rm vol}^f(B_p(r))}{{\rm vol}^f(B_p(R))}\geq\frac{{\rm vol}_{\Lambda}^{n+4A}(B(r))}{{\rm vol}_{\Lambda}^{n+4A}(B(R))},
~\text{ if }~|f|\le A,\end{align}
and
\begin{align}\label{volume-estimate-2}\frac{{\rm vol}^f(B_p(r))}{{\rm vol}^f(B_p(R))}\geq e^{-AR}\frac{{\rm vol}_{\Lambda}^{n}(B(r))}{{\rm vol}_{\Lambda}^{n}(B(R))},
\text{ if}~ |\nabla f|\le A,
\end{align}
where ${\rm vol}_{\Lambda}^{n}(B(r))$ denotes the volume of geodesic ball $B(r)$ with radius $r$ in $n$-dimensional space form
with constant curvature $-\Lambda$.
\end{theo}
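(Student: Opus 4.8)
\emph{Proof strategy.} The plan is to transplant the Bishop--Gromov volume comparison to the weighted measure $d\text{v}^f$, using Lemma~\ref{lapalace-esti-r} in place of the classical Laplacian comparison for the distance function. Fix $p\in M$ and work in geodesic polar coordinates $(r,\theta)$ centred at $p$, with $\theta$ ranging over the unit sphere $S^{n-1}\subset T_pM$; inside the cut locus write $d\text{v}=\cA(r,\theta)\,dr\,d\theta$, so that $d\text{v}^f=e^{-f}\,d\text{v}=\cA_f(r,\theta)\,dr\,d\theta$ with $\cA_f:=e^{-f}\cA$. Since $\partial_r\log\cA=\Delta r$ and $\partial_r f=\langle\nabla f,\nabla r\rangle$, one has
\begin{align}\label{eq-logderiv}
\partial_r\log\cA_f(r,\theta)=\Delta r-\langle\nabla f,\nabla r\rangle=\Delta^f r,
\end{align}
valid wherever $r$ is smooth. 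Thus Lemma~\ref{lapalace-esti-r} is precisely a first-order differential inequality for $\cA_f$ along each radial geodesic.

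Next I would supply the matching comparison density. In the case $|f|\le A$, put $\mathfrak s_\Lambda(r):=\sinh(\Lambda r)/\Lambda$ and let $\bar\cA(r):=\mathfrak s_\Lambda(r)^{\,n-1+4A}$, the radial volume density of the (formal) $(n+4A)$-dimensional model space of curvature $-\Lambda^2$; it satisfies $\partial_r\log\bar\cA(r)=(n-1+4A)\Lambda\coth(\Lambda r)$, exactly the bound in \eqref{lapalace-r-1}. Combining this with \eqref{eq-logderiv} gives $\partial_r\log(\cA_f/\bar\cA)\le 0$, hence for each fixed $\theta$ the function $r\mapsto\cA_f(r,\theta)/\bar\cA(r)$ is non-increasing on $[0,\infty)$ once $\cA_f(\cdot,\theta)$ is extended by $0$ past the cut distance $c(\theta)$ (the extension only lowers the ratio). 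In the case $|\nabla f|\le A$ the same works verbatim, comparing instead against $\bar\cA(r):=e^{Ar}\,\mathfrak s_\Lambda(r)^{\,n-1}$, whose logarithmic radial derivative is $A+(n-1)\Lambda\coth(\Lambda r)$, matching \eqref{lapalace-r-3}.

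To pass from densities to ball volumes I would use the elementary lemma of Gromov: if $g,h>0$ and $g/h$ is non-increasing, then $r\mapsto\int_0^r g\big/\int_0^r h$ is non-increasing. Applying it for each $\theta$ with $g=\cA_f(\cdot,\theta)$, $h=\bar\cA$ and cross-multiplying gives, for $0<r\le R$,
\begin{align}\label{eq-crossmult}
\Big(\int_0^r\cA_f(s,\theta)\,ds\Big)\int_0^R\bar\cA(s)\,ds\ \ge\ \Big(\int_0^R\cA_f(s,\theta)\,ds\Big)\int_0^r\bar\cA(s)\,ds;
\end{align}
integrating \eqref{eq-crossmult} over $\theta\in S^{n-1}$ (the radial integrals of $\bar\cA$ being $\theta$-independent) yields
$$\frac{\text{vol}^f(B_p(r))}{\text{vol}^f(B_p(R))}\ \ge\ \frac{\int_0^r\bar\cA(s)\,ds}{\int_0^R\bar\cA(s)\,ds}.$$
For $|f|\le A$ the right-hand side equals $\text{vol}_\Lambda^{n+4A}(B(r))/\text{vol}_\Lambda^{n+4A}(B(R))$ by definition of the model volume, which is \eqref{volume-estimate-1}. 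For $|\nabla f|\le A$, using $1\le e^{As}\le e^{AR}$ on $[0,R]$ we get $\int_0^r\bar\cA(s)\,ds\ge\int_0^r\mathfrak s_\Lambda(s)^{\,n-1}\,ds$ and $\int_0^R\bar\cA(s)\,ds\le e^{AR}\int_0^R\mathfrak s_\Lambda(s)^{\,n-1}\,ds$, so the right-hand side is $\ge e^{-AR}\,\text{vol}_\Lambda^n(B(r))/\text{vol}_\Lambda^n(B(R))$, which is \eqref{volume-estimate-2}.

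I expect the only genuinely delicate point to be the bookkeeping at the cut locus: I would need to make precise that $\cA_f(\cdot,\theta)$, extended by $0$ past $c(\theta)$, still obeys the monotonicity used in \eqref{eq-crossmult} despite a possible downward jump there --- equivalently, that \eqref{eq-logderiv} may be read with $\Delta^f r$ in the barrier/distributional sense of [WW], so that the comparison survives globally and not just strictly inside the cut locus --- together with fixing the meaning of the $(n+4A)$-dimensional model volume $\text{vol}_\Lambda^{n+4A}(B(r))$ as a fixed constant times $\int_0^r\mathfrak s_\Lambda(s)^{\,n-1+4A}\,ds$. Everything else is the classical Bishop--Gromov computation run verbatim for the weighted density $\cA_f$.
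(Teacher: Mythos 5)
Your proposal is correct and follows essentially the same route as the paper: write $d\text{v}^f=A^f(s,\theta)\,ds\wedge d\theta$ in polar coordinates, note $\partial_s\log A^f=\Delta^f r$, compare against the model density $L_{\Lambda,A}$ via Lemma \ref{lapalace-esti-r} to get the monotonicity formula (\ref{mono-formula}), and integrate. You have simply made explicit the "simple computation" the paper omits (Gromov's integration lemma, the cut-locus extension by zero, and the $e^{-AR}$ bookkeeping), all of which is handled correctly.
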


  Wei-Wylie's proof of Theorem \ref{volume-comparison} depends on a monotonic formula for  the weighted volume  form as follows.

 By choosing  a polar coordinate with the origin at $p$,  we write
\begin{align}
e^{-f}d\text{v}= A^f(s,\theta)ds\wedge d\theta.\notag\end{align}
Then
\begin{align}
 \frac{d}{ds}A^f(s,\theta)= A^f(s,\theta) \Delta^fr. \notag
 \end{align}
In case that  $|\nabla f|\le A$,  it follows  from (\ref{lapalace-r-3}),
\begin{align}\label{mono-1}
 \frac{d}{ds}A^f(s,\theta)\le A^f(s,\theta) l_{\Lambda,A}(r),
 \end{align}
 where  $ l_{\Lambda,A}(r)=(n-1)\Lambda\coth\Lambda r+A.$
  Thus if we put
   \begin{align}\label{L-function}
    L_{\Lambda,A}(r)=e^{Ar}(\frac{\sinh\Lambda r}{\Lambda})^{n-1},
    \end{align}
     which is a  solution of   equation,
\begin{align}\label{L-equation}
\frac{L_{\Lambda,A}'}{L_{\Lambda,A}}=l_{\Lambda,A}, ~\frac{L_{\Lambda,A}(r)}{ r^{n-1}}\to 1~\text{as}~  r\to 0,
 \end{align}
 (\ref{mono-1}) is  equivalent to  the following monotonic formula,
\begin{align}\label{mono-formula}
 \frac{A^f(b,\theta)}{A^f(a,\theta)}\leq \frac{L_{\Lambda,A}(b)}{L_{\Lambda,A}(a)},~\forall~ b\ge a.\end{align}
By a simple computation,
 we get (\ref{volume-estimate-2}) from (\ref{mono-formula}). Similarly, we can prove (\ref{volume-estimate-1}).

Another application of  Lemma  \ref{lapalace-esti-r} is  the following  weighted  Poincar\'{e} inequality.

 \begin{lem}\label{Poincare1}
Let  $(M,g)$  be a complete Riemannian manifold which satisfies
\begin{align}\label{curvature-uasual-condition} {\rm Ric}_{g}^f\ge -(n-1)\Lambda^2 g,~\text{and}~|\nabla f|\le A.
\end{align}
   Let  $A_p(a,b)=B_p(b)\setminus \overline{B_p(a)}$ be an annulus in $M$.
Then for  any  Liptischtz  function  $h$  in $A_p(a,b)$ with   $h|_{\partial A_p(a,b)}=0$,
it holds
  \begin{align}\label{poincare-inequality}
  \int_{A_p(a,b)}h^2e^{-f}d{\rm v} \leq c(a,b,A,\Lambda)\int_{A_p(a,b)}|\nabla h|^2e^{-f}d{\rm v}
  \end{align}

\end{lem}

\begin{proof} By  (\ref{lapalace-r-3}),  it is easy to see that
\begin{align}
\Delta^f r^{-k} &\geq -kr^{-k-1}l_{\Lambda,A}(r)+k(k+1)r^{-k-2}\notag\\
&=kr^{-k-1}(-l_{\Lambda,A}(r)+\frac{k+1}{r}),
\end{align}
where $k$ is a positive real number. Putting $\frac{k+1}{b}=l_{\Lambda,A}(a)+1$, we  have
$$\Delta^f r^{-k}\geq c(a,b,\Lambda,A)>0.$$
 Thus  for $h$ with zero boundary value, we get
\begin{align}
\nonumber c(a,b,\Lambda,A)\int_{A_p(a,b)}h^2e^{-f}d{\rm v}&\leq \int_{A_p(a,b)}(\Delta^f r^{-k}) h^2e^{-f}d{\rm v}&\\
\nonumber &=-2\int_{A_p(a,b)}h\langle \nabla h,\nabla(r^{-k})\rangle e^{-f}d{\rm v}&\\
\nonumber &\leq 2k\int_{A_p(a,b)}h|\nabla h|e^{-f}d{\rm v}&\\
\nonumber &\leq 2k (\int_{A_p(a,b)}h^2e^{-f}d{\rm v})^{\frac{1}{2}}(\int_{A_p(a,b)}|\nabla h|^2e^{-f}d{\rm v})^{\frac{1}{2}}.
\end{align}
Hence,  (\ref{poincare-inequality}) follows from the above immediately.
\end{proof}

For  the $f$-Lapalace operator,    we  have the following  Bochner-typed identity,
 \begin{align}\label{bochner-inequ}
&\frac{1}{2}\Delta^{f}|\nabla u|^2\notag\\
&=|\text{hess }u|^2+\langle\nabla u,\nabla\Delta^f u\rangle+{\rm Ric}_g^f(\nabla u,\nabla u), ~\forall~u\in C^\infty(M).
\end{align}
By  (\ref{bochner-inequ}) and Lemma \ref{lapalace-esti-r}, we derive the following Li-Yau typed gradient estimate for $f$- harmonic functions on $(M,g)$.

 \begin{prop}\label{gradient-esti} Let   $(M,g)$  be a complete Riemannian manifold which satisfies (\ref{curvature-uasual-condition}).
 Let  $u>0$ be a $f$-harmonic function defined on the  unit distance ball $B_p(1)\subset (M,g)$,  i.e.
 $$\triangle^f u=0, ~ \text{in }~B_p(1).$$
Then
 \begin{align}
 |\nabla u|^2\leq (C_1\Lambda+C_2A^2+C_3)u^2, ~\text{in}~ B_p(1/2),
 \end{align}
   where  the constants $C_i$  $(1\leq i\leq 3)$ depend only on $n$.
\end{prop}

\begin{proof} The proof is standard as  in the case  $f=0$ for a harmonic function (cf. [SY]).
 We let $v=\ln u$.
 Then
\begin{align}
\Delta^f v&=\Delta v-\langle\nabla f,\nabla v\rangle=\nabla(\frac{\nabla u}{u})-\langle\nabla f, \frac{\nabla u}{u}\rangle\notag\\
&=\frac{\Delta u}{u}-\frac{|\nabla u|^2}{u^2}-\langle\nabla f, \frac{\nabla u}{u}\rangle=\frac{|\nabla u|^2}{u^2}.
\end{align}
Note that
$$|\text{hess } v|^2\geq\frac{|\Delta v|^2}{n}$$
and
$$|\Delta v|^2\geq\frac{|\Delta^f v|^2}{2}-C_1 A^2Q,$$
where $Q=|\nabla v|^2$. Thus applying (\ref{bochner-inequ}) to $v$,  we get
\begin{align}\label{ineq-Q}
\frac{1}{2}\Delta^f Q\geq\frac{Q^2}{2n}-\frac{1}{n}C_1 A^2Q+\langle\nabla v,\nabla Q\rangle-\Lambda^2Q .
\end{align}

Choose a decreasing cut-off function $\eta(t)$  on  $t\in [0,\infty]$ such that
 \begin{align}&\eta(t)=1 ~\text{if}~  t\le \frac{1}{2}; \phi=0~\text{ if}~ t\ge 1;\notag\\
   &-C_2\eta^{\frac{1}{2}}\le \eta^{\prime},~\text{if}~t\ge \frac{1}{2};\notag\\
   &|\eta^{\prime\prime}|\leq C_2.\notag
   \end{align}
Then  if let  $\phi=\eta(r(\cdot,p))$,
$$|\nabla\phi|^2\phi^{-1}\le C_2^2,$$
 and by Lemma  \ref{lapalace-esti-r},
\begin{align}\label{lapalace-phi}\Delta^f\phi=\Delta^f\eta(r)\ge - C_3(A+\Lambda).
\end{align}
Hence, by  (\ref{ineq-Q}),  we obtain
\begin{align}\label{ineq-phiQ}
\Delta^f(\phi Q)&=\Delta(\phi Q)+\langle\nabla f,\nabla(\phi Q)\rangle\notag\\
 &= \phi\Delta^f Q+Q\Delta^f\phi+ 2\langle\nabla Q,\nabla \phi\rangle\notag\\
 &\ge \phi(\frac{Q^2}{n}-(\frac{2}{n}C_1 A^2+2\Lambda^2)Q)-C_3(A+\Lambda)Q\notag\\
 &+2\langle\nabla v,\nabla Q\rangle+2\langle\nabla Q,\nabla \phi\rangle.
 \end{align}

Suppose that $(Q\phi)(q)=\max_M\{Q\phi\}$ for some $q\in M$. Then at this point, it holds
 $\nabla(Q\phi)=0$.
 It follows that
 $$\nabla Q=-\frac{Q\nabla \phi}{\phi},$$
and
\begin{align}|\langle\nabla Q,\nabla\phi\rangle|=\frac{Q}{\phi}|\nabla\phi|^2\leq C_2^2Q. \notag
\end{align}
Also
\begin{align}
|\langle\nabla Q,\nabla v\rangle|\leq Q^{\frac{3}{2}}\frac{|\nabla\phi|}{\phi}\le
C_2Q^{\frac{3}{2}}\phi^{-\frac{1}{2}}.\notag\end{align}
Therefore, by applying  the maximum principle to $\phi Q$ at the point $q$,  we get from (\ref{ineq-phiQ}),
\begin{align}
0&\geq\phi(\frac{Q^2}{n}-\frac{2}{n}C_1A^2Q- 2C_2Q^{\frac{3}{2}}\phi^{-\frac{1}{2}})\notag\\
 &-C_3(\Lambda Q+A)-2C_2Q.\notag
\end{align}
As a consequence,  we derive
$$ \phi Q\le (\phi Q)(q)\leq C_4\Lambda+C_5A^2+C_6, ~\text{in}~ B_p(1).$$
This proves  the proposition.\end{proof}

As an application of  Proposition \ref{gradient-esti}, we are able to construct a cut-off function with  
 bounded gradient and $f$-Lapalace.  Such a function will be used in the next section.

\begin{lem}\label{cut-off}
Under the condition (\ref{curvature-uasual-condition})  in Lemma \ref{Poincare1}, there exists a cut-off  function $\phi$ supported in $B_p(2)$ such that
i)  $\phi\equiv1$,  in  $B_p(1)$;  ii)
\begin{align}
 |\nabla \phi|, |\Delta^f\phi| \le  C(n,\Lambda, A).\notag
\end{align}
\end{lem}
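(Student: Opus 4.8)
The plan is to build $\phi$ as a function of the distance $r=r(\cdot,p)$, composed with a suitable profile, after first solving an auxiliary $f$-harmonic-type problem on an annulus to control the bad term coming from $\Delta^f r$. The naive choice $\phi=\chi(r)$ with $\chi$ a fixed cutoff fails because $\Delta^f\phi=\chi''(r)|\nabla r|^2+\chi'(r)\Delta^f r$, and while Lemma \ref{lapalace-esti-r} gives a one-sided bound $\Delta^f r\le (n-1)\Lambda\coth\Lambda r+A$, it gives no lower bound, so $|\Delta^f\phi|$ is not controlled where $\chi'<0$. The standard Cheeger–Colding device (adapted to the weighted setting) is: first produce an $f$-harmonic function $b$ on the annulus $A_{1,2}=B_p(2)\setminus \overline{B_p(1)}$ with $b=1$ on $\partial B_p(1)$ and $b=0$ on $\partial B_p(2)$, then extend by $b\equiv 1$ on $B_p(1)$ and $b\equiv 0$ outside $B_p(2)$. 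On the annulus $\Delta^f b=0$ automatically, so the only issue is the gradient bound and the behaviour across the two gluing spheres, which we then smooth out.

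First I would invoke the weighted volume comparison (Theorem \ref{volume-comparison}) and Lemma \ref{lapalace-esti-r} to run the usual barrier argument: since $r$ is a subsolution, $\Delta^f r\le h(r):=(n-1)\Lambda\coth\Lambda r+A$ in the barrier sense, the radial function $G(r)=\int_r^2 L_{\Lambda,A}(t)^{-1}\,dt\big/\int_1^2 L_{\Lambda,A}(t)^{-1}\,dt$ (with $L_{\Lambda,A}$ as in \eqref{L-function}) is an $f$-superharmonic barrier that is $1$ on $\partial B_p(1)$, $0$ on $\partial B_p(2)$ — indeed $\Delta^f G=G''+G'\Delta^f r\le G''+G'h$ where the sign of $G'<0$ combines correctly with the ODE \eqref{L-equation} to give $\Delta^f G\le 0$. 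Solving the Dirichlet problem $\Delta^f b=0$ on $A_{1,2}$ with the same boundary data, the maximum principle gives $0\le b\le 1$, and comparison with the barrier near $\partial B_p(1)$ and $\partial B_p(2)$ gives a Lipschitz bound on $b$ up to the boundary, hence a gradient estimate $|\nabla b|\le C(n,\Lambda,A)$ on a slightly smaller annulus (here one uses the Li–Yau-type estimate of Proposition \ref{gradient-esti} applied to $b$, or directly to $1-b$, on interior balls, after noting $b$ is bounded; strict positivity needed for Proposition \ref{gradient-esti} is handled by working with $b+\varepsilon$ or by the standard trick of replacing $\ln u$ by $\ln(u+\varepsilon)$ and letting $\varepsilon\to 0$).

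Next I would do the smoothing. Choose a fixed smooth $\psi:[0,1]\to[0,1]$ with $\psi\equiv 1$ near $1$ and $\psi\equiv 0$ near $0$, and set $\phi=\psi\circ \tilde b$ where $\tilde b$ is $b$ cut off suitably; then $\phi\equiv 1$ on $B_p(1)$, $\phi\equiv 0$ outside $B_p(2)$, and $|\nabla\phi|=|\psi'(\tilde b)|\,|\nabla \tilde b|\le C$. For the Laplacian, $\Delta^f\phi=\psi''(\tilde b)|\nabla\tilde b|^2+\psi'(\tilde b)\Delta^f\tilde b$; on the region where $\psi'\ne 0$ we have $\tilde b=b$ is genuinely $f$-harmonic so $\Delta^f b=0$, and $\psi''(\tilde b)|\nabla b|^2$ is bounded by $C(n,\Lambda,A)$ since $\psi$ is fixed and $|\nabla b|$ is bounded. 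This is exactly why the composition with the $f$-harmonic $b$ is used rather than with $r$: the composition kills the uncontrolled $\Delta^f b$ term. One must be a little careful near the two spheres $\partial B_p(1)$ and $\partial B_p(2)$: there $b$ is only Lipschitz (a corner), but by choosing $\psi$ to be locally constant ($\equiv 1$, resp. $\equiv 0$) on neighbourhoods of the boundary values, $\phi$ becomes genuinely smooth ($\equiv$ const) across those spheres, so no distributional terms appear.

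The main obstacle I expect is the gradient bound for $b$ up to (or near) the gluing spheres: Proposition \ref{gradient-esti} is an interior estimate on balls and requires positivity, so one needs either the barrier comparison near $\partial A_{1,2}$ to get boundary Lipschitz control, or to choose the cutoff $\psi$ so that $\psi'$ is supported strictly in the interior of the annulus $A_{1,2}$, where the interior gradient estimate applies directly to $b+\varepsilon>0$ with constants independent of $\varepsilon$. The latter is cleaner and is what I would carry out: fix $a\in(1,2)$, $a'\in(a,2)$, arrange $\psi'$ supported in $[\,b|_{\{r=a'\}},\,b|_{\{r=a\}}\,]$ using monotonicity of the radial barriers to see these level sets are genuine separating hypersurfaces inside $A_{1,2}$, and apply Proposition \ref{gradient-esti} on interior balls of definite size contained in $A_{1,2}$. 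Everything else is the routine bookkeeping of constants, all of which depend only on $n$, $\Lambda$, $A$ through Lemma \ref{lapalace-esti-r}, Theorem \ref{volume-comparison} and Proposition \ref{gradient-esti}.
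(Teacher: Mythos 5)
Your overall architecture (solve an auxiliary equation on the annulus $A_p(1,2)$, pinch the solution between radial barriers built from the Laplacian comparison, then compose with a profile $\psi$ that is locally constant near the boundary values) is the same as the paper's. But your key deviation --- taking $b$ to be $f$-harmonic, $\Delta^f b=0$, instead of the paper's $\Delta^f w=1/a>0$ --- opens a genuine gap at the upper barrier. The Laplacian comparison is one-sided: $\Delta^f r\le l(r)$ with no lower bound. For a radial function $F(r)$ that decreases from $1$ on $\partial B_p(1)$ to $0$ on $\partial B_p(2)$ one has $F'<0$, hence $\Delta^f F=F''+F'\Delta^f r\ge F''+F'l$; this makes $F$ a \emph{sub}solution (your $G$ is $f$-subharmonic, not $f$-superharmonic as you wrote --- the inequality in your display goes the wrong way, though the resulting comparison $b\ge G(r)$ is still the correct one). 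The same one-sidedness means there is \emph{no} radial supersolution centered at $p$ that dominates $b$ from above, so you cannot show that $b$ is uniformly small on a definite collar $\{2-\eta\le r\le 2\}$. Without that, the transition set $\{c_0\le b\le c_1\}$ where $\psi'\ne0$ is not known to stay a distance $\eta(n,\Lambda,A)$ away from $\partial B_p(2)$, so the interior gradient estimate of Proposition \ref{gradient-esti} cannot be applied on balls of definite size with uniform constants; your fallback of "choosing $\psi'$ supported between the level sets $b|_{\{r=a'\}}$ and $b|_{\{r=a\}}$" presupposes exactly the two-sided pinching you have not established.

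The paper's choice $\Delta^f w=1/a$ is what rescues this step: the increasing solution $H$ of $H''+H'l=1$ gives $\Delta^f H(d(\cdot,x))\le 1$ for \emph{any} center $x$, so $w-H(d(\cdot,x))/a$ is $f$-subharmonic and the maximum principle, evaluated at $y=x$, yields the pointwise upper bound $w(x)\le\max\{1-H(d(x,p)-1)/a,\,0\}$. With $\Delta^f b=0$ the analogous combination $b-cH(d(\cdot,x))$ is not a subsolution for any $c>0$, so this trick is unavailable. The positive constant right-hand side costs nothing elsewhere: in $\Delta^f\phi=\psi''|\nabla w|^2+\psi'\Delta^f w$ the extra term $\psi'\cdot(1/a)$ is bounded. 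To repair your argument, replace the Dirichlet problem $\Delta^f b=0$ by $\Delta^f w=1/a$ and add the $H$-comparison; the rest of your smoothing and bookkeeping then goes through as in the paper.
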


\begin{proof} We will use an argument from Theorem  6.33 in [CC1].
First we consider a solution  of ODE,
\begin{align}
G''+G'l_{\Lambda,A}=1, ~\text{on}~ [1,2],
\end{align}
with $G(1)=a$  and  $G(2)=0$.   It is easy to see that there is a number $a=a(n,\Lambda, A)$
such tha $G'<0$.
Then  by (\ref{lapalace-r-3}),  we have
$$\Delta^f G(d(p,\cdot))\geq1.$$
Let $w$ be a  solution of equation,
\begin{align}
\Delta^f w=\frac{1}{a}, ~\text{in}~B_p(2)\setminus \overline{B_p(1)},\notag \end{align}
with $w=1$  on  $\partial B_p(1)$ and  $ w=0$  on  $\partial B_p(2)$.
 Thus  by the maximum principle,  we get
  $$w\geq \frac{G(d(.,p))}{a}.$$

Secondly,  we choose another  function $H$  with  $H'>0$ which is a solution of ODE,
\begin{align}
H''+H'l_{\Lambda,A}=1, ~ \text{ on } [0,\infty),
\end{align}
with $H(0)=0$.  Then  by (\ref{lapalace-r-3}),   we have
$$\Delta^f H(d(x,\cdot))\leq 1,\text{ for any fixed point} ~ x. $$
Thus by  the maximum principle,  we get
$$w(y)-\frac{H(d(x,y))}{a}\leq max\{1-\frac{H(d(x,p)-1)}{a},0\}$$
for any $y$ in the annulus  $A_p(1,2)=B_p(2)\setminus \overline{B_p(1)}$.
It follows
$$w(x)\leq max\{1-\frac{H(d(x,p)-1)}{a}, 0\},~\forall~ x\in A_p(1,2).$$

 Now  we choose a number  $\eta(n,\Lambda,A)$ such that $\frac{G(1+\eta)}{a}>1-\frac{H(1-\eta)}{a}$ and we  define a function $\psi(x)$ on $[0,1]$  with bounded derivative up to second order,  which satisfies
\begin{align}
\psi(x)=1,  \text{ if } x\geq \frac{G(1+\eta)}{a}\notag
\end{align}
 and
 \begin{align}
  \psi(x)=0, \text{ if } x\leq max\{1-\frac{H(1-\eta)}{a}, 0\}.\notag
\end{align}
 It is clear that  $\phi=\psi\circ w$ is constant near the boundary of $A_p(1,2)$.  So we  can extend  $\phi$  inside $B_p(1)$ by setting $\phi=1$.  By Proposition  \ref{gradient-esti},  one sees that  $|\nabla\phi|$ is bounded by a constant $C(n,\Lambda, A)$
 in $B_2(p)$.    Since
 $$\Delta^f \phi=\psi''|\nabla w|^2+\psi'\Delta^f w, $$
 we also derive  that $|\Delta^f\phi| \le C(n,\Lambda, A)$.
\end{proof}

 \vskip3mm

\section{$L^2$-Integral estimates for  Hessians of functions}

In this section, we  establish  various integral comparisons   of gradient and Hessian between    appropriate $f$-harmonic functions and   coordinate
functions  or  distance functions.  We start with a basic lemma about  a distance function  along  a long approximate line in a  manifold.

\begin{lem}\label{coordinate-lemma1}
Let $(M,g)$ be a complete Riemannian manifold which satisfies
\begin{align}\label{flat-ricci-condition} {\rm Ric}_g^f\geq-\frac{n-1}{R^2} g~\text{and}~|f|\leq A.\end{align}
  Suppose that there are three points $p, q^+, q^-$  in $M$ which satisfy
\begin{align}
d(p,q^+)+d(p,q^-)-d(q^+,q^-)<\epsilon\end{align}
and
\begin{align}d(p,q^+),d(p,q^-)> R.
\end{align}
Then  for  any $q \in B_p(1)$,  the following  holds,
$$E(q):=d(q,q^+)+d(q,q^-)-d(q^+,q^-) <\Psi(\epsilon,\frac{1}{R};A, n), $$
where the quantity  $\Psi(\epsilon, \frac{1}{R}; A, n)$ means that it goes to zero as   $\epsilon, \frac{1}{R}$  go to zero  while $A, n$ are  fixed.
\end{lem}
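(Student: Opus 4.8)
The plan is to treat $E$ as the \emph{excess function} of the pair $(q^+,q^-)$: the $f$-Laplacian comparison forces $E$ to have tiny $L^{p_0}$-average on a definite ball about $p$, and then the fact that $E$ is $2$-Lipschitz, combined with the weighted volume comparison, upgrades this into the claimed pointwise bound on $B_p(1)$. Concretely, put $e(x):=d(x,q^+)+d(x,q^-)-d(q^+,q^-)$, so $e=E$, $e\ge0$ (triangle inequality), $e$ is $2$-Lipschitz (each $d(\cdot,q^\pm)$ is $1$-Lipschitz), and $e(p)=E(p)<\epsilon$. We may assume $\epsilon<1$ and $R\ge16$: if $R$ is bounded we only get $e(q)\le e(p)+2d(p,q)<\epsilon+2$ on $B_p(1)$, and for such $R$ we simply take $\Psi:=\epsilon+2$, which is harmless for the assertion that $\Psi\to0$ as $\epsilon,\tfrac1R\to0$. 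For $R\ge16$ and $x\in B_p(8)$ one has $d(x,q^\pm)>R-8>R/2$, so, since $\coth$ is decreasing, Lemma~\ref{lapalace-esti-r} with $\Lambda=1/R$ gives, in the barrier (hence distributional) sense,
\[
\Delta^f e=\Delta^f d(\cdot,q^+)+\Delta^f d(\cdot,q^-)\le\frac{2(n-1+4A)\coth(\tfrac12)}{R}=:\frac{c_1(n,A)}{R}\qquad\text{on }B_p(8).
\]

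\textbf{The average estimate.} Since $e\ge0$ and $\Delta^f e\le c_1/R$ on $B_p(8)$, the weak Harnack inequality for nonnegative supersolutions of $\Delta^f u=g$ on a ball — valid with constants depending only on $n$ and $A$, by the weighted volume doubling of Theorem~\ref{volume-comparison} together with a Bakry--\'Emery Poincar\'e inequality at scale $\le8$ — provides $p_0=p_0(n,A)\in(0,1]$ and $C=C(n,A)$ with
\[
\Big(\frac{1}{\mathrm{vol}^f(B_p(4))}\int_{B_p(4)}e^{p_0}\,d\text{v}^f\Big)^{1/p_0}\le C\Big(\inf_{B_p(2)}e+\frac{c_1}{R}\Big)\le C\Big(e(p)+\frac{c_1}{R}\Big)<C\Big(\epsilon+\frac{c_1}{R}\Big),
\]
using $p\in B_p(2)$ in the middle step.

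\textbf{From average to pointwise.} Fix $q\in B_p(1)$ and set $M:=e(q)$; then $M<\epsilon+2<3$. Because $e$ is $2$-Lipschitz, $e\ge M/2$ on $B_q(M/4)$, and $B_q(M/4)\subset B_p(4)\subset B_q(5)$. By Theorem~\ref{volume-comparison} (the case $|f|\le A$, with $\Lambda=1/R$; as $R\ge16$ the model balls of radius $\le5$ are comparable to Euclidean ones) $\mathrm{vol}^f(B_q(M/4))\ge c_3(n,A)\,M^{\,n+4A}\,\mathrm{vol}^f(B_q(5))\ge c_3(n,A)\,M^{\,n+4A}\,\mathrm{vol}^f(B_p(4))$. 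Writing $V:=\mathrm{vol}^f(B_p(4))$ and combining with the previous display,
\[
\Big(\tfrac M2\Big)^{p_0}c_3(n,A)\,M^{\,n+4A}\ \le\ \frac1V\int_{B_q(M/4)}e^{p_0}\,d\text{v}^f\ \le\ \frac1V\int_{B_p(4)}e^{p_0}\,d\text{v}^f\ \le\ \Big(C\big(\epsilon+\tfrac{c_1}{R}\big)\Big)^{p_0},
\]
so $M^{\,p_0+n+4A}\le C_4(n,A)\big(\epsilon+\tfrac{c_1}{R}\big)^{p_0}$, and therefore
\[
E(q)=M\le C_5(n,A)\,\big(\epsilon+\tfrac{c_1(n,A)}{R}\big)^{\,p_0/(p_0+n+4A)}=:\Psi\big(\epsilon,\tfrac1R;A,n\big)\ \longrightarrow\ 0\quad\text{as }\epsilon,\tfrac1R\to0,
\]
as required.

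\textbf{Main difficulty.} The crux is the average estimate: one needs a weak Harnack (equivalently, a De Giorgi--Nash--Moser mean value) inequality for the drift Laplacian $\Delta^f$ in the regime where only $|f|$, not $|\nabla f|$, is controlled, which in turn rests on a weighted Poincar\'e inequality extracted from $Ric_M^f(g)\ge-(n-1)\Lambda^2g$ and $|f|\le A$ via a Bakry--\'Emery analogue of Buser's inequality and the monotonicity formula (\ref{mono-formula}). Alternatively, this step can be replaced by the Abresch--Gromoll ODE comparison along minimal geodesics from $x$ to $q^\pm$, again built only on Lemma~\ref{lapalace-esti-r}. By contrast the final step is soft: the $2$-Lipschitz bound forbids thin spikes and Theorem~\ref{volume-comparison} supplies the volume lower bound, so ``small $L^{p_0}$-average'' turns automatically into ``small supremum on $B_p(1)$''.
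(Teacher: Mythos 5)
Your proof takes a genuinely different route from the paper's. The paper runs the classical Abresch--Gromoll excess argument directly: it solves the ODE $G''+\tilde l\,G'=1$ with $\tilde l(s)=(n-1+4A)\tfrac1R\coth\tfrac sR$ to produce an explicit radial barrier $G_t$ with $\Delta^f G_t(d(q_0,\cdot))\ge 1$ and $G_t(s)\sim s^{2-n-4A}$ near $0$, notes $\Delta^f E\le b:=\tfrac{10(n-1+A)}{R}$, and applies the maximum principle to $bG_1(d(q_0,\cdot))-E$ on the annulus $A_{q_0}(c,1)$ to get $E(q_0)\le 2c+bG_1(c)+\epsilon$; optimizing $c\sim b^{1/(n-1+4A)}$ gives $\Psi$. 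Everything there rests only on the pointwise Laplacian comparison (Lemma~\ref{lapalace-esti-r}). You instead go through an $L^{p_0}$ mean-value (weak Harnack) inequality for nonnegative supersolutions of $\Delta^f$ and then upgrade to a pointwise bound using the $2$-Lipschitz property of $E$ together with the weighted volume comparison. Your ``average to pointwise'' step is correct and cleanly executed, and the crude treatment of the regime $R<16$ is fine since $\Psi$ need only tend to $0$ as both parameters do.

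The weak link is exactly the one you flag: the weak Harnack inequality for $\Delta^f$ with constants depending only on $n,A$ when only $|f|\le A$ (not $|\nabla f|$) is controlled. This is true, but it is a heavy black box: it requires weighted volume doubling (available from (\ref{volume-estimate-1})), a weighted $(1,2)$-Poincar\'e inequality, and the full Moser/De Giorgi iteration for the drift Laplacian, plus a remark that the barrier inequality $\Delta^f e\le c_1/R$ makes $e$ a distributional supersolution (Calabi's trick). The Poincar\'e inequality in this paper is obtained only later, from the segment inequality of Section~3 (and is stated there under the extra hypothesis $|\nabla f|\le A$, though the proof goes through under $|f|\le A$ using the $(n+4A)$-dimensional model in the monotonicity formula); your appeal to ``a Bakry--\'Emery analogue of Buser's inequality'' is not the right mechanism and is not supplied. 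So as written there is a gap at the average estimate: either carry out the doubling-plus-Poincar\'e-implies-weak-Harnack chain in the weighted setting, or replace that step by the Abresch--Gromoll ODE comparison you mention as an alternative --- which is precisely what the paper does, and which buys a self-contained, elementary proof at the cost of the explicit barrier computation. Your route, once the Harnack input is justified, buys a quantitative exponent $p_0/(p_0+n+4A)$ and a template that generalizes to other supersolution estimates, but it presupposes machinery deeper than what the paper has established at this point.
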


\begin{proof} Let
 \begin{align}\label{lap-com}
\tilde l(s)=(n-1+4A)\frac{1}{R}\coth\frac{s}{R}.
\end{align}
For given $t>0$, we construct a function $G=G_t(s)$ on $[0,t]$ which  satisfies  the ODE,
\begin{align}\label{comparison-function}
 G''+\tilde l(s)G'=1, G'(s)<0,
\end{align}
with  $G(0)=+\infty$ and  $G(t)=0$.
  Then $G(s)\sim s^{2-n-4A}$ $(s\rightarrow 0)$.
   Furthermore, by  (\ref{lapalace-r-1}) in Lemma \ref{lapalace-esti-r},  we  have
\begin{align}
\Delta^f G(d(x,.))=G'\Delta^fd(x,.)+G''\geq G''+G'\tilde
l(s)=1.
\end{align}

By  Lemma \ref{lapalace-esti-r},
\begin{align}
\Delta^f E(q)\leq \frac{10(n-1+A)}{R}:=b.
\end{align}
We claim:
For any $0<c<1$ ,
$$E(q)\leq 2c+bG_1(c)+\epsilon,~\text{ if}~ bG_1(c)>\epsilon.$$

  Suppose that the claim is not true.  Then there exists  point $q_0\in  B_p(1)$ such that
 for some $c$,
  $$E(q_0)>2c+bG_1(c)+\epsilon.$$
  We  consider $u(x)=bG_1(d(q_0,x))-E(x)$ in the annulus $A_{q_0}(c,1)$.  Clearly,
  $$\Delta^f u\geq 0.$$
   Note that we may assume that  $p \in A_{q_0}(c,1)$.  Otherwise $d(q_0,p)<c$ and
    $E(q_0)\leq E(p)+2c$,  so the claim is true and the  proof is complete.  On the other hand,
    it is easy to see that  on the inner boundary $\partial B_{q_0}(c)$,
\begin{align}
\nonumber u(x)=bG_1(c)-E(x)\leq bG_1(c)-E(q_0)-2c\leq-\epsilon,
\end{align}
and on the outer boundary $\partial B_q(1)$,
\begin{align}
\nonumber u(x)=-E(x)\leq 0.
\end{align}
Thus applying  the maximum principle,  it follows that  $ u(p)\le 0$.
However,
$$
u(p)=bG_1(d(p,q_0))-E(p)\geq bG_1(c)-\epsilon,
$$
which is impossible.  Therefore,  the claim is true.

By choosing  $c$  with the order $b^{\frac{1}{n-1+4A}}$ in the above claim,
we  prove Lemma \ref{coordinate-lemma1}.

\end{proof}

Let $b^+(x)=d(q^+,x)- d(q^+,p)$ and let  $h^+$  be a $f$-harmonic  function which satisfies
$$\triangle^f h^+=0,~ \text{in} ~B_p(1),$$
 with $h^+=b^+$ on $\partial B_p(1)$. Then

 \begin{lem}\label{harmonic-estimate} Under the conditions in Lemma \ref{coordinate-lemma1} with $|\nabla f|\le A$,
 we have
\begin{align}\label{c0-h}
\|h^+-b^+\|_{C^0(B_p(1))}<\Psi(1/R,\epsilon; A),\end{align}
\begin{align}\label{h-gradient}\frac{1}{{\rm vol}(B_p(1))}\int_{B_p(1)}|\nabla h^+-\nabla b^+|^2
e^{-f}\text{d{\rm v}} <\Psi(1/R,\epsilon; A),\end{align}
\begin{align}\label{hessian-integral}\frac{1}{{\rm vol}(B_p(\frac{1}{2}))}
\int_{B_p(\frac{1}{2})} |\text{hess }h^+|^2e^{-f}\text{d{\rm v}}
<\Psi(1/R,\epsilon; A).
\end{align}
\end{lem}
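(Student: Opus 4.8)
The plan is to prove the three assertions in the order (\ref{c0-h}), (\ref{h-gradient}), (\ref{hessian-integral}), adapting the scheme of [CC1] to the operator $\Delta^f$ and the weighted volume $e^{-f}d\text{v}$; here $\Lambda=1/R$, and I write $\Psi$ for any quantity of the form $\Psi(1/R,\epsilon;A)$. For the $C^0$-estimate, first note that since $d(q^{\pm},p)>R$, every $x\in B_p(1)$ satisfies $d(q^{\pm},x)>R-1$, so for $R\ge 2$ the Laplacian comparison (\ref{lapalace-r-1}) gives, in the barrier (hence distributional) sense,
$$\Delta^f b^{\pm}\le (n-1+4A)\tfrac1R\coth\tfrac{R-1}{R}\le \tfrac{C(n,A)}{R}=:\delta .$$
Comparing $h^+-b^+$ — which vanishes on $\partial B_p(1)$ and has $\Delta^f(h^+-b^+)=-\Delta^f b^+\ge-\delta$ — with the solution of $\Delta^f w=-\delta$, $w|_{\partial B_p(1)}=0$, which satisfies $\|w\|_{L^\infty}\le C(n,\Lambda,A)\,\delta$ by standard $L^\infty$-estimates for $\Delta^f$ on $B_p(1)$, one gets $h^+\le b^++C\delta$, and likewise $h^-\le b^-+C\delta$. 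For the reverse bound one cannot use $b^+$ alone (it is only ``almost $f$-superharmonic''); instead, $h^++h^-$ is $f$-harmonic with boundary values $e:=b^++b^-$, and by Lemma \ref{coordinate-lemma1} $-\epsilon<e<\Psi$ throughout $B_p(1)$, so $h^++h^-\ge-\epsilon$ there by the maximum principle. Then $h^+=(h^++h^-)-h^-\ge -\epsilon-b^--C\delta=b^+-e-\epsilon-C\delta\ge b^+-\Psi$, which together with the upper bound proves (\ref{c0-h}); this is the only point where the hypothesis that $\epsilon$ is small is used.

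For (\ref{h-gradient}), let $\mu^f$ be the signed Radon measure on $B_p(1)$ with $\int\varphi\,d\mu^f=-\int\langle\nabla b^+,\nabla\varphi\rangle e^{-f}d\text{v}$ for $\varphi\in C^\infty_c(B_p(1))$, representing the distributional $\Delta^f b^+$ against $e^{-f}d\text{v}$. Since $\Delta^f h^+=0$, $h^+=b^+$ on $\partial B_p(1)$, and $|\nabla b^+|=1$ a.e., integrating by parts (legitimate because $h^+-b^+\in C^0\cap W^{1,2}$ vanishes on the boundary) gives
$$\int_{B_p(1)}|\nabla h^+-\nabla b^+|^2e^{-f}d\text{v}=\text{vol}^f(B_p(1))-\int_{B_p(1)}|\nabla h^+|^2e^{-f}d\text{v}=\int_{B_p(1)}(h^+-b^+)\,d\mu^f .$$
By the barrier inequality above, the positive part of $\mu^f$ is $\le\delta\,e^{-f}d\text{v}$, while $\mu^f(B_p(1))=\int_{\partial B_p(1)}\langle\nabla b^+,n\rangle e^{-f}d\sigma\le\text{vol}^f(\partial B_p(1))$; hence the negative part of $\mu^f$ has total mass $\le\delta\,\text{vol}^f(B_p(1))+\text{vol}^f(\partial B_p(1))$, which by the monotonicity formula (\ref{mono-formula}) underlying Theorem \ref{volume-comparison} is $\le C(n,\Lambda,A)\,\text{vol}^f(B_p(1))$. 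Using the $C^0$-bound already proved, the last integral is $\le\Psi\cdot C(n,\Lambda,A)\,\text{vol}^f(B_p(1))$, and dividing by $\text{vol}(B_p(1))$ and using $e^{-A}\le e^{-f}\le e^{A}$ yields (\ref{h-gradient}).

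For (\ref{hessian-integral}), apply the Bochner identity (\ref{bochner-inequ}) to the $f$-harmonic $h^+$ and use $Ric_g^f\ge-(n-1)R^{-2}g$ to get $|\text{hess }h^+|^2\le\tfrac12\Delta^f|\nabla h^+|^2+(n-1)R^{-2}|\nabla h^+|^2$. Let $\phi$ be the cut-off of Lemma \ref{cut-off}, rescaled so that $\phi\equiv1$ on $B_p(\tfrac12)$, $\supp\phi\subset B_p(\tfrac34)$, and $|\nabla\phi|+|\Delta^f\phi|\le C(n,\Lambda,A)$. Multiplying by $\phi$, integrating against $e^{-f}d\text{v}$, using the self-adjointness of $\Delta^f$ to move the Laplacian onto $\phi$, and subtracting the constant $1=|\nabla b^+|^2$ (which contributes nothing since $\int_{B_p(1)}\Delta^f\phi\,e^{-f}d\text{v}=0$),
$$\int_{B_p(1/2)}|\text{hess }h^+|^2e^{-f}d\text{v}\le\tfrac12\int_{B_p(3/4)}\bigl(|\nabla h^+|^2-1\bigr)\Delta^f\phi\,e^{-f}d\text{v}+\tfrac{n-1}{R^2}\int_{B_p(3/4)}\phi\,|\nabla h^+|^2e^{-f}d\text{v} .$$
By Proposition \ref{gradient-esti} applied to $h^++C_0$ (with $C_0$ chosen via (\ref{c0-h}) so that this function is positive), $|\nabla h^+|\le C(n,\Lambda,A)$ on $B_p(\tfrac34)$, so $\bigl|\,|\nabla h^+|^2-1\,\bigr|=|\langle\nabla h^+-\nabla b^+,\nabla h^++\nabla b^+\rangle|\le C|\nabla h^+-\nabla b^+|$; Cauchy--Schwarz together with (\ref{h-gradient}) then bounds the first integral by $C\,\Psi^{1/2}\,\text{vol}^f(B_p(1))$, and the second by $C\,R^{-2}\,\text{vol}^f(B_p(\tfrac34))$. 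The weighted volume comparison turns both into $\Psi\cdot\text{vol}(B_p(\tfrac12))$, giving (\ref{hessian-integral}).

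The two delicate points are: the lower half of the $C^0$-estimate, where $b^+$ is only ``almost $f$-superharmonic'' and one must couple $h^+$ with $h^-$ through the excess $e=b^++b^-$ — this is where Lemma \ref{coordinate-lemma1}, hence the smallness of $\epsilon$, actually enters; and arranging (\ref{hessian-integral}) to be genuinely small — the naive Bochner computation only gives $\int_{B_p(1/2)}|\text{hess }h^+|^2e^{-f}d\text{v}\le C(n,\Lambda,A)\,\text{vol}^f(B_p(1))$, and the gain of smallness comes precisely from replacing $|\nabla h^+|^2$ by $|\nabla h^+|^2-|\nabla b^+|^2$ before moving the Laplacian onto $\phi$ and invoking (\ref{h-gradient}). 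A subsidiary technical point is making $\Delta^f b^+$ rigorous as a measure and controlling its total variation, for which one needs the weighted area bound $\text{vol}^f(\partial B_p(1))\le C(n,\Lambda,A)\,\text{vol}^f(B_p(1))$ extracted from the volume comparison.
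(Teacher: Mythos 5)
Your proof is correct and follows essentially the same route as the paper's: a maximum-principle argument with a bounded barrier absorbing the small $\Delta^f b^{\pm}$ for the $C^0$ bound (the paper uses $-b^-$ directly as the lower barrier where you couple $h^+$ with $h^-$, but both rest on the smallness of the excess $b^++b^-$ from Lemma \ref{coordinate-lemma1}), integration by parts against $\Delta^f b^+$ with its total variation controlled by the weighted boundary area for (\ref{h-gradient}), and the Bochner identity with a cut-off plus the crucial subtraction of $|\nabla b^+|^2$ for (\ref{hessian-integral}). No substantive differences.
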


\begin{proof}
Choose a point   $q$ in $\partial B_p(2)$ and let $g=\varphi(d(q,\cdot))$, where  $\varphi$ is a solution of
(\ref{comparison-function}) restricted on the interval $[1,3]$.
Then
\begin{align}
\Delta^fg=\varphi'\Delta^fr+\varphi''\geq\varphi' \tilde l+\varphi'' =1,~\text{in}~ B_p(1).
\end{align}
 It follows that
 \begin{align}
\nonumber \Delta^f (h^+-b^++\Psi(1/R,\epsilon; A)g) > 0,~\text{in}~ B_p(1).
\end{align}
Thus by  the maximum principle,  we get
$$h^+-b^+ < \Psi(1/R,\epsilon; A).$$
On the other hand, we have
$$\Delta^f ( -b^--h^++\Psi(1/R,\epsilon; A)g)  > 0,~\text{in}~ B_p(1),$$
where $b^-=d(q^-,x)- d(p,q^{-})$.  Since  $b^++b^-$ is small as long as $1/R$ and $\epsilon$ are small by Lemma \ref{coordinate-lemma1}, by  the maximum principle, we also
 get
 $$h^+-b^+ > -(b^++b^-)-\Psi(1/R,\epsilon; A) > -\Psi(1/R,\epsilon; A).$$

 For the second estimate (\ref{h-gradient}),  we see
\begin{align}
& \int_{B_p(1)}|\nabla h^+-\nabla b^+|^2e^{-f}d\text{v} \notag\\
&=\int_{B_p(1)}(h^+-b^+)(\triangle^f b^+-\triangle^fh^+)e^{-f}d\text{v}\notag\\
 & <\Psi(1/R,\epsilon; A)\int_{B_p(1)}|\triangle^f b^+|e^{-f}d\text{v}.\notag
\end{align}
and
\begin{align}
& \int_{B_p(1)}|\triangle^f b^+|e^{-f}d\text{v}\notag\\
&\leq|\int_{B_p(1)}\triangle_f b^+e^{-f}d\text{v}|+2e^{A} \text{sup}_ {B_p(1)}(\triangle^f b^+){\rm vol}(B_p(1))\notag\\
 &\leq  e^{A}{\rm vol}({\partial B_p(1)})+C(A){\rm vol}(B_p(1))\notag\\
&\leq C'(A){\rm vol}(B_p(1)).\notag
\end{align}
Here we used (\ref{mono-formula}) at the last inequality.  Then (\ref{h-gradient}) follows.

To get  (\ref{hessian-integral}), we  choose a cut-off function $\varphi$ supported in $B_p(1)$  as constructed in  Lemma \ref{cut-off}.
Since
\begin{align}
&\nonumber \Delta^f(|\nabla h^+|^2-|\nabla b^+|^2)\notag\\
&=|\text{hess }h^+|^2+{\rm Ric}_g^f(\nabla h^+,\nabla h^+), \notag
\end{align}
  multiplying both sides of the above  by $\varphi e^{-f}d\text{v}$ and using integration by parts,
 we  get
 \begin{align}&\int_{B_p(1)} \varphi|\text{Hess }h^+|^2 e^{-f}d\text{v}\notag\\
 &\le \int_{B_p(1)} \Delta^f \varphi (|\nabla h^+|^2-|\nabla b^+|^2) e^{-f}d\text{v}+\frac{n-1}{R^2}\int_{B_p(1)}
 \varphi|\nabla h^+|^2 e^{-f}d\text{v}\notag.
 \end{align}
Note that  $|\nabla h^+|$ is locally bounded by Proposition  \ref{gradient-esti},
  we derive (\ref{hessian-integral}) from (\ref{h-gradient}) immediately.
\end{proof}

Next, we  construct an approximate function  to compare  the square  of a distance function with  asymptotic  integral  gradient and Hessian estimates.  Such estimates  are crucial  in the proof of metric-cone theorem in Section 4.

Let  $q\in M$ and  $h$ be a solution of the following equation,
\begin{align}\label{f-harmonic-radial}
\Delta^fh=n, ~\text{in}~ B_q(b)\setminus \overline{B_q(a)}, ~h|\partial B_q(b)=\frac{b^2}{2}~\text{ and}~   h|\partial B_q(a)=\frac{a^2}{2}.
\end{align}
   Let $p=\frac{r(q,\cdot)^2}{2}$. Then

 \begin{lem}\label{harmonic-estimate-annual-1}
 Let  $(M,g)$  be a complete Riemannian manifold which satisfies
  $${\rm Ric}^f_g\geq-(n-1)\epsilon^2\Lambda^2g ~\text{and}~|\nabla f|\leq\epsilon A.$$
Let     $a<b$.  Suppose that
    \begin{align}\label{cone-volume-condition-2}
 \frac{{\rm vol}^f(\partial B_q(b))}{{\rm vol}^f(\partial B_q(a))}\geq(1-\omega)\frac{L_{\epsilon\Lambda,\epsilon A}(b)}{L_{\epsilon\Lambda,\epsilon A}(a)}
 \end{align}
for some $\omega>0$, where $L_{\epsilon\Lambda,\epsilon A}(r)$ is the function defined by (\ref{L-function}) with respect to constants  $\epsilon\Lambda$ and $\epsilon A.$
Then
 \begin{align}\label{gradient-annual}\frac{1}{{\rm vol}(A_q(a,b))}
 \int_{A_q(a,b)}|\nabla p-\nabla h|^2  e^{-f}d\text{v} < \Psi(\omega,\epsilon;\Lambda, A,a,b).
 \end{align}
Moreover,
\begin{align}\label{h-p-estimate}
 \|h-p\|_{C^0(A_q(a',b'))}< \Psi(\omega,\epsilon;\Lambda, A,a,b,a',b'),
 \end{align}
 where   $a<a'<b'<b$.

 \end{lem}

 \begin{proof}  Since
 $$\Delta^f r\leq(n-1)\epsilon \Lambda\coth(\epsilon \Lambda r)+\epsilon A=l_{\epsilon\Lambda,\epsilon A},$$
 we have
 \begin{align}\label{lapalace-p}
  \Delta^fp=p''+p' \Delta^f r  < n+\Psi(\epsilon;\Lambda, A, a, b), ~\text{ in }A(a,b).
   \end{align}
   Thus we get
\begin{align}\label{lemma-2-5-1}
 \frac{1}{{\rm vol}(A_q(a, b))}\int_{A_q(a,b)}\Delta^fpe^{-f} d\text{v}<   e^{-f(0)}( n+\Psi(\epsilon;\Lambda, A, a, b)).
\end{align}
 On the other hand, by   the monotonicity formula (\ref{mono-formula}), we have
\begin{align}
\nonumber \frac{\int_a^bL_{\epsilon\Lambda,\epsilon A}(s)ds}{L_{\epsilon\Lambda,\epsilon A}(b)}{\rm vol}^f(\partial B_q(b))\leq{\rm vol}^f(A_q(a, b))\leq \frac{\int_a^bL_{\epsilon\Lambda,\epsilon A}(s)ds}{L_{\epsilon\Lambda,\epsilon A}(a)}{\rm vol}^f(\partial B_q(a)).
\end{align}
It follows by (\ref{cone-volume-condition-2}),
$${\rm vol}^f(A_q(a, b))\leq  (1-\omega)^{-1}\frac{\int_a^bL_{\epsilon\Lambda,\epsilon A}(s)ds}{L_{\epsilon\Lambda,\epsilon A}(b)}{\rm vol}^f(\partial B_q(b)).$$
Since
\begin{align}
 \nonumber \int_{A_q(a,b)}\Delta^fpe^{-f} d\text{v}=b{\rm vol}^f(\partial B_q(b)) -a{\rm vol}^f(\partial B_q(a)),
 \end{align}
we get
\begin{align}
 \nonumber &\frac{1}{{\rm vol}^f(A_q(a, b))} \int_{A(a,b)}\Delta^fpe^{-f} d\text{v}\\
 &\ge
  (1-\omega)\frac{L_{\epsilon\Lambda,\epsilon A}(b)} {\int_a^bL_{\epsilon\Lambda,\epsilon A}(s)ds} ( b-
a\frac{{\rm vol}^f(\partial B_q(a))}{ {\rm vol}^f(\partial B_q(b))}). \notag
 \end{align}
Observe that
${\rm vol}^f$ is close to $e^{-f(0)}{\rm vol}$ and $\frac{L_{\epsilon\Lambda,\epsilon A}(s)}{s^{n-1}}$ is close to a constant as $\epsilon$ is small.   Hence we derive immediately,
\begin{align}\label{lemma-2-5-2}
 \frac{1}{{\rm vol}(A_q(a, b))}\int_{A_q(a,b)}\Delta^fpe^{-f} d\text{v}>  e^{-f(0)}   (n+\Psi(\omega,\epsilon;\Lambda, A, a, b)).
\end{align}

By (\ref{lemma-2-5-1}) and (\ref{lemma-2-5-2}), we have
$$|\int_{A_q(a,b)}(\Delta^fp-n) e^{-f} d\text{v}|<  {\rm vol}(A_q(a, b)) \Psi(\omega,\epsilon;\Lambda, A, a, b).$$
Then  one can follow the argument  for the estimate (\ref{h-gradient}) in Lemma \ref{harmonic-estimate} to obtain (\ref{gradient-annual}).

Applying  Lemma \ref{Poincare1} to the function $p-h$ together with   the estimate (\ref{gradient-annual}),  we see that
\begin{align}
\nonumber \frac{1}{{\rm vol}^f(A_q(a,b))}\int_{A_q(a,b)}|p-h|^2  e^{-f}d\text{v} < \Psi(\omega,\epsilon;\Lambda, A,a,b).
\end{align}
Then  for any point $x\in A_q(a',b')$,   by (\ref{volume-estimate-2}),
there is a point $y\in B_x(\eta)$  such that
\begin{align}
|p(y)-h(y)|^2&\leq \frac{{\rm vol}^f(A_q(a,b))}{{\rm vol}^f(B_x(\eta))}  \frac{1}{{\rm vol}^f(A_q(a,b))}\int_{A_q(a,b)}|p-h|^2  e^{-f}d\text{v}\notag\\
&< \frac{C(\Lambda, A, b)}{\eta^n}\Psi(\omega,\epsilon;\Lambda, A,a,b).\notag\end{align}
On the other hand,  by  Proposition \ref{gradient-esti}, we have
\begin{align} |(p(x)-h(x))-(p(y)-h(y))| &\leq (\|\nabla h\|_{C^0(A_q(a'-\eta,b'+\eta))}+1)\text{dist}(x,y)\notag\\
&\leq C(\Lambda,A,a,b,a'-\eta,b'+\eta)\eta.\notag\end{align}
Thus  we derive
\begin{align}
&|p(x)-h(x)|\notag\\
&< \frac{C(\Lambda, A,b)}{\eta^n}\Psi(\omega,\epsilon;\Lambda, A,a,b)+C(\Lambda,A,a,b,a'-\eta,b'+\eta)\eta.\notag
\end{align}
Choosing $\eta=\Psi^{\frac{1}{n+1}}$, we  prove  (\ref{h-p-estimate}).
\end{proof}

Furthermore,  we have

 \begin{lem}\label{harmonic-estimate-annual-2} Under the condition in Lemma \ref{harmonic-estimate-annual-1}, it holds
 \begin{align}&\frac{1}{{\rm vol}(A_q(a_2,b_2))}\int_{A_q(a_2,b_2)}|{\rm hess }h-g|^2e^{-f} d\text{v}\notag\\
 &< \Psi(\omega,\epsilon;\Lambda, A,a_1,b_1,a_2,b_2,a,b),
 \end{align}
  where  $a<a_1<a_2<b_2<b_1<b$.
   \end{lem}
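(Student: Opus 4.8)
The plan is to use the Bochner-type identity \eqref{bochner-inequ} applied to $h$, combined with the gradient and $C^0$-type control already obtained in Lemma \ref{harmonic-estimate-annual-1} and the cut-off function from Lemma \ref{cut-off}, in a way that parallels the passage from \eqref{h-gradient} to \eqref{hessian-integral} in Lemma \ref{harmonic-estimate}. First I would record the pointwise identity: since $\Delta^f h = n$ and $g = \text{hess}\,p$ with $p = r^2/2$, we have $\text{hess}\,h - g = \text{hess}\,h - \text{hess}\,p$, and by \eqref{bochner-inequ},
\[
\tfrac12 \Delta^f |\nabla h|^2 = |\text{hess}\,h|^2 + \langle \nabla h, \nabla \Delta^f h\rangle + Ric^f_g(\nabla h,\nabla h) = |\text{hess}\,h|^2 + Ric^f_g(\nabla h,\nabla h).
\]
Similarly $\tfrac12 \Delta^f |\nabla p|^2 = |\text{hess}\,p|^2 + \langle\nabla p, \nabla \Delta^f p\rangle + Ric^f_g(\nabla p,\nabla p)$, and on the Euclidean cone model $\text{hess}\,p$ is the identity, so $|\text{hess}\,p|^2 = n$; the curvature term and the $\nabla\Delta^f p$ term are $\Psi(\epsilon;\Lambda,A,a,b)$-small after integration because $Ric^f_g \geq -(n-1)\epsilon^2\Lambda^2 g$ and $\Delta^f p$ is within $\Psi$ of the constant $n$ by \eqref{lapalace-p}. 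The algebraic point is that
\[
|\text{hess}\,h - g|^2 = |\text{hess}\,h|^2 - 2\langle \text{hess}\,h, g\rangle + |g|^2,
\]
and $\langle \text{hess}\,h, g\rangle = \langle \text{hess}\,h, \text{hess}\,p\rangle$ can be handled by integrating against a cut-off: integration by parts moves one derivative, turning it into terms involving $\Delta^f h = n$, $\nabla h$, and $\nabla p$, all of which are controlled; what remains after these cancellations is exactly $|\text{hess}\,h|^2 - n$ up to $\Psi$-error.

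Concretely, I would fix a cut-off function $\varphi$ as in Lemma \ref{cut-off}, adapted to the annuli so that $\varphi \equiv 1$ on $A_q(a_2,b_2)$ and $\varphi$ is supported in $A_q(a_1,b_1)$, with $|\nabla\varphi|$ and $|\Delta^f\varphi|$ bounded by $C(n,\Lambda,A,a_1,b_1,a_2,b_2)$. Multiplying the Bochner identity for $h$ (and subtracting the one for $p$) by $\varphi\, e^{-f}d\text{v}$ and integrating by parts twice on the $\Delta^f$ terms gives
\[
\int \varphi\, |\text{hess}\,h|^2\, e^{-f}d\text{v} = \int \Delta^f\varphi\, \big(|\nabla h|^2 - |\nabla p|^2\big)\, e^{-f}d\text{v} - \int \varphi\, Ric^f_g(\nabla h,\nabla h)\, e^{-f}d\text{v} + (\text{terms from } p),
\]
where the first term on the right is $\Psi$-small by \eqref{gradient-annual} of Lemma \ref{harmonic-estimate-annual-1} together with the local gradient bounds from Proposition \ref{gradient-esti}, and the curvature term is $\Psi$-small since $|Ric^f_g| \to 0$ after scaling; the "terms from $p$" reconstruct the constant $n$. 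The cross term is treated by the Bochner-type identity applied to $h + p$ or, more directly, by using $\int \varphi\,\langle \text{hess}\,h,\text{hess}\,p\rangle e^{-f}d\text{v} = -\int \langle\nabla h, \text{div}^f(\varphi\,\text{hess}\,p)\rangle e^{-f}d\text{v}$ and commuting derivatives, which produces $\int \varphi\, \Delta^f p \cdot \Delta^f h$-type terms plus curvature and $\nabla\varphi$ corrections — again all within $\Psi$. Collecting everything yields
\[
\frac{1}{\text{vol}(A_q(a_2,b_2))}\int_{A_q(a_2,b_2)} |\text{hess}\,h - g|^2\, e^{-f}d\text{v} < \Psi(\omega,\epsilon;\Lambda,A,a_1,b_1,a_2,b_2,a,b),
\]
which is the assertion.

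The main obstacle I anticipate is the bookkeeping of the cross term $\langle \text{hess}\,h, \text{hess}\,p\rangle$: unlike in Lemma \ref{harmonic-estimate}, where one only needs $|\text{hess}\,h|^2$ to be small, here one must show that $\text{hess}\,h$ is close to the \emph{specific} tensor $g = \text{hess}(r^2/2)$, so the integration by parts must be arranged to recover precisely $|g|^2 = n$ (on the model) and not merely a bound. The clean way to do this is to expand $|\text{hess}(h - p)|^2$, apply \eqref{bochner-inequ} to the function $h - p$ directly — noting $\Delta^f(h-p)$ is $\Psi$-close to $0$ by \eqref{lapalace-p} — so that $\tfrac12\Delta^f|\nabla(h-p)|^2 = |\text{hess}(h-p)|^2 + \langle\nabla(h-p),\nabla\Delta^f(h-p)\rangle + Ric^f_g(\nabla(h-p),\nabla(h-p))$, then integrate against $\varphi e^{-f}d\text{v}$; the left side becomes a $\Delta^f\varphi$ term controlled by \eqref{gradient-annual}, the $\nabla\Delta^f(h-p)$ term is $\Psi$-small since $\Delta^f(h-p)$ is $\Psi$-small in $L^1$ and $|\nabla(h-p)|$ is locally bounded, and the curvature term is $\Psi$-small; this avoids ever separating out the cross term explicitly. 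I would present the proof in this form, since it is the most economical and mirrors Lemma \ref{harmonic-estimate} most closely.
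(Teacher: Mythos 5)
Your overall skeleton (Bochner for $h$, a cut-off supported in $A_q(a_1,b_1)$, Lemma \ref{harmonic-estimate-annual-1} to trade $|\nabla h|^2$ for $|\nabla p|^2$, and then $\tfrac12\Delta^f|\nabla p|^2=\Delta^f p\le n+\Psi$) is the paper's argument. But there is a genuine misstep at the very first line: $g$ in the statement is the metric tensor, not $\text{hess}\,p$. The identity $\text{hess}(r^2/2)=g$ holds only on the exact cone, so $\text{hess}\,h-g\neq\text{hess}(h-p)$ on $M$, and the cross term is simply $\langle\text{hess}\,h,g\rangle=\Delta h=\Delta^f h+\langle\nabla f,\nabla h\rangle=n+O(\epsilon A|\nabla h|)$, with $|g|^2=n$. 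No integration by parts on the cross term is needed at all; the expansion $|\text{hess}\,h-g|^2=|\text{hess}\,h|^2+(n-2\Delta h)$ is the whole point, and it reduces the lemma to showing $\int\varphi|\text{hess}\,h|^2\lesssim n\int\varphi+\Psi$, which is exactly what Bochner applied to $h$ alone gives (the term $\langle\nabla h,\nabla\Delta^f h\rangle$ vanishes identically because $\Delta^f h\equiv n$). Your proposed treatment of $\int\varphi\langle\text{hess}\,h,\text{hess}\,p\rangle$ via $\mathrm{div}^f(\varphi\,\text{hess}\,p)$ involves third derivatives of the distance function, which are not controlled (and $p$ is not even $C^2$ across the cut locus), so that route does not close.

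The ``clean way'' you prefer at the end has two further problems. First, it proves the wrong statement: controlling $|\text{hess}(h-p)|^2$ does not yield $|\text{hess}\,h-g|^2$ small unless one already knows $\text{hess}\,p\approx g$ in $L^2$, which is essentially the content of the lemma and is not an available input. Second, the term $\langle\nabla(h-p),\nabla\Delta^f(h-p)\rangle$ in the Bochner identity for $h-p$ is not ``$\Psi$-small since $\Delta^f(h-p)$ is $\Psi$-small in $L^1$'': one only has the one-sided pointwise bound (\ref{lapalace-p}) and an $L^1$ bound on $\Delta^f p-n$ coming from the volume hypothesis, and integrating that term by parts produces $\int\varphi(\Delta^f(h-p))^2$, which is not controlled by $L^1$-smallness. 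The paper avoids all of this precisely by applying Bochner only to $h$, where $\Delta^f h$ is an exact constant, and by using $p$ only through the quantities $|\nabla p|^2=2p$ and the one-sided bound on $\Delta^f p$, whose signs are exactly what make the final cancellation work.
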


 \begin{proof} First observe that
 \begin{align}
 \nonumber &\frac{1}{{\rm vol}(A_q(a,b))}\int_{A_q(a,b)}|\text{hess }h-g|^2e^{-f} d\text{v}\\
 \nonumber &=\frac{1}{{\rm vol}(A_q(a,b))}\int_{A_q(a,b)}|{\rm hess}h|^2e^{-f} d\text{v}
  +\frac{1}{{\rm vol}(A_q(a,b))}\int_{A_q(a,b)}(n-2\Delta h)e^{-f}\text{v}.
 \end{align}
Let  $\varphi$ be a cut-off function  of $A_q(a,b)$ with  support in $A_q(a_1,b_1)$  as constructed  in Lemma \ref{cut-off} which satisfies,
\begin{align}
& 1)   ~\varphi\equiv 1,~\text{ in}~ A_q(a_2,b_2);\notag\\
&  2) ~|\nabla\varphi|,  |\triangle^f\varphi|~\text{  is bounded in}~ A_q(a,b).\notag
\end{align}
 Then
 \begin{align}\label{two-parts}
 \nonumber &\frac{1}{{\rm vol}(A_q(a,b))}\int_{A_q(a,b)}\varphi|{\rm hess}h-g|^2e^{-f} d\text{v}\\
 \nonumber &=\frac{1}{{\rm vol}(A_q(a,b))}\int_{A_q(a,b)}\varphi|\text{hess }h|^2e^{-f} d\text{v}\\
 &+\frac{1}{{\rm vol}(A_q(a,b))}\int_{A_q(a,b)}\varphi(n-2\Delta h)e^{-f} d\text{v}.
 \end{align}

 By  the  Bochner formula (\ref{bochner-inequ}), we have
  \begin{align}
 \nonumber  &\frac{1}{{\rm vol}(A_q(a,b))}\int_{A_q(a,b)}\varphi|\text{hess }h|^2e^{-f} d\text{v}\\
 \nonumber  &<\frac{1}{2{\rm vol}(A_q(a,b))}\int_{A_q(a,b)}\varphi\Delta^f|\nabla h|^2e^{-f} d\text{v}+ \Psi(\epsilon;\Lambda,A,a_1,b_1,a_2,b_2,a,b).
 \end{align}
 It follows by Lemma \ref{harmonic-estimate-annual-1},
 \begin{align}
 \nonumber  &\frac{1}{{\rm vol}(A_q(a,b))}\int_{A_q(a,b)}\varphi|\text{hess }h|^2e^{-f} d\text{v}\\
 \nonumber &<\frac{1}{2{\rm vol}(A_q(a,b))}\int_{A_q(a,b)}\varphi\Delta^f|\nabla p|^2e^{-f} d\text{v}+ \Psi(\epsilon,\omega;\Lambda,A,a_1,b_1,a_2,b_2,a,b) \\
 \nonumber &=\frac{1}{{\rm vol}(A_q(a,b))}\int_{A_q(a,b)}\varphi\Delta^fp e^{-f} d\text{v}+ \Psi(\epsilon,\omega;\Lambda,A,a_1,b_1,a_2,b_2,a,b).
 \end{align}
On the other hand,
 \begin{align}
 \nonumber &\frac{1}{{\rm vol}(A_q(a,b))}\int_{A_q(a,b)}\varphi(n-2\Delta h)e^{-f} d\text{v}&\\
\nonumber & =\frac{1}{{\rm vol}(A_q(a,b))}\int_{A_q(a,b)}\varphi(-n-2\langle\nabla f,\nabla h\rangle)e^{-f} d\text{v}&\\
 \nonumber &=\frac{1}{{\rm vol}(A_q(a,b))}\int_{A_q(a,b)}-n\varphi e^{-f} d\text{v}+\Psi(\epsilon,\omega;\Lambda,A,a_1,b_1,a,b).&
 \end{align}
 Hence we derive from (\ref{two-parts}),
 \begin{align}
 \nonumber &\frac{1}{{\rm vol}(A_q(a_2,b_2))}\int_{A_q(a_2,b_2)}|\text{hess }h-g|^2e^{-f} d\text{v}\\
 \nonumber & \leq\frac{1}{{\rm vol}(A_q(a,b))}\int_{A_q(a,b)}\varphi|\text{hess }h-g|^2e^{-f} d\text{v}\\
 \nonumber &<\frac{1}{{\rm vol}(A_q(a,b))}\int_{A_q(a,b)}\varphi(\Delta^fp-n)e^{-f} d\text{v}+ \Psi(\epsilon,\omega;\Lambda,A,a_1,b_1,a_2,b_2,a,b)
 \\
 \nonumber &<  \Psi(\epsilon,\omega;\Lambda,A,a_1,b_1,a_2,b_2,a,b).
 \end{align}
Here we used  (\ref{lapalace-p}) at last inequality.
\end{proof}

 \section{A splitting theorem}

In this section, we  prove  the splitting theorem of Cheeger-Colding   in  the Bakry-\'{E}mery geometry [CC1].
Recall that $\gamma(t)$ $(t\in (-\infty,
\infty))$ is  a line in a metric space $Y$ if
$$\text{dist}(\gamma(t_1),\gamma(t_2))=|t_1-t_2|, ~\forall~t_1,t_2\in (-\infty,\infty).$$

\begin{theo}\label{splitting-theorem}
 Let $(M_i,g_i; p_i)$ be a sequence of Riemannian manifolds which satisfy
  $${\rm Ric}_{M_i,g_i}^{f_i}\geq{-\epsilon_i^2} g_i,~|f_i|,~|\nabla f_i|\leq A.$$
  Let   $(Y;y)$ be  a  limit metric space of   $(M_i,g_i; p_i)$  in the pointed Gromov-Hausdorff  topology  as $\epsilon_i\rightarrow 0$.
  Suppose that  $Y$ contains a line passing $y$.  Then $Y=\mathbb{R}\times X$ for some metric space $X$.
\end{theo}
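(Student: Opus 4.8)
The strategy follows Cheeger–Colding's original argument for the splitting theorem, adapted to the Bakry-\'Emery setting using the $f$-harmonic machinery developed in Sections 1 and 2. Since $Y$ contains a line $\gamma$ through $y$, we may realize $\gamma$ as a Gromov-Hausdorff limit of \emph{long} approximate segments in the manifolds $M_i$: fix $R_i\to\infty$ and pick points $q_i^+, q_i^-\in M_i$ with $d(p_i,q_i^\pm)\ge R_i$ and $d(p_i,q_i^+)+d(p_i,q_i^-)-d(q_i^+,q_i^-)<\epsilon_i'$ with $\epsilon_i'\to0$. On each $M_i$ we form the ``excess'' function $e_i = b_i^+ + b_i^-$, where $b_i^\pm(x)=d(q_i^\pm,x)-d(q_i^\pm,p_i)$, and by Lemma \ref{coordinate-lemma1} we have $e_i<\Psi(\epsilon_i',1/R_i;A,n)\to0$ on balls of bounded radius. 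Next, solve the Dirichlet problem $\Delta^{f_i} h_i^\pm=0$ on $B_{p_i}(r)$ with $h_i^\pm=b_i^\pm$ on the boundary; Lemma \ref{harmonic-estimate} gives uniform $C^0$ closeness $|h_i^\pm-b_i^\pm|<\Psi$, $L^2$ gradient closeness $\fint|\nabla h_i^\pm-\nabla b_i^\pm|^2 e^{-f_i}\,d\mathrm{v}<\Psi$, and most importantly the Hessian smallness $\fint_{B_{p_i}(r/2)}|\mathrm{hess}\,h_i^+|^2 e^{-f_i}\,d\mathrm{v}<\Psi$.

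\textbf{Building the splitting function and controlling its gradient.} Set $h_i := h_i^+$ (after normalizing, $h_i^+ \approx -h_i^-$ since $b_i^++b_i^-=e_i$ is small). From the Bochner-type identity \eqref{bochner-inequ} applied to $h_i$, together with $\Delta^{f_i}h_i=0$ and $Ric^{f_i}\ge-\epsilon_i^2 g_i$, one gets
\begin{align}
\tfrac12\Delta^{f_i}|\nabla h_i|^2 = |\mathrm{hess}\,h_i|^2 + Ric^{f_i}_{g_i}(\nabla h_i,\nabla h_i) \ge -\epsilon_i^2|\nabla h_i|^2.\notag
\end{align}
Combining this with the integral Hessian bound and the weighted volume comparison (Theorem \ref{volume-comparison}), a Moser-type iteration — or more simply the integral-to-pointwise machinery of [CC1] — upgrades the smallness of $\fint|\mathrm{hess}\,h_i|^2 e^{-f_i}$ and $\fint\big||\nabla h_i|^2-1\big|e^{-f_i}$ to the statement that $|\nabla h_i|\to1$ in $L^p$ on compact sets, and that $h_i$ is asymptotically a distance function (the gradient flow lines of $h_i$ are almost geodesics). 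The weight $e^{-f_i}$ is harmless here because $|f_i|\le A$ makes $e^{-f_i}$ two-sidedly bounded, so all the measure-theoretic comparisons go through with constants depending only on $A, n$.

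\textbf{Passing to the limit and identifying the product structure.} After passing to a subsequence, the functions $h_i$ converge (uniformly on compacta, by the uniform Lipschitz bound from Proposition \ref{gradient-esti}) to a Lipschitz function $b:Y\to\mathbb R$ with $b(y)=0$. The integral Hessian estimate forces, in the limit, that $b$ is an \emph{affine} function in the appropriate generalized sense: concretely, for any two points in $Y$ realized as limits of points in $M_i$, the almost-geodesic property of $h_i$'s flow shows $|b(z_1)-b(z_2)|$ equals the ``$b$-distance'' along the limiting integral curve, and that these integral curves are genuine lines in $Y$ along which $b$ is an isometry onto $\mathbb R$. Defining $X := b^{-1}(0)$ with the induced metric, one shows via the Hessian-vanishing and a Pythagorean relation (obtained by taking GH-limits of the approximate Pythagorean identity on $M_i$, which itself follows from the almost-orthogonal splitting $d^2 \approx (\Delta b)^2 + d_X^2$) that the map $\Phi:\mathbb R\times X\to Y$, $\Phi(t,x)=$ (time-$t$ flow of $x$ along $\nabla b$), is a surjective isometry. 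Hence $Y\cong\mathbb R\times X$.

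\textbf{Main obstacle.} The technical heart — and the step most in need of care in the Bakry-\'Emery setting — is the passage from the \emph{integral} smallness of $|\mathrm{hess}\,h_i|^2$ (weighted by $e^{-f_i}$) to the geometric conclusion that the gradient flow lines of $h_i$ are uniformly close to actual geodesics, with errors controlled by $\Psi(\epsilon_i, 1/R_i; A, n)$. In [CC1] this rests on the segment inequality and Cheeger's triangle lemmas; here one must use the weighted segment inequalities (Lemma \ref{equ-seg}, Lemma \ref{equ-rad}, Lemma \ref{approxi-1}) and the weighted triangle lemmas (Lemma \ref{cheeger-lemma}, Lemma \ref{cheeger-lemma-2}) advertised in the introduction, and verify that the extra terms coming from $\langle\nabla f_i,\nabla(\cdot)\rangle$ in $\Delta^{f_i}$ and from the $A$-dependent corrections in Lemma \ref{lapalace-esti-r} all contribute only to the $\Psi$-error and not to the main term. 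Once that estimate is in hand, the extraction of the product structure at the limit is essentially identical to the Riemannian case.
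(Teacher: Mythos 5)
Your plan follows essentially the same route as the paper: approximate the Busemann functions by $f$-harmonic functions via Lemma \ref{harmonic-estimate}, and convert the integral Hessian smallness into distance identities via the weighted segment inequalities (Lemma \ref{equ-seg}, Lemma \ref{equ-rad}, Lemma \ref{approxi-1}) and Cheeger's triangle lemma (Lemma \ref{cheeger-lemma}); you correctly identify this conversion as the technical heart. The one place where your description diverges from what can actually be carried out is the final assembly: on the metric space $Y$ the limit function $b$ is merely Lipschitz, $Y$ has no smooth structure, and so the map $\Phi(t,x)=$ ``time-$t$ flow of $x$ along $\nabla b$'' is not defined. The paper instead proves a quantitative local almost-splitting statement (Proposition \ref{proof-splitting}): the map $q\mapsto (x_q,h^+(q))$, with $x_q$ the nearest point in the level set $X=(h^+)^{-1}(0)$, is a $\Psi(1/R,\epsilon;A,n)$ Gromov--Hausdorff approximation onto a ball in $\mathbb{R}\times X$, established by a case analysis (upper, middle, lower regions relative to the level set) and repeated application of the triangle lemma to suitable triples produced by Lemma \ref{approxi-1}. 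The global splitting then follows by applying this at every scale $j$, noting that all the $\mathbb{R}$-projections approximate the Busemann function of the given line, so that the cross-sections $X_j$ form a Cauchy sequence in the Gromov--Hausdorff topology whose limit is the desired $X$. Your remark about ``taking GH-limits of the approximate Pythagorean identity on $M_i$'' is the right idea, but it must be packaged through this level-set approximation at each scale rather than through a flow on $Y$; with that substitution your outline matches the paper's argument.
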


We will follow the argument in [CC1] to prove  Theorem \ref{splitting-theorem}.
 The proof depends on the  following triangle lemma  in terms  of small     integral    Hessian  of appropriate function.

\begin{lem}\label{cheeger-lemma}
Let $x,y,z$ be  three points in a complete  Riemannian manifold $M$.  Let  $\gamma(s)$ $(s\in [0,a], ~a=d(x,y))$  be
 a geodesic  curve connecting $x,y$ and
 $\gamma_s(t)$ $(s\in [0,l(s)], ~l(s)=d(z,\gamma (s)))$  a family of geodesic curves  connecting $z$ and $\gamma(s)$.
Suppose that  $h$  is  a smooth  function on $M$ which satisfies
\begin{align} &i) ~ |h(z)-h(x)|<\delta<<1;\notag\\
 & ii)~ \int_{[0,a]}|\nabla h(\gamma(s))-\gamma'(s)|<\delta<<1;
 \notag\\
&iii)~\int_{[0,a]}\int_{[0,l(s)]}|{\rm hess} ~h(\gamma_s(t))|dtds<\delta<<1.\notag
\end{align}
 Then
 \begin{align}\label{triangular-equ} |d(z,x)^2+d(x,y)^2-d(y,z)^2|<\epsilon(\delta)<<1.\end{align}
\end{lem}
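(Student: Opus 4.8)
The plan is to manufacture a single pointwise identity in the arclength parameter $s$ of the geodesic $\gamma$ joining $x$ and $y$ which, once integrated over $[0,a]$, converts the three hypotheses into (\ref{triangular-equ}). Precisely, I would establish
$$
h(\gamma(s))-h(z)\;=\;\tfrac12\,\tfrac{d}{ds}\bigl[d(z,\gamma(s))^2\bigr]+E(s),\qquad \int_0^a|E(s)|\,ds\le C\delta,
$$
together with the \emph{uniform} estimate $h(\gamma(s))-h(z)=s+\hat E(s)$, where $|\hat E(s)|\le 2\delta$ for every $s\in[0,a]$. Equating the two right-hand sides and integrating in $s$ over $[0,a]$ gives $\tfrac{a^2}{2}=\tfrac12\bigl[d(z,y)^2-d(z,x)^2\bigr]+O(\delta)$, which rearranges exactly to $|d(x,y)^2+d(x,z)^2-d(y,z)^2|<\epsilon(\delta)$ with $\epsilon(\delta)\to 0$ as $\delta\to 0$.

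First I would analyse $h$ along $\gamma$ itself: since $|\gamma'|\equiv 1$ one has $\langle\nabla h(\gamma(\sigma)),\gamma'(\sigma)\rangle=1+O\bigl(|\nabla h(\gamma(\sigma))-\gamma'(\sigma)|\bigr)$, so hypothesis ii) gives $h(\gamma(s))-h(x)=s+O(\delta)$ uniformly in $s$ because $\int_0^s|\nabla h(\gamma)-\gamma'|\le\int_0^a|\nabla h(\gamma)-\gamma'|<\delta$; combined with hypothesis i) this yields the second identity. Next, for fixed $s$ I would apply Taylor's formula with integral remainder to $g(t):=h(\gamma_s(t))$ on $[0,l(s)]$. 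Since $\gamma_s$ is a geodesic, $g''(t)=\text{hess}\,h(\dot\gamma_s(t),\dot\gamma_s(t))$, hence $|g''(t)|\le|\text{hess}\,h(\gamma_s(t))|$; using $g(0)=h(z)$ and re-expressing $g'(0)$ via $g'(l(s))$ one finds
$$
h(\gamma(s))-h(z)=l(s)\,\langle\nabla h(\gamma(s)),v(s)\rangle+E_1(s),
$$
where $v(s):=\dot\gamma_s(l(s))$ is the unit speed of $\gamma_s$ at the endpoint $\gamma(s)$, and $E_1(s)$ is a combination of integrals of $g''$ multiplied by factors at most $l(s)$. Since all points involved lie in a fixed bounded region, $l(s)\le L:=d(z,x)+a$, and hypothesis iii) gives $\int_0^a|E_1(s)|\,ds\le C\delta$.

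It then remains to recognise the right-hand side as the derivative of a squared distance. Replacing $\nabla h(\gamma(s))$ by $\gamma'(s)$ costs at most $l(s)\,|\nabla h(\gamma(s))-\gamma'(s)|$, again integrable in $s$ with bound $\le C\delta$ by ii). Finally the first variation of arclength says that the Lipschitz function $s\mapsto d(z,\gamma(s))$ has derivative $\langle v(s),\gamma'(s)\rangle$ at almost every $s$ (namely those $s$ at which the minimal geodesic $\gamma_s$ is unique), so $l(s)\langle\gamma'(s),v(s)\rangle=\tfrac12\tfrac{d}{ds}\bigl[d(z,\gamma(s))^2\bigr]$ a.e.; since $s\mapsto d(z,\gamma(s))^2$ is Lipschitz, the fundamental theorem of calculus applies, and assembling the pieces yields the first identity and hence the conclusion.

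The main obstacle is organisational rather than computational: all three hypotheses are merely $L^1$-bounds over the swept-out triangle, not pointwise bounds, so the argument must be arranged so that every error term is integrated in $s$ before being estimated — the one genuinely pointwise estimate that is needed (the uniform bound on $\hat E$) being available precisely because $\int_0^s\le\int_0^a$. The only real technical subtlety is the standard care with the first variation formula at parameters where $d(z,\cdot)$ fails to be differentiable, handled by working almost everywhere and using the Lipschitz character of $s\mapsto d(z,\gamma(s))^2$.
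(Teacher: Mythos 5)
The paper does not prove this lemma itself --- it is quoted verbatim from Cheeger's Lemma 9.16 in [C1] --- but your argument is correct and is essentially the standard proof from [C1]/[CC1]: integrate the identity $h(\gamma(s))-h(z)=\tfrac12\tfrac{d}{ds}\bigl[d(z,\gamma(s))^2\bigr]+E(s)$ (obtained from Taylor expansion along $\gamma_s$ plus the first variation formula) against the uniform estimate $h(\gamma(s))-h(z)=s+O(\delta)$ coming from i) and ii). Your handling of the two genuine subtleties --- that only the bound $\hat E$ needs to be pointwise while every other error is estimated after integration in $s$, and that the first variation formula for $s\mapsto d(z,\gamma(s))$ holds only a.e.\ but suffices because the function is Lipschitz --- is exactly right, with the understood caveat (which you note) that $\epsilon(\delta)$ depends on the diameter of the configuration through the factor $l(s)\le L$.
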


\begin{proof}  The proof below comes  essentially from Lemma 9.16 in [Ch2].  First by  the condition ii), we have
\begin{align}
|h(\gamma(s))-h(\gamma(0))-s|=|\int_0^s(\langle \nabla h(\gamma(s))-\gamma'(s),\gamma'(s)\rangle|\leq \delta.\notag
\end{align}
 Then
 $$s= h(\gamma(s))-h(x)+o(1).$$
By the condition  i),  it follows
\begin{align}\frac{1}{2} d(x,y)^2&=\int_0^a sds\notag\\
&=\int_0^a (h(\gamma(s))-h(x))ds+o(1)\notag\\
&=\int_0^a (h(\gamma_s(l(s)))-h(\gamma_s(0)))ds+o(1)\notag\\
&=\int_0^{l(s)}\int_0^a \langle\nabla h(\gamma_s(t)),\gamma_s'(t)\rangle dtds+o(1).\notag
\end{align}
On the other hand,
\begin{align} &| \langle\nabla h(\gamma_s(t)),\gamma_s'(t)\rangle- \langle\nabla h(\gamma_s(l(s))),\gamma_s'(l(s))\rangle|\notag\\
&=|\int_t^{l(s)} {\rm hess}h(\gamma_s'(\tau),  \gamma_s'(\tau)) d\tau|\notag\\
&\le \int_0^{l(s)} |{\rm hess}h(\gamma_s'(t),  \gamma_s'(t))| dt.\notag
\end{align}
Hence from the condition iii),  we get
\begin{align}\label{short-distance}\frac{1}{2} d(x,y)^2&=\int_0^{l(s)}\int_0^a \langle\nabla h(\gamma_s(l(s))),\gamma_s'(l(s))\rangle dtds+o(1)\notag\\
&=\int_0^a \langle\nabla h(\gamma_s(l(s))),\gamma_s'(l(s))\rangle l(s)ds+o(1)\notag\\
&=\int_0^a \langle\nabla  h(\gamma(s)),\gamma_s'(l(s))\rangle l(s)ds+o(1).
\end{align}

Secondly,  by  the first variation formula of geodesic curve,   we see that
$$
l'(s)=\langle\gamma_s'(l(s)),\gamma'(s)\rangle.$$
Then by  the condition ii), we obtain
\begin{align}
&\int_0^a \langle\nabla  h(\gamma(s)),\gamma_s'(l(s))\rangle l(s)ds\notag\\
&=\int_0^a  l'(s) l(s)ds +o(1)\notag\\
&=\frac{1}{2} (d(y,z)^2-d(z,x)^2).\notag
\end{align}
Therefore, combining (\ref{short-distance}), we derive (\ref{triangular-equ}).

\end{proof}

In order to  get the above configuration in Lemma \ref{cheeger-lemma}, we need a  segment inequality lemma
  in terms  of  the Bakry-\'{E}mery Ricci curvature.
In the following,  we  will always assume that the manifold $(M,g)$ satisfies
\begin{align}\label{be-curvature-condition}
{\rm Ric}^f_g\geq-(n-1)\Lambda^2 g, ~|f|, |\nabla f|\leq A,
\end{align}
and   the volume form $d\text{v} $ is replaced by $d\text{v}^f=e^{-f}d\text{v}$.

\begin{lem}\label{equ-seg}
Let  $A_1, A_2$ be two  subsets of $M$ and $W$  another subset of $M$ such that
$\bigcup_{y_1\in A_1,y_2\in A_2}\gamma_{y_1y_2}\subseteq W$, where  $\gamma_{y_1y_2}$ is
 a minimal geodesic curve connecting $y_1$ and $y_2$  in $M$.  Let
$$D=sup\{d(y_1, y_2)|~y_1\in A_1,y_2\in A_2\}.$$
 Then for any  smooth  function  $e$   on $W$,  it holds
\begin{align}\label{segment-inequ}
&\int_{A_1\times A_2}\int_0^{d(y_1,y_2)}e(\gamma_{y_1,y_2}(s))ds\notag\\
&\leq c(n,\Lambda,A)D[{\rm vol}^f(A_1)+{\rm vol}^f(A_2)]\int_We,
\end{align}
where  $c(n,\Lambda,A)=sup_{s,u}\{L_{\Lambda, A}(s)/L_{\Lambda, A}(u)|~0<\frac{s}{2}\leq u\leq s\}$.
\end{lem}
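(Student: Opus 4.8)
The plan is to mimic Cheeger–Colding's proof of the Euclidean-type segment inequality [CC1, Theorem~2.11], the only place where curvature enters being the weighted Bishop-type monotonicity formula (\ref{mono-formula}) of Wei–Wylie, used in place of ordinary Bishop–Gromov. We assume $e\ge 0$, and we choose the minimal geodesics so that $\gamma_{y_2y_1}$ is the reversal of $\gamma_{y_1y_2}$. For $y_1\in A_1$, $y_2\in A_2$ set $\ell=d(y_1,y_2)$ and split $\int_0^{\ell}e(\gamma_{y_1y_2}(s))\,ds=\mathcal F^-(y_1,y_2)+\mathcal F^+(y_1,y_2)$, the pieces being the integrals over $s\in[0,\ell/2]$ and $s\in[\ell/2,\ell]$. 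Substituting $s\mapsto\ell-s$ gives $\mathcal F^+(y_1,y_2)=\mathcal F^-(y_2,y_1)$, so it is enough to establish
\[
\int_{A_1\times A_2}\mathcal F^-\, d\text{v}^f(y_1)\,d\text{v}^f(y_2)\ \le\ c(n,\Lambda,A)\,D\,\text{vol}^f(A_2)\int_W e ,
\]
since exchanging the roles of $A_1,A_2$ then bounds $\int_{A_1\times A_2}\mathcal F^+$ by $c(n,\Lambda,A)\,D\,\text{vol}^f(A_1)\int_W e$, and adding yields the Lemma with the stated constant.

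To prove the displayed bound, fix $y_2$ and pass to geodesic polar coordinates about $y_2$, writing $e^{-f}\,d\text{v}=A^f(r,\theta)\,dr\,d\theta$. (As usual one first restricts the $(y_1,s)$-integration to the full-measure set on which $\gamma_{y_1y_2}$ is the unique minimal geodesic and $y_1$ is not a cut point of $y_2$, so that these coordinates are legitimate.) If $y_1=\exp_{y_2}(\ell\theta)$, then $\gamma_{y_1y_2}(s)=\exp_{y_2}((\ell-s)\theta)$, and the condition $s\in[0,\ell/2]$ is precisely $t:=\ell-s\in[\ell/2,\ell]$, i.e. $t\le\ell\le 2t$. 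Changing variables $y_1\mapsto(\ell,\theta)$ and $s\mapsto t$, then applying Tonelli,
\[
\int_{A_1}\mathcal F^-(y_1,y_2)\,d\text{v}^f(y_1)
=\int_{\theta}\int_{t>0}e(\exp_{y_2}(t\theta))\Big(\int_{\{\,t\le\ell\le 2t,\ \ell\le D,\ \exp_{y_2}(\ell\theta)\in A_1\,\}}A^f(\ell,\theta)\,d\ell\Big)dt\,d\theta .
\]

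For $t\le\ell\le 2t$, the monotonicity (\ref{mono-formula}) gives $A^f(\ell,\theta)\le\frac{L_{\Lambda,A}(\ell)}{L_{\Lambda,A}(t)}A^f(t,\theta)\le c(n,\Lambda,A)\,A^f(t,\theta)$, directly from the definition of $c(n,\Lambda,A)$ with $s=\ell$, $u=t$ (indeed $0<\ell/2\le t\le\ell$). Moreover the set of admissible $\ell$ lies in $[t,2t]$ and, since $\ell\le D$ and $t\le\ell$ there, has length $\le t\le D$; hence the inner integral above is $\le c(n,\Lambda,A)\,D\,A^f(t,\theta)$. Finally, whenever that inner integral is nonzero there is some $y_1=\exp_{y_2}(\ell\theta)\in A_1$ with $t\le\ell$, so $z:=\exp_{y_2}(t\theta)$ lies on $\gamma_{y_1y_2}$ and therefore $z\in W$. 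Thus
\[
\int_{A_1}\mathcal F^-(y_1,y_2)\,d\text{v}^f(y_1)
\le c(n,\Lambda,A)\,D\int_{\theta}\int_{t>0}A^f(t,\theta)\,(e\,\chi_W)(\exp_{y_2}(t\theta))\,dt\,d\theta
=c(n,\Lambda,A)\,D\int_W e ,
\]
and integrating in $y_2$ over $A_2$ gives the desired bound.

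There is no essential obstacle beyond the measure-theoretic care already present in [CC1]: the two points to watch are that the same minimal geodesic be used for a pair and its reverse (so that the $\mathcal F^+\leftrightarrow\mathcal F^-$ symmetry is legitimate), and that the polar change of variables about $y_2$ be carried out on the segment domain of $y_2$. Crucially, no \emph{lower} bound on $A^f$ is ever needed --- only the one-sided comparison (\ref{mono-formula}), which is exactly what the Bakry--\'Emery hypothesis supplies through (\ref{lapalace-r-1})--(\ref{lapalace-r-3}) --- so the adaptation from the Ricci setting is routine.
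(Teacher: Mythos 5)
Your proof is correct and follows essentially the same route as the paper's: split each geodesic integral into its two halves, use the reversal symmetry to reduce to one half, pass to polar coordinates about the far endpoint, and apply the weighted monotonicity formula (\ref{mono-formula}) to compare the area element at the endpoint with that at the interior point, which is exactly where the constant $c(n,\Lambda,A)$ arises. Your write-up is in fact more careful than the paper's about the cut locus, the use of Tonelli, and the bookkeeping of which endpoint serves as the pole, but the underlying argument is identical.
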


\begin{proof} Note that
\begin{align}
\nonumber &\int_{A_1\times A_2}\int_0^{d(y_1,y_2)}e(\gamma_{y_1,y_2}(s))ds&\\
\nonumber &=\int_{A_1}dy_1\int_{A_2}\int_{\frac{d(y_1,y_2)}{2}}^{d(y_1,y_2)}e(\gamma_{y_1y_2}(s))dsdy_2\\
&\nonumber+\int_{A_2}dy_2\int_{A_1}\int_{\frac{d(y_1,y_2)}{2}}^{d(y_1,y_2)}e(\gamma_{y_1y_2}(s))dsdy_1.&
\end{align}
On the other hand, for a fixed $y_1\in A_1$, by using  the monotonicity  formula  (\ref{mono-formula}), we have
\begin{align}
\nonumber &\int_{A_2}\int_{\frac{d(y_1,y_2)}{2}}^{d(y_1,y_2)}e(\gamma_{y_1y_2}(s))dsdy_2\\
\nonumber&=\int_{A_2}\int_{\frac{r}{2}}^{r}e(\gamma_{y_1y_2}(s))A^f(r,\theta)drd\theta  ds&\\
\nonumber &\leq c(n,\Lambda,A)\int_{A_2}\int_{\frac{r}{2}}^{r}e(\gamma_{y_1y_2}(s))A^f(s,\theta)drd\theta ds\\
\nonumber &\leq c(n,\Lambda,A)D\int_We.&
\end{align}
Similarly,
\begin{align}
\nonumber &\int_{A_1}\int_{\frac{d(y_1,y_2)}{2}}^{d(y_1,y_2)}e(\gamma_{y_1y_2}(s))dsdy_1\\
\nonumber &\leq c(n,\Lambda,A)D\int_We.&
\end{align}
Then (\ref{segment-inequ}) follows from the above two inequalities.

\end{proof}

Using the same argument above,  we can prove

\begin{lem}\label{equ-rad}
Given two points $q^-,q$ with $d(q,q^-)\ge 10$ and a  smooth function $e$ with support in $B_p(1)$,  then for any  $B_q(r)\subset  B_p(1)$  the following inequality holds,
\begin{align}
\int_{B_q(r)}dy\int_0^{d(q^-,y)}e(\gamma_{q^-y}(s))ds\leq c(\Lambda, A)\int_{B_p(1)}e(y)dy.
\end{align}
\end{lem}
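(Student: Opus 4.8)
The plan is to reproduce the proof of Lemma \ref{equ-seg} in a one‑sided form, using geodesic polar coordinates centred at the single point $q^-$ (it suffices to treat $e\ge 0$). Fixing $q^-$, I would write a point $y$ in the star‑shaped domain around $q^-$ as $y=\exp_{q^-}(t\theta)$ with $t=d(q^-,y)$ and $\theta$ a unit vector, so that the minimal geodesic $\gamma_{q^-y}$ is just the radial ray $\gamma_\theta(s)=\exp_{q^-}(s\theta)$, $s\in[0,t]$. With the weighted polar density $dv^f=A^f(s,\theta)\,ds\,d\theta$ (recall the convention of (\ref{be-curvature-condition}) that $dv$ has been replaced everywhere by $dv^f$, so $dy$ means $dv^f$), and discarding the measure‑zero contribution of the cut locus as in [CC1], the left‑hand side equals
\begin{align}
\int_{\mathbb{S}^{n-1}}\int_{\{t\,:\,\gamma_\theta(t)\in B_q(r)\}} A^f(t,\theta)\Big(\int_0^{t} e(\gamma_\theta(s))\,ds\Big)\,dt\,d\theta.\notag
\end{align}

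The key step, exactly as in Lemma \ref{equ-seg}, is to convert $ds$ into the weighted element $A^f(s,\theta)\,ds$ by the monotonicity formula (\ref{mono-formula}); what makes this possible on the whole range $s\in[0,t]$ is precisely the hypothesis $d(q,q^-)>10$. Since $\supp e\subset B_p(1)$, the inner integral only sees $s$ with $\gamma_\theta(s)\in B_p(1)$, and — $B_q(r)$ and $B_p(1)$ lying within bounded distance of one another in every situation where the lemma is applied — any such $s$ satisfies $s=d(q^-,\gamma_\theta(s))\ge d(q^-,q)-O(1)>t/2$, because $t=d(q^-,y)\le d(q^-,q)+r$. (Equivalently, one may simply drop the near half $s\le t/2$ as in Lemma \ref{equ-seg}; with the constant $10$ it contributes nothing.) Hence $t/2<s\le t$ throughout the relevant range, so (\ref{mono-formula}) gives $A^f(t,\theta)\le\big(L_{\Lambda,A}(t)/L_{\Lambda,A}(s)\big)A^f(s,\theta)\le c(n,\Lambda,A)\,A^f(s,\theta)$ with the same constant $c(n,\Lambda,A)=\sup\{L_{\Lambda,A}(u)/L_{\Lambda,A}(v):0<u/2\le v\le u\}$ as in Lemma \ref{equ-seg}, and therefore $\int_0^{t}e(\gamma_\theta(s))\,ds\le \frac{c(n,\Lambda,A)}{A^f(t,\theta)}\int_0^{\infty}e(\gamma_\theta(s))A^f(s,\theta)\,ds$.

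Substituting this back, the factor $A^f(t,\theta)$ cancels and I am left with
\begin{align}
\text{LHS}\le c(n,\Lambda,A)\int_{\mathbb{S}^{n-1}}\Big(\int_{\{t\,:\,\gamma_\theta(t)\in B_q(r)\}}dt\Big)\Big(\int_0^{\infty}e(\gamma_\theta(s))A^f(s,\theta)\,ds\Big)d\theta.\notag
\end{align}
For each $\theta$ the set $\{t:\gamma_\theta(t)\in B_q(r)\}$ has diameter at most $2r$ — two points of a minimal ray from $q^-$ that both lie within $r$ of $q$ differ in parameter by at most $2r$ — so the $dt$‑integral is $\le 2r$; and the remaining double integral is $\int_M e\,dv^f=\int_{B_p(1)}e$ since $\supp e\subset B_p(1)$. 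This gives $\text{LHS}\le 2r\,c(n,\Lambda,A)\int_{B_p(1)}e$, and (for $r$ bounded, as in the applications) absorbing $2r$ into the constant yields the assertion. The only genuine point requiring care is the one flagged above — making the monotonicity formula available on the entire $s$‑range meeting $\supp e$ — which is exactly what the constant $10$ buys, together with the fact that $B_q(r)$ sits near $B_p(1)$ wherever the lemma is used; the polar‑coordinate and cut‑locus bookkeeping is routine and identical to [CC1].
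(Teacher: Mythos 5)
Your proposal is correct and follows exactly the route the paper intends: the paper gives no separate proof of Lemma \ref{equ-rad}, stating only that it follows by ``the same argument'' as Lemma \ref{equ-seg}, and your one-sided polar-coordinate version (monotonicity of $A^f(s,\theta)$ on the far half $s>t/2$, Fubini, and the observation that the $t$-integral runs over a set of length at most $2r$) is precisely that argument carried out. You also correctly flag the one point the paper leaves implicit — that the range $s\le t/2$ is harmless only because $\supp e\subset B_p(1)$ lies far from $q^-$ and near $B_q(r)$ in every application — which is a legitimate gap in the statement as written rather than in your proof.
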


Combining  Lemma \ref{triangular-equ} and Lemma \ref{equ-rad}, we get another  segment inequality lemma as follows.

\begin{lem}\label{approxi-1} Let $ b^+(q)=d(q,q^+)-d(p,q^+)$ for any $q$ with $d(q,q^+)\geq 10$.
Let   $h^+$   be a smooth function which satisfies
\begin{align}
\nonumber \int_{B_p(1)}|\nabla h^+-\nabla b^+|\leq \epsilon {\rm vol}^f(B_p(1))\end{align}
and
 \begin{align}\nonumber \int_{B_p(1)}|{\rm hess }~h^+|\leq \epsilon {\rm vol}^f(B_p(1)).
\end{align}
We assume that   Lemma \ref{equ-seg} and Lemma \ref{equ-rad} are true.
Then for any two points $q,q' \in B_p(\frac{1}{8})$ and any  small  number $\eta>0$, there  exist  $y^*,z^*$ with
$d(y^*,q)<\eta, d(z^*,q')<\eta$,
 and  a minimal geodesic line  $\gamma(t)$ $(0\leq t\leq l(y^*))$   from $y^*$ to $q^{-}$ with
 $\gamma(0)=y^*,\gamma(l(y^*))\in \partial B_p(\frac{1}{8})$
   such that the following is true:
 \begin{align}\label{lemma-3.5-1}
 \int_0^{l(y^*)}|\nabla h^+(s)-\gamma'(s)|ds\leq\epsilon\frac{{\rm vol}^f(B_q(2))}{{\rm vol}^f(B_q(\eta))},\end{align}
 \begin{align}\label{lemma-3.5-2}
 \int_0^{l(y^*)} ds\int_0^{d(z^*,\gamma(s))}|{\rm hess }~h^+(\gamma_s(t))|dt
 \leq\epsilon(\frac{{\rm vol}^f(B_q(2))}{{\rm vol}^f(B_q(\eta))})^2,
 \end{align}
where  $\gamma_s(t)$  is the minimal geodesic curvse  connecting $ \gamma(s)$  and $z^*$.
\end{lem}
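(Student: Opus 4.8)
The plan is to prove Lemma~\ref{approxi-1} by an averaging (pigeonhole) argument over the two base points $y^*\in B_q(\eta)$ and $z^*\in B_{q'}(\eta)$: Lemma~\ref{equ-rad} will control the radial leg of the broken path running from $y^*$ towards $q^-$, Lemma~\ref{equ-seg} will control the family of geodesics ending at $z^*$, and one then intersects the two resulting sets of good base points. Throughout, $\gamma$ is the minimal geodesic from $y^*$ towards $q^-$, stopped at its first exit from $\overline{B_p(1/8)}$; if $\gamma_{q^-y^*}$ denotes the same geodesic parametrized by arclength from $q^-$, then $\gamma_{q^-y^*}'=\nabla b^-$ pointwise along it, where $b^-(\cdot)=d(q^-,\cdot)-d(p,q^-)$, so $\gamma'=-\nabla b^-$ and hence, along $\gamma$,
\[
|\nabla h^+-\gamma'|=|\nabla h^++\nabla b^-|\le|\nabla h^+-\nabla b^+|+|\nabla(b^++b^-)| .
\]
Since $b^++b^-=E-E(p)$, Lemma~\ref{coordinate-lemma1} gives $|b^++b^-|<\Psi(1/R,\epsilon;n,A)$ on $B_p(1)$, while $\Delta^f(b^++b^-)$ is bounded above by Lemma~\ref{lapalace-esti-r} and $|\nabla(b^++b^-)|\le2$ a.e.; the integration-by-parts argument used for (\ref{h-gradient}) (together with $\int_{B_p(1)}|\Delta^f(b^++b^-)|\le C\,\text{vol}^f(B_p(1))$) and Cauchy--Schwarz then yield $\int_{B_p(1)}|\nabla(b^++b^-)|\le\Psi(1/R,\epsilon;n,A)\,\text{vol}^f(B_p(1))$. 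Finally, by Theorem~\ref{volume-comparison} the weighted volumes $\text{vol}^f(B_p(1))$, $\text{vol}^f(B_q(2))$ and $\text{vol}^f(B_{q'}(2))$ are mutually comparable with constants depending only on $n,\Lambda,A$, and $\text{vol}^f(B_q(\eta)),\text{vol}^f(B_{q'}(\eta))\le\text{vol}^f(B_p(1))$ because $q,q'\in B_p(1/8)$; these comparabilities are used freely below to rewrite bounds in the stated form, the universal constants being absorbed into $\epsilon$.

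For (\ref{lemma-3.5-1}): apply Lemma~\ref{equ-rad} (extended to non-negative measurable functions by approximation) to $e_1:=\bigl(|\nabla h^+-\nabla b^+|+|\nabla(b^++b^-)|\bigr)\cdot\mathbf 1_{B_p(1/2)}$, which is supported in $B_p(1)$ with $\int_{B_p(1)}e_1\le\Psi(1/R,\epsilon;n,A)\,\text{vol}^f(B_p(1))$. Since $\gamma(s)$ lies on $\gamma_{q^-y^*}$ for $s\in[0,l(y^*)]$ and $|\nabla h^+(\gamma(s))-\gamma'(s)|\le e_1(\gamma(s))$ there,
\[
\int_{B_q(\eta)}\Bigl(\int_0^{l(y^*)}|\nabla h^+-\gamma'|\,ds\Bigr)dy^*\le c(\Lambda,A)\int_{B_p(1)}e_1\le\Psi(1/R,\epsilon;n,A)\,\text{vol}^f(B_p(1)) .
\]
By Markov's inequality there is a set $G_1\subset B_q(\eta)$ with $\text{vol}^f(B_q(\eta)\setminus G_1)\le\tfrac1{10}\text{vol}^f(B_q(\eta))$ on which (\ref{lemma-3.5-1}) holds, after dividing by $\text{vol}^f(B_q(\eta))$, using $\text{vol}^f(B_p(1))\le C\,\text{vol}^f(B_q(2))$, and renaming the resulting $C\Psi$ as $\epsilon$.

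For (\ref{lemma-3.5-2}), the main step, consider
\[
\mathcal I:=\int_{B_q(\eta)}dy^*\int_{B_{q'}(\eta)}dz^*\int_0^{l(y^*)}ds\int_0^{d(z^*,\gamma_{y^*}(s))}|\text{hess }h^+|\bigl(\gamma_s^{y^*,z^*}(t)\bigr)dt ,
\]
where $\gamma_s^{y^*,z^*}$ is the minimal geodesic from the breakpoint $w=\gamma_{y^*}(s)\in\overline{B_p(1/8)}$ to $z^*$. Put $e_3(w):=\mathbf 1_{\overline{B_p(1/8)}}(w)\int_{B_{q'}(\eta)}\bigl(\int_0^{d(w,z^*)}|\text{hess }h^+|(\gamma_{wz^*}(t))\,dt\bigr)dz^*$, a bounded measurable function supported in $B_p(1)$. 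Fubini gives $\mathcal I=\int_{B_q(\eta)}dy^*\int_0^{l(y^*)}e_3(\gamma_{y^*}(s))\,ds$; reversing $\gamma_{y^*}$ to $\gamma_{q^-y^*}$ and using $e_3\ge0$, this is at most $\int_{B_q(\eta)}dy^*\int_0^{d(q^-,y^*)}e_3(\gamma_{q^-y^*}(s))\,ds$, so Lemma~\ref{equ-rad} gives
\[
\mathcal I\le c(\Lambda,A)\int_{B_p(1)}e_3=c(\Lambda,A)\int_{\overline{B_p(1/8)}\times B_{q'}(\eta)}\Bigl(\int_0^{d(w,z^*)}|\text{hess }h^+|(\gamma_{wz^*}(t))\,dt\Bigr)dw\,dz^* .
\]
Now apply Lemma~\ref{equ-seg} with $A_1=\overline{B_p(1/8)}$, $A_2=B_{q'}(\eta)$, $e=|\text{hess }h^+|$, $D\le\tfrac14+\eta$, and $W=B_p(1)$ (which contains every minimal geodesic between $A_1$ and $A_2$, since both lie in $B_p(1/8+\eta)$ and such geodesics have length $<\tfrac12$); using $\text{vol}^f(A_1),\text{vol}^f(A_2)\le\text{vol}^f(B_p(1))$ and the hypothesis $\int_{B_p(1)}|\text{hess }h^+|\le\epsilon\,\text{vol}^f(B_p(1))$, this yields $\mathcal I\le c(n,\Lambda,A)\,\epsilon\,\text{vol}^f(B_p(1))^2$. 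Dividing by $\text{vol}^f(B_q(\eta))\,\text{vol}^f(B_{q'}(\eta))$ and applying Markov's inequality twice (first to the pair, then to $y^*$ via Fubini) produces $G_2\subset B_q(\eta)$ with $\text{vol}^f(B_q(\eta)\setminus G_2)\le\tfrac15\text{vol}^f(B_q(\eta))$ such that for every $y^*\in G_2$ at least half of $z^*\in B_{q'}(\eta)$ satisfy (\ref{lemma-3.5-2}); the bound obtained is a constant times $\epsilon\,\text{vol}^f(B_q(2))\,\text{vol}^f(B_{q'}(2))/(\text{vol}^f(B_q(\eta))\,\text{vol}^f(B_{q'}(\eta)))$, which is of the stated form. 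Since $\text{vol}^f(G_1\cap G_2)\ge(\tfrac9{10}+\tfrac45-1)\,\text{vol}^f(B_q(\eta))>0$, choose $y^*\in G_1\cap G_2$, a $z^*$ compatible with it, and $\gamma=\gamma_{y^*}$ stopped at $\partial B_p(1/8)$: then $d(y^*,q),d(z^*,q')<\eta$ and both (\ref{lemma-3.5-1}) and (\ref{lemma-3.5-2}) hold, supplying exactly the two integral smallness hypotheses needed for Cheeger's triangle lemma (Lemma~\ref{cheeger-lemma}) in the proof of Theorem~\ref{splitting-theorem}.

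The step I expect to demand the most care is the bookkeeping in the reduction of (\ref{lemma-3.5-2}): verifying that the subarc of $\gamma_{q^-y^*}$ inside $\overline{B_p(1/8)}$ is a subarc dominating the $\gamma$ of the statement, that $e_3$ is a legitimate input for the approximation-extended Lemma~\ref{equ-rad}, and that the choice $W=B_p(1)$ really captures every geodesic appearing in the two segment inequalities; together with the $L^1$-bound on $|\nabla(b^++b^-)|$ from the first paragraph, which is where the full strength of Lemma~\ref{coordinate-lemma1} enters and which is carried out exactly as in [CC1].
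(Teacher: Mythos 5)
Your proposal is correct and follows essentially the same route as the paper: the radial segment inequality (Lemma \ref{equ-rad}) handles (\ref{lemma-3.5-1}), the composition of Lemma \ref{equ-seg} with Lemma \ref{equ-rad} handles the double integral in (\ref{lemma-3.5-2}), and a Markov/pigeonhole argument over $B_q(\eta)\times B_{q'}(\eta)$ produces $y^*,z^*$. The only difference is that you explicitly supply the $L^1$-bound on $|\nabla(b^++b^-)|$ needed to pass from $|\nabla h^+-\nabla b^+|$ to $|\nabla h^+-\gamma'|$, a step the paper's proof leaves implicit; this is a genuine (and standard, as in [CC1]) detail, not a change of method.
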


\begin{proof}
Choose a  cut-off function $\phi=\phi(\text{dist}(p,\cdot))$ with support in $B_p(1)$.  Let
\begin{align}
\nonumber e=\phi|\nabla h^+-\nabla b^+|,e_1=\phi|\text{hess }h^+|, \\
\nonumber e_2(y)=\int_{B_{q'}(\eta)}dz\int_0^{d(y,z)}e_1(\gamma_{yz})(s)ds.
\end{align}
Then by Lemma \ref{equ-rad}, we  have
\begin{align}\label{estimate-e}
 \int_{B_q(\eta)}\int_0^{d(q^-,y)}e(\gamma_{q^-y}(s))dsdy\leq c(A,\Lambda)\int_{B_p(1)}e(y)dy.
\end{align}
 On the other hand,  by  Lemma \ref{equ-seg}, one sees
\begin{align}
\nonumber \int_{B_p(1)}e_2(y)dy&=\int_{B_p(1)} dy\int_{B_{q'}(\eta)}dz\int_0^{d(y,z)}e_1(\gamma_{yz})(s)ds\\
&\leq  c_1(\Lambda, A)\text{vol }^f (B_p(1))\int_{B_p(1)}e_1(y)dy.\notag
\end{align}
Thus by  Lemma \ref{equ-rad},  we get
\begin{align}\label{estimate-e2}
&\int_{B_q(\eta)}\int_0^{d(q^-,y)}e_2(\gamma_{q^-y}(s))dsdy\notag\\
&\leq c_2(\Lambda,A)\int_{B_p(1)}e_2(y)dy\\
\nonumber &\leq  \text{vol}^f (B_p(1))c_3(\Lambda, A)\int_{B_p(1)}e_1(y)dy.
\end{align}
Observe that the left hand side of  (\ref{estimate-e2})  is equal to
\begin{align}
\nonumber \int_{B_{q}(\eta)}dy\int_{B_{q'}(\eta)}dz\int_0^{d(q^{-},y)}\int_0^{d(\gamma_{q^{-}y}(s),z)}e_1(\hat \gamma_s(t))dtds,
\end{align}
where $\hat\gamma_s(t)$ is the minimal geodesic from $z$ to $\gamma_{q^{-}y}(s)$ with arc-length parameter $t$.
Combining  (\ref{estimate-e}) and (\ref{estimate-e2}),  we find two points  $y^*,z^*$ such that   both  (\ref{lemma-3.5-1})  and (\ref{lemma-3.5-2}) are satisfied.

\end{proof}

Now we apply Lemma \ref{approxi-1} to   prove a local version of  Theorem \ref{splitting-theorem}.

\begin{prop}\label{proof-splitting} Let  $(M,g)$ be an $n$-dimensional   complete Riemannian manifold which satisfies
$${\rm Ric}_g^f\geq-\frac{n-1}{R^2}, ~|f|,~|\nabla f|\leq A.$$
Suppose that there exist three points $p,q^+,q^-$ such that
 \begin{align}\label{splitting-condition-1} d(p,q^{+})+d(p,q^{-})-d(q^{+},q^{-})<\epsilon\end{align}
 and
 \begin{align}\label{splitting-condition-2} d(p,q^{+})\geq R, d(p,q^{-}) > R.\end{align}
   Then there exists a map
\begin{align}
u:B_p(1/8)\longrightarrow B_{(0, x)}(1/8)
\end{align} as a $\Psi(1/R,\epsilon;A,n)$ Gromov-Hausdorff approximation,
where $B_{(0, x)}(1/8) \subset\mathbb{R}\times X$ is a $\frac{1}{8}$-radius ball centered at $(0,x) \in \mathbb{R}\times X$ and
 $X$ is given by  the level set  $(h^+)^{-1}(0)$  as a metric space   measured in the $B_p(1)$.
 \end{prop}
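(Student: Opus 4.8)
The plan is to adapt the proof of the local almost splitting theorem of Cheeger and Colding [CC1], using the $f$-harmonic function $h^+$ of Lemma \ref{harmonic-estimate} as an approximate linear coordinate along the almost-line direction. First I would assemble the consequences of the hypotheses: by Lemma \ref{coordinate-lemma1}, $E(q)<\Psi(1/R,\epsilon;A,n)$ for every $q\in B_p(1)$, hence $b^++b^-<\Psi$ there, where $b^\pm(q)=d(q,q^\pm)-d(p,q^\pm)$; by Lemma \ref{harmonic-estimate}, $|h^+-b^+|_{C^0(B_p(1))}<\Psi$, while $\frac{1}{\text{vol}(B_p(1))}\int_{B_p(1)}|\nabla h^+-\nabla b^+|^2e^{-f}d\text{v}<\Psi$ and $\frac{1}{\text{vol}(B_p(1/2))}\int_{B_p(1/2)}|\text{hess }h^+|^2e^{-f}d\text{v}<\Psi$. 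In particular $|h^+(p)|<\Psi$, so the level set $X:=(h^+)^{-1}(0)\cap B_p(1)$, with the metric of $M$ restricted to it, is nonempty; fix a point $x\in X$ within $\Psi$ of $p$. The desired map is $u(q)=(h^+(q),\pi(q))$, where $\pi(q)\in X$ is the endpoint obtained by running an approximate gradient-flow curve of $h^+$ issuing from $q$ until $h^+$ reaches $0$, flowing toward $q^-$ when $h^+(q)<0$ and toward $q^+$ when $h^+(q)>0$. I must then show that $\pi$ is well-defined up to error $\Psi$, that $u$ is almost distance-preserving, and that $u$ is almost onto $B_{(0,x)}(1/8)$.

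The core is the following almost-Pythagorean estimate: for all $q,q'\in B_p(1/8)$,
\begin{align}
\big|\, d(q,q')^2-(h^+(q)-h^+(q'))^2-d(\pi(q),\pi(q'))^2\,\big|<\Psi(1/R,\epsilon;A,n). \notag
\end{align}
To prove it I would feed the pair $q,q'$ into Lemma \ref{approxi-1} (and its mirror version, in which $q^+$ replaces $q^-$ and $h^-\approx-h^+$ replaces $h^+$, so that the approximate line through a point can be run in either direction): this yields $y^*\approx q$, $z^*\approx q'$, a minimal geodesic $\gamma$ from $y^*$ toward $q^\mp$ ending on $\partial B_p(1/8)$, and the two integral bounds (\ref{lemma-3.5-1}) and (\ref{lemma-3.5-2}). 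These are exactly hypotheses ii) and iii) of Cheeger's triangle Lemma \ref{cheeger-lemma}, with hypothesis i) supplied by the $C^0$ bound on $h^+-b^+$ together with Lemma \ref{coordinate-lemma1}; hence geodesic triangles with one vertex moving along $\gamma$ and opposite pair $\{y^*,z^*\}$ are almost Euclidean. Since $\frac{d}{ds}h^+(\gamma(s))=\langle\nabla h^+,\gamma'\rangle\approx1$ by (\ref{lemma-3.5-1}), running $\gamma$ for parameter $\approx|h^+(q)|<1/8$ keeps the curve inside $B_p(1/2)$ and reaches $X$ at a point $\hat q$ with $d(q,\hat q)\approx|h^+(q)|$; the triangle lemma then gives $d(q,q')^2\approx d(q,\hat q)^2+d(\hat q,q')^2$, independence of the chosen flow curve from $q$ (two choices land $\Psi$-close in $X$, which is the well-definedness of $\pi$), and, specializing to $q,q'$ already on the zero level set, $d(\hat q,\hat q')\approx d(q,q')$, i.e. $\pi$ is an almost-isometry between level sets. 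Assembling these facts yields the displayed estimate, which together with the restricted metric on $X$ gives $|d(u(q),u(q'))-d(q,q')|<\Psi$, so $u$ is almost distance-preserving.

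For surjectivity, given $(t,w)\in B_{(0,x)}(1/8)$ with $w\in X$, I would run the approximate line through $w$ for parameter $t$ (toward $q^-$ if $t>0$, toward $q^+$ if $t<0$); by Lemma \ref{coordinate-lemma1} and Lemma \ref{harmonic-estimate} the curve stays in $B_p(1/8)$ for $|t|<1/8$ and the endpoint $q$ has $h^+(q)$ within $\Psi$ of $t$ and $\pi(q)$ within $\Psi$ of $w$, so that $u(q)$ is $\Psi$-close to $(t,w)$. Combined with the distance estimate, this exhibits $u$ as a $\Psi(1/R,\epsilon;A,n)$ Gromov--Hausdorff approximation.

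The step I expect to be the main obstacle is making the "approximate gradient flow to the level set" and its path-independence rigorous, since $\text{hess }h^+$ and $\nabla h^+-\nabla b^+$ are controlled only in $L^2$ with respect to the weighted measure $e^{-f}d\text{v}$: every pointwise-along-curves statement must be routed through the weighted segment inequalities Lemma \ref{equ-seg}, Lemma \ref{equ-rad} and the derived Lemma \ref{approxi-1}, whose constants, and the non-degeneracy of $\text{vol}^f(B_q(\eta))$ appearing on the right of (\ref{lemma-3.5-1})--(\ref{lemma-3.5-2}), hinge on the weighted volume comparison (\ref{mono-formula}) and Theorem \ref{volume-comparison}, so that all error terms genuinely depend on $A$. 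One must also verify that the geodesics furnished by Lemma \ref{approxi-1} reach $X$ before exiting $B_p(1/8)$ — which is precisely why the proposition is stated on the smaller ball $B_p(1/8)$ rather than on $B_p(1)$ — and upgrade "for a dense set of pairs $q,q'$" to "for all pairs" by the usual continuity argument using the uniform Lipschitz bound on $h^+$ from Proposition \ref{gradient-esti}.
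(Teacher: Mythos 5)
Your proposal is correct and follows essentially the same route as the paper: the map $u(q)=(h^+(q),\pi(q))$ with $\pi(q)$ the point where the geodesic toward $q^{\mp}$ meets $(h^+)^{-1}(0)$ is the paper's $u(q)=(x_q,h^+(q))$ in disguise (the paper's first step is precisely to show the geodesic hits $X$ near the nearest point $x_q$), and the almost-Pythagorean estimate is obtained exactly as in the paper by feeding the configuration produced by Lemma \ref{approxi-1} into Cheeger's triangle Lemma \ref{cheeger-lemma}, with the case division by sign of $h^+$ corresponding to the paper's upper/middle/lower regions. Your explicit treatment of surjectivity and of the well-definedness of $\pi$ is a useful elaboration of points the paper leaves implicit, but it is not a different method.
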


\begin{proof}
For simplicity,  we denote the terms on the  right-hand side of   (\ref{c0-h}), (\ref{h-gradient}) and (\ref{hessian-integral})  in Lemma \ref{harmonic-estimate}   by $\delta=\delta(\epsilon,\frac{1}{R})$.  Define a map $u$ on $B_p(1)$ by  $u(q)=(x_q,h^+(q))$,  where $x_q$ is the nearest point to $q$ in $X$. We are going to prove that $u$ is a $\Psi(1/R,\epsilon; A)$ Gormov-Hausdorff approximation.
Since $|\nabla h^{+}|\leq c=c(A)$ in $B_p(\frac{1}{2})$,
$$h^+(y)\leq 0,~ \forall~ y \in B_q(\eta), ~ \text{if}~ h^{+}(q)<-c\eta,$$
 where $\eta$ is an appropriate small number and it  will be  determined late. We call the area of $h^{+}(q)<-c\eta$ the upper region,  the area of $h^{+}(q)>c\eta$  the lower region and the rest the middle region, respectively.

Case 1.  Both points $q_1$ and $q_2$ in the upper region  ( we may assume that  $h^+(q_1)>h^+(q_2)$).  Let  $q$ be a point  in the upper region. Then  by  applying Lemma \ref{approxi-1}   to $q,x_q$,  we get a geodesic from a point $y$ near $q$ to $q^-$ whose direction is almost the same as $\nabla h^+$. Thus this geodesic must intersect $h^+=0.$  Applying  Triangle Lemma \ref{cheeger-lemma}, we  see  that the intersection is near $x_q$.  Hence  for  $q_1$ and $q_2$,  we can find $y_1$  and $y_2$ nearby $q_1$ and $q_2$ respectively,  such that two geodesics from $y_1$ and $y_2$ to $q^-$  intersect  $X$ with  points $x_1$ and  $x_2$, respectively. Denote the geodesic from $x_2$ to $y_2$ by $\gamma(s):\gamma(0)=x_2, \gamma(h^+(y_2))=y_2$. Applying  Triangle  Lemma \ref{cheeger-lemma} to  triples  $\{y_1,y_2,\gamma(h^+(y_1))\}, \{x_2,y_1,\gamma(h^+(y_1))\}$ and $\{x_1,x_2,y_1\}$, respectively,  we get
$$ |d(y_1,y_2)^2-|h^+(y_2)-h^+(y_1)|^2-d(y_1,\gamma(h^+(y_1)))^2|\leq c(n,A)\frac{\delta}{\eta^n},$$
$$|d(y_1,x_2)^2-d(y_1,\gamma(h^+(y_1)))^2-h^+(y_1)^2|\leq c(n,A)\frac{\delta}{\eta^n},$$
and
$$|d(y_1,x_2)^2-d(x_1,x_2)^2-h^+(y_1)^2| \leq c(n,A)\frac{\delta}{\eta^n}.$$
Combining the above three relations, we derive
\begin{align}\label{dist-appr}
|d(q_1,q_2)-d(u(q_1),u(q_2))|\leq c(n,A)\frac{\delta}{\eta^n}<<1
\end{align}
as $\delta=o(\eta^n)$.

Case 2.  $q_1$ is in the middle region and $q_2$ is in the upper region.  Note that  $x_q$ is near $q$  if $q$ is in the middle region.  Then we can find two points $y_1$ and $y_2$ near $q_1$ and $q_2$ respectively, such that  Triangle Lemma  \ref{cheeger-lemma} holds for  the triple $\{y_1,y_2,x_2\}$.  Hence for such two points $q_1$ and $q_2$,  we get (\ref{dist-appr}) immediately.

Case 3.  $q_1$ is in the lower region and $q_2$ is in the upper region.   As in  Case  1.  we can get one geodesic from $q^+$ to a point near $q_1$ and another geodesic from $q^-$ to a point near $q_2$, respectively.   Thus we can use same argument in Case 1 to obtain (\ref{dist-appr}).  Similarly, we can settle down another two cases,  both $q_1$ and $q_2$  in  the  lower region   and  both $q_1$ and $q_2$  in the middle region.

\end{proof}

\begin{proof}[Proof of  Theorem \ref{splitting-theorem}]

Suppose that  the line in $Y$ is $\gamma(t)$ and $\gamma(0)=y$.  Define a  Busemann  function  $b$ along $\gamma$ by
\begin{align}
\nonumber b(y)=\lim_{t\rightarrow +\infty}(d(y,\gamma(t))-t).
\end{align}
Since
\begin{align}
 \nonumber d_{GH}(B_{p_i}(j),B_y(j))\rightarrow 0, ~\text{as}~i\rightarrow \infty,
 \end{align}
  for any given  integer number $j>0$,  we may assume that
\begin{align}
\nonumber d_{GH}(B_{p_i}(j),B_y(j))<\frac{1}{j}, \epsilon_i<\frac{n-1}{j^2}\text{ for }i=i(j) \text{ large enough}.
\end{align}
Choose  a  Gromov-Hausdorff approximation from $B_y(j)$ to $B_{p_i}(j)$ so that  the images of endpoints $\gamma(j)$ and
$\gamma(-j)$ of the line in $B_y(j)$ together with  $p_i$  satisfy the conditions  (\ref{splitting-condition-1}) and  (\ref{splitting-condition-2})  in Proposition 3.6.
Then  we see that  there exist a metric space $X_j$ and a  Gromov-Hausdorff approximation  $u_j:B_{p_i}(1)\to B_{0\times x_j}(1)$  such that
\begin{align}
\nonumber d_{GH}( B_{p_i}(1), u_j(B_{p_i}(1))) <\Psi(\frac{1}{j}).
\end{align}
As a consequence,   there exists a map  $\hat u_j:B_{_y}(1)\to B_{0\times x_j}(1)$  such that
\begin{align}
\nonumber d_{GH}(B_y(1),\hat u_j(B_y(1)))<\Psi.
\end{align}
This implies that  all the projection of   $\mathbb{R}$ component from   space  $\mathbb{R}\times X_j$   are close to the Buseman function $b$ along the given line in $Y$ for $j>>1$,  so they are  almost the same.  Hence,  $\{ X_j\}$ is a Cauchy sequence in Gromov-Hausdorff topology with a limit $X$.  It follows that $B_y(1)=B_{0\times x}(1)$ where $x$ is the limit point of $\{x_j\}$ in $X$.  Since the number $1$ can be replaced  by any  positive number,  we finish  the proof of theorem.
\end{proof}

\vskip3mm

\section{Existence of  metric cone}

In this section, we prove    an anology of Theorem \ref{thm-cc1}  in  the  Bakry-\'Emery geometry.  Namely,  we prove 
the existence of metric cone of  a tangent space on   the  limit  space of a sequence in $\mathcal{M} (A,v,\Lambda)$. Recall

\begin{defi}
 For a metric space $(Y, d)$,  the  limit of $(Y, \epsilon_i^{-2
 }d;y)$  in  the Gromov-Hausdorff  topology as $\epsilon_i\to0$  is called a tangent cone of $Y$ at $y$ (if exists).
 We denote it  by  $T_yY$.
 \end{defi}

\begin{defi}\label{def-metric-cone}
Given a metric space $X$, the space $\mathbb{R}^+\times X$ with the metric defined by
\begin{align}
\nonumber &d((r_1,x_1),(r_2,x_2))=\sqrt{r_1^2+r_2^2-2r_1r_2\cos d(x_1,x_2)}, \text{ if }d(x_1,x_2)\leq \pi,\\
\nonumber &d((r_1,x_1),(r_2,x_2))=r_1+r_2, \text{ if }d(x_1,x_2)\geq \pi
\end{align}
is called a metric cone over $X$.  We usually   denote it  by $C(X)$ with the metric $\mathbb{R}^+\times_rX$.
\end{defi}

 The main theorem of this section can be stated as follows.

 \begin{theo}\label{existence-metric-cone}
Let $\{(M_i,g_i;p_i)\}$ be a sequence of manifolds in $\mathcal{M}(A,v,\Lambda)$. Then there exists a subsequence of $\{(M_i,g_i;p_i)\}$
  converges  to a metric space $(Y; y)$  in the pointed  Gromov-Hausdorff  topology.  Moreover,  for each
  $z\in (Y;y)$,  each tangent cone  $T_zY$ is a metric cone over another metric space whose diameter is less than $\pi$.
\end{theo}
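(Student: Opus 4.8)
The plan is to follow the Cheeger--Colding strategy adapted to the Bakry--\'Emery setting, using the integral Hessian estimates from Section 2 in place of their Ricci-curvature counterparts. First I would establish the existence of a convergent subsequence: since $\mathcal{M}(A,v,\Lambda)$ gives a uniform lower bound on $Ric^f$ and a uniform lower bound on the volume $\text{vol}_g(B_p(1))\ge v$, together with $|\nabla f|\le A$, the weighted volume comparison of Theorem \ref{volume-comparison} yields a uniform doubling property for $\text{vol}^f$ on bounded balls; since $f$ is bounded on bounded sets (integrating $|\nabla f|\le A$), $\text{vol}^f$ and $\text{vol}$ are comparable, so the underlying metric spaces are uniformly doubling, hence precompact in the pointed Gromov--Hausdorff topology by Gromov's theorem. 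This gives the limit $(Y;y)$ and, passing to any point $z\in Y$ and rescaling, a tangent cone $T_zY$ which is itself a pointed Gromov--Hausdorff limit of rescaled manifolds $(M_i,\epsilon_i^{-2}g_i;q_i)$ satisfying $Ric^{f_i}_{M_i}(\epsilon_i^{-2}g_i)\ge -(n-1)\epsilon_i^2\Lambda^2(\epsilon_i^{-2}g_i)$ with $|\nabla f_i|\le \epsilon_i A$ in the rescaled metric, i.e. precisely the hypotheses of Lemma \ref{harmonic-estimate-annual-1} and Lemma \ref{harmonic-estimate-annual-2} with small parameters.

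The core of the argument is to show that for a fixed point $q$ in (a manifold approximating) $T_zY$ and radii $a<b$, the volume monotonicity ratio $\text{vol}^f(\partial B_q(b))/\text{vol}^f(\partial B_q(a))$ is almost extremal, i.e. almost equals $L_{\epsilon\Lambda,\epsilon A}(b)/L_{\epsilon\Lambda,\epsilon A}(a)$. This follows because $T_zY$ is a tangent cone: on smaller and smaller scales the volume ratios of concentric annuli converge (this is the statement that the ``volume density'' stabilizes in the limit, a consequence of the monotonicity in Theorem \ref{volume-comparison} and the fact that a monotone bounded quantity has a limit along the rescaling sequence). With this almost-extremal volume hypothesis (the parameter $\omega$ small), Lemma \ref{harmonic-estimate-annual-1} gives that the $f$-harmonic function $h$ solving $\Delta^f h=n$ with the boundary values $b^2/2$, $a^2/2$ is $L^2$-close in gradient to $p=r(q,\cdot)^2/2$, and Lemma \ref{harmonic-estimate-annual-2} gives that $\text{hess}\,h$ is $L^2$-close to the metric $g$ on an interior annulus.

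Next I would feed these Hessian estimates into Cheeger's triangle lemmas (Lemma \ref{cheeger-lemma}, and its companion for the radial direction) together with the segment-inequality machinery (Lemma \ref{equ-seg}, Lemma \ref{equ-rad}, Lemma \ref{approxi-1}) to produce, as in the proof of Proposition \ref{proof-splitting} but now with the function $h$ playing the role of (half the square of) the distance, an almost-Pythagorean relation along geodesics emanating from $q$. Concretely: for most pairs of points, the function $\sqrt{2h}$ behaves like a distance-to-$q$ function, and the level sets of $\sqrt{2h}$, rescaled, are almost isometric to one another; the geodesics from $q$ are almost integral curves of $\nabla h /|\nabla h|$ and meet the level sets almost orthogonally with the correct Pythagorean defect controlled by $\Psi(\omega,\epsilon;\dots)$. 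Letting $i\to\infty$ (so $\omega,\epsilon\to 0$) these almost-relations become exact in $T_zY$: the distance function $\rho$ from the vertex satisfies, for any two points, $d(x_1,x_2)^2$ is expressed via $\rho(x_1),\rho(x_2)$ and the ``angular'' distance read off on a fixed level set, which is exactly the statement that $T_zY=C(X)$ for the metric space $X=\{\rho=1\}$. Finally, the diameter bound $\text{diam}(X)\le\pi$ follows because if two points of $X$ had angular distance $>\pi$ then the cone metric would force $d((1,x_1),(1,x_2))=2$, contradicting the fact that $X$ is connected and points at cone-distance $<2$ from each other exist densely (equivalently, one applies the splitting Theorem \ref{splitting-theorem}: if $\text{diam}(X)>\pi$ one extracts a line in $C(X)$, forcing a further splitting, and iterating shows the angular diameter cannot exceed $\pi$).

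The main obstacle I anticipate is the bookkeeping needed to pass from the single ``good pair of points'' produced by the segment inequality (Lemma \ref{approxi-1}) to a statement valid for \emph{all} pairs of points simultaneously, uniformly enough to survive the Gromov--Hausdorff limit — this is where Cheeger--Colding spend real effort, partitioning balls into regions by the values of $h$ (as in the case analysis of Proposition \ref{proof-splitting}) and carefully choosing the auxiliary radius $\eta$ relative to the error $\delta$. The adaptation to Bakry--\'Emery geometry is essentially painless here because all the underlying integral estimates have already been established in Section 2 with $d\text{v}^f=e^{-f}d\text{v}$ and the constants depend only on $n,\Lambda,A$; the one place requiring care is that the weighted and unweighted volumes must be compared (using $|f|$ bounded on the relevant balls, which follows from $|\nabla f|\le A$ after translating $f$ by a constant) so that a volume-density statement for $\text{vol}^f$ transfers to the actual Gromov--Hausdorff geometry, which is measured by $\text{vol}$.
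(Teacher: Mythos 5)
Your proposal is correct and follows essentially the same route as the paper: Gromov precompactness, an almost-extremal volume ratio at some scale obtained from the monotonicity formula plus non-collapsing (the paper's Lemma \ref{volume-sphere} is exactly the quantitative pigeonhole version of your "monotone bounded quantity" remark), the Hessian estimates of Lemmas \ref{harmonic-estimate-annual-1}--\ref{harmonic-estimate-annual-2} fed into the triangle Lemma \ref{cheeger-lemma-2} via the segment inequality to get the cone annulus structure (Proposition \ref{segment-inequ-2}), and the diameter bound via iterated application of the splitting theorem. Your first heuristic for the diameter bound is not a proof by itself, but you correctly fall back on the splitting argument, which is what the paper uses.
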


 The proof of Theorem \ref{existence-metric-cone} is similar to one of Splitting Theorem \ref{splitting-theorem}.  We need another  triangle lemma to estimate  the distance.

\begin{lem}\label{cheeger-lemma-2}
Let $x,y$ be two points  in a minimal  geodesic    from $p$ and denote  the part of  the  geodesic  curve from  $x$ to $y$   by $\gamma(s)$. Let  $\gamma_s(t)$  be  a family of geodesic  curves  connecting $z$ and $\gamma(s)$  as in Lemma \ref{cheeger-lemma}.
Suppose that there is  a smooth function $h$  on $M$ which satisfies
\begin{align}&i)~ |h(z)-h(x)-\frac{r(z)^2-r(x)^2}{2}|<\delta<<1;\notag\\
&ii)~ \int_{[0,a]}|\nabla h(\gamma(s))-r(\gamma(s))\gamma'(s)|<\delta<<1;\notag\\
& iii)~\int_{[0,a]}\int_{[0,l(s)]}|{\rm hess }~h-g|dtds <\delta<<1.\notag
\end{align}
Here $r(\cdot)=\text{dist}(p,\cdot)$. Then
\begin{align}\label{cosine}
&d(z,y)^2r(x)-d(x,z)^2r(y)\\
\nonumber &+ r(z)^2(r(y)-r(x))-r(x)r(y)(r(y)-r(x)) <\epsilon(\delta).
\end{align}
\end{lem}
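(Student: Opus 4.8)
The plan is to follow the proof of the cosine estimate in Cheeger's lecture notes (Lemma 9.64 in [C1]), adapting the integration-by-parts and second-variation computations to the $f$-weighted setting. The key point is that the hypotheses only involve $h$, $\nabla h$ and $\mathrm{hess}\,h$ (not $\Delta^f h$ or the weight $e^{-f}$ directly), so the weighted nature of the ambient manifold enters only through the fact that the geodesics $\gamma(s)$ and $\gamma_s(t)$ are Riemannian geodesics for $g$; the estimate itself is purely a statement about the metric $g$ and the function $h$, and no $f$-dependence should appear in the final inequality. Consequently I expect the proof to be essentially identical to Cheeger's, and the whole point of invoking it here is that the companion lemmas (Lemma \ref{approxi-1}, Lemma \ref{equ-seg}, Lemma \ref{equ-rad}) now supply the configuration in the Bakry-\'Emery setting.

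The main steps I would carry out are as follows. First, introduce the function $e(s) := h(\gamma_s(0)) = h(z)$ is constant in $s$, and more usefully define $F(s) := h(\gamma(s))$; compute $F'(s) = \langle \nabla h(\gamma(s)), \gamma'(s)\rangle$, which by hypothesis ii) is within $\delta$ (in $L^1$) of $r(\gamma(s))$. Second, for each fixed $s$ consider the second variation of arc length along the geodesic variation $\gamma_s(t)$ connecting $z$ to $\gamma(s)$: differentiating $l(s) = d(z,\gamma(s))$ twice and using the first variation formula, one gets $l(s) l''(s) + (l'(s))^2$ expressed via $\langle \nabla^2(\text{dist from }z), \cdot\rangle$ evaluated along $\gamma$; the trick is that $h$ approximates $\tfrac{1}{2}r^2$ infinitesimally in the sense that $\mathrm{hess}\,h \approx g$ (hypothesis controlling $\int\int |\mathrm{hess}\,h - g|$), so one replaces the Hessian of the true distance-squared function by $\mathrm{hess}\,h$ at the cost of the small error $\delta$, integrated over the triangle region swept out by the $\gamma_s(t)$. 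Third, integrate the resulting differential inequality in $s$ over $[0,a]$ with $a = d(x,y)$, using the boundary data at $s=0$ (point $x$) and $s=a$ (point $y$) together with the relation between $r(x), r(y), d(x,z), d(y,z), d(p,z)$, and collect the error terms — each of which is $O(\delta)$ by hypotheses i)–iii) — to arrive at the claimed cosine-type inequality (\ref{cosine}).

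The step I expect to be the main obstacle is the careful bookkeeping in the second-variation / integration-by-parts argument: one must integrate $|\mathrm{hess}\,h - g|$ over the full two-parameter family $\{\gamma_s(t) : 0\le s \le a,\ 0\le t\le l(s)\}$ and match this precisely with hypothesis iii), being careful that the Jacobian of the map $(s,t)\mapsto \gamma_s(t)$ does not blow up — this is where one needs $x,y$ to lie on a minimal geodesic from $p$ (so that $r$ is smooth and the distance functions involved are well-behaved away from cut loci), exactly as in Cheeger's original formulation. A secondary technical point is handling the endpoint behavior near $t=0$ where $\gamma_s(t)$ emanates from $z$; there $\nabla h$ need not be close to $r\gamma'$, but the contribution is controlled because the relevant integrand carries a factor vanishing at $t=0$. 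Since all of these are the same difficulties as in [C1] and the Bakry-\'Emery curvature bound plays no direct role (having already been used to produce the configuration via the segment inequalities), I would simply reproduce Cheeger's argument, remarking that it goes through verbatim.
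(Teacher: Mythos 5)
Your proposal is correct and matches the paper's treatment: the paper gives no proof of this lemma at all, citing it directly as Lemma 9.64 of [C1], and your key observation — that the hypotheses and conclusion involve only $g$, $h$, $\nabla h$ and $\text{hess }h$ with no $f$-dependence, so Cheeger's purely Riemannian argument applies verbatim and the Bakry-\'Emery structure enters only through the companion segment-inequality lemmas that produce the required configuration — is exactly the right justification. Your sketch of Cheeger's argument (integrating $\text{hess }h-g$ twice along the two-parameter family $\gamma_s(t)$ and once along $\gamma$ to recover the cone cosine law up to $\epsilon(\delta)$) is the standard one.
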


\begin{proof} The proof is similar to one of Lemma \ref{cheeger-lemma}.  First,  we have
\begin{align}
&|h(\gamma(s))-h(\gamma(0))-\frac{(s+r(x))^2}{2}+\frac{r^2(x)}{2}|\notag\\
 &=|\int_0^s\langle \nabla h(\gamma(s))-(s+r(x))\gamma'(s),\gamma'(s)\rangle|\leq \delta.\notag
\end{align}
Then
$$\label{vertical}  h(\gamma_s(l(s)))= h(\gamma(s))=h(x)+\frac{(s+r(x))^2}{2}-\frac{r^2(x)}{2}+o(1).$$
Since
\begin{align} l(s) h'(\gamma_s(0))&=h(\gamma_s(l(s)))-h(z)-\frac{l^2(s)}{2}\notag\\
&-\int_0^a \int_0^{l(s)} (\text{hess}h(\gamma_s'(t),\gamma_s'(t)) -g(\gamma_s'(t),\gamma_s'(t)))  dtds,\notag
\end{align}
from the condition iii) and i),  we get
 \begin{align}l(s)h'(\gamma_s(0))&=\frac{(s+r(x))^2}{2}-\frac{r^2(x)}{2}+h(x)-h(z)-\frac{l^2(s)}{2}+o(1)\notag\\
&=\frac{(s+r(x))^2}{2}-\frac{r^2(z)}{2}-\frac{l^2(s)}{2}+o(1).\notag
\end{align}
Consequently,  we obtain
\begin{align}
l(s)h'(\gamma_s(l(s)))&=\frac{(r(x)+s)^2-r^2(z)}{2}+\frac{l^2(s)}{2}\notag\\
&+ l(s)\int_0^{l(s)} (\text{hess }h(\gamma_s'(t),\gamma_s'(t)) -g(\gamma_s'(t),\gamma_s'(t)))  dt+
 o(1).\notag
\end{align}
Hence we derive
\begin{align}\label{short-distance-2}
&\int_0^a(\frac{2l(s)h(\gamma_s'(l(s)))}{(s+r(x))^2}-\frac{l^2(s)}{(s+r(x))^2}) ds\notag\\
& =a+\frac{r^2(z)}{r(x)+a}-\frac{r^2(z)}{r(x)}+o(1).
\end{align}

 Secondly,  by
 the first variation formula,
$$
l'(s)=\langle\gamma_s'(l(s)),\gamma'(s)\rangle,
$$
 we get from the condition ii),
\begin{align}
\int_0^a(\frac{l^2(s)}{s+r(x)})' ds&=\int_0^a(\frac{2l(s)l'(s)}{s+r(x)}-\frac{l^2(s)}{(s+r(x))^2}) ds\notag\\
&=\int_0^a(\frac{2l(s) (s+r(x))\langle \gamma_s'(l(s)),\gamma'(s)\rangle}{(s+r(x))^2}-\frac{l^2(s)}{(s+r(x))^2})ds \notag\\
&=\int_0^a(\frac{2l(s)\langle\gamma_s'(l(s)),\nabla h(\gamma(s))\rangle}{(s+r(x))^2}-\frac{l^2(s)}{(s+r(x))^2})ds+o(1)\notag\\
  &=\int_0^a (\frac{2l(s)h'(\gamma_s(l(s)))}{(s+r(x))^2} - \frac{l^2(s)}{(s+r(x))^2}) ds +o(1)\notag.
   \end{align}
Therefore,  by  combining (\ref{short-distance-2}), we get (\ref{cosine})  immediately.

\end{proof}

It is easy to see  the left-hand side of (\ref{cosine}) is zero  in a metric cone $C(X)$ if $x,y$ lie in a radial direction.
We need a few of  lemmas more  to prove Theorem \ref{existence-metric-cone}.

\begin{lem}\label{distance-appro-3}
Given $\eta>0$, there exists $\omega=\omega(a,b,\eta, A,\Lambda)$ such that the following is true:
if
 \begin{align}\label{ricci-condition-2}
{\rm Ric}_{g}^f\geq-(n-1)\Lambda^2 g~\text{and}~|\nabla f|\leq A\end{align}
and
\begin{align}\label{volume-condition-2}
\frac{{\rm vol}^f(\partial B_p(b))}{{\rm vol}^f(\partial B_p(a))}\geq(1-\omega)\frac{L_{\Lambda, A}(b)}{L_{\Lambda, A}(a)},
\end{align}
then  for any point $q$ on $\partial B_p(a)$, there exists $q'$ on $\partial B_p(b)$ such that
$$d(q,q')\leq b-a+\eta.$$
 \end{lem}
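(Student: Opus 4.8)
The plan is to argue by contradiction through the weighted Bishop--Gromov monotonicity that already underlies Theorem~\ref{volume-comparison}. Fix $\eta>0$ and suppose the conclusion fails for some $q\in\partial B_p(a)$, i.e. $d(q,q')>b-a+\eta$ for every $q'\in\partial B_p(b)$. I will show this forces $\omega$ to be bounded below by a positive constant depending only on $a,b,\eta,A,\Lambda,n$; taking $\omega$ below that constant then proves the lemma.

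First, the assumed failure repels from $q$ all long minimal geodesics issuing from $p$: if $\sigma$ is minimal from $p$ of length $\ge b$ then $\sigma(b)\in\partial B_p(b)$, so $d(q,\sigma(b))>b-a+\eta$, and hence for every $s\in[a-\tfrac\eta2,a]$
$$d(q,\sigma(s))\ \ge\ d(q,\sigma(b))-(b-s)\ >\ b-a+\eta-\bigl(b-a+\tfrac\eta2\bigr)\ =\ \tfrac\eta2 .$$
Thus the radial tube $T:=\{\sigma(s):\ \sigma\ \text{minimal from }p,\ \ell(\sigma)\ge b,\ a-\tfrac\eta2\le s\le a\}$ is disjoint from $B_q(\tfrac\eta2)$. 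Writing $\text{vol}^f$ in geodesic polar coordinates at $p$ and using the monotone formula $(\ref{mono-formula})$ together with the hypothesis $(\ref{volume-condition-2})$, the $\theta$-integral being over directions whose geodesic remains minimal up to length $\ge b$,
$$\text{vol}^f(T)=\int\!\!\int_{a-\eta/2}^{a}A^f(s,\theta)\,ds\,d\theta\ \ge\ \frac{\int_{a-\eta/2}^{a}L_{\Lambda,A}(s)\,ds}{L_{\Lambda,A}(b)}\,\text{vol}^f(\partial B_p(b))\ \ge\ (1-\omega)\,\frac{\int_{a-\eta/2}^{a}L_{\Lambda,A}(s)\,ds}{L_{\Lambda,A}(a)}\,\text{vol}^f(\partial B_p(a)).$$
On the other hand, if $\gamma$ is a minimal geodesic from $p$ to $q$ and $q_-=\gamma(a-\tfrac\eta4)$, then $B_{q_-}(\tfrac\eta4)\subset B_q(\tfrac\eta2)\cap A_p(a-\tfrac\eta2,a)$, and applying the weighted volume comparison $(\ref{volume-estimate-2})$ at $q_-$ (using $B_p(a)\subset B_{q_-}(2a)$ and the weighted ``Bishop'' bound $\text{vol}^f(\partial B_p(a))\le\frac{L_{\Lambda,A}(a)}{\int_0^{a}L_{\Lambda,A}(s)\,ds}\,\text{vol}^f(B_p(a))$, which itself follows from $(\ref{mono-formula})$) gives $\text{vol}^f(B_{q_-}(\tfrac\eta4))\ge c_0\,\text{vol}^f(\partial B_p(a))$ with $c_0=c_0(a,\eta,A,\Lambda,n)>0$. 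Since $T$ and $B_{q_-}(\tfrac\eta4)$ are disjoint subsets of the shell $A_p(a-\tfrac\eta2,a)$, adding these lower bounds forces $\text{vol}^f(A_p(a-\tfrac\eta2,a))$ to exceed the value that the monotone formula and $(\ref{volume-condition-2})$ allow, so $\omega\ge c(a,b,\eta,A,\Lambda,n)>0$.

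The step I expect to be the main obstacle is the comparison ``$\text{vol}^f(A_p(a-\tfrac\eta2,a))$ exceeds what the monotone formula allows'': a priori the shell $A_p(a-\tfrac\eta2,a)$ can carry extra weighted volume coming from geodesics from $p$ that are cut between radii $a-\tfrac\eta2$ and $a$, since the hypothesis $(\ref{volume-condition-2})$ pins down the geometry only from radius $a$ outward. Ruling out such a genuine ``cut tube'' near $q$ requires more than Bishop--Gromov: solving $\Delta^f h=n$ on the annulus and invoking the $L^2$-Hessian estimate of Lemma~\ref{harmonic-estimate-annual-2} makes the annulus $L^2$-close to a metric--cone annulus, in which there is essentially no cut locus; feeding this through the triangle Lemma~\ref{cheeger-lemma-2} and the weighted segment inequalities of Section~3 forces the minimal geodesics from $p$ reaching radius $b$ to fill the shell near $q$ as well, which completes the contradiction. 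This last point is exactly where the argument runs parallel to its counterpart in [CC1].
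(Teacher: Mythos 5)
Your overall strategy --- argue by contradiction, show that the minimal geodesics from $p$ reaching $\partial B_p(b)$ must avoid a definite neighbourhood of $q$, and play the resulting deficit off against (\ref{volume-condition-2}) via the monotonicity formula (\ref{mono-formula}) --- is the paper's strategy, and your first three steps (the tube $T$ avoids $B_q(\eta/2)$; $\text{vol}^f(T)\ge(1-\omega)\frac{\int_{a-\eta/2}^aL_{\Lambda,A}}{L_{\Lambda,A}(a)}\text{vol}^f(\partial B_p(a))$; $\text{vol}^f(B_{q_-}(\eta/4))\ge c_0\,\text{vol}^f(\partial B_p(a))$) are correct. The gap is exactly where you flagged it, and as you have set things up it is fatal: you placed both the tube and the ball in the \emph{inner} shell $A_p(a-\tfrac\eta2,a)$, and the hypotheses give no upper bound at all on $\text{vol}^f(A_p(a-\tfrac\eta2,a))$. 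The monotonicity of $A^f(s,\theta)/L_{\Lambda,A}(s)$ bounds sphere areas from above in terms of $\text{vol}^f(\partial B_p(a))$ only at radii $s\ge a$; for $s<a$ it gives $A^f(s,\theta)\ge A^f(a,\theta)L_{\Lambda,A}(s)/L_{\Lambda,A}(a)$, a bound in the wrong direction, and (\ref{volume-condition-2}) says nothing about radii below $a$. So the phrase ``exceeds the value that the monotone formula and (\ref{volume-condition-2}) allow'' has no content. Your proposed repair does not close this: Lemmas \ref{harmonic-estimate-annual-1} and \ref{harmonic-estimate-annual-2} control only the annulus $A_q(a,b)$, i.e.\ radii between $a$ and $b$, not the region $A_p(a-\tfrac\eta2,a)$ where your excess volume sits; an $L^2$ Hessian bound does not rule out a cut-locus tube of definite measure; and the statement ``the annulus is close to a metric-cone annulus'' is Proposition \ref{segment-inequ-2}, whose proof in the paper invokes the present lemma, so using it here would be circular.

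The correct fix is much simpler and is what the paper does: put the hole on the \emph{outer} side of $\partial B_p(a)$. If $d(q,q')>b-a+\eta$ for every $q'\in\partial B_p(b)$, the same triangle inequality shows that every minimal geodesic from $p$ of length $\ge b$ misses $B_q(\eta/3)$ at every radius $s\in[a,a+\eta/3]$. Volume comparison centered at $q$ together with the coarea formula produces some $r\in(\tfrac\eta4,\tfrac\eta3)$ with $\text{vol}^f\bigl(B_q(\tfrac\eta3)\cap\partial B_p(a+r)\bigr)\ge c_1(\eta,a,b,\Lambda,A,n)\,\text{vol}^f(A_p(a,b))$. Two applications of (\ref{mono-formula}) --- outward from $a+r$ to $b$ over the surviving directions, and outward from $a$ to $a+r$ over all directions --- then give
\[
\text{vol}^f(\partial B_p(b))\le\frac{L_{\Lambda,A}(b)}{L_{\Lambda,A}(a+r)}\Bigl(\text{vol}^f(\partial B_p(a+r))-c_1\text{vol}^f(A_p(a,b))\Bigr)\le\bigl(1+\delta'(\eta,a,b)\bigr)^{-1}\frac{L_{\Lambda,A}(b)}{L_{\Lambda,A}(a)}\text{vol}^f(\partial B_p(a)),
\]
which contradicts (\ref{volume-condition-2}) once $\omega<\tfrac12\delta'$. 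Everything stays at radii $\ge a$, where the comparison inequalities point the right way, and no harmonic-function or triangle-lemma input is needed.
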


\begin{proof}
Suppose that  the conclusion fails to hold for some $\eta$ and $q_1\in \partial B_p(a)$.  Then for any point in $B_{q_1}(\frac{\eta}{3})$, there is no point $q$ on $\partial B_p(b)$ such that $d(q_1,q)\leq b-a+\frac{\eta}{3}$.
Thus for any $r<\frac{\eta}{3}$, any minimal geodesic from $p$ to $\partial B_p(b)$ does not  intersect with $B_{q_1}(\frac{\eta}{3})\cap \partial B_p(a+r)$. Since
\begin{align}
{\rm vol}^f(B_{q_1}(\frac{\eta}{3}))\geq \frac{L_{\Lambda, A}(\frac{\eta}{3})}{L_{\Lambda, A}(2b)}{\rm vol}^f(A_p(a,b)),
\notag\end{align}
by the coarea formula,
 there exists  some $\frac{\eta}{4}<r<\frac{\eta}{3}$ such that
\begin{align}
{\rm vol}^f( B_{q_1}(\frac{\eta}{3})\cap \partial B_p(a+r))\geq \frac{1}{\eta}\frac{L_{\Lambda, A}(\frac{\eta}{3})}{L_{\Lambda, A}(2b)}{\rm vol}^f(A_p(a,b)).\notag
\end{align}
  Using  the monotonicity  formula (\ref{mono-formula}), we get
\begin{align}
{\rm vol}^f(\partial B_p(b))
&\leq {\rm vol}^f(\partial B_p(a+r)\setminus B_{q_1}(\frac{\eta}{3}))\frac{L_{\Lambda, A}(b)}{L_{\Lambda, A}(a+r)}\notag\\
&\leq ({\rm vol}^f(\partial B_p(a+r))-\frac{1}{\eta}\frac{L_{\Lambda, A}(\frac{\eta}{3})}{L_{\Lambda, A}(2b)}{\rm vol}^f(A_p(a,b)))\frac{L_{\Lambda, A}(b)}{L_{\Lambda, A}(a+r)}.\notag
\end{align}
It follows
\begin{align}
{\rm vol}^f(\partial B_p(b))&\le (1+\delta' (\eta,b,a))^{-1} {\rm vol}^f(\partial B_p(a+r) )
\frac{L_{\Lambda, A}(b)}{L_{\Lambda, A}(a+r)}\notag\\
&\le  (1+\delta' (\eta,b,a))^{-1} {\rm vol}^f(\partial B_p(a) )
\frac{L_{\Lambda, A}(b)}{L_{\Lambda, A}(a)}.\notag
\end{align}
But this is  a contradiction to   (\ref{volume-condition-2}) as $\omega<\frac{1}{2}\delta' (\eta,b,a)$.  Therefore,  the lemma is proved.
\end{proof}

By applying  Theorem 3.6 in $[CC1]$ with the help  of Lemma \ref{cheeger-lemma-2} and  Lemma \ref{distance-appro-3},
 we have the following proposition.

\begin{prop}\label{segment-inequ-2}
Given $\eta>0$, there exist $\omega=\omega(a,b,\eta)$  and $\delta=\delta(\eta)$ such that if
(\ref{ricci-condition-2}) and (\ref{volume-condition-2}) are satisfied,
then there is a length space $X$ such that
$$d_{GH}(A_p(a,b),(a,b)\times_rX) <\eta,$$
where $(a,b)\times_rX$ is an  annulus in $C(X)$ and the metric of $A_p(a,b)$ is measured in a slightly bigger annulus in $M$.
\end{prop}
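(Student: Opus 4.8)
The plan is to run the argument of Theorem 3.6 of [CC1], replacing their Laplacian estimates by the weighted annular estimates of Section 2 and using Lemma \ref{cheeger-lemma-2} and Lemma \ref{distance-appro-3} as the two geometric inputs. Write $r=d(p,\cdot)$ and $p_0=\tfrac12 r^2$. I would first enlarge the annulus to $A_p(a',b')$ with $a'<a<b<b'$, and on it let $h$ solve $\Delta^f h=n$ with boundary values $\tfrac12 a'^2$ on $\partial B_p(a')$ and $\tfrac12 b'^2$ on $\partial B_p(b')$, as in (\ref{f-harmonic-radial}). Since the model volume ratios vary continuously with the radii, hypothesis (\ref{volume-condition-2}) also holds on $A_p(a',b')$ after slightly increasing $\omega$; hence Lemma \ref{harmonic-estimate-annual-1} and Lemma \ref{harmonic-estimate-annual-2} apply and give, on $A_p(a,b)$,
\begin{align}
\frac{1}{\text{vol}(A_p(a,b))}\int_{A_p(a,b)}\bigl(|\nabla h-\nabla p_0|^2+|\text{hess }h-g|^2\bigr)\,e^{-f}d\text{v}<\Psi(\omega,\epsilon;\Lambda,A,a,b).\notag
\end{align}
Thus $h$ is a smooth function whose gradient is, on average, close to the radial field $\nabla p_0=r\,\nabla r$ and whose Hessian is close to $g$ — exactly the data required by the triangular Lemma \ref{cheeger-lemma-2}, for which the non-smooth function $p_0$ itself cannot be used.

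Next I would fix a generic intermediate radius $\rho\in(a,b)$ and take the cross-section to be $X=\partial B_p(\rho)$ equipped with the metric
\begin{align}
d_X(x_1,x_2)=\arccos\Bigl(\max\bigl\{1-\tfrac{1}{2\rho^2}d(x_1,x_2)^2,\,-1\bigr\}\Bigr),\notag
\end{align}
which one checks to be a length space using the intrinsic structure of the annulus. Define $u\colon A_p(a,b)\to(a,b)\times_r X$ by $u(\bar q)=\bigl(r(\bar q),\pi_\rho(\bar q)\bigr)$, where $\pi_\rho(\bar q)$ is the point at which a chosen minimal geodesic from $p$ to $\bar q$ meets $\partial B_p(\rho)$. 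By construction $u$ takes the radial coordinate isometrically onto the cone factor, and by Lemma \ref{distance-appro-3} every point of $\partial B_p(a)$ is joined to $\partial B_p(b)$ by an almost radial geodesic, so $\pi_\rho$ is $1$-Lipschitz up to $\Psi$ and has $\Psi$-dense image; hence $u$ is $\Psi$-almost surjective and it remains only to control pairwise distances.

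The heart of the matter is the cosine law
\begin{align}
\bigl|\,d(\bar q_1,\bar q_2)^2-r_1^2-r_2^2+2r_1r_2\cos d_X(\pi_\rho\bar q_1,\pi_\rho\bar q_2)\,\bigr|<\Psi(\omega,\epsilon;\cdots),\qquad r_i:=r(\bar q_i),\notag
\end{align}
for all $\bar q_1,\bar q_2\in A_p(a,b)$, since the metric of $(a,b)\times_r X$ is given by exactly this formula. To prove it I would integrate the two $L^2$-bounds above in polar coordinates around $p$ (for the radial hypothesis) and over the relevant families of minimal geodesics via the weighted segment inequalities Lemma \ref{equ-seg} and Lemma \ref{equ-rad} (for the Hessian hypothesis): this shows that, outside a set of $d\text{v}^f$-measure $\Psi\cdot\text{vol}^f$, one may pick $y_1,y_2$ within $\eta$ of $\bar q_1,\bar q_2$ for which the geodesic from $p$ through $y_i$, and the family of geodesics joining $y_i$ to the remaining relevant points, satisfy hypotheses (i)--(ii) of Lemma \ref{cheeger-lemma-2} with $\delta=\Psi$. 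Applying Lemma \ref{cheeger-lemma-2} to the triple $\{p,\pi_\rho y_1,y_1\}$ with test point $y_2$, and to the triple $\{p,\pi_\rho y_2,y_2\}$ with test point $\pi_\rho y_1$, then eliminating the intermediate length $d(\pi_\rho y_1,y_2)$ and re-expressing $d(\pi_\rho y_1,\pi_\rho y_2)$ through $d_X$, yields the displayed identity with an error $\Psi$. Choosing $\omega$ — and the curvature/drift scale $\delta=\delta(\eta)$, which is automatic after rescaling when the proposition is applied to tangent cones — small enough that $\Psi<\eta$ gives $d_{GH}(A_p(a,b),(a,b)\times_r X)<\eta$, with the metric of $A_p(a,b)$ read off from a slightly larger annulus as in the statement.

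I expect the principal obstacle to be the passage from the \emph{averaged} estimates of Lemmas \ref{harmonic-estimate-annual-1}--\ref{harmonic-estimate-annual-2} to the \emph{pointwise} cosine law: one must show that the geodesics along which $\int|\text{hess }h-g|$ fails to be small form a set negligible in the \emph{weighted} volume, which is precisely why the $d\text{v}^f$-versions of the segment inequality are needed rather than the Riemannian ones, and one must carry the $\Psi(\omega,\epsilon)$ errors consistently through the several applications of Lemma \ref{cheeger-lemma-2}. A secondary difficulty is the bookkeeping of the nested auxiliary annuli $A_p(a_i,b_i)$ so that the cut-off functions of Lemma \ref{cut-off} exist and Lemma \ref{harmonic-estimate-annual-2} still applies on a region containing $A_p(a,b)$.
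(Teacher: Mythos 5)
Your proposal is correct in substance and uses exactly the same ingredients as the paper — the function $h$ of (\ref{f-harmonic-radial}) with the $L^2$ estimates of Lemma \ref{harmonic-estimate-annual-1} and Lemma \ref{harmonic-estimate-annual-2}, the weighted segment inequalities to convert them into estimates along generic geodesics, the triangular Lemma \ref{cheeger-lemma-2}, and Lemma \ref{distance-appro-3} for the radial connectivity. The difference is organizational: the paper does not construct the Gromov--Hausdorff approximation by hand. It only verifies the two hypotheses of Theorem 3.6 in [CC1] — namely the four-point relation $|d(y,w)-Q(r(x),r(y),r(z),r(w),d(x,z))|<\Psi$ for radial pairs $\{x,y\}$, $\{z,w\}$ (obtained by applying Lemma \ref{cheeger-lemma-2} to the triples $\{x_1,y_1,z_1\}$ and $\{y_1,z_1,w_1\}$ and comparing with a planar configuration), and the almost-radial connectivity of Lemma \ref{distance-appro-3} — and then quotes that theorem, which produces $X$ as the level set $r^{-1}(a)$ with a $\chi$-intrinsic metric. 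You instead unpack that black box: you fix a cross-section $\partial B_p(\rho)$, define its metric by the arccos formula, build the projection map, and prove the cosine law directly. This buys self-containedness but obliges you to establish what [CC1, Thm.\ 3.6] already supplies — in particular that your arccos-defined $d_X$ is genuinely a length metric (the reason [CC1] and the paper pass to the $\chi$-intrinsic metric on the level set rather than using the induced chordal one), and that $\pi_\rho$ is almost surjective and almost well-defined independently of the choice of minimal geodesic. These points are standard but are precisely the nontrivial content of the cited theorem, so if you take this route you should either carry them out or, as the paper does, reduce to the four-point hypothesis and invoke [CC1] directly.
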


\begin{proof} It  suffices  to verify  the  condition for distance function in  Theorem 3.6 in $[CC1]$ .
Let   $x,y,z,w$ be four points  in the annulus $A_p(a,b)$  such that both  pairs $\{x,y\}$ and $\{z,w\}$ are in the radial direction from $p$.   Then by  applying the segment inequality   of  Lemma \ref{approxi-1} to the function $h$  in Lemma \ref{harmonic-estimate-annual-1} and Lemma \ref{harmonic-estimate-annual-2},  we can find  another four points $x_1,y_1,z_1,w_1$ near the four points respectively  such that  Triangular Lemma \ref{cheeger-lemma-2} holds  for two triples   $\{x_1,y_1,z_1\}$ and   $\{y_1,z_1,w_1\}$.
Now we choose four points $x_2,y_2,z_2,w_2$ in the plane $\mathbb R^2$  such that both triples  $ \{O, x_2, y_2\}$  and $\{O,z_2, w_2\}$ are  co-linear.  Moreover,  we  can require that
$$r(x_2)=r(x_1),r(y_2)=r(y_1),r(z_2)=r(z_1),r(w_2)=r(w_1)$$
and
$$d(x_1,z_1)=d(x_2,z_2).$$
Thus  by  using Triangle Lemma \ref{cheeger-lemma-2} to $\{x_1,y_1,z_1\}$ ,  it is easy to see that
\begin{align}\label{dis-1}
 |d(y_2,z_2)-d(y_1,z_1)|< \Psi.
\end{align}

Applying  Triangle Lemma \ref{cheeger-lemma-2} to $\{y_1,z_1,w_1\}$ , we  have
\begin{align}\label{dis-2}
&|d(y_1,z_1)^2r(w_1)+r(w_1)r(z_1)(r(w_1)-r(z_1))\notag\\
&-d(y_1,w_1)^2r(z_1)-r(y_1)^2d(z_1,w_1)|< \Psi.
\end{align}
Note that  the left hand side of  (\ref{dis-2})  is zero when  the triple $\{y_1,z_1,w_1\}$  is  replaced by  $\{y_2,z_2,w_2\}$  in the plane.
Since
$$|d(z_1,w_1)-(r(w_1)-r(z_1))|< \Psi,$$
we get from  (\ref{dis-1}) and  (\ref{dis-2}) that,
\begin{align}
\nonumber |d(y_1,w_1)-d(y_2,w_2)|< \Psi.
\end{align}
On the other hand,  $d(y_2,w_2)$ can be written  as  the following function:
\begin{align}
d(y_2,w_2)=Q(r(x_2),r(y_2),r(z_2),r(w_2), d(x_2,z_2)). \notag
\end{align}
Therefore
\begin{align}
|d(y_1,w_1)-Q(r(x_1),r(y_1),r(z_1),r(w_1), d(x_1,z_1))|< \Psi.\notag
\end{align}
It follows that
\begin{align}\label{distance-function-condition}
|d(y,w)-Q(r(x),r(y),r(z),r(w),d(x,z))| <\Psi.
\end{align}
(\ref{distance-function-condition})  is just   the condition  for  distance function  in  Theorem 3.6 in [CC1].

By (\ref{distance-function-condition}) and Lemma \ref{distance-appro-3} we see that two conditions in Theorem 3.6 in [CC1] are satisfied.  Hence as a consequence of this theorem,  we obtain Proposition \ref{segment-inequ-2}.  In fact,
 $X$ is a level set  of  $r^{-1}(a)$ with a $\chi$-intrinsic metric defined by
\begin{align}
l^\chi(x,y)=\frac{1}{a}\inf\Sigma_{i=1}^n d(x_{i-1},x_i),
\end{align}
 where the infimum  is taken among all the sequences $\{x_i\} \in X$  which satisfy  $x_0=x, x_n=y$ and  $d(x_{i-1},x_i)\leq \chi$.

\end{proof}

It remains   to verify  the condition (\ref{volume-condition-2}) in Lemma \ref{distance-appro-3}.

\begin{lem}\label{volume-sphere}
 Given $0<a<b=a\Omega, \Omega>0$, there exists an integer $N=N(n,\Omega,\Lambda, v,A)$ such that for any sequence of $r_i$ $(1\leq i\leq N)$ with  $\Omega r_{i+1}\leq r_i\leq \frac{1}{b}$,   the volume condition (\ref{volume-condition-2}) for any manifold
 $(M,g)\in\mathcal{M}(\Lambda,v,A)$ in Lemma \ref{distance-appro-3} holds for some annulus $A_p(ar_k,br_k)\subset M $ $(1\leq k\leq N)$
with   rescaling metric $\hat g=\frac{g}{r_k}$.
  \end{lem}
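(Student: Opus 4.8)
The plan is to deduce the claim from a pigeonhole argument on the monotone weighted sphere--volume ratio underlying the monotonicity formula (\ref{mono-formula}). Replacing $f$ by $f-f(p)$ --- which changes neither $Ric^f$, $\Delta^f$, nor any ratio of weighted volumes, and which gives $|f|\le A$ on $B_p(1)$ since $|\nabla f|\le A$ --- put
\[
\Phi(\rho)=\frac{\text{vol}^f(\partial B_p(\rho))}{L_{\Lambda,A}(\rho)},\qquad \tilde L(r)=\int_0^rL_{\Lambda,A}(s)\,ds .
\]
Integrating (\ref{mono-formula}) over the unit sphere of minimizing directions at $p$ shows that $\Phi$ is non-increasing in $\rho$, with $\lim_{\rho\to0^+}\Phi(\rho)=|S^{n-1}|=:c(n)$; hence $\Phi\le c(n)$ everywhere. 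On the other hand, by the coarea formula $\text{vol}^f(B_p(1))=\int_0^1\Phi(s)L_{\Lambda,A}(s)\,ds$, and splitting this integral at $\rho_0$ and using that $\Phi$ is non-increasing,
\[
e^{-A}v\le\text{vol}^f(B_p(1))\le c(n)\tilde L(\rho_0)+\Phi(\rho_0)\bigl(\tilde L(1)-\tilde L(\rho_0)\bigr)
\]
for every $\rho_0\in(0,1]$; so if $\rho_1=\rho_1(n,\Lambda,A,v)\in(0,1]$ is chosen with $c(n)\tilde L(\rho_1)\le\tfrac12e^{-A}v$, there is a constant $c_0=c_0(n,\Lambda,A,v)>0$ with $\Phi(\rho)\ge c_0$ for all $\rho\in(0,\rho_1]$.

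Next I would record the arithmetic of the scales. Since $b=a\Omega$ and $\Omega r_{k+1}\le r_k$, we have $br_{k+1}=a\Omega r_{k+1}\le ar_k$, so the annuli $A_p(ar_k,br_k)$ are pairwise disjoint and ordered from $k=1$ (outermost) inward; moreover $r_k\le r_1\Omega^{-(k-1)}\le\Omega^{-(k-1)}/b$ gives $br_k\le\Omega^{-(k-1)}$. Hence there is $k_0=k_0(n,\Omega,\Lambda,A,v)$ --- the least $k$ with $\Omega^{-(k-1)}\le\rho_1$ --- such that $br_k\le\rho_1$, and in particular $\Phi(br_k)\ge c_0$, for all $k\ge k_0$. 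Ordering the relevant endpoints as $ar_N<br_N\le ar_{N-1}<\dots<ar_{k_0}<br_{k_0}\le\rho_1$ and telescoping,
\[
\prod_{k=k_0}^{N}\frac{\Phi(br_k)}{\Phi(ar_k)}=\frac{\Phi(br_{k_0})}{\Phi(ar_N)}\prod_{k=k_0+1}^{N}\frac{\Phi(br_k)}{\Phi(ar_{k-1})}\ \ge\ \frac{\Phi(br_{k_0})}{\Phi(ar_N)}\ \ge\ \frac{c_0}{c(n)}=:P>0,
\]
where $\Phi(br_k)/\Phi(ar_{k-1})\ge1$ because $br_k\le ar_{k-1}$ and $\Phi$ is non-increasing, while each of the $N-k_0+1$ factors $\Phi(br_k)/\Phi(ar_k)$ is $\le1$ because $ar_k<br_k$. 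Choosing $N=N(n,\Omega,\Lambda,A,v)$ so large that $(1-\omega)^{N-k_0+1}\le P$ --- with $\omega=\omega(a,b,\eta,\Lambda,A)$ the constant of Lemma \ref{distance-appro-3} for a fixed accuracy $\eta$ --- it follows that these factors cannot all be $<1-\omega$, so $\Phi(br_k)\ge(1-\omega)\Phi(ar_k)$ for some $k\le N$.

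Finally, for this $k$ the inequality $\Phi(br_k)\ge(1-\omega)\Phi(ar_k)$ is exactly the volume condition (\ref{volume-condition-2}) for the annulus $A_p(ar_k,br_k)$ in the rescaled metric $\hat g$ that turns it into $A_p(a,b)$: the scaling identity $L_{r_k\Lambda,\,r_kA}(r)=r_k^{-(n-1)}L_{\Lambda,A}(r_kr)$ shows that both sides of (\ref{volume-condition-2}) written for $\hat g$ reduce to the corresponding quantities for $g$ at the radii $ar_k,br_k$, so nothing is lost by rescaling, while the Bakry--\'{E}mery curvature and gradient bounds for $\hat g$ are $Ric^{\hat f}_{\hat g}\ge-(n-1)(r_k\Lambda)^2\hat g$ and $|\nabla\hat f|_{\hat g}\le r_kA$, which since $r_k\le1/b$ are controlled by constants $\Lambda'(\Lambda,b)$, $A'(A,b)$; thus Lemma \ref{distance-appro-3} is applicable to it. I expect the only genuinely delicate point to be the second half of the first paragraph: extracting a \emph{scale-invariant} lower bound $\Phi\ge c_0$ on a definite range $(0,\rho_1]$ out of the single volume hypothesis $\text{vol}(B_p(1))\ge v$. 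This is precisely what pins $\rho_1$, $c_0$, $k_0$, and hence $N$, to depend only on $n,\Omega,\Lambda,A,v$; the remainder is monotonicity and counting.
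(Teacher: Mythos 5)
Your proof is correct and follows essentially the same route as the paper's: a pigeonhole argument on the monotone quantity $\text{vol}^f(\partial B_p(\rho))/L_{\Lambda,A}(\rho)$, bounded above by volume comparison and below by the non-collapsing hypothesis, with the telescoping over nested annuli forcing some ratio to exceed $1-\omega$ once $N$ is large. The only difference is that you spell out the scale-invariant lower bound $\Phi\ge c_0$ on $(0,\rho_1]$, which the paper simply asserts as a consequence of non-collapsing.
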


 \begin{proof}
We only need to give an upper bound of $N$   in case that  the following inequality
 \begin{align}\label {vol-con}
\frac{{\rm vol}_{\hat g}^f(\partial B_p(br_k))}{L_{r_k\Lambda,r_kA}(br_k)}\geq e^{-\omega}\frac{{\rm vol}_{\hat g}^f(\partial B_p(ar_k))}{L_{r_k\Lambda,r_kA}(ar_k)}
 \end{align}
 doesn't hold for any $1\leq k\leq N$. Then by the   monotonicity formula (\ref{mono-formula}),  we know that
 \begin{align}
 \nonumber \frac{{\rm vol}_{\hat g}^f(\partial B_p(br_N))}{L_{r_k\Lambda,r_kA}(br_N)}\leq e^{-N\omega}\frac{{\rm vol}_{\hat g}^f(\partial B_p(br_1))}{L_{r_k\Lambda,r_kA}(br_1)}.
 \end{align}
   Thus by the non-collapsing condition the left-hand side has a lower bound $c_1(n, \Lambda,v,A)$, and by    Volume Comparison  Theorem \ref{volume-comparison} the right-hand side is not greater than $e^{-N\omega}c_2(n, \Lambda, v,A)$. Thus this helps us to  get an upper bound of $N$.  Hence,  if $N$ is larger than this bound, there must be some $k$ such that (\ref{vol-con}) holds. The lemma is proved.
\end{proof}

\begin{proof}[Proof of Theorem\ref{existence-metric-cone}]

Without loss of generality, we may assume that $z=y$  since each point in $(Y,d;y)$ is a limit of sequence of  volume non-collapsing  points in $M_i$.  Also we note that  the  tangent cone   $T_yY$ always exists in our case by Gromov's theorem [Gr].   By the contradiction argument,  we  suppose that $T_yY$ is not a metric cone.  Then  it is easy to see that there exist  numbers  $0<a<b,\eta_0>0$   and a sequence $\{r_i\}$,  which  tends  to $0$, such that for any  length space $X$ annulus  $A_y(ar_i,br_i)\subset (Y, \frac{d}{r_i};y)$  satisfy,
\begin{align}\label{equ-1}
d_{GH}(A_y(ar_i,br_i),(ar_i,br_i)\times_rX)>3r_i\eta_0.
\end{align}
By  taking  a subsequence we may assume that $\Omega r_{i+1}\leq r_i$ $(\Omega=\frac{b}{a})$ and $r_i$ is smaller than $\delta$ in Lemma \ref{distance-appro-3}.  On the other hand,  since $Y$ is the limit of $M_i$, we can find an increasing sequence $m_i$ such that for every $j\geq m_i$
 \begin{align}\label{anulus-close-3}
 d_{GH}(A_y(ar_i,br_i),A_{p_j}(ar_i,br_i))<r_i\eta_0.
 \end{align}
Let $\omega$ be a small  number as chosen in Proposition \ref{segment-inequ-2} and $N$ an   integer such that  Lemma \ref{volume-sphere} is true for the $\omega>0$. Thus by (\ref{anulus-close-3}),  we see that there exist a  subsequence $\{r_{i_k}\} \to 0$ and a
sequence $\{j_k\}\to\infty$ such that
\begin{align}\label{anulus-close-4}
 d_{GH}(A_y(ar_{i_k},br_{i_k}),A_{p_{j_k}}(ar_{i_{k}},br_{i_{k}}))<r_{i_k}\eta_0,
 \end{align}
where   annulus  $A_{p_{j_{k}}}(ar_{i_k},br_{i_k})$ are chosen  as in   Lemma \ref{volume-sphere}.  Now we can apply   Proposition \ref{segment-inequ-2} to show that  for each large $k$ there exists    a length space $X$ such that
\begin{align}
\nonumber d_{GH}(A_{p_{j_{k}}}(ar_{i_k},br_{i_k}),(ar_{i_k},br_{i_k})\times_rX)<r_{i_k}\eta_0.
\end{align}
But  this is impossible by (\ref{equ-1}).  Therefore,   $T_yY$ must be a metric cone.

 The diameter estimate  follows  from Splitting Theorem \ref{splitting-theorem}.   In fact,   if $diam(X)>\pi$, there will be two points  $p,q$ in $X$ such that  $d(p,q)=\pi$.  By  Theorem \ref{splitting-theorem},  it follows  that $C(X)=\mathbb{R}\times Y_1$, where $Y_1$ is also a metric cone,  i.e. $Y_1=C(X_1)$.  It is clear that  $diam(X_1)>\pi$ since  $diam(X)>\pi$. Thus we  can  continue to apply  Theorem \ref{splitting-theorem} to split off $X_1$.  By the induction,   $ C(X)$  should be an Euclidean space,  and consequently  $X$ is a standard  sphere . But this is impossible by the assumption that $diam(X)>\pi$.

  \end{proof}

Following the argument in the proofs  of  Theorem \ref{existence-metric-cone} and Proposition \ref{segment-inequ-2},   we   actually prove the  following strong  approximation of Gromov-Hausdorff    to  the flat space.

\begin{cor}\label{hausddorf-closed}
For all $\epsilon>0$,  there exists $\delta=\delta(n,\epsilon),\eta=\eta(n,\epsilon)$ such that   if
\begin{align}\label{small-curvature-2}
{\rm Ric}^f_{g}\geq-(n-1)\delta^2 g, ~|\nabla f|\leq\eta
\end{align}
and
\begin{align}\label{volume-condition-3} e^{-f(0)}{\rm vol}^f(B_p(1))\geq(1-\delta){\rm vol}(B_0(1))
\end{align}
are satisfied,  then
\begin{align}\label{conclusion-hausddorf-closed}
 d_{GH}(B_p(1),B_0(1))<\epsilon.
\end{align}
\end{cor}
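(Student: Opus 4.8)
The plan is to run the argument of Theorem~\ref{existence-metric-cone} and Proposition~\ref{segment-inequ-2} by contradiction, concentrating at the single scale $1$. Suppose the statement is false: there are $\epsilon_0>0$ and a sequence $(M_j,g_j;p_j)$ with functions $f_j$ satisfying $Ric^{f_j}_{M_j}(g_j)\geq-(n-1)j^{-2}g_j$, $|\nabla f_j|\leq j^{-1}$, and $e^{-f_j(p_j)}\text{vol}^{f_j}(B_{p_j}(1))\geq(1-j^{-1})\text{vol}(B_0(1))$ (so that $B_{p_j}(1)$ has almost maximal volume), yet $d_{GH}(B_{p_j}(1),B_0(1))\geq\epsilon_0$. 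By Gromov's compactness theorem [Gr] and the weighted volume comparison (Theorem~\ref{volume-comparison}) I would pass to a subsequence along which $(M_j,g_j;p_j)$ converges in the pointed Gromov--Hausdorff topology to a length space $(Y;y)$, and along which the renormalized volume measures on $B_{p_j}(1)$ --- which, since $|f_j-f_j(p_j)|\leq 2j^{-1}$ there, are comparable to $d\text{v}_{g_j}$ with constant $\to1$ --- converge weakly to a Radon measure $\mu$ on $B_y(1)$.

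The key first step is to upgrade the single volume inequality (\ref{volume-condition-3}) into the scale-invariant ratio condition (\ref{volume-condition-2}) at every intermediate scale, so that Proposition~\ref{segment-inequ-2} becomes available on all annuli $A_{p_j}(a,b)$. By the Wei--Wylie monotonicity formula (\ref{mono-formula}), $r\mapsto \text{vol}^{f_j}(\partial B_{p_j}(r))/L_{j^{-1},j^{-1}}(r)$ is non-increasing (with $L_{\Lambda,A}$ as in (\ref{L-function})), and hence so is $r\mapsto \text{vol}^{f_j}(B_{p_j}(r))/\int_0^r L_{j^{-1},j^{-1}}(s)\,ds$; both have the same finite limit as $r\to0$, determined only by $n$ and $f_j(p_j)$, and $\int_0^1 L_{j^{-1},j^{-1}}(s)\,ds\to\tfrac1n$. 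The almost-maximality hypothesis (\ref{volume-condition-3}) therefore pins these monotone quantities to within a factor $1-\Psi(j^{-1})$ of their limiting value on all of $(0,1]$; comparing the integrands then gives, for every fixed $0<a<b<1$,
$$\frac{\text{vol}^{f_j}(\partial B_{p_j}(b))}{\text{vol}^{f_j}(\partial B_{p_j}(a))}\;\geq\;\bigl(1-\Psi(j^{-1};a,b)\bigr)\,\frac{L_{j^{-1},j^{-1}}(b)}{L_{j^{-1},j^{-1}}(a)},$$
which is exactly (\ref{volume-condition-2}) with $\omega=\Psi(j^{-1};a,b)\to0$; equivalently, $\mu(B_y(r))=\text{vol}(B_0(1))\,r^n$ for all $r\in(0,1]$. (So the pigeonhole device Lemma~\ref{volume-sphere} used in Theorem~\ref{existence-metric-cone} is not needed here --- the volume hypothesis supplies (\ref{volume-condition-2}) outright at every scale.) Now Proposition~\ref{segment-inequ-2} applies to each $A_{p_j}(a,b)$, $0<a<b<1$, producing length spaces $X_j^{a,b}$ with $d_{GH}\bigl(A_{p_j}(a,b),(a,b)\times_r X_j^{a,b}\bigr)\to0$; passing to the limit and matching the cone structures on overlapping annuli yields a single length space $X$ with $A_y(a,b)\cong(a,b)\times_r X$ for all $0<a<b<1$, so $B_y(1)\setminus\{y\}$ is the metric cone $(0,1)\times_r X$.

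It remains to show that $X$ is the round unit sphere $S^{n-1}$, after which $B_y(1)$ is isometric to $B_0(1)$, contradicting $d_{GH}(B_{p_j}(1),B_0(1))\geq\epsilon_0$ together with $d_{GH}(B_{p_j}(1),B_y(1))\to0$. Since $\mu(B_y(r))=\text{vol}(B_0(1))r^n$ and --- as in [CC1] --- the measures $\text{vol}^{f_j}$ converge to $\mu$, the cross-section of the cone carries maximal volume, equal to $\text{vol}(\partial B_0(1))$. To conclude $X\cong S^{n-1}$ I would induct on $n$ (the case $n=1$ being trivial): as $B_y(1)$ is a noncollapsed limit of spaces whose Bakry-\'{E}mery Ricci curvatures tend to be $\geq0$, the Gauss equation together with the near-totally-umbilic behaviour of the level sets $\partial B_{p_j}(r)$ shows that $X$ is a noncollapsed limit of closed $(n-1)$-manifolds with Bakry-\'{E}mery Ricci curvature $\geq(n-2)-o(1)$ and volume $\to\text{vol}(\partial B_0(1))$, so the lower-dimensional case of the Corollary (the rigidity case of the weighted volume comparison) forces $X$ to be isometric to $S^{n-1}$. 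The main obstacle is precisely this last part: it requires the measured Gromov--Hausdorff refinement of the convergence, hence a volume-convergence theorem for Bakry-\'{E}mery limits, together with the dimension induction yielding the rigidity of the link --- both carried out exactly as in the corresponding arguments of Cheeger and Colding in [CC1].
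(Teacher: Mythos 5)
Your setup and the first half of your argument are sound, and in one respect sharper than the paper's own sketch: the observation that near-equality in the weighted Bishop--Gromov inequality at scale $1$ propagates, via the monotonicity formula (\ref{mono-formula}), to the annulus ratio condition (\ref{volume-condition-2}) at every pair of scales in $(0,1]$ --- so that Proposition \ref{segment-inequ-2} applies on all annuli and the pigeonhole device of Lemma \ref{volume-sphere} is not needed --- is exactly the right way to obtain the cone structure $B_y(1)\setminus\{y\}\cong(0,1)\times_r X$ on the limit. The paper compresses this step into the phrase ``following the argument in the proofs of Theorem \ref{existence-metric-cone} and Proposition \ref{segment-inequ-2}''; your version makes it explicit and correct.

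The final step, however, contains a genuine gap. You propose to identify the link $X$ as a noncollapsed limit of the level sets $\partial B_{p_j}(r)$, equipped with an intrinsic Bakry-\'{E}mery Ricci lower bound $\geq(n-2)-o(1)$ obtained from the Gauss equation and ``near-totally-umbilic'' behaviour, and then to conclude $X\cong S^{n-1}$ by induction on dimension. This does not close: level sets of distance functions need not be smooth hypersurfaces, and, more importantly, all the second-fundamental-form control available in this setting (Lemma \ref{harmonic-estimate-annual-2}, Lemma \ref{small-second-form}) is $L^2$-integral rather than pointwise, so the Gauss equation yields only integral curvature information on the level sets --- not a lower Ricci bound to which a lower-dimensional instance of the Corollary (which in any case concerns balls close to Euclidean balls, not closed manifolds close to round spheres) could be applied. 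The paper avoids the link entirely. Since the almost-maximal volume condition holds, by the same monotonicity you invoked, at every point of $B_{p_j}(1)$ and at every scale, every point $y$ of the limit cone $C(X)$ is itself a vertex of a cone structure on all of $C(X)$; two distinct vertices of a metric cone are joined by a line, so the Splitting Theorem \ref{splitting-theorem} splits off a factor of $\mathbb{R}$ in the direction of every $y\in X$, and iterating forces $C(X)=\mathbb{R}^n$, contradicting $d_{GH}(B_{p_j}(1),B_0(1))\geq\epsilon_0$. Replacing your link-rigidity induction by this splitting argument repairs the proof.
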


\begin{proof}

Suppose that the conclusion (\ref{conclusion-hausddorf-closed})  is not true.  Then  there exist   sequences of $\{\delta_i\}$ and
$\{\eta_i\}$  which  tend $0$ both,   and a sequence of manifolds $\{(M,g_i)\}$  with conditions (\ref{small-curvature-2}) and (\ref{volume-condition-3}) such that
 \begin{align}\label{conclusion-hausddorf-unclosed}
  d_{GH}(B_{p_i}(1),B_0(1))\ge \epsilon_0>0,
 \end{align}
 where  $B_{p_i}(1)\subset M_i$.   Then following  the argument in the proofs  of  Theorem \ref{existence-metric-cone} and Proposition \ref{segment-inequ-2},   it is no hard to show  that $B_{p_i}(1)$ converge to a  limit  $B_x(1)$ which  is a  metric ball with radius $1$ in  a metric-cone $(C(X),d)$ with vertex $x$.   Since the blowing-up  space of $B_x(1)$ at $x$ is $C(X)$ itself,    we  see that
 there are  subsequences $\{j\}$ and  $\{i_j\}$, both of  which   tend to infinity,  such that
\begin{align}
\nonumber (B_{p_{i_j}}(j),j^2g_{i_j},q_{i_j})\rightarrow (C(X),d,x).
\end{align}

For any  $y\in X$, we choose a sequence of points $q_{i_j} \in B_{p_{i_j}}(j) \subset (M_{i_j},j^2g_{i_j})$  which tends to $y$.  Then for any given $R>0$, we have
 \begin{align}
 \nonumber B_{q_{i_j}}(R)(\subseteq (M_{i_j}, j^2g_{i_j})) \rightarrow B_y(R).
 \end{align}
 Since  the volume condition  (\ref{volume-condition-3}) implies
 \begin{align}
 e^{-f(0)}{\rm vol}^f(B_{q_{i_j}}(R))\rightarrow {\rm vol}(B_0(R)),
 \end{align}
by the above argument,  $B_y(R)$ is  in fact a metric ball  with radius $R$  in a metric cone $C(Y)$ with vertex $y$.   Note that $R$ is arbitrary.  We  prove that  $C(X)$ is  also a cone with vertex at $y$.  This shows  that there exists  a line connecting $x$ and $y$ in $C(X)$.   By Splitting Theorem \ref{splitting-theorem},  $C(X)$ can split off a line along the direction $xy$.   Since   $y \in X$ can be taken in  any direction,  $C(X)$ must be an euclidean space. But this is impossible according to  (\ref{conclusion-hausddorf-unclosed}).   The Corollary  is proved.

\end{proof}

\begin{rem} Corollary \ref{hausddorf-closed} is a generalization of    Theorem 9.69 in [Ch2]  in   the Bakry-\'{E}mery  geometry.  It  will be used in Section 5 and Section 6 for the  blowing-up analysis.  We also note that
 $e^{-f(0)}{\rm vol}^f(B_p(1))$ is close to ${\rm vol}(B_p(1))$  since $|\nabla f|$ is small enough.  Thus the volume condition
 (\ref{volume-condition-3})
 can be replaced by
 $${\rm vol}(B_p(1))\ge (1-\delta){\rm vol}(B_0(1)).$$
   \end{rem}

For the rest of this section,  we  prove   the Colding's  volume convergence  theorem   in    the Bakry-\'{E}mery  geometry  by using  the Hessian estimates in Section 2 [Co3].

\begin{theo}\label{volume-convergence}
Let $(M_i^n,g_i)$ be a sequence of  Riemannian manifolds  which satisfy (\ref{ricci-condition-2}).
Suppose that  $M_i$ converge to an $n$-dimensional  compact manifold $M$ in  the Gomov-Hausdorff  topology.  Then
$$\lim_{i\to \infty}{\rm vol}(M_i,g_i)= {\rm vol}(M).$$
\end{theo}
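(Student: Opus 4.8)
The plan is to adapt Colding's original volume-convergence argument (see [C1], [C2]) to the weighted setting, using the $L^2$-Hessian estimates for $f$-harmonic approximations of distance functions that were established in Section 2. The key point is that, since $M$ is assumed to be an $n$-dimensional manifold (in particular the limit is non-collapsed), the Gromov--Hausdorff convergence $M_i \to M$ can be promoted, away from a small bad set, to a quantitative almost-isometry of balls; and on such almost-Euclidean balls the weighted volume $\mathrm{vol}^f$ is $\Psi$-close to the Euclidean volume. Combined with the uniform bound $|f|,|\nabla f|\le A$, which forces $e^{-f}$ to be comparable to a constant on each unit ball (after renormalizing $f$ by $f(p)$), this yields $\mathrm{vol}(B_{p_i}(r))\to \mathrm{vol}(B_{x}(r))$ locally, and then a covering/compactness argument upgrades this to convergence of the total volumes.

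First I would reduce to a local statement: it suffices to show that for every $x\in M$ and every sufficiently small $r>0$ one has $\mathrm{vol}_{g_i}(B_{p_i^{x}}(r))\to \mathrm{vol}(B_x(r))$, where $p_i^x\in M_i$ are chosen converging to $x$ under the Gromov--Hausdorff approximations; summing over a finite cover of the compact limit $M$ (by balls with controlled overlaps, or by a partition-of-unity/packing argument) then gives the global statement. The upper semicontinuity $\limsup_i \mathrm{vol}_{g_i}(B_{p_i^x}(r)) \le \mathrm{vol}(B_x(r)) + \Psi$ is the easy direction: it follows directly from the weighted Bishop--Gromov comparison of Theorem \ref{volume-comparison} together with the fact that $L_{\epsilon_i\Lambda,\epsilon_i A}(s)/s^{n-1}\to 1$ as $\epsilon_i\to 0$ and $e^{-f(p_i^x)}\mathrm{vol}^f_{g_i}$ differs from $\mathrm{vol}_{g_i}$ by a factor controlled by $e^{\pm A \cdot 2r}$, hence $1+\Psi(r)$ on a ball of radius $r$ as $r\to 0$; more precisely one runs the comparison at a scale where the curvature lower bound $-(n-1)\Lambda^2$ is negligible.

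The substantive direction is the reverse inequality $\liminf_i \mathrm{vol}_{g_i}(B_{p_i^x}(r)) \ge \mathrm{vol}(B_x(r)) - \Psi$, i.e.\ the absence of volume drop. Here I would use that $M$ is a manifold: for $\mathcal{H}^n$-a.e.\ $x\in M$, the tangent cone at $x$ is $\mathbb{R}^n$ (this is where the metric-cone theorem, Theorem \ref{existence-metric-cone}, and the manifold hypothesis enter — the density of $M$ is $1$ a.e.), so by Corollary \ref{hausddorf-closed} the rescaled balls $B_{p_i^x}(r)$ are, for $i$ large and $r$ small, $\Psi(r)$-close to a Euclidean $r$-ball in Gromov--Hausdorff distance. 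One then builds the $f$-harmonic functions $h^+$ (or rather $n$ of them, $h_1,\dots,h_n$, approximating the coordinate functions as in Lemma \ref{harmonic-estimate}), whose differentials are $L^2$-close to an orthonormal coframe and whose Hessians are $L^2$-small by (\ref{hessian-integral}); the map $H=(h_1,\dots,h_n)\colon B_{p_i^x}(r/2)\to\mathbb{R}^n$ is then shown, via the segment inequality (Lemma \ref{equ-seg}) and the triangle lemmas, to be an almost-isometry onto its image and onto a region containing $B_0(r/2-\Psi)$; the co-area/change-of-variables estimate $\int |\det DH|\, e^{-f}\,d\mathrm{v} \approx \int 1\, e^{-f}\,d\mathrm{v}$ (with $|\det DH|$ close to $1$ because $|DH^{*}DH - \mathrm{Id}|$ is $L^2$-small) then forces $\mathrm{vol}^f_{g_i}(B_{p_i^x}(r/2))\ge \mathrm{vol}(B_0(r/2)) - \Psi$, hence the same for $\mathrm{vol}_{g_i}$ up to $e^{\pm Ar}$. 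The \textbf{main obstacle} is exactly this last step: controlling $|\det DH|$ pointwise (or in a strong enough integral sense) from the $L^2$-Hessian smallness, since $L^2$ control of the Hessian of $H$ only gives $L^2$ closeness of $DH$ to an isometry, not uniform closeness; the standard fix, which I would carry out here, is to combine the $L^2$-Hessian bound with the gradient bound $|\nabla h_j|\le C(n,\Lambda,A)$ from Proposition \ref{gradient-esti} to get an $L^1$ estimate on $|DH^{*}DH-\mathrm{Id}|$ and then apply a Fubini/segment argument along a.e.\ radial geodesic (as in [C1, §9], [C2]) to conclude that $H$ distorts volume by at most $1+\Psi$. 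Finally, since the exceptional set of $x$ (where the density is $<1$ or the above almost-isometry fails) has $\mathcal{H}^n$-measure zero and $M$ is compact, a Vitali-type covering argument patches the local volume convergence into $\lim_i \mathrm{vol}(M_i,g_i)=\mathrm{vol}(M)$.
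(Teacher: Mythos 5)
Your proposal is correct and follows essentially the same route as the paper: a local volume lower bound proved by building $n$ $f$-harmonic almost-coordinate functions with the $L^2$-Hessian and gradient estimates of Section 2, upgrading them via the segment inequality to an almost-isometric map of degree one whose image contains $B_0(1-\Psi)$, combined with the weighted Bishop--Gromov comparison for the upper bound and a finite covering/disjoint-ball argument to globalize. The only (harmless) deviation is your detour through a.e.\ Euclidean tangent cones and Corollary \ref{hausddorf-closed}; the paper simply uses that small balls in the smooth compact limit $M$ are already Gromov--Hausdorff close to Euclidean balls, which is all the local lemma requires.
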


We first prove a local version  of Theorem \ref{volume-convergence} as follows.

\begin{lem}\label{volume-estimate-4}
Given $\epsilon>0$, there exist $R=R(\epsilon,\Lambda,A,n)>1$  and $\delta=\delta(\epsilon,\Lambda,A,n)$ such that if
\begin{align}
{\rm Ric}^f_{M,g}\geq-(n-1)\frac{\Lambda^2}{R^2} g,|\nabla f|\leq\frac{A}{R},
\end{align}
 and
 \begin{align}d_{GH}(B_p(R),B_0(R))<\delta,\end{align}
  then we have
\begin{align}
{\rm vol}(B_p(1))> {\rm vol}(B_0(1))-\epsilon.
\end{align}
\end{lem}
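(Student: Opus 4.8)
The plan is to run Colding's volume--convergence argument (the global version of which is Theorem~\ref{volume-convergence}), replacing harmonic coordinate functions by $f$-harmonic ones and feeding in the weighted estimates of Sections~1--2. Taking the curvature parameter to be of order $R$ (absorbing $\Lambda$), the hypotheses read $Ric_M^f(g)\ge-\frac{n-1}{\rho^2}g$ with $\rho$ of order $R$ and $|\nabla f|\le A/R$; the point of taking $R$ large (depending on $\Lambda,A,n$) is exactly that the error terms $\Psi(1/\rho,\delta;A,n)$ produced by Lemma~\ref{coordinate-lemma1} and Lemma~\ref{harmonic-estimate} become as small as we wish on $B_p(1)$. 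Also, since $|\nabla f|\le A/R$, the oscillation of $f$ on $B_p(1)$ is $O(1/R)$, so the weighted and unweighted normalized averages over $B_p(1)$ differ by a factor $1+O(1/R)$; I will pass freely between them.

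First I would extract approximate coordinates from $d_{GH}(B_p(R),B_0(R))<\delta$: pulling back the standard basis under a Gromov--Hausdorff approximation $\Phi\colon B_p(R)\to B_0(R)$ gives $2n$ points $q_a^\pm\in M$ with $d(p,q_a^\pm)\ge(1-\Psi(\delta))R$ (so $>\rho$), with $d(p,q_a^+)+d(p,q_a^-)-d(q_a^+,q_a^-)<\Psi(\delta)$, and with the directions mutually almost orthogonal, $|d(q_a^+,q_b^+)^2-d(p,q_a^+)^2-d(p,q_b^+)^2|<\Psi(\delta)R$ for $a\ne b$. Put $b_a^\pm(x)=d(q_a^\pm,x)-d(q_a^\pm,p)$ and let $h_a$ solve $\Delta^f h_a=0$ on $B_p(2)$ with $h_a=b_a^+$ on $\partial B_p(2)$. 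By Lemma~\ref{harmonic-estimate}, the $C^0(B_p(1))$-norm of $h_a-b_a^+$ and the normalized $L^2(B_p(1))$-norms of $\nabla h_a-\nabla b_a^+$ and of $\text{hess}\,h_a$ are all $\le\Psi(1/\rho,\delta;A,n)$, and by Proposition~\ref{gradient-esti}, $|\nabla h_a|\le C(n,A)$ on $B_p(1)$. Set $u=(h_1,\dots,h_n)\colon B_p(1)\to\mathbb R^n$.

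The heart of the argument is to control the Gram matrix $G=\big(\langle\nabla h_a,\nabla h_b\rangle\big)$. On average it is close to the identity: since $|\nabla b_a^+|=1$ a.e., the gradient estimate gives $\frac{1}{\text{vol}(B_p(1))}\int_{B_p(1)}\big|\,|\nabla h_a|^2-1\,\big|\le\Psi$, and for $a\ne b$ the polarization of [CC1] — using the almost-orthogonality above together with the $L^2$ gradient estimates — gives $\frac{1}{\text{vol}(B_p(1))}\int_{B_p(1)}|\langle\nabla h_a,\nabla h_b\rangle|\le\Psi$. Feeding the $L^2$-Hessian bounds into the weighted segment inequality (Lemma~\ref{equ-seg}) as in [CC1], one upgrades this to a pointwise statement: the bad set $B:=\{x\in B_p(1):\|G(x)-\text{Id}\|\ge\Psi^{1/2}\}$ has $\text{vol}(B)\le\Psi^{1/2}\,\text{vol}(B_p(1))$. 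Hence the Jacobian $|Ju|=\sqrt{\det G}$ satisfies $|Ju|\le1+\Psi$ on $\Omega:=B_p(1)\setminus B$ and $|Ju|\le C(n,A)$ everywhere. Separately, $u$ is almost onto $B_0(1)$: Taylor-expanding $|Re_a-\Phi(x)|-R$ for $\Phi(x)\in B_0(1)$ and $R$ large shows $b_a^+(x)\approx-\langle e_a,\Phi(x)\rangle$ up to $\Psi$, so $u$ agrees with $-\Phi$ on $B_p(1)$ up to $\Psi$, whence $u(B_p(1))\supseteq B_0(1-\Psi)$ (alternatively, argue as in Proposition~\ref{proof-splitting}, iterating the splitting construction in the $n$ directions).

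The conclusion then follows from the area formula:
\begin{align*}
\text{vol}(B_0(1))-\Psi\ \le\ \text{vol}\big(u(B_p(1))\big)\ &\le\ \int_{B_p(1)}|Ju|\,d\text{v}\\
&\le\ (1+\Psi)\,\text{vol}(\Omega)+C(n,A)\,\text{vol}(B)\ \le\ \big(1+C'\Psi^{1/2}\big)\,\text{vol}(B_p(1)),
\end{align*}
so that $\text{vol}(B_p(1))\ge\text{vol}(B_0(1))-\Psi'$, which is $\ge\text{vol}(B_0(1))-\epsilon$ once $R$ is large and $\delta$ small. I expect the main obstacle to be the pointwise upgrade in the third paragraph — turning the averaged closeness $G\approx\text{Id}$ into the small-bad-set estimate — since this is exactly the step that needs the $L^2$-Hessian estimates of Section~2 and the weighted segment inequality, and without it the key bound $|Ju|\le1+\Psi$ on a set of nearly full measure is not available. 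A secondary annoyance is carrying the $e^{-f}$ weight through all the estimates, but this is harmless because $|\nabla f|\le A/R$ forces $f$ to be nearly constant on $B_p(1)$.
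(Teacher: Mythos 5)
Your overall strategy is the same as the paper's (Colding's volume convergence run with $f$-harmonic approximations to the coordinate functions, control of the Gram matrix $G=(\langle\nabla h_a,\nabla h_b\rangle)$, and the area formula), but there is a genuine gap at the surjectivity step, and you have misplaced the difficulty. The inequality $\text{vol}(B_0(1))-\Psi\le \text{vol}(u(B_p(1)))$ in your final display requires the honest set containment $u(B_p(1))\supseteq B_0(1-\Psi)$. What you actually establish is that $u$ is pointwise $\Psi$-close to a Gromov--Hausdorff approximation, which only makes the image $u(B_p(1))$ a $\Psi$-dense subset of $B_0(1-\Psi)$; a $\Psi$-dense image of a continuous map can have arbitrarily small (even zero) Lebesgue measure, so the ``whence'' does not follow and the lower bound on $\text{vol}(u(B_p(1)))$ is unjustified. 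The paper closes exactly this hole with a degree argument: after arranging $h$ to map $(B_p(1),\partial B_p(1))$ into $(B_0(1-\Psi),\partial B_0(1-\Psi))$, it uses a Vitali covering plus the weighted segment inequality (Lemma \ref{equ-seg}) applied to the Hessian integrands to produce a point $x$ and, for each $y$, nearby points $x^*,y^*$ along whose connecting geodesic the Hessians are small and at which $G$ is close to $\text{Id}$; this forces $|h(x^*)-h(y^*)|^2=(1+\Psi)r^2$, hence $h(x)\neq h(y)$ for all $y\neq x$, hence $\deg h=1$ and $h(B_p(1))\supseteq B_0(1-\Psi)$.

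Relatedly, the step you flag as the main obstacle --- upgrading the averaged bound on $G-\text{Id}$ to the small-bad-set estimate --- is immediate from Markov's inequality applied to the $L^1$ bound and needs neither the segment inequality nor the Hessian estimates; those tools are needed precisely for the degree argument you omitted (and for the polarization identity giving the off-diagonal Gram estimates in the first place). The rest of your write-up (construction of the $q_a^{\pm}$ and $h_a$ via Lemma \ref{harmonic-estimate}, the treatment of the weight $e^{-f}$ using $|\nabla f|\le A/R$, and the final area-formula computation once surjectivity is granted) is consistent with the paper.
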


\begin{proof}
We  need to construct a Gromov-Hausdorff approximation  map by  using $f$-harmonic  functions  constructed  in Section 2.  Choose
$ n $ points $q_i$ in $B_p(R)$  which is close to $Re_i$ in $B_0(R)$, respectively.
Let  $l_i(q)=d(q,q_i)-d(q_i,p)$ and $h_i$ a solution of
 \begin{align}\Delta^fh_i=0, ~\text{in}~B_1(p), \notag\end{align}
with  $ h_i=l_i$  on  $\partial B_1(p)$.
Then by Lemma \ref {harmonic-estimate},  we have
\begin{align}
 \nonumber \frac{1}{\text{vol }(B_p(1))}\int_{B_p(1)}|\text{hess }h_i|^2 <\Psi(1/R,\delta;A).
\end{align}
By using  an  argument  in [Co3] (cf.  Lemma 2.9),  it follows
\begin{align}\label{orthorgonal-4}
\frac{1}{\text{vol }(B_p(1))}\int_{B_p(1)}|\langle\nabla h_i,\nabla h_j\rangle-\delta_{ij}| <\Psi(1/R,\delta;A).
 \end{align}
Define a map by  $h=(h_1,h_2,...,h_n)$. It is easy to see that  the map $h$ is a $\Psi(\frac{1}{R}, \delta; \Lambda)$ Gromov-Hausdorff approximation to $B_p(1)$  by using the estimate (\ref{c0-h})  in Lemma \ref{harmonic-estimate}.
  Since $h$
 maps  $\partial B_p(1)$  nearby  $\partial B_0(1)$ with distance  less than $\Psi$,  by a small modification  to $h$ we may assume that
 \begin{align}
  \nonumber h:(B_p(1),\partial B_p(1))\longrightarrow(B_0(1-\Psi),\partial B_0(1-\Psi)).
  \end{align}

Next we use  a degree  argument  in [Ch2]  to show that the image of $h$ contains $B_0(1-\Psi)$. 
  By using Vitali covering lemma,   there exists a point $x$ in $B_p(\frac{1}{8})$ such that for any $r$  less than $\frac{1}{8}$ it holds
  \begin{align}\label{almost-hassian-zero-2}
  \frac{1}{\text{vol }(B_x(r))}\int_{B_x(r)}|\text{hess }h_i|<\Psi
  \end{align}
  and
  \begin{align}\label{orthogonal-2}
   \frac{1}{\text{vol }(B_x(r))}\int_{B_x(r)}|\langle\nabla h_i,\nabla h_j\rangle-\delta_{ij}|<\Psi.
  \end{align}
    Let  $\eta=\Psi^{\frac{1}{2n+1}}$.  For any $y$ with  $d(x,y)=r<\frac{1}{8}$,
  applying Lemma \ref{segment-inequ} to $A_1=B_x(\eta r), A_2=B_y(\eta r), e=|\text{hess }h_i|$ , we get from (\ref{almost-hassian-zero-2}),
  \begin{align}
&\int_{B_x(\eta r)\times B_y(\eta r)}\int_{\gamma_{zw}}|\text{hess }h_i(\gamma',\gamma')|\notag\\
 &< r(\text{vol }(B_x(\eta r))+\text{vol }(B_y(\eta r)))\text{vol }(B_x(r)) \Psi\notag.&
  \end{align}
It follows that
 \begin{align}
&\int_{B_x(\eta r)} [  Q(r,\eta) \int_{ B_y(\eta r)}  \int_{\gamma_{zw}} \Sigma_{i=1}^n |\text{hess }h_i(\gamma',\gamma')|
+  | \langle\nabla h_i,\nabla h_j\rangle-\delta_{ij}|]
\notag\\
 &<\text{vol }   (B_x(\eta r)) \Psi\notag,&
  \end{align}
 where $Q(r,\eta)=\frac{{\rm vol}B_x(\eta r)}{ r(\text{vol }(B_x(\eta r))+\text{vol }(B_y(\eta r))){\rm vol}B_x(r)}$.
  Consider
  $$  Q(r,\eta) \int_{ B_y(\eta r)} \int_{\gamma_{zw}}\Sigma_{i=1}^n\text{hess } |h_i(\gamma',\gamma')| + | \langle\nabla h_i,\nabla h_j\rangle-\delta_{ij}|$$
  as a function of $z\in B_x(\eta r)$.   Then  one sees that  there exists a  point $x^*\in B_x(\eta r)$ such that
 \begin{align}\label{orthogonal-3}
 |\langle\nabla h_i,\nabla h_j\rangle(x^*)-\delta_{ij}|< \Psi
 \end{align}
and
 \begin{align}\label{hession-small-geodesic}
\Sigma_{i=1}^n\int_{B_y(\eta r)} \int_{\gamma_{x^*w}}|\text{hess }h_i(\gamma',\gamma')| < r\text{vol }(B_x(r))\eta^{-n}\Psi.
 \end{align}
Here at the  last inequality, we used  the volume comparison (\ref{volume-estimate-2}).
 Moreover  by  (\ref{hession-small-geodesic}),   we  can find  a point  $y^*\in B_y(\eta r)$ such that
\begin{align}\label{almost-hessian-zero-3}
  \Sigma_{i=1}^n\int_{\gamma_{x^*y^*}}|\text{hess }h_i(\gamma',\gamma')| <\eta r.
\end{align}

  By a direct calculation with help of   (\ref{orthogonal-3}) and (\ref{almost-hessian-zero-3}),    we get
 \begin{align}\label{dist-appr-1}
 (h(x^*)-h(y^*))^2=(1+\Psi^{\frac{1}{2n+1}})r^2.
 \end{align}
 This shows that $ h(x)\neq h(y)$ for any $y$ with $d(y,x)\leq \frac{1}{8}$.  On the other hand, for any $y$ with $d(y,x)\geq \frac{1}{8}$,  it is clear that $h(x)\neq h(y)$ since  $h$ is a $\Psi$ Gromov-Hausdorff approximation.
 Thus  we prove that   the pre-image of $h(x)$ is unique.  Therefore the degree of $h$ is  $1$,  and consequently,
  $B_0(1-\Psi)\subset h(B_p(1))$.     The lemma  is proved  because the volume of $B_p(1)$ is almost same to one of $h(B_p(1))$
  by the fact  (\ref{orthorgonal-4}).
\end{proof}

\begin{proof}[Proof of Theorem \ref{volume-convergence}]
Choose finite $r_i$-balls $B(q_i, r_i)$  to  cover $M$  with $r_i$ small enough to make all  balls close to Euclidean balls so that
$$\Sigma _i{\rm vol}(B(q_i, r_i))<(1+\epsilon){\rm vol}(M)$$
 for any given $\epsilon>0$.  Then for $j$ sufficiently large, $M_j$ can be covered by $B(q_{ji}, r_{ji})$ with $r_{ji}\leq(1+\epsilon)r_i$. Thus     by  the volume comparison   (\ref{volume-estimate-2}),  we have
  \begin{align}
  {\rm vol}(M_j)&\leq \Sigma_i{\rm vol}(B(q_{ji}, r_{ji}))\notag\\
  &< (1+\Psi(\delta:\Lambda,  A))\Sigma_i{\rm vol}(B(q_{i}, r_{i})).
   \end{align}
 Here $\delta={\rm max}\{r_i\}$.  Hence  we get
 $$\lim_{j\rightarrow\infty}{\rm vol}(M_j)\leq {\rm vol}(M).$$

 On the other hand,  for any $\epsilon>0$,  we choose small enough $N$ disjoint balls $B(q_i, r_i)$ in $M$ with $B(q_i, r_i)$ close to Euclidean balls  so that
 \begin{align}\label{small-ball-volume-2}
(1+\epsilon)\Sigma _i \omega_n r_i^n \ge  \Sigma _i{\rm vol}(B(q_i, r_i)) >(1-\epsilon){\rm vol}(M).
 \end{align}
 Then for a fixed  large number  $R$, we see that for $j$ large enough there are  corresponding   disjoint balls $B(q_{ij}, r_i)$ in $M_j$ such that $B(q_{ij}, Rr_i)$ is $\delta(N)$-close to $B(q_i, Rr_i)$ in Gromov-Hausdorff topology, where $\delta(N)$ is the number determined in Lemma \ref{volume-estimate-4} when $\epsilon$ is replaced by $\frac{\epsilon}{N}$.
 Apply the above lemma   to each  ball $B(q_{ij}, Rr_i)$ with rescaling  metric $\frac{g_j}{r_i}$,  we get from
 (\ref{small-ball-volume-2}),
 $$(1+\epsilon){\rm vol}(M_j)>(1-\epsilon){\rm vol}(M)-(1+\epsilon)\epsilon.$$
Taking $\epsilon$ to $0$ and $N$ to $\infty$,  it follows
 $$\lim_{j\rightarrow\infty}{\rm vol}(M_j)\ge {\rm vol}(M).$$
The theorem is proved.

 \end{proof}

\vskip3mm
\section {Structure of singular set I:  Case of Riemannian metrics}

According to  Theorem \ref{existence-metric-cone},  we  may  introduce a notion of   $\mathcal S_k$-typed  singular point $y$ in   the
 limit space $(Y,d_\infty;p_\infty)$ of  a sequence of Riemannian manifolds $\{(M_i,g_i;p_i)\}$  in $\mathcal{M}(A,v,\Lambda)$ as Definition  \ref{singular-type},   if  there exists  a tangent cone at $y$ which can be  split   out an euclidean space   $\mathbb R^k$ isometrically  with dimension at most $k$.  By applying  Metric Cone Theorem \ref{existence-metric-cone} to appropriate tangent cone spaces  $T_yY$,   we can follow the  argument in   [CC2] to show that   dimension of  $\mathcal{S}_k$    is less than $k$.  Moreover,      $\mathcal{S}=\mathcal{S}(Y)=\mathcal{S}_{n-2}$, where  $\mathcal{S}(Y)= \cup_{i=0}^{n-1}\mathcal{S}_i$.   The latter  is equivalent to  that  any tangent cone can't be the upper half space,   which  can be proved  by using a topological argument as in  the  case of  Ricci  curvature bounded below (cf. Theorem 6.2 in  [CC2]).
 Thus we have

 \begin{theo}\label{dimension-k}
 Let $\{(M_i, g_i; p_i)\}$ be a sequence of Riemannian manifolds   in  $\mathcal{M}(A,v,\Lambda)$ and  let $(Y,d_\infty;p_\infty)$  be its limit in  the Gromov-Hausdorff topology.
  Then $\text{dim } \mathcal{S}_k\leq k$ and $\mathcal{S}(Y)=\mathcal{S}_{ n-2}$.
 \end{theo}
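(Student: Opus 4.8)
The plan is to transplant the Cheeger--Colding stratification argument [CC2] to the weighted setting, using as black boxes the results established in Sections 1--4. Recall that $\mathcal{S}_k$ is the set of $y\in Y$ at which no tangent cone splits off a Euclidean factor $\mathbb{R}^{k+1}$, so that $\mathcal{S}_0\subseteq\mathcal{S}_1\subseteq\cdots\subseteq\mathcal{S}_{n-1}=\mathcal{S}(Y)$. The key structural observation is that the class $\mathcal{M}(A,\Lambda,v)$ improves under blow-up: rescaling $g\mapsto r^{-2}g$ turns $Ric_M^f(g)\ge-(n-1)\Lambda^2g$ into $Ric_M^f(r^{-2}g)\ge-(n-1)(r\Lambda)^2(r^{-2}g)$ and $|\nabla f|_g\le A$ into $|\nabla f|_{r^{-2}g}\le rA$, so in the limit $r\to0$ the weighted term disappears. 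Consequently every iterated tangent cone of $Y$ is a metric cone (Theorem \ref{existence-metric-cone}) and splits off a line whenever it contains one (Theorem \ref{splitting-theorem}); thus the iterated-cone structure and the cone-splitting mechanism of [CC2] (a metric cone which, at a point off its spine, has a tangent cone splitting off one more Euclidean factor must itself split off a line) are available verbatim.

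For the bound $\dim\mathcal{S}_k\le k$ I would run the usual Federer-type dimension reduction. Assume $\mathcal{H}^{k+\alpha}(\mathcal{S}_k)>0$ for some $\alpha>0$, pass to a point of positive upper $\mathcal{H}^{k+\alpha}$-density of $\mathcal{S}_k$, and blow up. A dimension count on the stratification of the resulting cone, together with the cone-splitting mechanism, forces the blow-up limit to split off $\mathbb{R}^{k+1}$; but the splitting property is closed under Gromov--Hausdorff limits of points in $\mathcal{S}_k$, which contradicts the definition of $\mathcal{S}_k$. The only analytic inputs needed to make ``closed under limits'' precise are the Bakry--Émery almost-splitting theorem \ref{splitting-theorem}, the almost-metric-cone theorem \ref{existence-metric-cone}, the almost-volume rigidity Corollary \ref{hausddorf-closed}, and the volume convergence Theorem \ref{volume-convergence}; with these in hand the argument of [CC2] applies line by line.

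For the identity $\mathcal{S}(Y)=\mathcal{S}_{n-2}$ it suffices, since $\dim\mathcal{S}_{n-2}\le n-2$, to show no tangent cone lies in $\mathcal{S}_{n-1}\setminus\mathcal{S}_{n-2}$; such a cone would split off $\mathbb{R}^{n-1}$ without being all of $\mathbb{R}^n$, hence would be the half-space $\mathbb{R}^{n}_{+}=\mathbb{R}^{n-1}\times[0,\infty)$. Suppose $T_yY=\mathbb{R}^{n}_{+}$. Using Theorem \ref{splitting-theorem} and Corollary \ref{hausddorf-closed} one obtains, for small $r>0$ and large $i$, a $\Psi$-Gromov--Hausdorff approximation $\Phi\colon B_{p_i}(r)\to B^{+}_0(r)\subset\mathbb{R}^{n}_{+}$ built from $f$-harmonic coordinate functions with small integral Hessian (Lemma \ref{harmonic-estimate}, Lemma \ref{harmonic-estimate-annual-2}); regularity of $\Phi$ follows from the gradient estimate of Proposition \ref{gradient-esti}. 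Restricting to the preimage of an interior point of the bounding hyperplane and computing the local degree of $\Phi$ exactly as in [CC2, Theorem 6.2]---where one uses crucially that each $M_i$ is a manifold \emph{without boundary}, so that a ball in $M_i$ cannot map onto a half-ball with the degree behaviour a half-space tangent cone would force---yields a contradiction. Hence $\mathcal{S}(Y)=\mathcal{S}_{n-2}$.

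I expect the main obstacle to be bookkeeping rather than a new idea: one must verify that every quantitative estimate used in the [CC2] dimension-reduction and no-boundary arguments has been replaced by its weighted counterpart ($f$-Laplacian, weighted volume $e^{-f}d\text{v}$) from Sections 1--4, and that in each blow-up step the scaling behaviour of the defining conditions of $\mathcal{M}(A,\Lambda,v)$ really does drive the weighted term to zero so that the limit cones satisfy the \emph{unweighted} rigidity conclusions. The genuinely delicate point is the degree argument excluding the half-space, as it is the only place where the smooth (rather than purely metric) structure of the $M_i$ enters; there one must ensure the almost-splitting map assembled from $f$-harmonic functions is regular enough for the topological degree to be defined and evaluated, which is where Proposition \ref{gradient-esti} and the integral Hessian bounds of Section 2 are used.
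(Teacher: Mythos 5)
Your proposal is correct and follows essentially the same route as the paper, which itself simply invokes the [CC2] dimension-reduction argument (powered by Theorems \ref{splitting-theorem} and \ref{existence-metric-cone}) for $\dim\mathcal{S}_k\le k$ and the topological argument of [CC2, Theorem 6.2] to exclude half-space tangent cones; your observation that rescaling drives the Bakry--\'Emery correction terms to zero is exactly the mechanism the paper relies on (implicitly, via the $\epsilon\Lambda,\epsilon A$ hypotheses in Section 2). You in fact supply more detail than the paper does, which states the reduction to [CC2] in two sentences without proof.
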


 \begin{rem}\label{volume-covergerce-4}  By Theorem $\ref{dimension-k}$,    one sees that  $\mathcal{H}^n( \mathcal{S})=0$.
   Thus by  Theorem \ref{volume-convergence},  we have
  \begin{align}\label{volume-covergerce-5}
 \lim_{i\rightarrow\infty}{\rm vol}(M_i)=\mathcal{H}^n(Y).
 \end{align}
  Moreover, if  $B_i(r)\subset M_i$ converge to
$B_\infty(r)\subset Y$,
 \begin{align}\label{volume-cone}
\lim_{i\rightarrow\infty}{\rm vol}(B_i(r))=\mathcal{H}^n(B_\infty(r)),\end{align}
where $B_i(r)$  and $B_\infty(r)$  are   radius $r$-balls in $M_i$ and $Y$, respectively.
% where $B_{x^*}(u )$ is radius $u$-ball in $C(X)$ and ${\rm vol}(X)$ is an  $(n-1)$-dimensional Hausdorff measure of $X$.
 \end{rem}

We define  $\epsilon$-regular points in $Y$.

\begin{defi} $y\in (Y;p_\infty)$  is called an $\epsilon$-regular point if there exist an  $\epsilon$ and a sequence $\{r_i\}$ such  that
$${\rm dist}_{GH}((B_{y}(1),\frac{1}{r_i}d_\infty), B_0(1))<\epsilon,~\text{as}~i\to \infty.$$
 Here $B_0(1)$ is the unit ball in  $\mathbb{R}^{n}$. We denote  the set of those points by $\mathcal{R}_{\epsilon}$.
  \end{defi}

 In this section,   our main purpose is to prove   an anology of Theorem \ref{thm-cct}  in  the  Bakry-\'Emery geometry.

\begin{theo}\label{dimension-n-4}  Let $\{(M_i, g_i; p_i)\}$ be a sequence in  $\mathcal{M}(A,v,\Lambda)$ and $(Y;p_\infty)$
  its limit  as in  Theorem \ref{dimension-k}.  Suppose that
   \begin{align}\label{integral-curvature}
    \frac{1}{{\rm vol}(B_{p_i}(2))}\int_{B_{p_i}(2)}|\rm Rm|^p<C.\end{align}
    Then for any $\epsilon>0$,  the  following is true:  i)
      \begin{align} \label{p-les-2} \mathcal{H}^{n-2p}(B_{p_\infty}(1)\setminus \mathcal{R}_{2\epsilon})<\infty,~ \text{if}~ 1\leq p<2;\end{align}
ii)
\begin{align}\label{p=2}
     {\rm dim} (B_{p_\infty}(1)\setminus \mathcal{R}_{2\epsilon})\leq n-4,~\text{ if}~ p=2.\end{align}

\end{theo}

The  theorem  is a consequence of following result of   $\epsilon$-regularity.

\begin{prop}\label{epsilon-regularity-1}
For any $v,\epsilon>0$, there exist three small numbers $\delta=\delta(v,\epsilon,n)$, $\eta=\eta(v,\epsilon,n)$,  $\tau=\tau(v,\epsilon,n)$ and a big number  $l=l(v,\epsilon,n)$
  such that if $(M^n,g)$ satisfies
\begin{align}\label{condition-1-regularity}
{\rm Ric}^f_{M,g} >-(n-1)\tau^2, |\nabla f|<\tau, {\rm vol }(B_p(1))\geq v,\end{align}
\begin{align}\label{curvature-int}
\frac{1}{{\rm vol}(B_{p}(3))}\int_{B_p(3)}|{\rm Rm}|<\delta,
\end{align}
and for some metric space $X$,
\begin{align}\label{g-h-close}
{\rm d}_{GH}(B_{p}(l),B_{(0,x)}(l)) <\eta
\end{align}
 holds for $k=2$ or $3$, where $(0,x)$ is the vertex in $\mathbb{R}^{n-k}\times C(X)$, then
 \begin{align}
 {\rm d}_{GH}(B_{p}(1),B_0(1)) <\epsilon.
 \end{align}
 \end{prop}

 To prove Proposition \ref{epsilon-regularity-1},  it suffices  to prove that ${\rm vol} (B_{p}(1))$ is close to  ${\rm vol}(B_0(1))$ according to Corollary  \ref{hausddorf-closed}.  The latter  is equivalent   to show  that ${\rm vol}(B_0(1))$ is close to  ${\rm vol}(B_{o,x}(1))$ by Remark \ref{volume-covergerce-4}.  Thus we shall estimate the volume of section  $X$.  In the following, we will use the idea in [CCT] to  turn into estimating  volume of a pre-image  of $X$ by constructing a  Gromov-Hausdorff approximation.

  Let $h_i$  $(i=1,...,n-k)$ be    $(n-k)$   $f$-harmonic  functions  on $B_p(5)$   with
appropriate boundary values as constructed in   the  proof of Splitting Theorem  \ref{splitting-theorem} (cf. Proposition \ref{proof-splitting}) and
    $h$  an  approximation  of  $\frac{r^2}{2}$  as  constructed in  the proof of  Metric Cone Theorem
   \ref{existence-metric-cone} (also  Lemma \ref{harmonic-estimate-annual-1}, Lemma \ref{harmonic-estimate-annual-2}),
 which is a solution of
  \begin{align}
  \nonumber \Delta^fh=n, ~\text{in}~B_p(5),~h|_{\partial(B_p(5))}=\frac{25}{2}.
  \end{align}
   Let
  \begin{align}
 \nonumber  w_0=2h-\Sigma h_j^2.
  \end{align}
   Define $w$ to be a solution of
 \begin{align}
  \nonumber \Delta^fw=2k,
 ~  w|\partial B_p(4)=w_0.
  \end{align}
Then $w$ is almost positive, so it can be transformed to  be positive by adding a small number.  Set
  \begin{align}
  \nonumber \mathbf{u}^2=w+\Psi>0.
  \end{align}
 We recall some estimates for functions $h_i$, $h$  and $w$:
 \begin{align}\label{orthogonal-5}
 &\frac{1}{{\rm vol}(B_p(3))}\int_{B_p(3)}\Sigma_i|\text{hess }h_i|^2+\Sigma_{i\neq j}|\langle\nabla h_i,\nabla h_j\rangle|\notag\\
 &+\frac{1}{{\rm vol}(B_p(3))}\int_{B_p(3)}\Sigma_i(|\nabla h_i|-1)^2 <\Psi,\end{align}
  \begin{align}\frac{1}{{\rm vol}(B_p(3))}\int_{B_p(3)}(|\nabla h-\nabla r|^2+|\text{hess }h-g|^2) <\Psi,
   \end{align}
    \begin{align}\label{hessian-small-4}
 \frac{1}{{\rm vol}(B_p(3))}\int_{B_p(3)}|\text{hess }{w_0}-\text{hess }w|^2 <\Psi,
 \end{align}
  and
  \begin{align}
  \frac{1}{{\rm vol}(B_p(3))}\int_{B_p(3)}|\nabla w_0-\nabla w|^2 <\Psi.
 \end{align}
 The first two  estimates  are proved in Section 2
 (cf. Lemma \ref{harmonic-estimate}, Lemma \ref{harmonic-estimate-annual-1},  Lemma \ref{harmonic-estimate-annual-2}).  We note that   the condition (\ref{cone-volume-condition-2})  in both  Lemma \ref{harmonic-estimate-annual-1} and  Lemma \ref{harmonic-estimate-annual-2} is satisfied  by (\ref{g-h-close}) according to (\ref{volume-cone}) in  Remark \ref{volume-covergerce-4}.  The others   can also be obtained  in a similar way.

  We define  maps $\Phi$ and $\Gamma$ respectively  by
   \begin{align}
   \Phi=(h_j):B_p(4)\longrightarrow \mathbb{R}^{n-k}\notag
   \end{align}
    and
\begin{align}\Gamma=(h_j,\mathbf{u}):B_p(4)\longrightarrow \mathbb{R}^{n-k+1}.\notag
   \end{align}
 Let
 \begin{align}
  \nonumber V_{\Phi,u}(z)={\rm vol}(\Phi^{-1}(z)\cap U_u),\end{align}
   where $U_u=\Gamma^{-1}(B_0^{n-k}(1)\times[0,u])$ for $u\leq 2$.
Then

\begin{lem}
 \begin{align}\label{average-volume-3}
 \frac{1}{{\rm vol}(B_0^{n-k}(1))}\int_{B_0^{n-k}(1)}|V_{\Phi,u}(z)-\frac{u^{k}}{k}{\rm vol}(X)|<\Psi.
 \end{align}
\end{lem}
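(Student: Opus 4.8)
The plan is to prove (\ref{average-volume-3}) in two stages. First I would pin down the \emph{average} of $V_{\Phi,u}$ over $B_0^{n-k}(1)$ by feeding the coarea formula for $\Phi=(h_1,\dots,h_{n-k})$ into Colding's volume convergence theorem; then I would upgrade this to the pointwise $L^1$ statement by comparing nearby fibres of $\Phi$ via the segment inequality. For the first stage, observe that $U_u=\Gamma^{-1}\big(B_0^{n-k}(1)\times[0,u]\big)$ is exactly the union over $z\in B_0^{n-k}(1)$ of the slices $\Phi^{-1}(z)\cap U_u$, so for a.e.\ regular value $z$ the coarea formula gives
\[ \int_{U_u}J\Phi\,d\text{v}=\int_{B_0^{n-k}(1)}V_{\Phi,u}(z)\,dz,\qquad J\Phi=\sqrt{\det\big(\langle\nabla h_i,\nabla h_j\rangle\big)}. \]
By (\ref{orthogonal-5}) the Gram matrix $(\langle\nabla h_i,\nabla h_j\rangle)$ is $L^1$-close to the identity on $B_p(3)$, and by Proposition \ref{gradient-esti} the $|\nabla h_i|$ are bounded, so $J\Phi$ is $L^1$-close to $1$; moreover $|\nabla f|\le\tau$ small makes $f$ nearly constant on $B_p(4)$, so $d\text{v}$- and $d\text{v}^f$-averages, and weighted and unweighted volumes, agree up to a factor $1+\Psi$ which cancels in all the ratios below. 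Hence $\int_{B_0^{n-k}(1)}V_{\Phi,u}(z)\,dz=\text{vol}(U_u)+\Psi$.

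Next I would evaluate $\text{vol}(U_u)$. By construction the $h_j$ approximate the Euclidean coordinates of the flat factor (Proposition \ref{proof-splitting}) and $\mathbf{u}$, built from $w$ with $\Delta^fw=2k$, approximates the radial coordinate $\rho$ of the cone factor (Lemma \ref{harmonic-estimate-annual-1}, Lemma \ref{harmonic-estimate-annual-2}); thus $\Gamma=(h_j,\mathbf{u})$ is a $\Psi$--Gromov--Hausdorff approximation of a neighbourhood of $B_0^{n-k}(1)\times[0,u]\subset\mathbb{R}^{n-k}\times C(X)$. Squeezing $U_u$ between the $\Gamma$-preimages of the $(1\mp\Psi)$-perturbations of $B_0^{n-k}(1)\times[0,u]$, covering these by small metric balls, and applying Colding's volume convergence theorem ball by ball (Theorem \ref{volume-convergence}, Remark \ref{volume-covergerce-4}, in particular (\ref{volume-cone})) gives
\[ \text{vol}(U_u)=\mathcal{H}^n\big(B_0^{n-k}(1)\times\{\rho\le u\}\big)+\Psi=\omega_{n-k}\,\frac{u^k}{k}\,\text{vol}(X)+\Psi, \]
the last equality being the Fubini identity for the cone (with $\text{vol}(X)=\mathcal{H}^{k-1}(X)$). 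Combined with the coarea identity this already yields $\frac{1}{\text{vol}(B_0^{n-k}(1))}\int_{B_0^{n-k}(1)}V_{\Phi,u}(z)\,dz=\frac{u^k}{k}\,\text{vol}(X)+\Psi$, i.e.\ (\ref{average-volume-3}) with the average in place of the pointwise value.

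The remaining, and main, step is to show $z\mapsto V_{\Phi,u}(z)$ is $L^1$-close to this average. The idea is that, along the gradient field of a generic linear combination of the $h_j$, one flows $\Phi^{-1}(z)\cap U_u$ onto $\Phi^{-1}(z')$ by a map which nearly preserves volume (because $\sum_i|\text{hess}\,h_i|$ is small) and nearly preserves the slice $\{\mathbf{u}\le u\}$ (because $|\langle\nabla h_i,\nabla\mathbf{u}\rangle|$ is small, i.e.\ $\nabla\mathbf{u}$ is almost tangent to the fibres of $\Phi$ --- an estimate obtained exactly as the ones already listed, from the defining relation of $w_0$, (\ref{hessian-small-4}) and the Hessian bounds); thus $|V_{\Phi,u}(z)-V_{\Phi,u}(z')|$ is bounded by the integral of a cut-off multiple of $\sum_i|\text{hess}\,h_i|+\sum_{i\ne j}|\langle\nabla h_i,\nabla h_j\rangle|+|\langle\nabla h_i,\nabla\mathbf{u}\rangle|+|\text{hess}\,w-\text{hess}\,w_0|$ along the geodesic tube joining the two fibres. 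Feeding this quantity into the segment inequality (Lemma \ref{equ-seg}) and integrating in $z$ and $z'$ gives $\int_{B_0^{n-k}(1)}\int_{B_0^{n-k}(1)}|V_{\Phi,u}(z)-V_{\Phi,u}(z')|\,dz\,dz'<\Psi$, hence $L^1$-closeness of $V_{\Phi,u}$ to its average, which with the previous paragraph completes the proof. The hard part is to carry out this fibre comparison rigorously, since the non-tangency of $\nabla\mathbf{u}$ to the fibres, the deviation of $J\Phi$ from $1$, and the motion of the slice under the flow all have to be estimated simultaneously in $L^1$ and absorbed into a single $\Psi$. An alternative, bypassing the flow, is to run the Bishop-type monotonicity argument of Lemma \ref{harmonic-estimate-annual-1} \emph{inside} each fibre: since $\Delta^fw=2k$ and, from (\ref{hessian-small-4}) together with the Hessian bounds, $\text{hess}\,w\approx 2\big(g-\sum_i dh_i\otimes dh_i\big)$, the intrinsic $f$-Laplacian of $\mathbf{u}^2$ on the fibre is nearly $2k$, which bounds $V_{\Phi,u}(z)$ above and below by $(1\pm\Psi)\frac{u^k}{k}\sigma(z)$ with $\sigma(z)$ the density of the fibre at its vertex; the average identity then forces $\sigma(z)=\text{vol}(X)+\Psi$ for a.e.\ $z$.
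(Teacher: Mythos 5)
Your proposal is correct in outline and its first half is exactly the paper's argument: the coarea formula applied to $v_\Phi=\nabla h_1\wedge\dots\wedge\nabla h_{n-k}$ (which is $L^1$-close to $1$ by (\ref{orthogonal-5})) identifies the average of $V_{\Phi,u}$ with $\text{vol}(U_u)/\text{vol}(B_0^{n-k}(1))$, and (\ref{volume-cone}) from Remark \ref{volume-covergerce-4} identifies $\text{vol}(U_u)$ with $\omega_{n-k}\frac{u^k}{k}\text{vol}(X)$ up to $\Psi$. For the variation step the two arguments control the same three error sources you list (motion of the Jacobian, non-orthogonality of $\nabla\mathbf{u}^2$ to the $\nabla h_i$, and the trace of the Hessians), but the implementations differ: the paper does not flow one fibre onto another, it regularizes $V_{\Phi,u}(z)$ to $J_{\Phi,u,\delta}(z)=\int_{\Phi^{-1}(z)}\chi_\epsilon(|v_\Phi|^2)\psi_{u,\delta}$ with cut-offs in both $|v_\Phi|^2$ and $\mathbf{u}^2$, differentiates in $z_j$ via the vector fields $\sum_i a_{ij}\nabla h_i$ (formula (\ref{hessian-5})), and bounds $\|\nabla_z J_{\Phi,u,\delta}\|_{L^1}$ by another application of the coarea formula, concluding by Poincar\'e on the Euclidean ball. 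The regularization is not cosmetic: it is what lets one differentiate legitimately across the set where $v_\Phi$ degenerates and across the non-smooth boundary of $\{\mathbf{u}\le u\}$, two points your flow argument would also have to confront. Note also that Lemma \ref{equ-seg} as stated compares integrals along geodesics between subsets of $M$, whereas your comparison is between level sets indexed by Euclidean $z$ reached by gradient flow lines, so it is not the right tool off the shelf; the paper's coarea-based differentiation sidesteps this. Your alternative fibrewise Bishop-monotonicity argument is genuinely different from anything in the paper and plausible, but it would require an intrinsic comparison theory on the (merely rectifiable, a priori) fibres that the paper never develops, so the differentiation route is the more economical one here.
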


\begin{proof} Set
 \begin{align}
 \nonumber v_\Phi=\nabla h_1\wedge...\wedge\nabla h_{n-k}.
 \end{align}
  Then $v_\Phi$  is     the Jacobian  of $\Phi$ in  $B_0^{n-k}(1)$.   By  (\ref{orthogonal-5}),   one can    show that  it  is almost $1$  almost everywhere in $B_0^{n-k}(1)$.
In fact,  the proof is   the same  to one of  (\ref{orthorgonal-4}).  Hence  by the coarea formula,
 we get
 \begin{align}\label{level-volume-2}
&  \frac{1}{{\rm vol}(B_0^{n-k}(1))}
 \int_{B_0^{n-k}(1)}V_{\Phi,u}(z)\notag\\
& =\frac{1}{{\rm vol}(B_0^{n-k}(1))}\int_{U_u}|v_\Phi|=\frac{{\rm vol}(U_u)}{{\rm vol}(B_0^{n-k}(1))}+\Psi.
 \end{align}

To compute the variation of $V_{\Phi,u}(z)$, we modify $V_{\Phi,u}(z)$ to
 \begin{align}
 J_{\Phi,u,\delta}=\int_{\Phi^{-1}(z)}\chi_\epsilon(|v_\Phi|^2)\psi_{u,\delta},\notag
 \end{align}
 where $\psi_{\delta,u}=\xi( \mathbf{u}^2)$ with  a cut-off function $\xi$  which satisfies
 $$\xi(t)=1,  \text{ for } t\in[0,((1-2\delta)u)^2],$$
 $$\xi(t)=0 \text{ for } t\in[((1-\delta)u)^2,u^2],
$$
and  $\chi_\epsilon(t)$ is another  cut-off function which satisfies
 $$\chi_\epsilon(t)=0, \text{ for } t\in[0,\epsilon],$$
  $$\chi_\epsilon(t)=(1-\epsilon)t, \text{ for } t\in[2\epsilon,1-\epsilon],$$
   $$\chi_\epsilon(t)=1, \text{ for } t\geq 1,$$
   $$|\chi_\epsilon'(t)|\leq 3.$$

 A direct computation shows that
 \begin{align}\label{hessian-5}
 \nonumber \frac{\partial J_{\Phi,u,\delta}}{\partial z_j}=\int_{\Phi^{-1}(z)\cap U_u}\chi_\epsilon'(|v_\Phi|^2)\sum_i a_{i,j}\nabla h_i(|v_\Phi|^2)\psi_{u,\delta},\\
 \nonumber +\int_{\Phi^{-1}(z)\cap U_u}\chi_\epsilon(|v_\Phi|^2)\sum_i a_{i,j}tr(\widehat{\text{hess } h_i})\psi_{u,\delta},\\
 +\int_{\Phi^{-1}(z)\cap U_u}\chi_\epsilon(|v_\Phi|^2)\sum_i a_{i,j}\langle\nabla \psi_\delta,\nabla h_i\rangle.
 \end{align}
 Here $a_{i,j}$ is the inverse of $\langle\nabla h_i,\nabla h_j\rangle$ so that  $\Phi_*(\sum_i a_{i,j}\nabla h_i)=\frac{\partial}{\partial z_j}$,
 and $tr(\widehat{\text{Hess }h_i})$ denotes the trace restricted to $\Phi^{-1}(z)$.
  Using the coarea formula the integrations of the first two terms at the  right side of (\ref{hessian-5})  in $B_0^{n-k}(1)$ can be controlled by  the Hessian estimate  in (\ref{orthogonal-5}).
 Moreover,  similar to  (\ref{orthorgonal-4}),  by  (\ref{orthogonal-5}) and (\ref{hessian-small-4}),  one can show,
 \begin{align}
\frac{1}{{\rm vol}(B_p(1))}\int_{B_p(1)}|\langle\nabla \mathbf{u}^2,\nabla h_j\rangle|<\Psi.\notag
 \end{align}
Thus   the integration of the third term  at the  right side of (\ref{hessian-5}) in $B^{n-k}(1)$  is also  small.
Hence we get
 \begin{align}
\frac{1}{{\rm vol}(B_0^{n-k}(1))}\int_{B_0^{n-k}(1)}|\nabla J_{\Phi,u,\delta}|< \Psi.\notag
\end{align}
 On the other hand,   by (\ref{volume-cone}),  it is easy to see
\begin{align}
\nonumber |\frac{{\rm vol}(U_u)}{{\rm vol}(B_0^{n-k}(1))}-\frac{u^k}{k}{\rm vol}(X)|<\Psi.
\end{align}
Therefore,  we  derive (\ref{average-volume-3})  from (\ref{level-volume-2}),
 \end{proof}

Similar  to  (\ref{average-volume-3}),  by  using  the  above argument   to the map  $\Gamma$,  one can  also obtain the following estimate,
 \begin{align}\label{average-volume-4}
 \frac{1}{{\rm vol}(B^{n-k}(1))\times [0,1])}\int_{B^{n-k}(1)\times [0,1]}  |V_\Gamma (z,u)-u^{k-1}{\rm vol}(X) |<\Psi,
 \end{align}
 where  $V_\Gamma (z,u)={\rm vol}(\Gamma^{-1}(z,u)).$    A similar  proof can be also found  in   Theorem 2.63 in [CCT],
  so we omit it. Thus we see

 \begin{lem}\label{level-volume}
 There exists a subset of $D_{\epsilon,l}\subseteq B^{n-k}(1)\times[0,1]$ which depending only on $\epsilon,  l$  such that
 \begin{align}
 {\rm vol}(D_{\epsilon,l}) >(1-\Psi){\rm vol}(B^{n-k}(1)\times[0,1])\end{align}
  and
  \begin{align}\label{cone-volume-3}
 |V_\Gamma (z,u)-u^{k-1}{\rm vol}(X)|<\Psi,~\forall~(z,u)\in D_{\epsilon,l}.
 \end{align}

 \end{lem}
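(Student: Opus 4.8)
The plan is to obtain Lemma~\ref{level-volume} from the integral bound (\ref{average-volume-4}) by a routine Markov (Chebyshev) inequality argument. Set $F(z,u) := |V_\Gamma(z,u) - u^{k-1}\text{vol}(X)|$, a nonnegative function on $B^{n-k}(1)\times[0,1]$ which, by the coarea formula, is defined for a.e.\ $(z,u)$ and measurable. Let $\Psi_1 = \Psi_1(\epsilon,l)$ denote the right-hand side of (\ref{average-volume-4}) (with $n,v,\Lambda,A$ fixed), so that $\int_{B^{n-k}(1)\times[0,1]}F \le \Psi_1\,\text{vol}(B^{n-k}(1)\times[0,1])$.

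First I would apply Markov's inequality: for every $t>0$,
\[
\text{vol}\big(\{(z,u):F(z,u)>t\}\big)\;\le\;\frac{1}{t}\int_{B^{n-k}(1)\times[0,1]}F\;\le\;\frac{\Psi_1}{t}\,\text{vol}(B^{n-k}(1)\times[0,1]).
\]
Taking $t=\sqrt{\Psi_1}$ and defining
\[
D_{\epsilon,l}:=\big\{(z,u)\in B^{n-k}(1)\times[0,1]:\ F(z,u)\le\sqrt{\Psi_1}\big\},
\]
one gets $\text{vol}(D_{\epsilon,l})\ge(1-\sqrt{\Psi_1})\,\text{vol}(B^{n-k}(1)\times[0,1])$, while $F\le\sqrt{\Psi_1}$ on $D_{\epsilon,l}$ by construction. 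Since $\sqrt{\Psi_1}\to 0$ as the small parameters go to $0$ with $n,v,\Lambda,A$ fixed, the quantity $\sqrt{\Psi_1}$ may be absorbed into the $\Psi$-notation, which yields the measure lower bound and (\ref{cone-volume-3}) simultaneously; the set $D_{\epsilon,l}$ and the bounds controlling it then depend only on $\epsilon,l$ (and the fixed data), as asserted.

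The only step needing genuine work — and hence the main, if modest, obstacle — is to make sure the input (\ref{average-volume-4}) is available. This is the analogue for the map $\Gamma$ of the estimate (\ref{average-volume-3}) already established for $\Phi$: one regularizes the level-set volume $V_\Gamma(z,u)$, differentiates it as in (\ref{hessian-5}), bounds the Jacobian $\nabla h_1\wedge\cdots\wedge\nabla h_{n-k}\wedge\nabla\mathbf{u}$ of $\Gamma$ — which is almost $1$ a.e.\ on $B^{n-k}(1)\times[0,1]$ thanks to (\ref{orthogonal-5}) and (\ref{hessian-small-4}) — and then applies the coarea formula together with the volume-convergence fact (\ref{volume-cone}) of Remark~\ref{volume-covergerce-4} to identify the leading term $u^{k-1}\text{vol}(X)$; the argument runs parallel to that of Theorem~2.63 in [CCT]. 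Once (\ref{average-volume-4}) is in hand, nothing beyond the Markov bookkeeping above is needed, so I do not expect any serious difficulty in the passage to the good set $D_{\epsilon,l}$.
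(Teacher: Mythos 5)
Your proposal is correct and matches the paper's own (largely implicit) argument: the paper likewise derives the lemma directly from the averaged estimate (\ref{average-volume-4}) — itself obtained for $\Gamma$ in parallel with (\ref{average-volume-3}) and Theorem 2.63 of [CCT] — and your Markov-inequality step with the $\sqrt{\Psi_1}$ threshold simply makes explicit the passage from the $L^1$ bound to the good set $D_{\epsilon,l}$ that the paper summarizes with ``Thus we see.''
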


 Next, we use the Bochner identity in terms of  Bakry-Emery Ricci curvature
  to estimate the second fundamental forms of pre-image of $\Phi,\Gamma$.
Let $v_1,v_2,...,v_m$  be $m$ smooth vector fields. Put $v=v_1\wedge v_2\wedge...\wedge v_m$. We compute
\begin{align}
\nonumber &\Delta^f|v|^2=2\langle\Delta^fv,v\rangle+2|\nabla v|^2
\end{align}
 and
\begin{align}
\nonumber &\Delta^f(|v|^2+\eta)^\frac{1}{2}=(|v|^2+\eta)^{-\frac{1}{2}}(|\nabla v|^2
-\frac{\langle\nabla v,v\rangle^2}{|v|^2+\eta})\\
&+(|v|^2+\eta)^{-\frac{1}{2}}\langle\Delta^fv,v\rangle,~\forall~\eta>0.\notag
 \end{align}
It follows,
 \begin{align}\label{second-form-related}
 \nonumber &(|v|^2+\eta)^{-\frac{1}{2}}|\pi(\nabla v)|^2\\
 &\leq-\frac{|v|}{(|v|^2+\eta)^{\frac{1}{2}}}(I-\pi)\Delta^f v+\Delta^f(|v|^2+\eta)^{\frac{1}{2}},
 \end{align}
  where  $\pi:\wedge^m\text{TM}\to v^{\perp} $  is  the compliment of  orthogonal projection to $v$.
 On the other hand,     if we choose  $v_i=\nabla l_i$ and take  map $F=(l_1,...l_m)$ and $v=v_F$, then
  \begin{align}\label{second-estimate-1}
  |v_F||\Pi_{F^{-1}(c)}|^2\leq |v_F|^{-1}|\pi(\nabla v_F)|^2,
    \end{align}
 where  $\Pi_{F^{-1}(c)}$ denote the second fundamental form of the level set $F^{-1}(c)$ in $M$.
  Hence the quantity  $(I-\pi)\Delta^f v_F$ in (\ref{second-form-related}) gives us an estimate for the second fundamental form of map $F$.

 To estimate $(I-\pi)\Delta^f v_F,$  we  use the following formula,
  \begin{align}
  \nonumber&\Delta^f \nabla l_i =\nabla \Delta^f l_i+{\rm Ric}^f(\nabla l_i,\cdot).
  \end{align}
  Note that  in our case  $\Delta^f l_i$ is constant  for map $F=\Phi$  or  $F=\Gamma=(\Phi,\mathbf{u}^2)$.    Then it is easy to see
  \begin{align}\label{second-estimate-2}
 & (I-\pi)\Delta^fv_F\notag\\
  &=2(I-\pi)(\Sigma_{j_1<j_2}\nabla l_1\wedge...\wedge\nabla_{e_s}\nabla l_{j_1}\wedge...
  \wedge\nabla_{e_s}\nabla l_{j_2}\wedge...\nabla l_m)\notag\\
  &  +{\rm tr}({\rm Ric}^f),
  \end{align}
where ${\rm tr}({\rm Ric}^f)$ is the trace over the space spanning by $\nabla l_i$.

\begin{lem}\label{small-second-form}
  There exists a subset $E_{\epsilon,l}\subseteq B^{n-k}(1)\times[0,1]$,   which depends only on $\epsilon,  l$ and satisfies
     \begin{align}
 {\rm vol}(E_{\epsilon,l})\geq(1-\Psi){\rm vol}(B^{n-k}(1)\times[0,1]), \end{align}
such that for any ($z,u)\in E_{\epsilon,l}$ it holds
   \begin{align}\label{second-flat}
 \frac{1}{V_{\Phi,u}(z) }\int_{\Phi^{-1}(z)\cap U_u}|\Pi_{\Phi^{-1}_z}|^2 <\Psi,
 \end{align}
 \begin{align}\label{second-form-1}
 \frac{1}{ V_\Gamma (z,u) }\int_{\Gamma^{-1} (z,u)}|\Pi_{\Phi^{-1}_z}|^2 <\Psi,
  \end{align}
 \begin{align}\label{second-form-2}
 \frac{1}{ V_\Gamma (z,u) }
  \int_{\Gamma^{-1} (z,u)}|\Pi_{\Gamma^{-1}(z,u)}-u^{-1}g_{\Gamma^{-1}(z,u)}\otimes\nabla u|^2 <\Psi.
  \end{align}

\end{lem}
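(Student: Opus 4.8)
The three inequalities are all consequences of the Bochner-type inequality (\ref{second-form-related}), the comparison (\ref{second-estimate-1}) and the identity (\ref{second-estimate-2}), applied to the two fields $v_\Phi=\nabla h_1\wedge\cdots\wedge\nabla h_{n-k}$ and $v_\Gamma=\nabla h_1\wedge\cdots\wedge\nabla h_{n-k}\wedge\nabla\mathbf u$, combined with the $L^2$-Hessian estimates (\ref{orthogonal-5})--(\ref{hessian-small-4}) from Section 2 and the volume/coarea estimates of Lemma \ref{level-volume}. The plan is first to establish a \emph{global} smallness statement of the form $\frac{1}{\text{vol}(B_p(3))}\int_{B_p(3)}|v_F|\,|\Pi_{F^{-1}}|^2 e^{-f}\,d\text{v}<\Psi$ (and, for (\ref{second-form-2}), the analogous statement with $\Pi_{F^{-1}}$ replaced by $\Pi_{F^{-1}}-u^{-1}g\otimes\nabla u$), and then to localize it to generic fibres by the coarea formula together with Chebyshev's inequality, after dividing by the fibre volumes $V_{\Phi,u}(z)$ and $V_\Gamma(z,u)$. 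By (\ref{average-volume-3}) and (\ref{cone-volume-3}) these are comparable to $\tfrac{u^{k}}{k}\text{vol}(X)$ and $u^{k-1}\text{vol}(X)$, hence bounded away from $0$ on the relevant range of $(z,u)$ once the non-collapsing hypothesis (\ref{condition-1-regularity}) is used to bound $\text{vol}(X)$ from below; the set of $(z,u)$ on which localization fails then has measure $<\Psi$, and this is precisely the complement of $E_{\epsilon,l}$.

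\textbf{Carrying out the global estimate.} Integrate (\ref{second-form-related}), for $v=v_\Phi$ and then for $v=v_\Gamma$, against $e^{-f}\,d\text{v}$ over $B_p(3)$. The term $\int\Delta^f(|v|^2+\eta)^{1/2}e^{-f}$ becomes, via the $f$-divergence theorem, a boundary integral over $\partial B_p(3)$ that is controlled using the gradient estimate of Proposition \ref{gradient-esti} and the weighted volume comparison of Theorem \ref{volume-comparison}. In the term $\tfrac{|v|}{(|v|^2+\eta)^{1/2}}|(I-\pi)\Delta^f v|$ we expand $(I-\pi)\Delta^f v$ by (\ref{second-estimate-2}): the Hessian-wedge part is pointwise $\lesssim\sum_i|\text{hess }h_i|$ (plus $|\text{hess }\mathbf u|$ in the $v_\Gamma$ case) and hence $L^1$-small by (\ref{orthogonal-5}) and (\ref{hessian-small-4}); and $\text{tr}(\widehat{Ric^f})=\text{tr}(\widehat{Ric})+\text{tr}(\widehat{\text{hess }f})$ has $\text{tr}(\widehat{Ric})$ bounded by $c_n|Rm|$, $L^1$-small by (\ref{curvature-int}), while $\text{tr}(\widehat{\text{hess }f})$ is treated by the integration by parts described below. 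Since $|v_\Phi|$ is close to $1$ and $|v_\Gamma|$ close to $u$ almost everywhere (by (\ref{orthogonal-5}) and the definition of $\mathbf u$), this gives the global smallness of $\int|v|\,|\Pi_{F^{-1}}|^2e^{-f}$. The coarea formula, in which $|v_F|$ is exactly the Jacobian of $F$, then converts it into $\frac{1}{\text{vol}(B^{n-k}(1))}\int_{B^{n-k}(1)}\big(\int_{F^{-1}(z)\cap(\,\cdot\,)}|\Pi_{F^{-1}}|^2\big)<\Psi$, and Chebyshev's inequality together with the lower bounds on $V_{\Phi,u}$ and $V_\Gamma$ produces (\ref{second-flat}) and (\ref{second-form-1}) off a set of measure $<\Psi$.

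\textbf{The tangential form of $\Gamma$.} For (\ref{second-form-2}) one uses that $\Gamma^{-1}(z,u)$ is the level set $\{\mathbf u=u\}$ inside $\Phi^{-1}(z)$, so $\Pi_{\Gamma^{-1}(z,u)}$ (in $M$) equals $\text{hess }\mathbf u|_{T\Gamma^{-1}(z,u)}/|\nabla\mathbf u|$ up to terms governed by $\Pi_{\Phi^{-1}(z)}$, which are negligible in $L^2$-average by (\ref{second-form-1}). From $\mathbf u^2=w+\Psi$, from (\ref{hessian-small-4}) (so $\text{hess }w$ is $L^2$-close to $\text{hess }w_0$), and from the explicit Hessian of $w_0$ — computed from $\text{hess }h\approx g$, $|\nabla h_j|\approx1$, $\text{hess }h_j\approx0$ in (\ref{orthogonal-5}) — one gets that $\text{hess }\mathbf u$ is $L^2$-close to $u^{-1}g$ along $T\Gamma^{-1}(z,u)$ (where $d\mathbf u$ vanishes and $\mathbf u=u$), while $|\nabla\mathbf u|\to1$. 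Hence $\Pi_{\Gamma^{-1}(z,u)}$ is $L^2$-close over $B_p(3)$ to $u^{-1}g_{\Gamma^{-1}(z,u)}\otimes\nabla u$, and localizing by coarea and Chebyshev as above yields (\ref{second-form-2}), together with (\ref{second-flat}) and (\ref{second-form-1}), on one common set $E_{\epsilon,l}$.

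\textbf{Main obstacle.} The genuinely new difficulty, compared with [CCT], is the contribution $\text{tr}(\widehat{\text{hess }f})$ from the Bakry--\'Emery Ricci tensor, since $|\text{hess }f|$ is not controlled pointwise (only $|\nabla f|\le\tau$). The remedy is to integrate by parts: for an orthonormal frame of $\text{span}\{\nabla h_j\}$ write $\text{hess }f(\nabla h_j,\nabla h_j)=\nabla_{\nabla h_j}\langle\nabla f,\nabla h_j\rangle-\text{hess }h_j(\nabla f,\nabla h_j)$; the second summand is $L^1$-small by (\ref{orthogonal-5}), and the first, integrated against the relevant cut-off times $e^{-f}$, is transferred by the $f$-divergence theorem onto $\langle\nabla f,\nabla h_j\rangle$, which is pointwise $\le c\tau$, the remaining factor being $L^1$-bounded because $\Delta^f h_j$ is constant. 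One must take care that this remaining factor involves $\nabla|\nabla h_j|^{-2}$, hence $\text{hess }h_j$, and is therefore only $L^1$-small rather than $L^\infty$-small, so it is the pointwise smallness of $\langle\nabla f,\nabla h_j\rangle$ that makes the estimate close. The other delicate point is extracting in (\ref{second-form-2}) the \emph{precise} limiting tensor $u^{-1}g\otimes\nabla u$ rather than mere smallness, which forces one to track the full Hessian of $w_0$ and the normalization $|\nabla\mathbf u|\to1$, not just one-sided bounds.
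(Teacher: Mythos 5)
Your overall strategy coincides with the paper's: integrate the Bochner-type inequality (\ref{second-form-related}) for $v_\Phi$ (resp.\ $v_\Gamma$) over $B_p(3)$ against $e^{-f}d\text{v}$, control $(I-\pi)\Delta^f v$ in $L^1$ via (\ref{second-estimate-2}), (\ref{orthogonal-5}) and the integral curvature bound, convert to fibre-wise statements by the coarea formula, and discard a set of $(z,u)$ of small measure using the lower bounds on $V_{\Phi,u}$ and $V_\Gamma$ coming from Lemma \ref{level-volume} and non-collapsing. Your treatment of $\mathrm{tr}(\widehat{\mathrm{hess}\,f})$ by integration by parts against $\langle\nabla f,\nabla h_j\rangle$ is a point where you are more careful than the paper, whose own proof passes over this term silently; some such argument (yours, or the observation that $Ric^f\ge -(n-1)\tau^2 g$ together with the pointwise bound $|Ric|\le c|Rm|$ pins $\mathrm{hess}\,f$ from below, so that $\int|\mathrm{hess}\,f|$ is controlled by $\int\Delta f+\int|Rm|+\tau^2$, with $\int\Delta f\,\phi\, e^{-f}$ handled by parts using $|\nabla f|\le\tau$) is genuinely needed for the lemma to hold.

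The one step that fails as written is your disposal of $\int\Delta^f(|v|^2+\eta)^{1/2}e^{-f}d\text{v}$ by the $f$-divergence theorem: the resulting boundary integral over $\partial B_p(3)$ is $\int_{\partial B_p(3)}\langle\nabla(|v|^2+\eta)^{1/2},n\rangle e^{-f}d\sigma$, and $\nabla(|v|^2+\eta)^{1/2}$ involves the Hessians of the $h_i$ (and of $\mathbf u$), which are controlled only in $L^2$ on the solid ball, not pointwise or on a fixed sphere; Proposition \ref{gradient-esti} bounds $|\nabla h_i|$, hence $|v|$ itself, but says nothing about $\nabla|v|$. The paper avoids this by inserting the cut-off $\phi$ of Lemma \ref{cut-off} and moving $\Delta^f$ onto $\phi$, so that the offending term becomes $\int|\Delta^f\phi|\,\bigl|(|v|^2+\eta)^{1/2}-1\bigr|e^{-f}d\text{v}$, which is small because $\Delta^f\phi$ is bounded and $|v_\Phi|$ is $L^1$-close to $1$ by (\ref{orthogonal-5}). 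With that substitution your argument goes through; the remainder (coarea plus Chebyshev localization, and the identification of $\mathrm{hess}\,\mathbf u$ with $u^{-1}g$ along $T\Gamma^{-1}(z,u)$ for (\ref{second-form-2})) matches the paper's, which for the last estimate simply refers to Theorem 3.7 of [CCT].
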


  \begin{proof}
Let   $\phi$ be a cut-off function  with support  in $B_p(3)$  as constructed in Lemma  \ref{cut-off}.
 Note  that   $v_\Phi$  is almost $1$  almost everywhere in $U_u$ by the Hessian estimates  in (\ref{orthogonal-5}).    Then by  (\ref{second-estimate-2}),
  we  have
 \begin{align}
\nonumber  &\int_{U_u} (|v|^2+\eta)^{-\frac{1}{2}}|\pi(\nabla v)|^2
 e^{-f}d\text{v}\notag\\
&\leq \int_{B_p(3)}|v_\Phi|| (I-\pi)\Delta^fv_\Phi|e^{-f}d\text{v}+\int_{B_p(3)}\phi\Delta^f((|v_\Phi|^2+\eta)^{\frac{1}{2}}-1) e^{-f}d\text{v}\notag
 \\
 & < \Psi+\int_{B_p(3)}|\Delta^f\phi||(|v_\Phi|^2+\eta)^{\frac{1}{2}}-1|e^{-f}d\text{v}.\notag
 \end{align}
By (\ref{second-estimate-1}),  it follows
\begin{align}\label{curvature-inequality-1}
&\int_{U_u}|v_\Phi||\Pi_{\Phi^{-1}(z)}|^2e^{-f}d\text{v}\notag\\
&\le
\lim_{\eta\rightarrow 0}\int_{U_u} (|v|^2+\eta)^{-\frac{1}{2}}|\pi(\nabla v)|^2
 e^{-f}d\text{v} < \Psi.
\end{align}

On the other hand,  by  the coarea formula,  we have
   \begin{align}
\nonumber  \int_{B^{n-k}(1)}\int_{\Phi^{-1}(z)\cap U_u}|\Pi_{\Phi^{-1}(z)}|^2e^{-f}d\text{v}
=\int_{U_u}|v_\Phi||\Pi_{\Phi^{-1}(z)}|^2e^{-f}d\text{v}.
  \end{align}
  Thus (\ref{second-flat}) follows from (\ref{curvature-inequality-1})    immediately.  Again by the  coarea formula  we get (\ref{second-form-1}) from   (\ref{second-flat}).   (\ref{second-form-2}) can be also  obtained by  using the same  argument above to  the map $\Gamma$  (cf.   Theorem 3.7 in  [CCT]).

  \end{proof}

 \begin{proof}[Completion of Proof of Proposition \ref{epsilon-regularity-1}]
We will finish the proof of Proposition \ref{epsilon-regularity-1} by  applying  the Gauss-Bonnet formula to an appropriate level set of $\Gamma$.
  In  case $k=2$,   by  Lemma \ref{level-volume}, we see that there exists $(z,u)$ ($u$ is close to $1$)   such that
  \begin{align}
 \nonumber |2\pi t-\frac{1}{u} V_\Gamma (z,u)|<\Psi,\notag
 \end{align}
 where $t$ is  the radius of $X$.   Note that $X$  is a circle here.
On the other hand,  applying  the Guass-Bonnet formulam to  $\Phi^{-1}(z)\cap U_u$, we have
    \begin{align}
 \nonumber \int_{\Gamma^{-1} (z,u)}H+\int_{\Phi^{-1}(z)\cap U_u} K=2\pi\chi( \Phi^{-1}(z)\cap U_u ),
   \end{align}
where $K$ and  $H$ are Gauss curvature and mean curvature of  $\Phi^{-1}(z)\cap U_u$ and
$\Gamma^{-1} (z,u)$, respectively.
   By  (\ref{second-flat}) and (\ref{curvature-int}) together with  the Gauss-Coddazzi equation,  we  see that
  \begin{align}
\nonumber  |\int_{ \Phi^{-1}(z)\cap U_u } K| <\Psi.
  \end{align}
  Also we  get from  (\ref{second-form-2}),
  \begin{align}
\nonumber  |\int_{\Gamma^{-1} (z,u)}H -\frac{1}{u}V_\Gamma (z,u)|<\Psi.
  \end{align}
  Thus $t$ is close to $\chi( \Phi^{-1}(z)\cap U_u  )$ which is an integer.  The non-collapsing condition  implies that $\chi( \Phi^{-1}(z)\cap U_u  )$ is not zero. So $t > 1-\Psi$.   As a consequence,  the volume of ball  $B(1)\subset \mathbb{R}^{n-1}\times C(X)$  is close to one of  a unit  flat ball.    Hence by Remark \ref{volume-covergerce-4},  we see that ${\rm vol}(B_p(1))$ is close to ${\rm vol}(B_0(1))$.  Therefore,  we prove that  $B_p(1)$ is close to  $B_0(1)$ by  Corollary \ref{hausddorf-closed}.

  In case $k=3$,  we see that there exists $(z,u)$ in Lemma \ref{level-volume} such that
  \begin{align}\label{area-2}
  | V_\Gamma (z,u)-{\rm vol} (X)| <\Psi,
  \end{align}
 as  $u$  is close to 1.
  On the other hand,  by the Gauss-Bonnet formula, we have
  \begin{align}\label{gauss-bonnet}
  \int_{  \Gamma^{-1} (z,u)}K=2\pi\chi( \Gamma^{-1} (z,u)).
  \end{align}
 Since by  (\ref{second-form-1}) and (\ref{curvature-int})  together with  the  Gauss-Coddazzi equation,
\begin{align}\label{curvature-int-sub}
 \int_{  \Gamma^{-1} (z,u)}|R_{\Phi^{-1}(z)}|< \Psi,\notag
\end{align}
where $R_{\Phi^{-1}(z)}$ is the curvature tensor of the submanifold $\Phi^{-1}(z)$,
(\ref{second-form-2})  implies that
\begin{align}
\nonumber |\int_{  \Gamma^{-1} (z,u)}K-V_\Gamma (z,u)| <\Psi.
\end{align}
 By  (\ref{gauss-bonnet}), it follows
  $$ V_\Gamma (z,u) > 4\pi-\Psi,$$
    since the Euler number is even.
  Thus by (\ref{area-2}),  we get
  $${\rm vol}(B_p(1))> {\rm vol}(B(1))-\Psi > {\rm vol}(B_0(1))-\Psi.$$
 As a consequence,  the volume $B_p(1)$ is close to one of  $B_0(1)$.  Therefore,  we also  prove that $B_p(1)$ is close to $B_0(1)$ by Corollary \ref{hausddorf-closed}.

  \end{proof}

 \begin{proof}[Proof of  Theorem\ref{dimension-n-4}] First we define a  distribution  $|\widetilde{\rm Rm}|^p$  ($p\in [1,2])$   on $B_{p_\infty}(2)$ by 
$$\int_{B_{p_\infty}(2)} | \widetilde{\rm Rm}|^p~ h = \overline{ \lim_i}\int_{B_{p_i}(2)}|{\rm Rm}(g_i)|^p  h( \Psi_i(\cdot)),$$
where $\Psi_i:  B_{p_i}(2) \to B_{p_\infty}(2)$ is a sequence of Gromov-Hausdorff approximations and
$h\in C^0(B_{p_\infty}(2))$ with ${\rm supp}(h)\subset B_{p_\infty}(2)$.
Then $| \widetilde{\rm Rm}|^p $  induces a measure  $\mu$ on $B_{p_\infty}(2)$ by
$$\mu(E)=\sup_h \{\int_{B_{p_\infty}(2)} | \widetilde{\rm Rm}|^p~ h|~ 0\le h\le 1, ~h\in C^0(B_{p_\infty}(2))~{\rm and} ~{\rm supp}(h)\subset E\},$$
where $E\subset B_{p_\infty}(2)$ is any closed subset.  In particular, $\mu(B_{p_\infty}(\frac{3}{2}))<\infty$.

 Let $\epsilon$  be a small number and $\delta=\delta(\epsilon)$  the constant    determined in Proposition \ref{epsilon-regularity-1}.   Let $\delta'= (C_0^{-1}\delta(\epsilon))^p$, where the constant $C_0$  will be determined lately. 
  Define a subset in $B_{p_\infty}(2)\subset M_\infty$
  for $\theta\leq\tau$ by
  \begin{align}
  Q(\theta)=\{q\in B_{p_\infty}(1)|~\frac{  \mu (B_q(s))}{\text{vol}(B_q(s))} \geq\delta' s^{-2p}, ~ \exists ~s\leq \theta \}.
  \end{align}
 We prove

  \begin{claim}\label{claim-sing-set}
\begin{align}
B_{p_\infty}(1)\subseteq \mathcal{R}_{3\epsilon}\cup Q(\theta)\cup \mathcal{S}_{n-4}.
\end{align}
  \end{claim}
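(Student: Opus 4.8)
The plan is to prove the contrapositive pointwise: if $q\in B_y(1)$ with $q\notin Q(\theta)$ and $q\notin\mathcal{S}_{n-4}$, then $q\in\mathcal{R}_{3\epsilon}$. Here $\delta,\eta,\tau,l$ are the constants produced by Proposition \ref{epsilon-regularity-1} for the data $n,\epsilon$ and the scale-invariant non-collapsing constant $v'=v'(n,\Lambda,A,v)$ obtained by applying the weighted relative volume comparison (Theorem \ref{volume-comparison}) to the balls $B_{p_i}(1)$. Since $q$ is a limit of volume non-collapsing points of the $M_i$, every tangent cone at $q$ is $n$-dimensional, and by Theorem \ref{existence-metric-cone} it is a metric cone; moreover $\mathcal{S}(Y)=\mathcal{S}_{n-2}$ by Theorem \ref{dimension-k}. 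As $q\notin\mathcal{S}_{n-4}$, some tangent cone at $q$ splits off $\mathbb{R}^{n-3}$ isometrically, and combining this with the cone structure and with $\mathcal{S}(Y)=\mathcal{S}_{n-2}$ forces that tangent cone to be isometric to $\mathbb{R}^{n-k}\times C(X)$ for some length space $X$ and some $k\in\{2,3\}$ — or to $\mathbb{R}^n$, in which case $q\in\mathcal{R}_\epsilon\subseteq\mathcal{R}_{3\epsilon}$ and we are done. Assuming $k\in\{2,3\}$, fix the large $l=l(v',\epsilon,n)$ above and choose $s_i\downarrow0$ realizing this tangent cone, i.e.\ $(B_q(ls_i),s_i^{-1}d_\infty)\to B_{(0,x)}(l)$ in the pointed Gromov-Hausdorff topology, with $(0,x)$ the vertex of $\mathbb{R}^{n-k}\times C(X)$.

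Next I would transfer this to the approximating manifolds and rescale. Write $q$ as a Gromov-Hausdorff limit of points $q_j\in M_{i_j}$; since $Q(\theta)$ is the Hausdorff limit of the sets $Q_{i_j}(\theta)$ and $q\notin Q(\theta)$, we may assume $q_j\notin Q_{i_j}(\theta)$ for all large $j$. Fix $i$ so large that $3s_i\le\theta$, $\Lambda s_i\le\tau$ and $As_i\le\tau$, and put $\hat g_{i_j}=s_i^{-2}g_{i_j}$. Then as $j\to\infty$: $(B_{q_j}^{\hat g_{i_j}}(l))\to(B_q(ls_i),s_i^{-1}d_\infty)$, so this ball is $\eta$-close to $B_{(0,x)}(l)$ for $j$ large; $Ric^f_{\hat g_{i_j}}=Ric^f_{g_{i_j}}\ge-(n-1)\Lambda^2 s_i^2\,\hat g_{i_j}>-(n-1)\tau^2\hat g_{i_j}$ and $|\nabla_{\hat g_{i_j}}f|=s_i|\nabla_{g_{i_j}}f|\le As_i<\tau$; $\text{vol}_{\hat g_{i_j}}(B_{q_j}^{\hat g_{i_j}}(1))=s_i^{-n}\text{vol}_{g_{i_j}}(B_{q_j}(s_i))\ge v'$ by Theorem \ref{volume-comparison}; and, using $q_j\notin Q_{i_j}(\theta)$ at the scale $s=3s_i\le\theta$,
\[
\frac{1}{\text{vol}_{\hat g_{i_j}}(B_{q_j}^{\hat g_{i_j}}(3))}\int_{B_{q_j}^{\hat g_{i_j}}(3)}|Rm(\hat g_{i_j})|=\frac{s_i^{2}}{\text{vol}_{g_{i_j}}(B_{q_j}(3s_i))}\int_{B_{q_j}(3s_i)}|Rm(g_{i_j})|<\frac{\delta}{9}<\delta .
\]
Thus $(M_{i_j},\hat g_{i_j};q_j)$ satisfies all the hypotheses of Proposition \ref{epsilon-regularity-1}, which yields $d_{GH}(B_{q_j}^{\hat g_{i_j}}(1),B_0(1))<\epsilon$ for all large $j$.

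Finally, letting $j\to\infty$ with $i$ fixed, $(B_{q_j}^{\hat g_{i_j}}(1))$ converges to $(B_q(1),s_i^{-1}d_\infty)$, so $d_{GH}((B_q(1),s_i^{-1}d_\infty),B_0(1))\le\epsilon<3\epsilon$. As this holds for every large $i$ and $s_i\downarrow0$, the sequence $\{s_i\}$ witnesses $q\in\mathcal{R}_{3\epsilon}$, which proves $B_y(1)\subseteq\mathcal{R}_{3\epsilon}\cup Q(\theta)\cup\mathcal{S}_{n-4}$.

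The main obstacles I anticipate are the two soft steps. First, in Step 1, making rigorous that $q\notin\mathcal{S}_{n-4}$ forces a tangent cone of exactly the product-with-cone form $\mathbb{R}^{n-k}\times C(X)$ with $k\le3$ required by Proposition \ref{epsilon-regularity-1}; this needs the metric cone theorem together with $\mathcal{S}(Y)=\mathcal{S}_{n-2}$ and non-collapsing to rule out half-space cross sections and lower-dimensional Euclidean factors. Second, in Step 2, the uniform scale-invariant non-collapsing bound $\text{vol}_{g_{i_j}}(B_{q_j}(s))\ge v's^n$ and the passage from $q\notin Q(\theta)$ (a statement about the Hausdorff limit in $Y$ of the sets $Q_{i_j}(\theta)$) to $q_j\notin Q_{i_j}(\theta)$ for large $j$ must be set up carefully, though both follow from the weighted volume comparison and the definition of the Hausdorff limit.
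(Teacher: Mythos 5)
Your proposal is correct and follows essentially the same route as the paper: rescale at a point outside $Q(\theta)\cup\mathcal{S}_{n-4}$ along a sequence realizing a tangent cone of the form $\mathbb{R}^{n-k}\times C(X)$ with $k\le 3$, verify the hypotheses of Proposition \ref{epsilon-regularity-1} on the approximating manifolds, and conclude $\epsilon$-closeness to the Euclidean ball, hence membership in $\mathcal{R}_{3\epsilon}$. The only differences are cosmetic (a direct contrapositive rather than the paper's contradiction argument), and you are in fact more explicit than the paper in checking the rescaled curvature-integral hypothesis via $q_j\notin Q_{i_j}(\theta)$ and the scale-invariant non-collapsing bound.
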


  Suppose that the claim is not true.  Then there exist a point $z\bar
{\in}\mathcal{R}_{3\epsilon}\cup Q(\theta)\cup \mathcal{S}_{n-4}$ and a tangent cone $T_zY$  which is $\mathbb{R}^{n-k}\times C(X)$ for $k=2$ or $3$ and ${\rm d}_{GH}(B_{z_\infty}(1),B_0(1))>3\epsilon$, where $z_\infty\cong z$. Thus there is a sequence $r_i$ approaching $0$ such that $(Y,\frac{d}{r_i};z)\rightarrow T_zY$ in  the Gromov-Hausdorff
  topology.
 Hence for  large enough $i$,  we have
\begin{align}
{\rm d}_{GH}(B_z(r_i),B_0(r_i))\geq 3\epsilon r_i, \notag\end{align}
 \begin{align} {\rm d}_{GH}(B_z(lr_i),B_{(0,x)}(lr_i))\leq  \frac{1}{2} r_i\eta, \notag
\end{align}
and 
\begin{align} \frac{  \mu (B_z(4r_i))}{\text{vol}(B_z(4r_i))} < \delta' (4r_i)^{-2p},\notag
\end{align}
where   $\eta=\eta(\epsilon)<<1$  and $l=l(\epsilon)>>1$  are both determined  in Proposition \ref{epsilon-regularity-1}.  For fixed  $i$ in the above inequalities,  we take   $j$ large enough  and  choose a point $z_j\in M_j\to z$   such that
\begin{align}\label{equ-con-1}
{\rm d}_{GH}(B_{z_j}(r_i),B_0(r_i))\geq2\epsilon r_i,\end{align}
\begin{align}\label{l-ball-closed} {\rm d}_{GH}(B_{z_j}(lr_i),B_{(0,x)}(lr_i))\leq r_i\eta, 
\end{align}
and 
\begin{align}\label{small-p-energy} \frac{1}{\text{vol}(B_{z_j}(4r_i))} \int_{B_{z_j}(3r_i)}|{\rm Rm}(g_i)|^p  < 2 \delta' (4r_i)^{-2p}.
\end{align}
By the volume comparison, we get from (\ref{small-p-energy}),  
\begin{align}
 \frac{1}{\text{vol}(B_{z_j}(3r_i))} \int_{B_{z_j}(3r_i)}|{\rm Rm}(g_i)|^p  < C_0' \delta' (3r_i)^{-2p},\notag
\end{align}
where  $C_0=C_0(A,v,\Lambda)$.
The H\"older inequality implies
\begin{align}\label{curvaure-comparison}  
 \frac{1}{\text{vol}(B_{z_j}(3r_i))} \int_{B_{z_j}(3r_i)}|{\rm Rm}(g_i)|  < C_0 ( \delta' )^{\frac{1}{p}}(3r_i)^{-2}=\delta(\epsilon) (3r_i)^{-2}.
\end{align}
Thus applying Proposition  \ref{epsilon-regularity-1} to the manifold $M_j$ with   rescaling  metric $\frac{g_i}{ r_i}$   together with  conditions   (\ref{equ-con-1}), (\ref{l-ball-closed}) and  (\ref{curvaure-comparison} ),     we obtain
\begin{align}{\rm d}_{GH}(B_{z_j}(r_i),B_0(r_i))\leq\epsilon r_i.\notag
\end{align}
But this is  impossible by  (\ref{equ-con-1}).  The claim is proved.

 By Claim \ref{claim-sing-set},  $B_{p_\infty}(1)\setminus \mathcal{R}_{2\epsilon}\subseteq Q(\theta)\cup \mathcal{S}_{n-4}$. Now we  estimate $\mathcal{H}^{n-2p}(Q(\theta))$.  By Vitali covering lemma,  for any $r>0$ there is a collection of disjoint balls $B_{q_j}(s_j) \subset Q(\theta)$ ($s_j\le r)$  such that $\bigcup B_{q_j}(5s_j)\supseteq Q(\theta)$   with property 
$$\frac{1}{{\rm vol}(B_{q_j}(s_j))} \mu(B_{q_j}(s_j))  \geq\delta' s_j^{-2p}, $$
where $q_j\in Q(\theta)$.     By the volume comparison,  it follows
\begin{align}
\Sigma s_j^{n-2p}\leq \frac{c(\Lambda,v,A)  \mu(B_{p_\infty}(\frac{3}{2}))}{\delta}.
\end{align}
 Taking $ r\to 0$, we  get
  \begin{align}  \mathcal{H}^{n-2p}(Q(\theta))<\infty.\notag
\end{align}
 Hence (\ref{p-les-2})and (\ref{p=2}) follows from the above estimate immediately.  

  \end{proof}

\vskip3mm

  \section{Structure of singular set II:  Case of K\"ahler metrics}

In this section,  we  study the limit  space of  a sequence of  K\"ahler
  metrics arising from  solutions of certain complex Monge-Amp\`ere equations  for the existence 
 of K\"ahler-Ricci soliton  on  a Fano manifold  via  the continuity method [TZ1], [TZ2]. We assume that $(M,g)$ is a compact K\"ahler manifold with positive first Chern class $c_1(M)>0$ (namely, $M$ is Fano),  and
$\omega_g$ is the   K\"ahler form  of $g$  in $2\pi c_1(M)$.  Then  there exists
  a Ricci potential $h$ of the  metric $g$ such that
 \begin{align}{\rm Ric}(g)-\omega_g=\sqrt{-1}\partial\overline\partial h,~
 \int_M e^h\omega_g^n=\int_M\omega^n=V.\notag
 \end{align}

 In [TZ1], Tian and Zhu considered a family of complex Monge-Amp\`ere equations  for K\"ahler potentials $\phi$ on $M$,
 \begin{align}\label{ma-equ} \text{det}(g_{i\overline j}+\phi_{i\overline j})=\text{det}(g_{i\overline j})e^{h-\theta_X-X(\phi)-t\phi},
 \end{align}
 where $t\in [0,1]$ is a parameter and $\theta_X$ is a real valued  potential of a reductive  holomorphic vector field on $M$ which is defined
\begin{align}\bar{\partial}\theta_X=i_X\omega_{g}, ~
 \int_M e^{\theta_X}\omega_g^n=V,\notag
 \end{align}
according to the choice of $g$ with $K_X$-invariant.  The equations (\ref{ma-equ}) are equal  to
 \begin{align}\label{ricci-equ}
 {\rm Ric}(\omega_{\phi})-L_X\omega_{\phi}=t\omega_{\phi}+(1-t)\omega_g.
 \end{align}
Thus $\omega_{\phi}$  will define a K\"ahler-Ricci soliton if $\phi$  is a solution of  (\ref{ma-equ}) at $t=1$. It was proved
the set $I$ of $t$ for which (\ref{ma-equ}) is solvable is open [TZ1]. In the other words, there exists $T\le 1$ such that
 $I=[0,T)$. (\ref{ricci-equ}) implies
 \begin{align}\label{curvature-condition-kahler}
{\rm Ric}(\omega_{\phi})+\sqrt{-1}\partial\overline\partial(-\theta_X(\phi))\ge t\omega_{\phi},\end{align}
where $\theta_X(\phi)=\theta_X +X(\phi)$ is a potential of $X$ associated to $\omega_\phi$, which is uniformly bounded [Zh].

 \begin{lem}\label{lemma-partial-theta}
   $|\overline\partial(\theta_X+X(\phi))|=|X|_{\omega_{\phi}}$ and  $\Delta_{\overline\partial}(\theta_X(\phi))$  are   both uniformly  bound by  $C(M,\omega, X)$,
 where  $\Delta_{\overline\partial}=\frac{1}{2}\Delta$ is a $\overline\partial$-Lapalace operator associated to
$\omega_{\phi}$.

 \end{lem}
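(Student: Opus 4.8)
\emph{Plan of proof.} The first identity is purely pointwise. By construction the function $\theta_X(\phi)=\theta_X+X(\phi)$ satisfies $\overline\partial\theta_X(\phi)=i_X\omega_\phi$: indeed $\overline\partial\theta_X=i_X\omega_g$ by definition of $\theta_X$, while $\overline\partial(X(\phi))=i_X(\sqrt{-1}\partial\overline\partial\phi)$ because $X$ is holomorphic, and adding these gives $\overline\partial\theta_X(\phi)=i_X(\omega_g+\sqrt{-1}\partial\overline\partial\phi)=i_X\omega_\phi$. Pairing this $(0,1)$-form with itself in the metric $\omega_\phi$ yields the elementary identity $|\overline\partial\theta_X(\phi)|_{\omega_\phi}^2=|i_X\omega_\phi|_{\omega_\phi}^2=|X|^2_{\omega_\phi}$, which is the asserted equality. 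Thus the substance of the lemma is the two \emph{uniform} bounds $|X|_{\omega_\phi}\le C(M,\omega,X)$ and $|\Delta_{\overline\partial}\theta_X(\phi)|\le C(M,\omega,X)$.

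The plan is to show that these two quantities are coupled through the Monge--Amp\`ere equation, so that controlling one controls the other. From $\overline\partial\theta_X(\phi)=i_X\omega_\phi$ one gets $X^i=g_\phi^{i\overline j}\partial_{\overline j}\theta_X(\phi)$, hence (using that $\theta_X(\phi)$ is real) $X(\theta_X(\phi))=|X|^2_{\omega_\phi}$, and also $X(\phi)=\theta_X(\phi)-\theta_X$. Taking $\log$ of (\ref{ma-equ}), differentiating along $X$, and noting that $\Delta_{\overline\partial}\theta_X(\phi)=\text{div}_{\omega_\phi}(X)=\partial_iX^i+X(\log\det g_\phi)$, these substitutions collapse the second--order term $X(X(\phi))$ and lead to the identity
\begin{align}\label{identity-lemma-proposal}
\Delta_{\overline\partial}\theta_X(\phi)+|X|^2_{\omega_\phi}=\text{div}_{\omega_g}(X)+X(h)+t\,\theta_X-t\,\theta_X(\phi),
\end{align}
whose right-hand side involves only the fixed data $\omega_g,h,X$ and the function $\theta_X(\phi)$; since $\theta_X(\phi)$ is uniformly bounded in $C^0$ by [Zh], the right-hand side of (\ref{identity-lemma-proposal}) is $\le C(M,\omega,X)$. (Equivalently, tracing (\ref{ricci-equ}) against $\omega_\phi$ expresses $\Delta_{\overline\partial}\theta_X(\phi)$ through the scalar curvature of $\omega_\phi$, $n$ and $\text{tr}_{\omega_\phi}\omega_g$, but the identity (\ref{identity-lemma-proposal}) is more convenient along a possibly degenerating family.) Consequently, once $\sup_M|X|_{\omega_\phi}$ is controlled, (\ref{identity-lemma-proposal}) immediately yields the bound for $\Delta_{\overline\partial}\theta_X(\phi)$, and the lemma reduces to the single estimate $\sup_M|X|^2_{\omega_\phi}\le C(M,\omega,X)$.

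To obtain that estimate I would apply the weighted Bochner identity (\ref{bochner-inequ}) to $u=\theta_X(\phi)$ with weight $f=-\theta_X(\phi)$: here (\ref{curvature-condition-kahler}) supplies the positive lower bound $\text{Ric}^{f}_{\omega_\phi}\ge t\,\omega_\phi>0$, and (\ref{identity-lemma-proposal}) shows that $\Delta^{f}\theta_X(\phi)$ is bounded. One then runs a maximum-principle argument for $|\nabla\theta_X(\phi)|^2$ exactly along the lines of the proof of Proposition \ref{gradient-esti}, using the positivity of $\text{Ric}^{f}$ and the elementary inequality $|\text{hess}\,\theta_X(\phi)|^2\ge\frac1{2n}|\Delta\theta_X(\phi)|^2$ to produce a good quadratic term that dominates, and the $C^0$-bound on $\theta_X(\phi)$ to handle the lower-order terms. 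The main obstacle is precisely this last step: the right-hand side of (\ref{identity-lemma-proposal}) contains the \emph{fixed} function $\text{div}_{\omega_g}(X)+X(h)+t\,\theta_X$, whose gradient with respect to the (a priori possibly degenerating) metric $\omega_\phi$ appears in the Bochner computation through the cross term $\langle\nabla\theta_X(\phi),\nabla(\Delta^{f}\theta_X(\phi))\rangle$ and is not obviously bounded; closing the estimate therefore requires the finer a priori bounds of the continuity method established in [TZ1], [TZ2], [Zh], which in fact already contain the bound $|X|_{\omega_\phi}\le C(M,\omega,X)$ (so that one may alternatively simply cite them). Granting that bound, the lemma follows from (\ref{identity-lemma-proposal}).
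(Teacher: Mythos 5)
Your overall route is the same as the paper's: the pointwise identity $|\overline\partial\theta_X(\phi)|_{\omega_\phi}=|X|_{\omega_\phi}$, a Futaki-type identity (your displayed identity agrees with the paper's (\ref{lapalace-theta}) after using $\mathrm{div}_{\omega_g}(X)+X(h)=-\theta_X$), the reduction of the Laplacian bound to the gradient bound via that identity, and a Bochner/maximum-principle argument with drift $X$ for $|\overline\partial\theta_X(\phi)|^2$. The problem is that you stop exactly at the decisive step: you declare the cross term $\langle\overline\partial\theta_X(\phi),\overline\partial\Delta_{\overline\partial}\theta_X(\phi)\rangle$ "not obviously bounded" and fall back on citing a priori estimates from [TZ1], [TZ2], [Zh] that would already contain the conclusion. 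That estimate is precisely what the lemma is meant to supply, so as written your argument is circular unless the external bound really is in the literature in the required uniform form; the paper does not rely on it.

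The missing idea is a cancellation driven by the identity itself. In the Bochner computation one groups the cross term as $2\,\mathrm{re}\langle\overline\partial\theta_X(\phi),\overline\partial(\Delta_{\overline\partial}\theta_X(\phi)+|\overline\partial\theta_X(\phi)|^2)\rangle$, absorbing the extra piece into the drift term $X(|\overline\partial\theta_X(\phi)|^2)$, and then substitutes (\ref{lapalace-theta}) so that the argument of $\overline\partial$ becomes $(1-t)X(\phi)-\theta_X(\phi)$. Since $\overline\partial(X(\phi))=i_X(\omega_\phi-\omega_g)=\overline\partial\theta_X(\phi)-\overline\partial\theta_X$, this cross term equals $-2t|\overline\partial\theta_X(\phi)|^2-2(1-t)|X|^2_{g}$, where the last norm is taken in the \emph{fixed} metric $g$ and is bounded by $C(M,\omega,X)$; no gradient of a fixed function measured in the possibly degenerating metric $\omega_\phi$ ever appears. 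Combined with the curvature term, which by (\ref{ricci-equ}) contributes $+t|\overline\partial\theta_X(\phi)|^2+(1-t)|X|^2_g$, one arrives at the clean inequality (\ref{lapalace-partial-theta}); the quadratic term $|\nabla\overline\nabla\theta_X(\phi)|^2\ge\frac1n\bigl(|\overline\partial\theta_X(\phi)|^2-C_1\bigr)^2$, with $C_1$ controlled by the $C^0$ bounds on $\theta_X(\phi)$ and $X(\phi)$ from [Zh], then dominates at a maximum point and the estimate closes. You should carry out this computation rather than defer to the continuity-method literature.
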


 \begin{proof} We will use the maximum principle to prove the lemma. First we recall that
$\theta_X(\phi)$ satisfies an identity [Fu],
$$\Delta_{\overline\partial}[\theta_X(\phi)]+\theta_X(\phi)+X(h)=0,$$
 where $
 h$ is a Ricci potential of  K\"ahler form $\omega_{\phi}$ at $t$.  Note that
$$h =\theta_X(\phi)+(t-1)\phi$$
by (\ref{ricci-equ}).
 Thus $\theta_X(\phi)$ satisfies
 \begin{align}\label{lapalace-theta}
 \Delta_{\overline\partial}[\theta_X(\phi)]+ |\overline\partial \theta_X(\phi)|^2+\theta_X(\phi)=(1-t)X(\phi).
 \end{align}

By  the Bochner formula, one sees
 \begin{align}
 &\Delta_{\overline\partial} (|\overline\partial \theta_X(\phi)|^2)\notag\\
 &=|\nabla\overline\nabla\theta_X(\phi)|^2+2\text{re}(
 \langle\overline\partial \theta_X(\phi),\overline\partial\Delta_{\overline\partial}\theta_X(\phi)\rangle)+{\rm Ric}(\overline\partial \theta_X(\phi),\overline\partial \theta_X(\phi))\notag
 \end{align}
It follows
\begin{align}
&(\Delta_{\overline\partial} +X)(|\overline\partial \theta_X(\phi)|^2)\notag\\
&=|\nabla\overline\nabla \theta_X(\phi)|^2+2\text{re}(
 \langle\overline\partial \theta_X(\phi),\overline\partial(\Delta_{\overline\partial}\theta_X(\phi)+|\overline\partial \theta_X(\phi)|^2)\rangle)\notag\\
 &+ ({\rm Ric}-\nabla\overline\nabla\theta_X(\phi))(\overline\partial \theta_X(\phi),
 \overline\partial \theta_X(\phi)).\notag\end{align}
Thus by  (\ref{lapalace-theta}),  we get
\begin{align}\label{lapalace-partial-theta}
(\Delta_{\overline\partial} +X)(|\overline\partial \theta_X(\phi)|^2)= |\nabla\overline\nabla \theta_X(\phi)|^2-t|\overline\partial \theta_X(\phi)|^2
-(1-t)|X|_{g}^2.
 \end{align}
 Note that
 $$ |\nabla\overline\nabla \theta_X(\phi)|^2\ge \frac{(\Delta_{\overline\partial}\theta_X(\phi))^2}{n}
 \ge \frac{(|\overline\partial\theta_X(\phi)|^2-C_1)^2}{n},
 $$
 where $C_1=\max_M\{|\theta_X(\phi)-(1-t)X(\phi)|\}.$
 Apply the maximum principle to $|\overline\partial \theta_X(\phi)|^2$ in (\ref{lapalace-partial-theta}),
  we derive at a maximal point of $|\overline\partial \theta_X(\phi)|^2$,
 \begin{align} 0\geq \frac{1}{n}(|\overline\partial\theta_X(\phi)|^2-C_1)^2
 -t|\overline\partial\theta_X(\phi)|^2-C_2.
  \end{align}
  Therefore,   the gradient  estimate  of $\theta_X(\phi)$  follows from the above inequality immediately.  By
 (\ref{lapalace-theta}), we also get the $\overline\partial$-Lapalace estimate of $\theta_X(\phi)$.

\end{proof}

By Lemma \ref{lemma-partial-theta} and  Theorem \ref{dimension-k}, we prove

\begin{theo}\label{thm-kahler-1}
 For any sequence of K\"ahler metrics $g_{t_i}$ associated to solutions $\phi_{t_i}$ of equations (\ref{ma-equ}) at $t=t_i\in I$,
 there exists a subsequence which converge to  a  limit metric space $Y$  in   the Gromov-Hausdorff topology. Moreover,
  $\mathcal{S}(Y)=\mathcal{S}_{2n-2}$.  In particular, the complex codimension of singularities of $Y$ is at least 1.
\end{theo}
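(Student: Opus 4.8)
The plan is to show that each $(M,g_{t_i})$, equipped with the weight $f_i:=-\theta_X(\phi_{t_i})$, lies in one fixed class $\mathcal{M}(A,v,\Lambda)$ with constants depending only on $(M,\omega_g,X)$, and then to quote Theorem \ref{dimension-k} (read with the ambient dimension equal to the real dimension $2n$) together with the Gromov--Hausdorff precompactness in Theorem \ref{existence-metric-cone}. So the real content is the verification of the three defining conditions of $\mathcal{M}(A,v,\Lambda)$.

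The curvature and gradient bounds come almost for free. The observation I would use is that $\theta_X(\phi)$, being a potential of the \emph{holomorphic} field $X$ with respect to the Kähler metric $g_\phi$, has vanishing $(2,0)$- and $(0,2)$-parts of its Riemannian Hessian: up to a constant $\partial_j\theta_X(\phi)=(g_\phi)_{j\bar k}\,\overline{X^k}$, and $\nabla_i\nabla_j\theta_X(\phi)=(g_\phi)_{j\bar k}\,\nabla_i\overline{X^k}=0$ because $\bar\partial X=0$ and the mixed Christoffel symbols of a Kähler metric vanish. Hence, as symmetric $2$-tensors, $\text{hess}(f_i)$ agrees with the $(1,1)$-tensor associated to $\sqrt{-1}\,\partial\bar\partial(-\theta_X(\phi_{t_i}))$ and $\text{Ric}(g_{t_i})$ with that of the Ricci form, so the inequality of $(1,1)$-forms (\ref{curvature-condition-kahler}) becomes an inequality of symmetric tensors,
\[
\text{Ric}^{f_i}_M(g_{t_i})=\text{Ric}(g_{t_i})+\text{hess}(f_i)\geq t_i\,g_{t_i}\geq 0,
\]
so in particular $\text{Ric}^{f_i}_M(g_{t_i})\geq-(2n-1)\Lambda^2 g_{t_i}$ for any fixed $\Lambda>0$. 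And by Lemma \ref{lemma-partial-theta}, $|\nabla f_i|_{g_{t_i}}$ is a dimensional multiple of $|\bar\partial\theta_X(\phi_{t_i})|_{g_{t_i}}=|X|_{g_{t_i}}\leq C(M,\omega_g,X)=:A$, uniformly in $i$.

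For the non-collapsing $\text{vol}_{g_{t_i}}(B_p(1))\geq v>0$ I would combine three inputs: (i) $\text{vol}_{g_{t_i}}(M)=V$ for all $i$, since $[\omega_{\phi_{t_i}}]=[\omega_g]=2\pi c_1(M)$; (ii) the uniform bound $|f_i|=|\theta_X(\phi_{t_i})|\leq C$ already recalled in the text (cf.\ [Zh]); and (iii) a uniform diameter bound $\text{diam}(M,g_{t_i})\leq D$ over the whole continuity interval $I=[0,T)$. Granting these, the weighted Bishop--Gromov inequality of Theorem \ref{volume-comparison}, applied with $R=D$, gives $\text{vol}^{f_i}_{g_{t_i}}(B_p(1))\geq c(n,\Lambda,A,D)\,\text{vol}^{f_i}_{g_{t_i}}(M)$, and the $C^0$-bound on $f_i$ then converts this into a uniform lower bound $\text{vol}_{g_{t_i}}(B_p(1))\geq v>0$ at every $p$. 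Thus $\{(M,g_{t_i};p_i)\}\subset\mathcal{M}(A,v,\Lambda)$; Theorem \ref{existence-metric-cone} produces a subsequence converging in the pointed Gromov--Hausdorff topology, which by the diameter bound is compact Gromov--Hausdorff convergence to a compact space $Y$; and Theorem \ref{dimension-k}, read with ambient real dimension $2n$, yields $\mathcal{S}(Y)=\mathcal{S}_{2n-2}$. Since $\text{dim}\,\mathcal{S}_{2n-2}\leq 2n-2$, the singular set has real codimension at least $2$, i.e.\ complex codimension at least $1$.

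The routine parts are the Hessian identity above (this is precisely what turns the Kähler hypothesis (\ref{curvature-condition-kahler}) into the Bakry-\'{E}mery bound in the full tensorial sense, not merely a trace bound) and the cited estimates of Lemma \ref{lemma-partial-theta} and [Zh]. The main obstacle I expect is ingredient (iii): the uniform diameter bound — equivalently, the uniform volume non-collapsing of unit balls — along the \emph{entire} interval $I$. For $t_i$ staying in a compact subset of $(0,T)$ this follows from smooth dependence on $t$ (and for $t_i\to 0$ from convergence to the fixed metric solving the $t=0$ equation), and for $t_i$ bounded away from $0$ a weighted Myers bound in terms of $\text{Ric}^{f_i}_M\geq t_i g_{t_i}$ and $|\nabla f_i|\leq A$ gives diameter $\leq C/\sqrt{t_i}$; the delicate regime $t_i\to T$ is the genuinely analytic point, and there I would import the a priori estimates of [TZ1], [TZ2] (Perelman-type estimates / uniform Sobolev inequality along the continuity path), or alternatively the conformal-transformation argument of [TZh] sketched in the appendix.
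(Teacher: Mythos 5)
Your proposal is correct and follows essentially the same route as the paper: verify that $(M,g_{t_i})$ with weight $f_i=-\theta_X(\phi_{t_i})$ lies in a fixed class $\mathcal{M}(A,v,\Lambda)$ (curvature bound from (\ref{curvature-condition-kahler}), gradient bound from Lemma \ref{lemma-partial-theta}, non-collapsing from Theorem \ref{volume-comparison} plus a uniform diameter bound) and then invoke Theorem \ref{dimension-k}. The only point you flag as delicate --- the uniform diameter bound along all of $I$ --- is settled in the paper by citing Mabuchi [Ma] rather than by the Perelman-type or conformal arguments you suggest.
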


\begin{proof} We suffice to verify that
 \begin{align}\label{volume-lower-bound}{\rm vol}_{g_t}(B_{p}(1))\geq v>0.~\forall~p\in M.\end{align}
But this is just a consequence of application of Volume Comparison Theorem \ref{volume-comparison} since
the diameter of $g_t$ is uniformly bounded by a result of Mabuchi [Ma].
\end{proof}

 In a special case $t_i\to 1$ when $I=[0,1)$ in  Theorem \ref{thm-kahler-1},  we  can strengthen Theorem \ref{thm-kahler-1} as follows.

\begin{theo}\label{thm-kahler-2}
Let $g_{t_i}$ be a  sequence of K\"ahler metrics  in  Theorem \ref{thm-kahler-1} with $t_i\to 1$.
 Then  $\mathcal{S}(Y)=\mathcal{S}_{2n-4}$.  In particular, the complex codimension of singularities of  $Y$ is at least 2.
\end{theo}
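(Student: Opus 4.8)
The plan is to deduce the statement from the real-dimension-$2n$ version of Theorem~\ref{dimension-n-4} applied with $p=2$ to the sequence $(M,g_{t_i})$, which by Lemma~\ref{lemma-partial-theta}, (\ref{curvature-condition-kahler}) and the volume lower bound used in the proof of Theorem~\ref{thm-kahler-1} belongs to $\mathcal{M}(A,v,\Lambda)$. The only hypothesis of Theorem~\ref{dimension-n-4} not yet available is a uniform $L^{2}$-curvature bound
$$\frac{1}{\text{vol}_{g_{t_i}}(B_{p}(2))}\int_{B_{p}(2)}|Rm(g_{t_i})|^{2}\,\omega_{t_i}^{n}\le C\qquad\text{for all }p\in M,$$
so the first half of the proof is to establish this, and the second half is to upgrade the resulting dimension estimate to the sharp stratification identity $\mathcal{S}(Y)=\mathcal{S}_{2n-4}$.

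For the curvature bound I would first pass to a global estimate $\int_{M}|Rm(g_{t_i})|^{2}\,\omega_{t_i}^{n}\le C$ by Chern--Weil: since the K\"ahler class $[\omega_{t_i}]=2\pi c_1(M)$ is fixed, the pairings $\int_{M}c_2(M)\wedge\omega_{t_i}^{n-2}$, $\int_{M}c_1(M)^2\wedge\omega_{t_i}^{n-2}$, $\int_{M}c_1(M)\wedge\omega_{t_i}^{n-1}$ are fixed topological numbers, and the K\"ahler curvature decomposition expresses $\int_{M}|Rm|^{2}\,\omega^{n}$ as a universal linear combination of these together with $\int_{M}|\text{Ric}(\omega_{t_i})|^{2}\,\omega_{t_i}^{n}$. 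Thus it suffices to bound the latter, and here (\ref{ricci-equ}) is used in the form $\text{Ric}(\omega_{t_i})=\sqrt{-1}\partial\overline\partial\theta_X(\phi_{t_i})+t_i\omega_{t_i}+(1-t_i)\omega_g$: one has $|\text{Ric}(\omega_{t_i})|^{2}_{\omega_{t_i}}\le C\bigl(1+|\sqrt{-1}\partial\overline\partial\theta_X(\phi_{t_i})|^{2}_{\omega_{t_i}}+(1-t_i)^{2}|\omega_g|^{2}_{\omega_{t_i}}\bigr)$, and integrating the Bochner identity by parts controls $\int_{M}|\sqrt{-1}\partial\overline\partial\theta_X(\phi_{t_i})|^{2}_{\omega_{t_i}}\,\omega_{t_i}^{n}$ by $\int_{M}(\Delta_{\overline\partial}\theta_X(\phi_{t_i}))^{2}\,\omega_{t_i}^{n}$ plus a Ricci term, both bounded by Lemma~\ref{lemma-partial-theta} and the lower Ricci bound. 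The remaining term $(1-t_i)^{2}\int_{M}|\omega_g|^{2}_{\omega_{t_i}}\,\omega_{t_i}^{n}$ is rewritten via $|\omega_g|^{2}_{\omega_{t_i}}=(\text{tr}_{\omega_{t_i}}\omega_g)^{2}-c_n\,\omega_g^{2}\wedge\omega_{t_i}^{n-2}/\omega_{t_i}^{n}$, the second piece being a fixed pairing $[\omega_g]^{2}\,(2\pi c_1(M))^{n-2}$; this is where the hypothesis $t_i\to 1$ enters. Finally the volume lower bound turns the global estimate into the stated local ratio on every unit ball.

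Granting the curvature bound, Theorem~\ref{dimension-n-4} with $p=2$ (and $n$ replaced by $2n$) gives $\dim\bigl(B_{p_\infty}(1)\setminus\mathcal{R}_{2\epsilon}\bigr)\le 2n-4$ for every $\epsilon>0$. To obtain $\mathcal{S}(Y)=\mathcal{S}_{2n-4}$ I would run the stratification argument of [CCT], exactly as in the proof of Theorem~\ref{dimension-n-4}: since $\mathcal{S}(Y)=\mathcal{S}_{2n-2}$ is already known from Theorem~\ref{thm-kahler-1}, it remains to exclude singular points lying in $\mathcal{S}_{2n-2}\setminus\mathcal{S}_{2n-3}$ and in $\mathcal{S}_{2n-3}\setminus\mathcal{S}_{2n-4}$. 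A singular point of the first type would have every tangent cone of the form $\mathbb{R}^{2n-2}\times_r C(X)$ with $X$ a circle, and one of the second type would have a tangent cone $\mathbb{R}^{2n-3}\times_r C(X)$ with $X$ two-dimensional of diameter $<\pi$ (the cone structure of every tangent cone being Theorem~\ref{existence-metric-cone}); these are precisely the cases $k=2$ and $k=3$ of the $\epsilon$-regularity Proposition~\ref{epsilon-regularity-1}. Because the $L^{2}$-curvature bound is scale invariant and the Bakry--\'{E}mery data rescale favorably, choosing a scale at which $Y$ is $\eta$-close to the tangent cone and applying Proposition~\ref{epsilon-regularity-1} forces the corresponding ball to be $\epsilon$-close to a Euclidean ball, so such a point would be regular --- a contradiction. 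Hence $\mathcal{S}(Y)=\mathcal{S}_{2n-4}$, i.e.\ the singular set has real codimension at least $4$, which is complex codimension at least $2$.

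The main difficulty is the uniform $L^{2}$-curvature bound, and inside it the term $(1-t_i)^{2}\int_{M}(\text{tr}_{\omega_{t_i}}\omega_g)^{2}\,\omega_{t_i}^{n}$: the prefactor tends to zero, but $\text{tr}_{\omega_{t_i}}\omega_g$ is not a priori bounded uniformly in $t_i$ --- uncontrolled degeneration of this quantity is the obstruction to the existence of a genuine K\"ahler--Ricci soliton --- so one must show either a uniform Schwarz-lemma estimate $\text{tr}_{\omega_{t_i}}\omega_g\le C$ along $t_i\to 1$, or at least $\int_{M}(\text{tr}_{\omega_{t_i}}\omega_g)^{2}\,\omega_{t_i}^{n}=o\bigl((1-t_i)^{-2}\bigr)$; this is the only place the specific limit $t_i\to 1$ (rather than a general $t_i\in I$) is genuinely used. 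A secondary, purely bookkeeping point is tracking the index shift $n\mapsto 2n$ between the Riemannian statements of Sections~1--5 and the present complex setting, and checking that all hypotheses of Theorem~\ref{dimension-n-4} and Proposition~\ref{epsilon-regularity-1} survive rescaling.
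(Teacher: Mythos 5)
Your reduction to Theorem \ref{dimension-n-4} with $p=2$ hinges on a uniform local $L^2$ bound on $|Rm(g_{t_i})|$, and this is where the proposal breaks down. Your own Chern--Weil reduction makes the obstruction explicit: after subtracting the fixed pairings of $c_2$, $c_1^2$ with $[\omega_{t_i}]=2\pi c_1(M)$ and controlling $\int_M|\nabla\overline\nabla\theta_X(\phi_{t_i})|^2$ via Bochner and Lemma \ref{lemma-partial-theta}, what remains is $(1-t_i)^2\int_M(\text{tr}_{\omega_{t_i}}\omega_g)^2\,\omega_{t_i}^n$, which by (\ref{ricci-equ}) equals $\int_M\bigl(R(\omega_{t_i})-\Delta\theta_X(\phi_{t_i})-t_in\bigr)^2\omega_{t_i}^n$, i.e.\ essentially a uniform $L^2$ bound on the scalar curvature along the continuity path. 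Only the $L^1$ norm of $R$ is cohomologically controlled; the $L^2$ bound is not known in this generality and is of the same order of difficulty as a uniform Schwarz-lemma estimate $\omega_g\le C\omega_{t_i}$, which is precisely what cannot be assumed short of solving the existence problem. So the step you flag as "the main difficulty" is not a technical loose end but an unproved (and, by current methods, unprovable) hypothesis, and the whole route through the $L^2$ version of Theorem \ref{dimension-n-4} collapses with it.

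The paper avoids this entirely by never asking for curvature in $L^2$. Lemma \ref{condtion-almost-kr-soliton} shows that the Bakry--\'Emery defect $\text{Ric}(\omega_{\phi})-\sqrt{-1}\partial\bar\partial\theta_X(\phi)-\omega_\phi$ has $L^1$ norm tending to zero: since $\text{Ric}(\omega_\phi)-\sqrt{-1}\partial\bar\partial\theta_X(\phi)-t\omega_\phi=(1-t)\omega_g$ is a \emph{positive} form, its $L^1$ norm equals its trace integral, a cohomological constant times $(1-t)\to 0$ --- positivity converts the $L^1$ bound into pure cohomology with no hard estimates. This feeds into the notion of weak almost K\"ahler--Ricci soliton and into Theorem \ref{thm-kahler-3}, whose $\epsilon$-regularity input is Lemma \ref{epsilon-regularity} rather than Proposition \ref{epsilon-regularity-1}: hypothesis iv) there is $L^1$ smallness of $\text{Ric}+\nabla\overline\nabla f$, and the Gauss--Bonnet argument is replaced by a transgressed first Chern class/holonomy computation of $\widehat{c_{1,\nabla}}$ along the circles $\Gamma^{-1}(z,u)$, which only needs the Ricci form (not the full curvature) to be small in $L^1$ on the two-dimensional slices. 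Note also that the paper never needs your $k=3$ case: Proposition \ref{prop-even-dim} (the $\mathbf{J}$-invariance argument of Appendix 2) gives $\mathcal{S}_{2k+1}=\mathcal{S}_{2k}$ for K\"ahler limits, so the odd strata collapse and only the real-codimension-two cone $\mathbb{R}^{2n-2}\times C(X)$ with $X$ a circle must be excluded. If you want to salvage your outline, the fix is to replace the $L^2$/Gauss--Bonnet machinery of Section 5 by this $L^1$/first-Chern-form machinery of Section 6.
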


$I=[0,1)$ can be guaranteed  when the modified Mabuchi $K$-energy is bounded  below  and $X$ is  a soliton  holomorphic  vector field which determined by the modified Futaki-invariant [TZ2].
This can be proved following  an argument  by Futaki for the study of almost K\"ahler-Einstein metric under an assumption  that   the Mabuchi $K$-energy is bounded  below on a Fano manifold [Fu]. Thus as a corollary of  Theorem \ref{thm-kahler-2}, we have

\begin{cor} Suppose that the modified $K$-energy is bounded  below on a Fano manifold. There exists a  subsequence of    weak  almost K\"ahler-Ricci solitons on $M$ which converge to  a  limit metric space $Y$  in   the Gromov-Hausdorff topology.  Moreover, the complex codimension of singularities of $Y$ is at least 2.

\end{cor}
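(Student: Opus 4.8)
The plan is to obtain this statement as an immediate consequence of Theorem \ref{thm-kahler-2}, once we know that the hypothesis forces the set $I$ of parameters for which the complex Monge-Amp\`ere equation (\ref{ma-equ}) is solvable to be all of $[0,1)$, i.e. that $T=1$. Granting $I=[0,1)$, I would choose any sequence $t_i\in I$ with $t_i\to 1$ and let $g_{t_i}$ be the K\"ahler metrics attached to the corresponding solutions $\phi_{t_i}$ of (\ref{ma-equ}). By (\ref{ricci-equ}) they satisfy $\text{Ric}(\omega_{\phi_{t_i}})-L_X\omega_{\phi_{t_i}}=t_i\omega_{\phi_{t_i}}+(1-t_i)\omega_g$, and since $t_i\to 1$ this says exactly that $\{g_{t_i}\}$ is a sequence of weak almost K\"ahler-Ricci solitons. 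Moreover $(M,g_{t_i};p)\in\mathcal{M}(A,v,\Lambda)$ for suitable constants, with weight $f=-\theta_X(\phi_{t_i})$: the Bakry-\'{E}mery Ricci lower bound follows from (\ref{curvature-condition-kahler}) (as tensors the real and complex Hessians of $f$ coincide here, since $\nabla^{1,0}f$ is a holomorphic field), the bound $|\nabla f|\le A$ follows from Lemma \ref{lemma-partial-theta}, and the non-collapsing $\text{vol}_{g_{t_i}}(B_p(1))\ge v>0$ is (\ref{volume-lower-bound}) from the proof of Theorem \ref{thm-kahler-1}. Hence Theorem \ref{thm-kahler-2} applies to $\{g_{t_i}\}$, and after passing to a subsequence the metrics $g_{t_i}$ converge in the pointed Gromov-Hausdorff topology to a metric space $Y$ with $\mathcal{S}(Y)=\mathcal{S}_{2n-4}$; in particular the complex codimension of the singular set of $Y$ is at least two, which is the assertion.

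Thus the whole content is the equality $I=[0,1)$. Since $I$ is open in $[0,1]$ by [TZ1] and contains $0$, it suffices to show that the set of solutions of (\ref{ma-equ}) stays precompact along any sequence $t_i\to T$ with $T<1$: then $T\in I$, contradicting $I=[0,T)$, so $T=1$. For this I would follow Futaki's argument [Fu] for almost K\"ahler-Einstein metrics, adapted to the modified Mabuchi functional and to the extremal holomorphic vector field $X$ singled out by the modified Futaki invariant [TZ2]. The assumed lower bound of the modified $K$-energy along the continuity path gives a uniform $C^0$ (properness-type) estimate for the normalized potentials $\phi_{t_i}$ with $t_i<1$, and feeding this into the standard second- and higher-order estimates for complex Monge-Amp\`ere equations yields uniform $C^k$ bounds, so that a subsequence of the $\phi_{t_i}$ converges smoothly to a solution at $t=T$.

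The main obstacle is precisely this last analytic step: deducing the uniform $C^0$ bound from the mere boundedness below of the modified $K$-energy. This is a genuine PDE input and is not a formal consequence of the Cheeger-Colding type theory developed in the earlier sections; once $I=[0,1)$ is in hand, the remaining argument is nothing more than an application of Theorem \ref{thm-kahler-2} together with the verification, recorded above, that the K\"ahler metrics $g_{t_i}$ belong to the class $\mathcal{M}(A,v,\Lambda)$.
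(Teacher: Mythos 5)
Your proposal is correct and follows essentially the same route as the paper: the corollary is obtained by combining the claim $I=[0,1)$ (which the paper, like you, only sketches by appealing to Futaki's argument for almost K\"ahler--Einstein metrics adapted via [TZ2], with the $C^0$ estimate for $t_i\to T<1$ as the real analytic content) with Theorem \ref{thm-kahler-2} applied to a sequence $t_i\to 1$, the membership of $g_{t_i}$ in $\mathcal{M}(A,v,\Lambda)$ and the weak almost K\"ahler--Ricci soliton property being exactly what Lemma \ref{lemma-partial-theta}, (\ref{volume-lower-bound}) and Lemma \ref{condtion-almost-kr-soliton} supply. Your explicit verification of these hypotheses is slightly more detailed than the paper's one-line deduction, but the logical structure is identical.
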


\begin{rem}In case that  $X=0$,  the  modified Mabuchi $K$-energy is just   the  Mabuchi $K$-energy.   In this case,  the $K$-energy is
bounded from below  is equivalent to that  the Fano manifold is $K$-semistable  by  a recent work  of  Li [Li].

\end{rem}

It is  useful to  introduce a more general  sequence  of K\"ahler metrics than one in Theorem \ref{thm-kahler-2} inspired by a recent work of Wang and Tian [WT].

\begin{defi}\label{almost-kr-soliton}   We call a sequence of  K\"ahler metrics $(M_i, J_i,  g_i)$
  weak  almost K\"ahler-Ricci solitons if  there are uniform constants $\Lambda$ and $A$  such that
\begin{align}
& i)~  {\rm Ric}(g_i)+ \nabla\overline\nabla  f_i\ge -\Lambda^2 g_i,~\nabla\nabla f_i=0;\notag\\
&ii)~ \|\overline\partial f_i\|_{g_i}\le A;\notag\\
&iii) ~\lim_{i\to\infty}\|{\rm Ric}(g_i)-g_i+\nabla\overline\nabla f_i\|_{L^1(g_i)}= 0.\notag
\end{align}
Here  $f_i$ are  some smooth functions and $\bar\partial f_i$  define  reductive holomorphic vector fields
 on  Fano manifolds $(M_i,J_i)$.
 \end{defi}

\begin{lem}\label{condtion-almost-kr-soliton}  Let $\{g_{t_i}\}$ be a  sequence of K\"ahler metrics  in  Theorem \ref{thm-kahler-1} with $t_i\to 1$.
Then $\{g_{t_i}\}$ is a sequence of weak  almost K\"ahler-Ricci solitons on $M$.
\end{lem}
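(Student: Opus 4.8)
The plan is to verify the three conditions of Definition \ref{almost-kr-soliton} with the choice $f_i := -\theta_X(\phi_{t_i})$, where $\phi_{t_i}$ is the solution of (\ref{ma-equ}) at $t=t_i$ and $\theta_X(\phi_{t_i}) = \theta_X + X(\phi_{t_i})$ is the potential of $X$ with respect to $\omega_{\phi_{t_i}}$, a smooth real function whose $(1,0)$-gradient is (a fixed constant multiple of) the fixed reductive holomorphic vector field $X$. Throughout I use the identification of the real $(1,1)$-tensor $\nabla\overline\nabla u$ with $\sqrt{-1}\partial\overline\partial u$, under which $L_X\omega_{\phi_{t_i}} = \nabla\overline\nabla\theta_X(\phi_{t_i}) = -\nabla\overline\nabla f_i$ (this is exactly the identity used to pass from (\ref{ricci-equ}) to (\ref{curvature-condition-kahler})). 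With this $f_i$, conditions i) and ii) come out immediately, and the only genuine computation is the uniform $L^1$ estimate required by iii); I expect the one point to watch to be the bookkeeping that pins $f_i$ down so that its Hessian is exactly $-L_X\omega_{\phi_{t_i}}$ and so that $\nabla\nabla f_i = 0$.

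For i): by (\ref{curvature-condition-kahler}),
$$\text{Ric}(g_{t_i}) + \nabla\overline\nabla f_i = \text{Ric}(\omega_{\phi_{t_i}}) - \nabla\overline\nabla\theta_X(\phi_{t_i}) \ge t_i\,\omega_{\phi_{t_i}} \ge 0 \ge -g_{t_i},$$
so one may take $\Lambda = 1$. Moreover $\overline\partial f_i = -\overline\partial\theta_X(\phi_{t_i}) = -i_X\omega_{\phi_{t_i}}$, so the $(1,0)$-gradient of $f_i$ is, up to the fixed constant $\sqrt{-1}$, the holomorphic vector field $X$; being holomorphic it has $\nabla_{\bar k}$-derivative zero, which is precisely $\nabla\nabla f_i = 0$ (and confirms the last clause of Definition \ref{almost-kr-soliton}). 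Condition ii) is exactly the content of Lemma \ref{lemma-partial-theta}: $\|\overline\partial f_i\|_{g_{t_i}} = \|\overline\partial\theta_X(\phi_{t_i})\|_{g_{t_i}} = |X|_{\omega_{\phi_{t_i}}} \le C(M,\omega,X) =: A$, a bound independent of $i$.

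For iii), subtract $\omega_{\phi_{t_i}}$ from both sides of (\ref{ricci-equ}) and use $L_X\omega_{\phi_{t_i}} = -\nabla\overline\nabla f_i$ to get the tensor identity
$$\text{Ric}(g_{t_i}) - g_{t_i} + \nabla\overline\nabla f_i = (1-t_i)\bigl(\omega_g - \omega_{\phi_{t_i}}\bigr),$$
hence $\bigl\|\text{Ric}(g_{t_i}) - g_{t_i} + \nabla\overline\nabla f_i\bigr\|_{L^1(g_{t_i})} = (1-t_i)\int_M |\omega_g - \omega_{\phi_{t_i}}|_{g_{t_i}}\,\omega_{\phi_{t_i}}^n$. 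It remains to bound this integral uniformly in $i$. Since $\omega_g$ and $\omega_{\phi_{t_i}}$ are positive $(1,1)$-forms, their pointwise $g_{t_i}$-norms are controlled by their $g_{t_i}$-traces up to a dimensional constant $c_n$, so by the triangle inequality the integral is at most $c_n\int_M\bigl(\mathrm{tr}_{g_{t_i}}\omega_g + \mathrm{tr}_{g_{t_i}}\omega_{\phi_{t_i}}\bigr)\,\omega_{\phi_{t_i}}^n$. Here $\int_M \mathrm{tr}_{g_{t_i}}\omega_{\phi_{t_i}}\cdot\omega_{\phi_{t_i}}^n = n\int_M\omega_{\phi_{t_i}}^n = nV$, and $\int_M \mathrm{tr}_{g_{t_i}}\omega_g\cdot\omega_{\phi_{t_i}}^n = n\int_M\omega_g\wedge\omega_{\phi_{t_i}}^{n-1} = n\int_M\omega_g^n = nV$, the penultimate step being purely cohomological since $[\omega_{\phi_{t_i}}] = [\omega_g] = 2\pi c_1(M)$. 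Thus the integral is at most $2nc_nV$, so $\|\text{Ric}(g_{t_i}) - g_{t_i} + \nabla\overline\nabla f_i\|_{L^1(g_{t_i})} \le 2nc_nV\,(1-t_i) \to 0$ as $t_i \to 1$. This is iii); therefore $\{g_{t_i}\}$ is a sequence of weak almost K\"ahler--Ricci solitons, and the substance of the lemma is the elementary cohomological bound just used, the rest being direct consequences of (\ref{curvature-condition-kahler}) and Lemma \ref{lemma-partial-theta}.
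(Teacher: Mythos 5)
Your proposal is correct and takes essentially the same route as the paper: conditions i) and ii) are read off from (\ref{curvature-condition-kahler}), the holomorphy of $X$, and Lemma \ref{lemma-partial-theta}, while iii) is reduced via (\ref{ricci-equ}) to the $L^1$ norm of $(1-t_i)$ times positive $(1,1)$-forms, which is controlled by traces and the cohomological identity $\int_M\omega_g\wedge\omega_{\phi}^{n-1}=\int_M\omega_g^n$. The paper's one-display computation is exactly this argument in compressed form (it writes the defect as $(1-t)\omega_g-(1-t)\omega_\phi$ rather than $(1-t)(\omega_g-\omega_\phi)$), so there is no substantive difference.
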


\begin{proof} By  Lemma \ref{lemma-partial-theta}, it suffice to check the condition iii) in  Definition \ref{almost-kr-soliton}.
In fact, we have
\begin{align}
&\int_M|{\rm Ric}(\omega_\phi)-\sqrt{-1}\partial\bar{\partial}\theta_X(\phi)-\omega_\phi|\notag\\
&\le\int_M|{\rm Ric}(\omega_\phi)-\sqrt{-1}\partial\bar{\partial}\theta_X(\phi)-t\omega_\phi |+n(1-t){\rm vol}(M)\notag\\
&=\int_M({\rm Ric}(\omega_\phi)-\sqrt{-1}\partial\bar{\partial}\theta_X(\phi)-t\omega_\phi)\wedge
\frac{\omega_\phi^{n-1}}{(n-1)!}+n(1-t){\rm vol}(M)\notag\\
&=2n(1-t)\text{Vol}(M)\to 0\notag.
\end{align}

\end{proof}

We now begin to prove Theorem  \ref{thm-kahler-3}.  As in the proof of Theorem \ref{dimension-n-4}.
We need the following  $\epsilon$-regularity result  for the tangent cone.

 \begin{lem}\label{epsilon-regularity}
For any $\mu_0,\epsilon>0$, there exist  small  numbers $\delta=\delta(v,\epsilon,n)$, $\eta=\eta (v,\epsilon, n)$
, $\tau=\tau(v,\epsilon,n)$ and a big number $l=l(v,\epsilon,n)$  such that if a K\"{a}hler manifold $(M^n,g)$ satisfies
\begin{align}
&i)~ {\rm Ric}^f_M(g) >-(n-1)\tau^2 g, \nabla\nabla f=0,\notag\\
&ii)~ {\rm vol}_g(B_p(1))\geq \mu_0,\notag\\
&iii)~ |\nabla f|< \tau,\notag\\
&iv)~  \frac{1}{{\rm vol}(B_p(2))}\int_{B_p(2)}|{\rm Ric}(g)+\nabla\overline\nabla f|dV_g <\delta,\notag\\
&v)~ {\rm d}_{GH}(B_p(l),B_{(0,x)}(l))<\eta,\notag
\end{align}
where $B_{(0,x)}(l)$ is a  $l$-radius ball in  cone $\mathbb{R}^{2n-2}\times C(X)$ centered at the  vertex $(0,x)$
 for some metric space  $X$,  then
 \begin{align}
 {\rm d}_{GH}(B_p(1),B(1))<\epsilon.
 \end{align}
 \end{lem}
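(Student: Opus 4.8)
The plan is to imitate the $k=2$ case of the proof of Proposition \ref{epsilon-regularity-1}, with the K\"ahler structure and condition iv) taking over the role that the $L^1$-curvature bound (\ref{curvature-int}) played there. By Corollary \ref{hausddorf-closed} and Remark \ref{volume-covergerce-4} it suffices to show that $\text{vol}(B_p(1))$ is close to $\text{vol}(B_0(1))$, and by (\ref{volume-cone}) this reduces to showing that the length $\text{vol}(X)$ of the circle $X$ is close to $2\pi$, since then $C(X)$ is Gromov--Hausdorff close to $\mathbb{R}^2$ and $\mathbb{R}^{2n-2}\times C(X)$ to $\mathbb{R}^{2n}$. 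We may assume the splitting factor $\mathbb{R}^{2n-2}$ of the tangent cone is $J$-invariant: otherwise, since on the smooth part of a K\"ahler space a parallel $1$-form pairs with its image under $J$, the factor $C(X)$ would itself contain a line, so $C(X)=\mathbb{R}^2$ and there is nothing to prove.

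First I would rebuild the machinery of Section 5. Let $h_1,\dots,h_{2n-2}$ be $f$-harmonic on $B_p(3)$ with boundary data modelled on the splitting directions (as in Proposition \ref{proof-splitting}), chosen so that each pair $h_{2j-1}+\sqrt{-1}\,h_{2j}$ approximates a holomorphic function, which is legitimate by the $J$-invariance just noted; let $h$ be the approximation of $\frac{r^2}{2}$ from Lemma \ref{harmonic-estimate-annual-1} and Lemma \ref{harmonic-estimate-annual-2}, and form $w,\mathbf{u},\Phi=(h_j)$ and $\Gamma=(h_j,\mathbf{u})$ exactly as there. The estimates (\ref{orthogonal-5})--(\ref{average-volume-4}), Lemma \ref{level-volume} and the second fundamental form bounds of Lemma \ref{small-second-form} used only the Ricci lower bound i), the bound $|\nabla f|<\tau$, the non-collapsing ii) and hypothesis v), so they carry over unchanged. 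From the Hessian estimates in (\ref{orthogonal-5}) one reads off, in addition, that each $h_j$ is almost pluriharmonic --- its $(1,1)$-Hessian, like its full Hessian, is small in $L^2$ --- so the level sets $\Phi^{-1}(z)$ are, on average, almost complex, and by (\ref{second-flat}) they are almost totally geodesic; hence the normal bundle $N=N_{\Phi^{-1}(z)}$, approximately spanned by the almost parallel, almost orthonormal fields $\nabla h_1,\dots,\nabla h_{2n-2}$, carries an induced connection whose connection form in this frame is pointwise dominated by the Hessians $|\text{hess}\,h_j|$, hence small in $L^2$ on $\Phi^{-1}(z)$.

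Now comes the step that replaces the curvature integral. I would pick $(z,u)$ in the good sets of Lemma \ref{level-volume} and Lemma \ref{small-second-form}, with $u$ close to $1$, such that in addition the coarea slices over $\Phi^{-1}(z)$ of $|\text{Ric}(g)+\nabla\overline\nabla f|$, of $|\nabla f|$, of the connection form of $N$, and of $|\Pi_{\Gamma^{-1}(z,u)}-u^{-1}g_{\Gamma^{-1}(z,u)}\otimes\nabla u|$ are all $<\Psi$; such $(z,u)$ fill a set of almost full measure by iv) (rescaled so that $|\text{Ric}(g)+\nabla\overline\nabla f|$ is small in $L^1$), by iii), and by (\ref{second-form-2}). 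Put $\Sigma=\Phi^{-1}(z)\cap U_u$, a topological disk with boundary $\Gamma^{-1}(z,u)$, and apply Gauss--Bonnet,
\begin{align}
\int_\Sigma K_\Sigma+\int_{\partial\Sigma}\kappa_g=2\pi\chi(\Sigma)=2\pi.\notag
\end{align}
Exactly as in the $k=2$ case of Proposition \ref{epsilon-regularity-1}, estimate (\ref{second-form-2}) together with $u\approx1$ and (\ref{cone-volume-3}) gives $\int_{\partial\Sigma}\kappa_g=\text{vol}(X)+\Psi$. For the interior term I would use the Chern--Weil identity $c_1(TM)|_\Sigma=c_1(T\Sigma)+c_1(N)$ which, on the almost complex and almost totally geodesic surface $\Sigma$ and modulo the second fundamental form terms bounded by (\ref{second-flat}), expresses $\int_\Sigma K_\Sigma$ as $\int_\Sigma\iota^*\rho_g$ minus the integral over $\Sigma$ of the trace of the curvature of $N$, where $\iota:\Sigma\hookrightarrow M$ and $\rho_g$ is the Ricci form. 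Writing $\rho_g=(\rho_g+\sqrt{-1}\partial\overline\partial f)-\sqrt{-1}\partial\overline\partial f$, the restriction of the first summand integrates over $\Sigma$ to at most $\int_{\Phi^{-1}(z)}|\text{Ric}(g)+\nabla\overline\nabla f|<\Psi$ by iv), while $\sqrt{-1}\partial\overline\partial f$ is exact and contributes only a boundary term controlled by $|\nabla f|<\tau$; and by Stokes the curvature integral of $N$ equals the integral along $\partial\Sigma$ of its connection form, which is $<\Psi$. Hence $\int_\Sigma K_\Sigma<\Psi$, so $\text{vol}(X)=2\pi+\Psi$, and Corollary \ref{hausddorf-closed} yields $d_{GH}(B_p(1),B(1))<\epsilon$.

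The hard part will be making the ``almost complex curve plus adjunction with boundary'' argument quantitative and uniform. One must propagate the $L^2$-smallness of $\text{Ric}(g)+\nabla\overline\nabla f$, of the Hessians $\text{hess}\,h_j$ and of $\nabla f$ from balls down to a single good slice $\Phi^{-1}(z)$ and then to its boundary, through the coarea formula --- which is only possible after intersecting several almost-full-measure good sets; one must control \emph{every} boundary contribution (the exact form built from $\partial f$ along $\partial\Sigma$, the connection form of $N$ around $\partial\Sigma$, and the discrepancy in (\ref{second-form-2})); and one must cope with the fact that $J$ is only approximately parallel along $\Sigma$, so that $T\Sigma$ and $N$ are only approximately $J$-invariant and the Chern--Weil identity above holds only up to a $\Psi$-error. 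All the $L^2$-inputs this requires are already available from Section 2 and Lemma \ref{small-second-form}, so what remains is the careful bookkeeping of these errors.
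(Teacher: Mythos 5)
Your overall strategy coincides with the paper's: reduce via Corollary \ref{hausddorf-closed} and Remark \ref{volume-covergerce-4} to showing $\text{vol}(X)\approx 2\pi$, build $\Phi$ and $\Gamma$ as in Section 5, select a good slice by coarea so that condition iv) makes $\int_{\Phi^{-1}(z)\cap U_u}|\text{Ric}(g)+\nabla\overline\nabla f|$ small, use $|\nabla f|<\tau$ to kill the exact piece $\sqrt{-1}\partial\bar\partial f$ through a boundary term, identify the boundary contribution with the length of $\Gamma^{-1}(z,u)$ via (\ref{second-form-2}), and pin down the remaining integer by non-collapsing. But the mechanism you use for the key identity is genuinely different. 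You run Gauss--Bonnet on the surface $\Sigma=\Phi^{-1}(z)\cap U_u$ and then split $c_1(TM)|_\Sigma=c_1(T\Sigma)+c_1(N)$, converting the normal-bundle curvature to a boundary integral of its connection form; this forces you to make sense of $K_\Sigma$ versus $c_1(T\Sigma)$ on a surface that is only approximately a complex curve, which is exactly the error bookkeeping you defer at the end. The paper instead never touches the intrinsic geometry of $\Sigma$: it works entirely on the one-dimensional boundary circle with the transgressed class $\widehat{c_{1,\nabla}}$ of the full tangent bundle, whose integral over $\Gamma^{-1}(z,u)$ equals $\int_{\Phi^{-1}(z)\cap U_u}\text{Ric}(\omega_g)$ mod $\mathbb{Z}$, and then compares $\nabla$ with a Whitney-sum connection $\nabla'$ and a connection $\nabla''$ flat on the span of the Gram--Schmidted frame $e_i,\mathbf{J}e_i$, so that $2\pi\widehat{c_{1,\nabla''}}$ is literally the holonomy angle $\int\langle\nabla''_X\mathbf N,\mathbf{J}\mathbf N\rangle$, close to the length of the circle. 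The two computations are equivalent in content (your $2\pi\chi(\Sigma)$ is the paper's mod-$\mathbb{Z}$ ambiguity), but the paper's route avoids the almost-complex adjunction step, while yours is more geometrically transparent.

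Two caveats. First, you assert that $\Sigma$ is a topological disk; neither the paper nor your argument needs or justifies this --- you should keep $\chi(\Sigma)$ as an unknown integer and observe that $\text{vol}(X)=2\pi\chi(\Sigma)+\Psi$ together with $\text{vol}(X)\le 2\pi$ (diameter of $X$ at most $\pi$) and non-collapsing forces $\chi(\Sigma)=1$; as written this is a gap, though an easily repaired one. Second, your justification of the $\mathbf{J}$-invariance of the $\mathbb{R}^{2n-2}$ factor (``a parallel $1$-form pairs with its image under $J$'') is only a heuristic; the paper devotes Appendix 2 (Proposition \ref{J-invariant-property}, using Lemma \ref{almost-gradient-vector} and Lemma \ref{split-integral}) to proving precisely the dichotomy you invoke, by showing that if the span of the $\nabla h_i$ is not almost $\mathbf{J}$-invariant in $L^2$ then one can manufacture an additional almost-splitting function and conclude the ball is already almost Euclidean. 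Your dichotomy is the right statement, but it requires that quantitative argument, not the smooth-case observation you give.
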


\begin{proof}
  The proof  of  Lemma \ref{epsilon-regularity} is a modification to  one of  Proposition \ref{epsilon-regularity-1}. Note that   $X$ is a circle of radius $t$ in present case.   It  suffices to  show that
  $t$ is close to  $2\pi$ by  Lemma \ref{hausddorf-closed}.
   Let   $ \Phi=(h_1,...,h_{2n-2})$  and  $\Gamma=(\Phi, \mathbf{u})$ be two maps constructed  in Proposition \ref{epsilon-regularity-1}.  By  Proposition \ref{J-invariant-property}  in Appendix 2, we may also assume
      \begin{align}\label{J-invariant}
 \int_{B_{p}(3)}|\nabla h_{n-1+i}-\mathbf{J}\nabla h_{i}|^2< \Psi(\tau, \epsilon,\frac{1}{l};v).
 \end{align}

We shall compute  the differential characteristic $\widehat{c_{1,\nabla}}$ of  tangent bundle $(TM,$ $\nabla)$
restricted on  $\Gamma^{-1}(z,u)=\Phi^{-1}(z) \cap U_u$ with fixed $z$ (cf. [Ch3]),   where $\nabla$ is the Levi-Civita connection on $TM$ and $(z,u)$ is a regular point of $\Gamma$ such that both Lemma \ref{level-volume}
and  Lemma \ref{small-second-form} hold.  It is easy to see that by the coarea formula and the condition iv),  the set
 \begin{align}
 \nonumber D=\{ z|~&  \Phi^{-1}(z)\cap U_u ~\text {is a regular  surface in  }~M~\text{and}\notag\\
 &\int_{\Phi^{-1}(z)\cap U_u}|{\rm Ric}(g)+\nabla\overline\nabla f| <c\delta\}
 \end{align}
 has a  positive volume in $\mathbb R^{2n-2}$  for some  constant $c$ which depends  only on $n$.

 For each  $z\in  D$,  we have the estimate
  \begin{align}\label{small-class-1}
  &|\int_{\Phi^{-1}(z)\cap U_u}\text{Ric }(\omega_g)|\notag\\
  &\le \int_{\Phi^{-1}(z)\cap U_u} |{\rm Ric}(g)+\nabla\overline\nabla f|
    +|\int_{\Phi^{-1}(z)\cap U_u} \sqrt{-1} \partial\overline\partial f |\notag \\
  &\leq c\delta+\int_{ \Gamma^{-1}(z,u)}|\nabla f|\leq c\delta+{\rm vol}(\Gamma^{-1}(z,u))\tau.
  \end{align}
Since
   \begin{align}
  \int_{ \Gamma^{-1}(z,u)}\widehat{c_{1,\nabla}}=\int_{\Phi^{-1}(z)\cap U_u}\text{Ric }(\omega_g), ~ \text{mod } \mathbb{Z},\notag
  \end{align}
 we get
 \begin{align}\label{small-class-2}
  \int_{ \Gamma^{-1}(z,u)}\widehat{c_{1,\nabla}}=\Psi,  ~ \text{mod } \mathbb{Z}.
  \end{align}
To compute the left term of (\ref{small-class-2}),
 we  will decompose  the tangent bundle $(TM,$ $\nabla)$ over $\Gamma^{-1}(z,u)$ as follows.

 By our construction of the map $\Gamma$,   using the coarea formula, we may   assume  that
  \begin{align}
  & i)~ \int_{\Gamma^{-1}(z,u) }|\langle\nabla h_i,\nabla h_j\rangle-\delta_{ij}|< \Psi, \notag\\
  &ii)~\int_{\Gamma^{-1}(z,u)}|\text{hess }h_i|< \Psi, \notag\\
  &iii)~\int_{\Gamma^{-1}(z,u) }|\langle\nabla \mathbf u^2,\nabla h_j\rangle| < \Psi, \notag\\
  &iv)\int_{  \Gamma^{-1}(z,u)}|\nabla\langle\nabla \mathbf u^2,\nabla h_j\rangle|< \Psi.\notag
  \end{align}
  Since $\Gamma^{-1}(z,u)$ is one dimensional manifold with bounded length,  the conditions i- ii) and iii-iv) imply
  $$|\langle\nabla h_i,\nabla h_j\rangle-\delta_{ij}|~\text{and} ~|\langle\nabla \mathbf u^2,\nabla h_j\rangle|$$
  are both small on   $\Gamma^{-1}(z,u)$,  respectively.  Moreover,  applying  the coarea formula to (\ref{J-invariant}) together with the above condition ii),  we also get
  \begin{align}
 \nonumber |\nabla h_{n-1+i}- \mathbf J\nabla h_i|< \Psi.
  \end{align}
  Hence by using  the Gram-Schmidt process, we obtain  $(2n-1)$   orthogonal   sections of $TM$ over  $\Gamma^{-1}(z,u)$,
  $$e_i,\mathbf{J}(e_i)~(1\leq i\leq n-1), \mathbf{N}$$
     from   sections  $\nabla h_i$ $(1\leq i\leq n-1)$, $\nabla \mathbf{u}$.   Denote  $\mathbb{E}$ to be  the sub-bundle spanning by $e_i,\mathbf{J}(e_i)$ and decompose  $TM$ into
    \begin{align}
    TM=\mathbb{E}\oplus\mathbb{E}^\perp
    \end{align}
 where    $\mathbb{E}^\perp$ is the orthogonal complement of $\mathbb{E}$.   We  introduce a  Whitney sum  connection  $\nabla'$  on  $TM$  over $\Gamma^{-1}(z,u)$  by  combining two  projection connections on   $\mathbb{E}$  and $\mathbb{E}^\perp$, which are both
 induced by  $\nabla$.   Then  by the condition ii), it is easy to show
    \begin{align}\label{small -connection-1}
    \int_{\Gamma^{-1}(z,u)}|\nabla-\nabla'|< \Psi,
    \end{align}
   where  $\nabla-\nabla'$  is  regarded as a 1-form on $\text{End}(TM)$.   Also we can  introduce  another connection $\nabla''$
   which is flat on $\mathbb{E}$.   Namely,  $\nabla''$ satisfies
     $$\nabla''(e_i)=\nabla''(\mathbf{J}(e_i))=0.$$
    Similar to (\ref{small -connection-1}),   we have
     \begin{align}\label{small -connection-2}
    \int_{\Gamma^{-1}(z,u)}|\nabla''-\nabla'|< \Psi.
    \end{align}
    Therefore, combining (\ref{small -connection-1}) and (\ref{small -connection-2}), we derive
     $$|(\widehat{c_{1,\nabla''}}-\widehat{c_{1,\nabla}})(\Gamma^{-1}(z,u))|<<1.$$

  On  the other hand,  by the flatness of  $\nabla''$ on $\mathbb{E}$ over $\Gamma^{-1}(z,u)$,  the quantity  $2\pi \widehat{c_{1,\nabla''}} (\Gamma^{-1}(z,u))$ is  just equal to the holonomy of the connection around $\Gamma^{-1}(z,u)$ (measured by  angle),
  \begin{align}
  2\pi \widehat{c_{1,\nabla''}} (\Gamma^{-1}(z,u))=\int_{\Gamma^{-1}(z,u)}\langle \nabla''_X \mathbf{N}, \mathbf J\mathbf N\rangle,
  \end{align}
  where $X$ is the unit tangent vector of $\Gamma^{-1}(z,u)$.  Thus by the choice of  $\mathbf{N}$ together with  (\ref{small -connection-1}), (\ref {small -connection-2})  and (\ref{second-form-2}),  we see that the angle is close to the length of $\Gamma^{-1}(z,u)$.   By (\ref{small-class-2}),   it follows that  $\frac{{\rm vol}(\Gamma^{-1}(z,u))}{2\pi}$ is close to zero modulo integers.  Hence, the  non-collapsing  of $ B_{(0,x)}(1)$ implies that ${\rm vol}(\Gamma^{-1}(z,u))$ is close to $2\pi$.     Consequently,  we prove that  $t$ is close
   to  $2\pi$ by (\ref{cone-volume-3}) in Lemma \ref{level-volume}.

  \end{proof}

\begin{proof}[Proof of  Theorem \ref{thm-kahler-3}]  By  Volume Comparison Theorem \ref {volume-comparison}, for any
$r\le 1$,  we have
$${\rm vol}_{g_i}({\rm vol}(B_p(r))\ge \lambda_0r^n,~\forall ~p\in ~M_i,$$
where $\lambda_0$ depends only on the constants $\Lambda, A, v$ in  Definition \ref{almost-kr-soliton}.  Thus by Gromov's  compactness theorem [Gr],  there exists  a subsequence of $(M_i,g_i;p_i)$  which converge to  a   metric space $Y_\infty$  in  the  pointed Gromov-Hausdorff topology.  In the remaining, we show that $\mathcal{S}(Y_\infty)=\mathcal{S}_{2n-4}$.
We will use the  argument by contradiction.  On the contrary,  for a ball $B_y(1)\subset Y$,  by Proposition \ref{prop-even-dim} in Appendix 2,  there exists a  point
 $z\in S\cap B_y(1)\nsubseteqq S_{2n-4}$
and  there exists a sequence $\{r_i\}~(r_i\to 0)$ such that  $(Y,\frac{d}{r_i^2};z)$ converge a tangent cone $T_zY=\mathbb{R}^{2n-2}\times C(X)$. This implies that exists an $\epsilon>0$ such that the unit metric ball  $B_{z_\infty}(1)\subset T_zY$ centered at $z_\infty\cong z$ satisfies
\begin{align} {\rm d}_{GH}(B_{z_\infty}(1),B(1))>2\epsilon,\end{align}
and for any $l>>1$ and $\epsilon<<1$ one can  choose sufficiently   large numbers  $i$ and $k$ such that
\begin{align}\label{assumption-thm-kahler-3}
&{\rm d}_{GH}(\hat B_{z_k}(1),B(1))>\epsilon,\\
&{\rm d}_{GH}(\hat B_{z_k}(l), B_{(0,x)}(l))< \eta\notag,
\end{align}
where $z_k\in M_k\to z\in Y$ as $k\to \infty$, and $ \hat B_{z_k}(1)$ and $\hat B_{z_k}(l)$ are two balls with radius $1$ and $l$ respectively in $(M_k,\frac{g_k}{r_i^2})=( M_k,\hat g_k)$ .
On the other hand, by using  Volume Comparison Theorem \ref{volume-comparison},  for fixed  $i$, we can choose large enough $k$ such that
\begin{align}
\frac{r_i^2}{{\rm vol}( B_{z_k}(2r_i))}\int_{ B_{z_k}(2r_i)}|{\rm Ric}(g_k)-g_k+\nabla\overline\nabla f_k| d\text{v}_{g_k}<\frac{1}{2} \delta.\notag
\end{align}
Since
\begin{align}
\frac{r_i^2}{{\rm vol}( B_{z_k}(2r_i))}\int_{ B_{z_k}(2r_i)}|g_k|d\text{v}_{g_k}\leq c(n,C)r_i^2\notag,
\end{align}
we have
\begin{align}
\frac{1}{{\rm vol}( \hat B_{z_k}(2))}\int_{\hat B_{z_k}(2)}|{\rm Ric}(\hat g_k)+\nabla\overline\nabla f_k| d\text{v}_{\hat g_k}< \delta.
\end{align}
Hence, for large $k$, $( M_k,\hat g_k)$ satisfies the conditions i-v) in Lemma \ref{epsilon-regularity}, and consequently, we get
$${\rm d}_{GH}(\hat B_{z_k}(1),B(1))< \epsilon,$$
which is a contradiction to (\ref{assumption-thm-kahler-3}).  The theorem is proved.
\end{proof}

 Theorem \ref{thm-kahler-2}  follows from  Theorem \ref{thm-kahler-3}
with the help of Lemma \ref{condtion-almost-kr-soliton} and the relation (\ref{volume-lower-bound}).

\vskip3mm

\section{Appendix 1}

This appendix is a discussion  about how to use the technique of  conformal transformation  from  [TZh] to prove  Theorem \ref {thm-kahler-1} and Theorem \ref {thm-kahler-2} in Section 6.  We would like to emphasis on the different  situation  after the change of Ricci curvature by the  conformal transformation.

First, Theorem  \ref {thm-kahler-1} can be proved by using  the conformal  technique.   In fact,
by the formula of Ricci curvature for conformal metric  $e^{2u}g$,
\begin{align}\label{conformal-curvature}
&\text{Ric }(e^{2u}g)\notag\\
&=\text{Ric }(g)-(n-2)(\text{hess }u-du\otimes du)+(\Delta u+(n-2)|\nabla u|^2)g,
\end{align}
the condition  $\text{Ric }^f_M(g)\geq -C$  implies that Ricci curvature  $\text{Ric }(e^{-\frac{2f}{n-2}}g)$ of  conformal  metric $e^{-\frac{2f}{n-2}}g$  is bounded
 below if  both $\nabla f$ and $\Delta f$ are bounded.  Thus by  Lemma \ref{lemma-partial-theta},   we see that
$$\text{Ric }(e^{\frac{2\theta_X(\phi_t)}{n-2}}g_{t})$$
is uniformly  bounded  below.  Hence,   Theorem  \ref {thm-kahler-1}  follows from Theorem 6.2 in  [CC2]  immediately.

Secondly,  following the proof of Theorem  5.4  in  [Ch3],  Lemma \ref{epsilon-regularity} with an additional condition
$ \text{vi)}~ |\Delta f|<\tau$
 can be proved   by using the conformal change of the bundle metric.   We note that the condition vi)  can be guaranteed  for      the K\"ahler manifolds  $(M,g_t)$  in Theorem  \ref {thm-kahler-2} with  blowing-up metrics.  Thus by   (\ref{conformal-curvature}),
the  Ricci curvature of  blowing-up metric of  $e^{\frac{2\theta_X(\phi_t)}{n-2}}g_{t}$  is almost positive.

 For a K\"{a}hler manifold $(M,g,\mathbf{J})$, the $(1,0)$-type Hermitian connection $\nabla$ on the holomorphic bundle $(TM,h)$ is same as the  Levi-Civita connection, where $h$ is the Hermitian metric corresponding to $g$.  Then  $c_{1,\nabla}$  of  $(TM,h)$  is the same as the Ricci form of $g$.  If  we choose  a Hermitian metric $e^{\psi}g$  for a smooth function $\psi$,   then
 $$\tilde{\nabla}=\nabla+\partial \psi$$
  is the corresponding $(1,0)$-type Hermitian connection.  It follows
\begin{align}F^{\tilde{\nabla}}=F^\nabla+d\partial \psi\notag\end{align}
and
\begin{align}\label{equ-conf}
 \sqrt{-1}tr(F^{\tilde{\nabla}})=\sqrt{-1}tr(F^\nabla)-n\sqrt{-1}\partial\bar{\partial}\psi,
\end{align}
where $F^\nabla$ ($F^{\tilde{\nabla}}$) denotes the curvature of  the connection $\nabla$ ($\tilde \nabla$) on $TM$.   Thus  by  putting $\psi=-\frac{2\pi}{n}f$ and
using (\ref{equ-conf}),  we have
\begin{align}\label{modified-small-connection}
\widehat{c_{1,\tilde{\nabla}}}( \Gamma^{-1}(z,u) )=\int_{  \Gamma^{-1}(z,u)}|\text{Ric }(\omega_g)+\sqrt{-1}\partial\bar{\partial}f|, ~ \text{mod } \mathbb{Z},
\end{align}
where the map $\Gamma$  is defined as in Section 5 and Section 6  for the conformal metric $\tilde g=e^{-\frac{2f}{n-2}}g$.  Thus $\widehat{c_{1,\tilde{\nabla}}}(\Gamma^{-1}(z,u))$ is small modulo integers.   Moreover,  by  Theorem 3.7 in [CCT] (compared to Lemma  \ref{small-second-form}  in Section 5) ,  it holds
 \begin{align}\label{second-form-conformal-metric}
 \frac{1}{ V_\Gamma (z,u) }
  \int_{\Gamma^{-1} (z,u)}|\Pi_{\Gamma^{-1}(z,u)}-u^{-1}\tilde g_{\Gamma^{-1}(z,u)}\otimes\nabla u|^2<\Psi.  \end{align}

 On the other hand,     since  the Ricci curvature of $\tilde g$  is
  almost positive,    for the connection $\tilde\nabla$,  we can follow the argument in  proof of Theorem 5.4
  [Ch3]  to show that
 the quantity  $2\pi \widehat{c_{1,\tilde\nabla}} (\Gamma^{-1}(z,u))$ is  close to a  holonomy of  another perturbation  connection  $\tilde\nabla''$ of $\tilde\nabla$ around $\Gamma^{-1}(z,u)$  (also see the argument in  proof of  Lemma  \ref{epsilon-regularity}). The late is close to
 $$\int_{\Gamma^{-1} (z,u)}\Pi_{\Gamma^{-1}(z,u)}.$$
Thus combining  (\ref{modified-small-connection})  and  (\ref{second-form-conformal-metric}), we get
 $$
 |\widehat{c_{1,\tilde\nabla}}(\Gamma^{-1}(z,u))-\frac{{\rm vol}(\Gamma^{-1}(z,u))}{2\pi}|<\Psi.$$
It  follows  that  the diameter of section $X$ in two dimensional cone $C(X)$ with rescaled  cone metric is close to $2\pi$.
Thus the Gromov-Hausdorff distance between $B_p(1)$ and $B_{(0,x)}(1)$ both with rescaled metrics  is close to zero.
 By Theorem 9.69 in [Co3], we prove  Lemma \ref{epsilon-regularity} with the additional  condition vi).
 Theorem \ref {thm-kahler-2}  follows  from applying   Lemma \ref{epsilon-regularity}  to the sequence $\{(M,g_t)\}$ $(t\to 1)$ with  blowing-up metrics,  for details to see  the proof of  Theorem \ref {thm-kahler-3} in the end of Section 6.

\vskip3mm

\section{Appendix 2}

 In this appendix, we prove  (\ref{J-invariant}) in Section 6.   We need several lemmas.  First,
as an  application of  Lemma \ref{harmonic-estimate-annual-2},  we have

\begin{lem} \label{almost-gradient-vector}
Under the conditions of Lemma \ref{harmonic-estimate-annual-1}, for a vector field $X$ on $A_p(a,b)$  which satisfies
\begin{align}\label{conditions}
|X|_{C^0(A_p(a,b))}\leq  D , \frac{1}{{\rm vol }^f(A_p(a,b))}\int_{A_p(a,b)}|\nabla X|^2 d\text{v}^f< \delta,
\end{align}
there exists a $f$-harmonic function $\theta$ defined in $A_p(a_2,b_2)$ such that
\begin{align}\label{gradient-est-app}
\frac{1}{{\rm vol }^f(A_p(a_2,b_2))}\int_{A_p(a_2,b_2)}|\nabla\theta-X|^2 d\text{v}^f
< \Psi(\epsilon,\omega, \delta;A,a_1,b_1,a_2,a,b),\end{align}
and
\begin{align}&\frac{1}{{\rm vol }^fA_p(a_3,b_3)}\int_{A_p(a_3,b_3)}|{\rm hess }~\theta|^2 d{\rm v}^f\notag\\
&<\Psi(\epsilon,\omega, \delta;A,a_1,b_1,a_2,b_2,a_3,b_3,a,b)\label{hessian-est},
\end{align}
where $A_p(a_3,b_3)$ is an even smaller annulus in $A_p(a_2,b_2)$.
\end{lem}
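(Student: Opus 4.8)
The plan is to reduce the construction of $\theta$ to objects already built in Section~2. Let $h$ be the solution of $\Delta^f h=n$ on $A_p(a,b)$ with the quadratic boundary values of Lemma~\ref{harmonic-estimate-annual-1}; by Lemma~\ref{harmonic-estimate-annual-2} one has $\int_{A_p(a_2,b_2)}|\text{hess}\,h-g|^2\,d\text{v}^f<\Psi\,\text{vol}^f(A_p(a_2,b_2))$, and $|\nabla h|\le C(n,\Lambda,A,a,b)$ on $A_p(a_2,b_2)$ by a gradient estimate of the type of Proposition~\ref{gradient-esti} applied on a slightly larger annulus. The key observation is to set
$$\psi:=\langle X,\nabla h\rangle,$$
so that $\nabla_e\psi=(\text{hess}\,h)(X,e)+\langle\nabla_e X,\nabla h\rangle$ for every vector $e$. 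Since $|\text{hess}\,h-g|$ is $L^2$-small by Lemma~\ref{harmonic-estimate-annual-2} and $|\nabla X|$ is $L^2$-small by hypothesis~(\ref{conditions}), while $|X|\le D$ and $|\nabla h|\le C$, this gives
$$\frac{1}{\text{vol}^f(A_p(a_2,b_2))}\int_{A_p(a_2,b_2)}|\nabla\psi-X|^2\,d\text{v}^f<\Psi(\epsilon,\omega,\delta;A,a_1,b_1,a_2,a,b),$$
where Theorem~\ref{volume-comparison} is used so that $L^2$-smallness over $A_p(a,b)$ transfers to the comparable annulus $A_p(a_2,b_2)$.

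Next take $\theta$ to be the $f$-harmonic function on $A_p(a_2,b_2)$ with $\theta=\psi$ on $\partial A_p(a_2,b_2)$; by the maximum principle $\|\theta-\psi\|_{C^0(A_p(a_2,b_2))}\le C(n,\Lambda,A,a,b)\,D$. To obtain~(\ref{gradient-est-app}) I would integrate by parts: since $\theta-\psi$ vanishes on the boundary and $\Delta^f\theta=0$,
$$\int_{A_p(a_2,b_2)}|\nabla\theta-\nabla\psi|^2 d\text{v}^f=-\int_{A_p(a_2,b_2)}\langle\nabla(\theta-\psi),X\rangle d\text{v}^f-\int_{A_p(a_2,b_2)}\langle\nabla(\theta-\psi),\nabla\psi-X\rangle d\text{v}^f.$$
The first integral equals $\int(\theta-\psi)\,\Delta^f\!\!-\!\text{adjoint of }X$, precisely $\int(\theta-\psi)\,\text{div}^f X\,d\text{v}^f$; since $|\text{div}^f X|\le\sqrt n\,|\nabla X|+D\,|\nabla f|$ with $|\nabla f|\le\epsilon A$, Cauchy--Schwarz together with~(\ref{conditions}) and Theorem~\ref{volume-comparison} give $\|\text{div}^f X\|_{L^1(d\text{v}^f)}\le\Psi(\delta,\epsilon)\,\text{vol}^f(A_p(a_2,b_2))$, so this term is a $\Psi\cdot\text{vol}^f$. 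The second integral is at most $\frac{1}{2}\int|\nabla\theta-\nabla\psi|^2 d\text{v}^f+\frac{1}{2}\int|\nabla\psi-X|^2 d\text{v}^f$, and its first half is absorbed into the left side; combined with the first paragraph this proves~(\ref{gradient-est-app}).

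For the Hessian bound~(\ref{hessian-est}) I would run the usual Bochner argument on a still smaller annulus. Choose a cut-off $\varphi$ as in Lemma~\ref{cut-off}, equal to $1$ on $A_p(a_3,b_3)$, supported in $A_p(a_2,b_2)$, with $|\nabla\varphi|,|\Delta^f\varphi|$ bounded. Applying the Bochner identity~(\ref{bochner-inequ}) to $\theta$, using $\Delta^f\theta=0$ and $\text{Ric}^f\ge-(n-1)\epsilon^2\Lambda^2 g$, and integrating against $\varphi\,d\text{v}^f\ge 0$,
$$2\int\varphi\,|\text{hess}\,\theta|^2 d\text{v}^f\le\int|\nabla\theta|^2\,\Delta^f\varphi\,d\text{v}^f+2(n-1)\epsilon^2\Lambda^2\int\varphi\,|\nabla\theta|^2 d\text{v}^f.$$
Writing $|\nabla\theta|^2=|X|^2+(|\nabla\theta|^2-|X|^2)$: the $|X|^2$-contribution, after one integration by parts, equals $-\int\langle\nabla|X|^2,\nabla\varphi\rangle d\text{v}^f$ and is bounded by $2D\|\nabla\varphi\|_{C^0}\int|\nabla X|\,d\text{v}^f$, a $\Psi\cdot\text{vol}^f$; the $(|\nabla\theta|^2-|X|^2)=\langle\nabla\theta-X,\nabla\theta+X\rangle$-contribution is a $\Psi\cdot\text{vol}^f$ by~(\ref{gradient-est-app}); and $\int\varphi|\nabla\theta|^2 d\text{v}^f\le 2\int\varphi|X|^2+2\int\varphi|\nabla\theta-X|^2\le C\,\text{vol}^f(A_p(a,b))$, so the last term is $O(\epsilon^2)\,\text{vol}^f$. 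Dividing by $\text{vol}^f(A_p(a_3,b_3))$, comparable to $\text{vol}^f(A_p(a,b))$ by Theorem~\ref{volume-comparison}, yields~(\ref{hessian-est}).

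The single delicate point throughout is that $X$ is controlled only to first order and only in $L^2$: no pointwise bound on $X$ or any bound on its second derivatives is available, so every estimate must be arranged so that at most one covariant derivative ever lands on $X$. This is exactly why $\psi$ is formed by contracting $X$ with $\nabla h$ — converting the (generally non-closed) $1$-form $X^\flat$ into a scalar whose gradient recaptures $X$, using $\text{hess}\,h\approx g$ — and why the two Dirichlet comparisons are done by integration by parts rather than by elliptic regularity. I expect the main obstacle to be precisely the verification of $\nabla\psi\approx X$ in $L^2$; once that is in hand, the passage from $\psi$ to the genuinely $f$-harmonic $\theta$ and the Hessian bound follow the pattern of Lemma~\ref{harmonic-estimate} and Lemma~\ref{harmonic-estimate-annual-2}.
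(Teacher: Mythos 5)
Your proposal is correct and follows essentially the same route as the paper's proof: the paper also sets $\theta_1=\langle X,\nabla h\rangle$ with $h$ the solution of $\Delta^f h=n$ from Lemma \ref{harmonic-estimate-annual-1}, controls $\nabla\theta_1-X$ via Lemma \ref{harmonic-estimate-annual-2} and (\ref{conditions}), solves the same Dirichlet problem for $\theta$, compares $\nabla\theta$ with $X$ by the same divergence-theorem identity, and obtains the Hessian bound by the same cut-off Bochner argument using $\Delta^f(|\nabla\theta|^2-|X|^2)$. Your reorganization of the gradient comparison (expanding $\int|\nabla(\theta-\psi)|^2$ directly and absorbing) is only a cosmetic variant of the paper's three-term expansion.
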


\begin{proof}
Let   $h$ be the $f$-harmonic function  constructed  in (\ref{f-harmonic-radial}) in Section 2 and $\theta_1=\langle X,\nabla h\rangle$.  Then
\begin{align}
\nonumber \nabla \theta_1=\langle\nabla X, \nabla h\rangle+\langle X,{\rm hess }~h\rangle,
\end{align}
It follows
\begin{align}
\nonumber &\int_{A_p(a_2,b_2)}|\nabla\theta_1-X|^2d\text{v}^f\\
&\leq 2\int_{A_p(a_2,b_2)}(\langle\nabla X, \nabla h\rangle^2d{\rm v}^f+\langle X,{\rm hess }~h-g\rangle^2)d{\rm v}^f.\notag
\end{align}
Thus by  (\ref{conditions}) and Lemma \ref{harmonic-estimate-annual-2}, we  get
\begin{align}\label{gradient-est}
\frac{1}{{\rm vol }^f(A_p(a_2,b_2))}\int_{A_p(a_2,b_2)}|\nabla\theta_1-X|^2 d{\rm v}^f
< \Psi.\end{align}

Let $\theta$   be  a solution  of equation,
\begin{align}
\Delta^f\theta=0,~{ \rm in}~A_p(a_2,b_2), \end{align}
  with $\theta=\theta_1$   ~on  $\partial A_p(a_2,b_2)$.
Then
\begin{align}
&\int_{A_p(a_2,b_2)}(\langle \nabla\theta-\nabla\theta_1,X\rangle+(\theta-\theta_1){\rm div } X)d{\rm v}^f\notag\\
&=\int_{A_p(a_2,b_2)}{\rm div } ((\theta-\theta_1)X)d{\rm v}^f= \int_{A_p(a_2,b_2)}  (\theta-\theta_1)\langle \nabla f, X\rangle d{\rm v}^f.\notag
\end{align}
It follows
\begin{align}\label{divergence}
\int_{A_p(a_2,b_2)}\langle \nabla\theta-\nabla\theta_1,X\rangle d{\rm v}^f< \Psi.
\end{align}
On the other hand, since
\begin{align}
\nonumber \int_{A_p(a_2,b_2)}\langle \nabla\theta_1-\nabla \theta, \nabla\theta\rangle d{\rm v}^f=\int_{A_p(a_2,b_2)}(\theta-\theta_1)\Delta^f\theta d{\rm v}^f=0,
\end{align}
 we have
 \begin{align}
 \int_{A_p(a_2,b_2)} |\nabla \theta|^2 d{\rm v}^f=\int_{A_p(a_2,b_2)}  \langle \nabla\theta,\nabla\theta_1\rangle d\text{v}^f.\notag
\end{align}
By the H\"older inequality,   we get
$$\int_{A_p(a_2,b_2)} |\nabla \theta|^2 d{\rm v}^f\le \int_{A_p(a_2,b_2)} |\nabla \theta_1|^2 d{\rm v}^f<C.$$
Hence,
\begin{align}
\nonumber & \int_{A_P(a_2,b_2)}\langle \nabla \theta-X\rangle^2d{\rm v}^f\\
&=\int_{A_p(a_2,b_2)}(|\nabla \theta|^2+|X|^2-2\langle \nabla\theta ,X\rangle)d{\rm v}^f\notag\\
\nonumber &=\int_{A_p(a_2,b_2)}(\langle\nabla \theta, \nabla\theta_1\rangle+|X|^2-2\langle \nabla\theta ,X\rangle)d{\rm v}^f&\\
&=\int_{A_p(a_2,b_2)}(\langle \nabla\theta_1-X,\nabla\theta\rangle+\langle X,X-\nabla\theta_1\rangle+\langle X,\nabla\theta_1-\nabla \theta\rangle)d{\rm v}^f.&
\end{align}
 Therefore, combining (\ref{conditions}) and (\ref{divergence}),
 we derive (\ref{gradient-est-app}) immediately.

To get (\ref{hessian-est}), we choose a cut-off function  which is $\phi$ supported in $A_p(a_2,b_2)$ with bounded gradient and
$f$-Lapalace as in Lemma  \ref{cut-off} in Section 1.   Then  by
 the Bochner identity,  we have
\begin{align}
 \int_{A_p(a_2,b_2)}\frac{1}{2}\phi\Delta^f|\nabla \theta|^2d{\rm v}^f=\int_{A_p(a_2,b_2)}\phi(|{\rm hess }~\theta|^2+{\rm Ric }(\nabla \theta,\nabla \theta)) d{\rm v}^f.\notag
\end{align}
Since
\begin{align}
 \int_{A_p(a_2,b_2)}\frac{1}{2}\phi\Delta^f|X|^2d{\rm v}^f
 =-\int_{A_p(a_2,b_2)}\langle\nabla \phi,\langle X,\nabla X\rangle\rangle d{\rm v}^f,\notag
\end{align}
 we obtain
\begin{align}
\nonumber \int_{A_p(a_2,b_2)}\phi(|{\rm hess }\theta|^2  d{\rm v}^f& < \int_{A_p(a_2,b_2)}\frac{1}{2}\phi\Delta^f(|\nabla \theta|^2-|X|^2)d{\rm v}^f\\
&+\Psi(\epsilon,\omega, \delta;A,a_1,b_1,a_2,b_2,a_3,b_3,a,b).&
\end{align}
Therefore,  using  integration  by parts, we  derive  (\ref{hessian-est}) from  (\ref{gradient-est-app}).
\end{proof}

Next, we generalize Proposition \ref{proof-splitting} to  the case without the  assumption of  the existence of an almost line.

\begin{lem}\label{split-integral}
Let $(M,g)$ be a Riemannian manifold  which satisfies  (\ref{be-curvature-condition}).  Let  $h^+$ be
a  $f$-harmonic function  which satisfies
\begin{align}\label{gradient-C^0-app}|\nabla h^+|\leq c(n,\Lambda,A),
\end{align}
\begin{align}
\label{gradient-condition-app}
\frac{1}{\rm {vol}^f (B_p(1))}|\int_{B_p(1)}|\nabla h^+|^2-1|  d{\rm v}^f< \delta,
\end{align}
\begin{align}\label{hessian-condition-app}\frac{1}{{\rm vol}^f (B_p(1))}\int_{B_p(1)}|{\rm hess }~h^+|^2  d\text{v}^f < \delta.
\end{align}
 Then there  exists  a $\Psi(\delta; A,\Lambda,n)$ Gromov-Hausdorff approximation from $B_p(\frac{1}{8})$ to  $B_{(0\times x)}(\frac{1}{8})\subset\mathbb{R}\times X$.
\end{lem}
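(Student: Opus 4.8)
The plan is to run the proof of Proposition~\ref{proof-splitting} essentially verbatim, using the three integral bounds on $h^+$ in place of the Busemann function $b^+$ that encoded the line there. After subtracting a constant we may assume $h^+(p)=0$; since $|\nabla h^+|\le c=c(n,\Lambda,A)$ by hypothesis and the $f$-average of $|\nabla h^+|^2$ over $B_p(1)$ is within $\delta$ of $1$, the level set $X:=(h^+)^{-1}(0)$ meets $B_p(\tfrac{1}{8})$, and we equip it with the intrinsic metric measured in $B_p(1)$ exactly as in Proposition~\ref{proof-splitting}. Define $u(q)=(h^+(q),x_q)$, where $x_q\in X$ is a nearest point of $X$ to $q$, and split $B_p(\tfrac{1}{8})$ into an upper region $\{h^+>c\eta\}$, a lower region $\{h^+<-c\eta\}$, and a middle region $\{|h^+|\le c\eta\}$, where $\eta=\eta(n,\Lambda,A)$ is chosen so that $\delta=o(\eta^{n})$.

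First I would upgrade the hypotheses. From $|\nabla|\nabla h^+|^2|=2\,|\text{hess }h^+(\nabla h^+,\cdot)|\le 2c\,|\text{hess }h^+|$ and Cauchy--Schwarz, $\int_{B_p(1)}|\nabla|\nabla h^+|^2|\,e^{-f}d\text{v}\le 2c\,\delta^{1/2}\,\text{vol}^f(B_p(1))$; combined with the weighted Poincar\'e inequality that follows from the segment inequality Lemma~\ref{equ-seg}, this shows $|\nabla h^+|^2$ is $L^1$-close to its $f$-average on $B_p(\tfrac12)$, and that average is within $\delta$ of $1$, so $\frac{1}{\text{vol}^f(B_p(1/2))}\int_{B_p(1/2)}\big||\nabla h^+|^2-1\big|\,e^{-f}d\text{v}<\Psi(\delta;A,\Lambda,n)$. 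In particular the set $\{|\nabla h^+|^2<\tfrac12\}$ has small $f$-measure, so $f$-a.e.\ point of $X$ is a regular point of $h^+$.

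The heart of the proof is the distance estimate. For each pair of regions I would produce, exactly as in Cases~1--3 of Proposition~\ref{proof-splitting}, points $y_i$ near the given points $q_i$ and points $x_i\in X$ near $x_{q_i}$ such that the minimal geodesics arising in the three triangles $\{y_1,y_2,\gamma(h^+(y_1))\}$, $\{x_2,y_1,\gamma(h^+(y_1))\}$ and $\{x_1,x_2,y_1\}$ each carry small integrals of $|\text{hess }h^+|$, of $\big||\nabla h^+|-1\big|$, and of $|\nabla h^+-\gamma'|$ along the vertical segments; these selections come from applying the segment inequality Lemma~\ref{equ-seg} and its iterate Lemma~\ref{approxi-1} to $e=\varphi\,|\text{hess }h^+|$ and $e=\varphi\,\big||\nabla h^+|-1\big|$, $\varphi$ the cut-off of Lemma~\ref{cut-off}. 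Along such a good geodesic $\gamma$ from $y_i$ to $x_i$ one has $\frac{d}{ds}\langle\nabla h^+,\gamma'\rangle=\text{hess }h^+(\gamma',\gamma')$, so $\langle\nabla h^+,\gamma'\rangle$ is nearly constant, equal on average to $h^+(y_i)/d(y_i,x_i)$; since $\gamma$ meets $X$ orthogonally at the regular point $x_i$ we have $|\langle\nabla h^+,\gamma'\rangle|\approx|\nabla h^+|\approx1$ at $x_i$, whence $d(y_i,x_i)\approx|h^+(y_i)|$ and, by Cauchy--Schwarz with $|\nabla h^+|\approx1$ and $|\gamma'|=1$, $\nabla h^+\approx\pm\gamma'$ along $\gamma$. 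With these inputs, three applications of Cheeger's triangular Lemma~\ref{cheeger-lemma} give $|d(q_1,q_2)-d(u(q_1),u(q_2))|<\Psi(\delta;A,\Lambda,n)$ as in Proposition~\ref{proof-splitting}; surjectivity of $u$ onto $B_{(0\times x)}(\tfrac{1}{8})$ up to $\Psi$ then follows from the same degree and Vitali-covering argument as in the proof of Lemma~\ref{volume-estimate-4}.

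The main obstacle is justifying $d(q,x_q)\approx|h^+(q)|$ with near-parallelism of the realizing geodesic to $\nabla h^+$, for $q$ in a set of almost full $f$-measure: in Proposition~\ref{proof-splitting} this came for free from the line (through $b^+\approx h^+$ and $b^++b^-$ small), while here it must be squeezed out of the integral hypotheses alone. The lower bound $|h^+(q)|\le c\,d(q,x_q)$ is immediate. For the remainder one estimates $\int_{B_p(1/8)}\big(\int_{[q,x_q]}|\text{hess }h^+|\big)\,e^{-f}d\text{v}$ by pushing forward along the normal exponential map of $X$, bounding it by $c\int_{B_p(1)}|\text{hess }h^+|\,e^{-f}d\text{v}\le c\,\delta^{1/2}\,\text{vol}^f(B_p(1))$, and then a maximal-function (Vitali) selection isolates a set of $q$ with $f$-measure $>(1-\Psi)\,\text{vol}^f(B_p(\tfrac{1}{8}))$ along whose minimal geodesic to $x_q$ all of the above integrals are small; points outside this set are absorbed into the Gromov--Hausdorff error. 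This is exactly the step where the fact that $\text{hess }h^+$ is controlled only in $L^2$-average, not pointwise, must be handled with care.
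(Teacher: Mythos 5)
Your outline (define $X=(h^+)^{-1}(0)$, set $u(q)=(h^+(q),x_q)$, and verify the distance estimate case by case via the segment inequality and Cheeger's triangle Lemma \ref{cheeger-lemma}) matches the paper's, and you have correctly isolated the crux: without the line, the estimate $d(q,(h^+)^{-1}(t))\approx |h^+(q)-t|$ must be extracted from the integral hypotheses alone. But your proposed resolution of that crux does not close the gap. First, the bound $\int_{B_p(1/8)}\bigl(\int_{[q,x_q]}|\text{hess }h^+|\bigr)e^{-f}d\text{v}\le c\int_{B_p(1)}|\text{hess }h^+|e^{-f}d\text{v}$ obtained by ``pushing forward along the normal exponential map of $X$'' requires a Jacobian bound for that map, i.e.\ control of the second fundamental form of the level set $X$; no such control is available here (the segment inequality Lemma \ref{equ-seg} controls integrals over \emph{all} geodesics between two sets of positive measure, not the single foot-point geodesic from $q$ to the set $X$). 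Second, even granting that $\int_{[q,x_q]}|\text{hess }h^+|$ is small for most $q$, this only shows $\langle\nabla h^+,\gamma'\rangle$ is nearly constant, equal to $h^+(q)/d(q,x_q)$, along the minimizing geodesic; this is perfectly consistent with $d(q,x_q)\gg|h^+(q)|$ unless one also knows $|\nabla h^+(x_q)|\approx 1$ \emph{pointwise} at the specific nearest point $x_q$ and that $x_q$ is a regular point of $h^+$. That pointwise information cannot be squeezed out of the $L^2$ hypotheses (\ref{gradient-condition-app})--(\ref{hessian-condition-app}) for the particular foot points; the set where $|\nabla h^+|$ is far from $1$ is only small in measure, and nothing prevents the nearest-point projection from landing in it.

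The paper handles exactly this step differently: it invokes Cheeger's Theorem 16.32 and Lemma 8.17 of [C1] through Lemma \ref{level-set-function}, which produces a Lipschitz replacement $\rho$ of $h^+$ with $|h^+-\rho|<\Psi$ and the distance-to-level-set property (\ref{app-dis}); the substantive work is then to verify the hypotheses of that theorem in the Bakry--\'Emery setting, namely the doubling property (from the weighted volume comparison, Theorem \ref{volume-comparison}) and the Poincar\'e and $(\epsilon,\delta)$-inequalities (from the weighted segment inequality, Lemma \ref{equ-seg}). Your proposal omits this input entirely, and the ad hoc substitute does not supply it, so as written the proof is incomplete at its central step.
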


The proof of Lemma  \ref{split-integral} depends on the following fundamental lemma  which is in fact a consequence of Theorem 16.32 and Lemma 8.17 in [Ch1].

\begin{lem}\label{level-set-function}
Under the condition (\ref{be-curvature-condition}), for a $f$-harmonic function $h^+$   which satisfies  (\ref{gradient-C^0-app}), (\ref{gradient-condition-app}) and  (\ref{hessian-condition-app}) in  $B_p(1)$,
 there exists a Lipschitz  function $\rho$ in $B_p(\frac{1}{4})$ such that  $|h^+-\rho|< \Psi$ and
\begin{align}\label{app-dis}
 ||\rho(z)-t|-d(z,\rho^{-1}(t))|< \Psi.
\end{align}
\end{lem}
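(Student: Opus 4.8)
The plan is to reproduce the argument Cheeger uses for the Riemannian analogue (Theorem 16.32 and Lemma 8.17 of [C1]), replacing the volume form $d\text{v}$ by the weighted form $d\text{v}^f$ throughout, the unweighted segment inequalities by their weighted versions Lemma \ref{equ-seg} and Lemma \ref{equ-rad}, the Laplacian comparison by Lemma \ref{lapalace-esti-r}, and invoking the weighted volume doubling of Theorem \ref{volume-comparison}. The content is: (i) show that $h^+$ is, along most minimal geodesics of $B_p(\frac{1}{4})$, $\Psi$-close to being affine with slope $\pm1$; (ii) define $\rho$ as an inf-convolution regularization of $h^+$, which is automatically $1$-Lipschitz with $\rho\le h^+$; (iii) deduce (\ref{app-dis}) from the $1$-Lipschitz bound together with the near-unit-speed behaviour of the flow of $-\nabla h^+/|\nabla h^+|$.

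\textbf{Step 1: $h^+$ is nearly affine along good geodesics.} By (\ref{gradient-condition-app}) the $d\text{v}^f$-average of $|\,|\nabla h^+|^2-1\,|$ over $B_p(1)$ is $<\delta$, and since $|\nabla h^+|\le c(n,\Lambda,A)$ by (\ref{gradient-C^0-app}) the same holds, up to a factor depending only on $n,\Lambda,A$, for $|\,|\nabla h^+|-1\,|$; combined with (\ref{hessian-condition-app}) this shows that $E:=|\text{hess }h^+|+\bigl|\,|\nabla h^+|-1\,\bigr|$ satisfies $\frac{1}{\text{vol}^f(B_p(1))}\int_{B_p(1)}E<\Psi$. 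Applying the weighted segment inequality Lemma \ref{equ-seg} with $A_1=A_2=B_p(\frac{1}{4})$ and $W=B_p(\frac{3}{4})$, together with Chebyshev's inequality and doubling of $d\text{v}^f$, yields a subset of $B_p(\frac{1}{4})\times B_p(\frac{1}{4})$ of $(d\text{v}^f\otimes d\text{v}^f)$-measure $<\Psi\,\text{vol}^f(B_p(\frac{1}{4}))^2$ off which $\int_0^{d(z,w)}E(\gamma_{zw}(s))\,ds<\Psi$. For such a pair $(z,w)$, integrating $\frac{d}{ds}h^+(\gamma_{zw}(s))=\langle\nabla h^+,\gamma_{zw}'\rangle$ and bounding the integrand by $|\nabla h^+|\le 1+\bigl|\,|\nabla h^+|-1\,\bigr|$ gives $\bigl|h^+(w)-h^+(z)\bigr|\le d(z,w)+\Psi$.

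\textbf{Step 2: construction of $\rho$ and proof of (\ref{app-dis}).} Set $\rho(z):=\inf_{w\in B_p(\frac{1}{4})}\bigl(h^+(w)+d(z,w)\bigr)$, a $1$-Lipschitz function with $\rho\le h^+$; the $1$-Lipschitz property already gives $d(z,\rho^{-1}(t))\ge|\rho(z)-t|$. Because the exceptional set of Step 1 has $d\text{v}^f$-measure $<\Psi\,\text{vol}^f(B_p(\frac{1}{4}))^2$ while $d\text{v}^f$ is doubling, for every $z\in B_p(\frac{1}{8})$ there is $w$ with $(z,w)$ in the good set and $d(z,w)<\Psi$; choosing $w$ to realize the infimum up to $\Psi$ and using the affine estimate of Step 1 gives $h^+(z)-\rho(z)<\Psi$, hence $|\rho-h^+|<\Psi$ on $B_p(\frac{1}{8})$. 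For the reverse inequality in (\ref{app-dis}) take $t<h^+(z)$ (the case $t>h^+(z)$ is symmetric, via $\sup_w(h^+(w)-d(z,w))$ and the reversed flow). A maximal-function argument along the flow of $-\nabla h^+/|\nabla h^+|$ — legitimate since that flow has $f$-divergence bounded by $A+|\text{hess }h^+|/|\nabla h^+|$, so the co-area formula passes the smallness of $\frac{1}{\text{vol}^f(B_p(1))}\int_{B_p(1)}\bigl|\,|\nabla h^+|-1\,\bigr|$ to individual orbits — shows that for $z$ outside a set of small $d\text{v}^f$-measure the unit-speed orbit $\sigma_z$ from $z$ satisfies $\int_{\sigma_z}\bigl|\,|\nabla h^+|-1\,\bigr|<\Psi$, so $h^+(\sigma_z(s))=h^+(z)-s+O(\Psi)$ and $\sigma_z$ meets $(h^+)^{-1}(t)$ at arc length $\le|h^+(z)-t|+\Psi$. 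Using $|\rho-h^+|<\Psi$ together with a further $1$-Lipschitz correction of size $<\Psi$, one reaches $\rho^{-1}(t)$, so $d(z,\rho^{-1}(t))\le|h^+(z)-t|+\Psi\le|\rho(z)-t|+\Psi$. The passage from "$z$ outside a small set" to all $z\in B_p(\frac{1}{8})$ is again by $\Psi$-density and the Lipschitz bounds, completing (\ref{app-dis}).

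\textbf{Main obstacle.} I expect the technical heart to be twofold. First, upgrading the "for most pairs / most points" estimates coming from the segment inequality and the maximal function to bounds valid for \emph{every} $z\in B_p(\frac{1}{8})$ and every relevant $t$, with a single error $\Psi(\delta;A,\Lambda,n)$: this is where doubling of $d\text{v}^f$ (Theorem \ref{volume-comparison}) and $L^1$-maximal function estimates enter, and where one must check that the comparison constants $c(n,\Lambda,A)$ in Lemma \ref{equ-seg} and the bounds in Lemma \ref{lapalace-esti-r} contribute only through fixed functions of $n,\Lambda,A$. Second, establishing the near-unit-speed behaviour of the $-\nabla h^+/|\nabla h^+|$-flow along most orbits in the purely integral regime, which genuinely reproduces the machinery of Sections 8 and 16 of [C1] with $d\text{v}^f$ in place of $d\text{v}$, using the weighted Bochner identity (\ref{bochner-inequ}) and the $f$-divergence control of the flow. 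Everything else is bookkeeping.
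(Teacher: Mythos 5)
There is a genuine gap at the very start of your Step 1: you read hypothesis (\ref{gradient-condition-app}) as saying that the $d\text{v}^f$-average of $\bigl|\,|\nabla h^+|^2-1\,\bigr|$ is small, but the hypothesis only bounds $\bigl|\frac{1}{\text{vol}^f(B_p(1))}\int_{B_p(1)}|\nabla h^+|^2\,d\text{v}^f-1\bigr|$, i.e.\ the deviation of the \emph{average} of $|\nabla h^+|^2$ from $1$, with the absolute value outside the integral. These are not equivalent, and the distinction matters in the application (Proposition \ref{J-invariant-property}): there $h^+=\theta$ comes from Lemma \ref{almost-gradient-vector} and one only controls $\int|\nabla\theta-X|^2$ with $\|X\|_{L^2}=1$, which gives the averaged bound but not the pointwise-in-$L^1$ one. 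Everything downstream in your Step 1 (the smallness of $\int E$, hence the segment-inequality selection of good geodesic pairs, hence the near-affineness of $h^+$) and your Step 2 (the near-unit-speed flow) relies on the genuine $L^1$ bound on $\bigl|\,|\nabla h^+|^2-1\,\bigr|$, so as written the argument does not get off the ground. The missing idea is exactly the first half of the paper's proof: one first establishes a weighted Poincar\'e inequality (\ref{Poincare}) as a consequence of the segment inequality Lemma \ref{equ-seg} applied to $e=|\nabla h|^2$, and then applies it to $h=|\nabla h^+|^2$; since $\nabla|\nabla h^+|^2$ is controlled by $|\text{hess }h^+|\cdot|\nabla h^+|$, the Hessian bound (\ref{hessian-condition-app}) together with the $C^0$ gradient bound (\ref{gradient-C^0-app}) shows $|\nabla h^+|^2$ has small $L^2$-oscillation about its mean, and only then does the averaged hypothesis (\ref{gradient-condition-app}) upgrade to the $L^1$ estimate (\ref{gradient-condition-2-app}). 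Note that the Hessian bound is thus used twice: once to kill the oscillation of $|\nabla h^+|^2$, and once (as in your Step 1) for the almost-affine behaviour along geodesics.

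Beyond this gap, your route also diverges from the paper's in emphasis: the paper does not reconstruct the inf-convolution $\rho$ or the gradient-flow argument at all. Instead it verifies that the weighted measure $d\text{v}^f$ is doubling (from Theorem \ref{volume-comparison}) and satisfies the $(\epsilon,\delta)$-segment inequality (from Lemma \ref{equ-seg}), and then quotes Theorem 16.32 and Lemma 8.17 of [C1] as black boxes valid for any metric measure space with these two properties; the function $\rho$ and the estimate (\ref{app-dis}) are outputs of those cited results. Your plan of re-deriving the Cheeger machinery with $d\text{v}^f$ in place of $d\text{v}$ is viable in principle but much longer, and once the Poincar\'e step above is inserted the cleaner path is the paper's: reduce to the two metric-measure hypotheses and cite [C1].
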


\begin{proof} First, we  notice  that the following Poincar\'{e} inequality holds for any $C^1$-function $h$,
\begin{align}\label{Poincare}
&\frac{1}{{\rm vol}^f(B_p(\frac{1}{2}))}\int_{B_p(\frac{1}{2})}|h-a|^2 d{\rm v}^f\notag\\
&\leq c(n,\Lambda, A)\frac{1}{{\rm vol}^f(B_p(1))}\int_{B_p(1)}|\nabla h|^2 d{\rm v}^f,
\end{align}
where $$a=\frac{1}{{\rm vol}^f(B_p(\frac{1}{2}))}\int_{B_p(\frac{1}{2})}h d{\rm v}^f.$$
This is in fact  a consequence of Lemma \ref{segment-inequ}  by applying  the function $e$ to $|\nabla h|^2$, because
\begin{align}
& \frac{1}{{\rm vol}^f(B_p(\frac{1}{2}))}  \int_{B_p(\frac{1}{2})}|h(x)-a|^2 d{\rm v}^f\notag\\
&=  \frac{1}{{\rm vol}^f(B_p(\frac{1}{2}))}  \int _{B_p(\frac{1}{2})} d{\rm v}_x^f[   \frac{1}{{\rm vol}^f(B_p(\frac{1}{2}))}  \int_{B_p(\frac{1}{2})}(h(x)-h(y))d {\rm v}_y^f]^2
\notag\\
& \leq    \frac{1}{{\rm vol}^f(B_p(\frac{1}{2}))}  \int_{B_p(\frac{1}{2})}    \frac{1}{{\rm vol}^f(B_p(\frac{1}{2}))}  \int_{B_p(\frac{1}{2})}(h(x)-h(y))^2 d {\rm v}_x^f d{\rm v}_y^f\notag\\
  &\leq    \frac{1}{{\rm vol}^f(B_p(\frac{1}{2}))}  \int_{B_p(\frac{1}{2})}   \frac{1}{{\rm vol}^f(B_p(\frac{1}{2}))}  \int_{B_p(\frac{1}{2})}\int_0^{d(x,y)}|\nabla h((\gamma(s))|^2 d{\rm v}_x^f d {\rm v}_y^f\notag\\
  &\leq c(n,\Lambda, A)   \frac{1}{{\rm vol}^f(B_p(1))}  \int_{B_p(1)}|\nabla h|^2 d{\rm v}^f.\notag
\end{align}
Thus by taking $h=|\nabla h^+|^2$,  we get from  (\ref{gradient-C^0-app})-(\ref{hessian-condition-app}),
 \begin{align}\label{gradient-condition-2-app}\frac{1}{{\rm vol}^f (B_p(\frac{1}{2}))}\int_{B_p(\frac{1}{2})}||\nabla h^+|^2-1| d{\rm v}^f< \Psi.
\end{align}

 Next we  apply Theorem 16.32 in [Ch1]  to $h^+$ with the condition  (\ref{gradient-C^0-app}), (\ref{gradient-condition-app}) and (\ref{gradient-condition-2-app}).  We suffice to check  a doubling condition for the measure $d{\rm v}^f$  and an $(\epsilon,\delta)$-inequality.   The  $(\epsilon,\delta)$-inequality says,  for any $\epsilon,\delta>0$  and  two points $x,y\in M$ with $d(x,y)=r$, there exist  $C_{\epsilon,\delta}$  and another two points  $x', y'$  with  ${\rm d}(x', x)\leq \delta r $ and $d(y', y)\leq \delta r $,  respectively such that
\begin{align}
&F_{\phi,\epsilon}(z_1',z_2')\leq \frac{C_{\epsilon,\delta}r}{{\rm vol}^f(B_{z_1}((1+\delta)(1+2\epsilon)r))}\int_{B_{z_1}((1+\delta)(1+2\epsilon)r)}\phi d{\rm v}^f,
\end{align}
    where
 $$F_{\phi,\epsilon}(x,y)=\inf\int_0^l \phi(c(s))ds, ~\forall ~\phi(\ge 0)\in ~C^0(M),$$
  and the  infinimum   takes  among all curves
 from $x$ to $y$ with length   $l\leq (1+\epsilon){\rm d}(x,y)$.
The doubling condition follows from  Volume Comparison Theorem \ref{volume-comparison},  and   $(\epsilon,\delta)$-inequality follows from Volume Comparison Theorem \ref{volume-comparison}
and the segment inequality in Lemma \ref{equ-seg}. Thus
we can construct a Lipschitz function $\rho$  from $h^+$ such that   $|h^+-\rho|\leq \Psi$. Moreover, by  Lemma 8.17 in  [Ch1],
we get (\ref{app-dis}).
\end{proof}

\begin{proof}[Proof of  Lemma \ref{split-integral}]
As  in the proof  of Proposition \ref{proof-splitting},  we define $X=(h^+)^{-1}(0)$  and the map $u$  by
\begin{align}
\nonumber u(q)=(h^+(q),x_q),
\end{align}
where $x_q$ is the nearest point in $X$ to $q$.  To show that $u$ is a Gromov-Hausdorff approximation,  we  shall use  Lemma \ref{cheeger-lemma}.  In fact,  by  (\ref{app-dis}) in Lemma \ref{level-set-function},
we see
\begin{align}\label{gradient-instead}
||h^+(z)-t|-{\rm d}(z,(h^+)^{-1}(t))|< \Psi.
\end{align}
 Then  instead  of (\ref{triangular-equ})  by (\ref{gradient-instead}),   Lemma \ref{cheeger-lemma} is still  true since  (\ref{hessian-condition-app})  holds [C2].
 Hence  the proof  in  Proposition \ref{proof-splitting} works for  Lemma \ref{split-integral}.
\end{proof}

Now we begin to prove (\ref{J-invariant}) in Section 6.  Let  $(M,g)$ be  a K\"{a}hler manifold which satisfies  (\ref{condition-1-regularity}).   Let $B_p(l)\subset M$  and $B_{(0\times x)}(l)\subset \mathbb{R}^{2n-2}\times X$ be   two $l$-radius  distance  balls as in Section 6.   Then

\begin{prop}\label{J-invariant-property}
 Suppose that
\begin{align}\label{cone-condition-app}
{\rm d}_{GH}(B_p(l),B_{(0\times x)}(l)) <\eta.\end{align}
  Then either $B_p(\frac{1}{8})$ is close to an Euclidean ball in  the Gromov-Hausdorff topology or for a suitable choice of the orthogonal coordinates  in  $\mathbb{R}^{2n-2}$, the map $\Phi=(h_1,...,h_{2n-1})$ constructed in Section 5 satisfies
\begin{align}\label{almost-complex}
\frac{1}{{\rm vol}^f B_p(1)}\int_{B_p(1)}|\nabla h_{n-1+i}-\mathbf{J}\nabla h_i|^2 d{\rm v}^f <\Psi(\tau,\eta,\frac{1}{l};v).
\end{align}
\end{prop}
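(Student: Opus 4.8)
The plan is to exploit the fact that, $M$ being K\"ahler, the complex structure $\mathbf J$ is parallel, so that each $\mathbf J\nabla h_i$ is again an \emph{almost parallel} vector field, and then to run a dichotomy: either the $\mathbf J\nabla h_i$ lie, in the $L^2$ sense, in the span of $\{\nabla h_j\}$ — in which case an orthogonal rotation of the $\mathbb R^{2n-2}$ coordinates turns them into the partners $\nabla h_{(n-1)+i}$ — or they do not, in which case they manufacture extra almost-splitting functions, forcing $B_p(\tfrac18)$ to be almost Euclidean. First I would record the K\"ahler consequences. Since $\nabla\mathbf J=0$ one has $\nabla(\mathbf J\nabla h_i)=\mathbf J\circ\text{hess}\,h_i$, hence $|\nabla(\mathbf J\nabla h_i)|=|\text{hess}\,h_i|$, so by the Hessian estimate in (\ref{orthogonal-5}) the field $X_i:=\mathbf J\nabla h_i$ is $C^0$-bounded ($|X_i|=|\nabla h_i|$) and satisfies $\frac{1}{\text{vol}^fB_p(3)}\int_{B_p(3)}|\nabla X_i|^2<\Psi$. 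Because $\mathbf J$ is a $g$-isometry and $g$-skew, $\langle X_i,X_j\rangle=\langle\nabla h_i,\nabla h_j\rangle$ and the matrix $C=(c_{ij})$, $c_{ij}:=\frac{1}{\text{vol}^fB_p(3)}\int_{B_p(3)}\langle X_i,\nabla h_j\rangle\,d\text{v}^f$, is exactly skew-symmetric. Writing $\mathbf J\nabla h_i=\sum_j\tilde a_{ij}\nabla h_j+w_i$ with $w_i\perp\text{span}\{\nabla h_l\}$, the $\tilde a_{ij}$ solve a linear system whose coefficient matrix $\langle\nabla h_k,\nabla h_l\rangle$ is $L^2$-close to $\delta_{kl}$; hence (using the Poincar\'e inequality (\ref{Poincare}), valid since $\nabla\langle X_i,\nabla h_l\rangle$ is a sum of Hessian terms) $\tilde a_{ij}$ is $L^2$-close to $c_{ij}$, $\nabla\tilde a_{ij}$ is $L^2$-small, and each $w_i$ is $C^0$-bounded with $\int_{B_p(3)}|\nabla w_i|^2<\Psi$. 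Averaging the identity $\langle\mathbf J\nabla h_i,\mathbf J\nabla h_j\rangle=\langle\nabla h_i,\nabla h_j\rangle$ yields $CC^{T}\preceq I+\Psi$ together with $\frac{1}{\text{vol}^fB_p(1)}\sum_i\int_{B_p(1)}|w_i|^2=\text{vol}^fB_p(1)\cdot\delta_0+\Psi$, where $\delta_0:=(2n-2)-\operatorname{tr}(CC^{T})\ge0$; and since $C$ is skew, its singular values occur in equal pairs and all lie in $[0,1+\Psi]$.

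Now fix $\epsilon>0$ and a threshold $\beta=\beta(\epsilon,n)$ to be chosen. \textbf{Case A: $\delta_0<\beta$.} Then $CC^{T}$ is $\Psi_0(\beta)$-close to $I$, so $C^2=-CC^{T}$ is $\Psi_0(\beta)$-close to $-I$ and $C$ is $\Psi_0(\beta)$-close (in matrix norm) to an exact orthogonal complex structure $C_0$ on $\mathbb R^{2n-2}$ (i.e. $C_0^{T}=-C_0=C_0^{-1}$); diagonalise $C_0=OJ_0O^{T}$ with $O\in O(2n-2)$ and $J_0$ the standard complex structure. Replacing $h_i$ by $\tilde h_i=\sum_j(O^{T})_{ij}h_j$ — still $f$-harmonic, with $L^2$-almost orthonormal gradients and $L^2$-small Hessians — and using that $\delta_0<\beta$ forces $\int_{B_p(1)}|w_i|^2\lesssim\beta\,\text{vol}^fB_p(1)$, hence $\mathbf J\nabla h_i\approx\sum_jc_{ij}\nabla h_j$ in $L^2$, one gets $\mathbf J\nabla\tilde h_i\approx\sum_l(O^{T}CO)_{il}\nabla\tilde h_l\approx\sum_l(J_0)_{il}\nabla\tilde h_l=\nabla\tilde h_{(n-1)+i}$ in $L^2$, with total error at most $\Psi_0(\beta)+\Psi(\tau,\eta,\tfrac1l;v)$. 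Choosing $\beta$ so that $\Psi_0(\beta)<\tfrac\epsilon2$, this is the estimate (\ref{almost-complex}) for the rotated coordinates.

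\textbf{Case B: $\delta_0\ge\beta$.} Since the singular values of the skew matrix $C$ are paired and $CC^{T}\preceq I+\Psi$, the constant symmetric matrix $P:=(\text{average of }\langle w_i,w_j\rangle)=I-CC^{T}+O(\Psi)$ has at least two eigenvalues $\ge\beta'=\beta'(\beta,n)>0$; let $w',w''$ be the corresponding unit-normalised constant-coefficient combinations of the $w_i$. Then $w',w''\perp\text{span}\{\nabla h_j\}$ exactly, they are $C^0$-bounded, and (by (\ref{Poincare})) $\int_{B_p(1)}\big(\,||w'|^2-1|+|\langle w',w''\rangle|+|\nabla w'|^2+|\nabla w''|^2\,\big)$ and the analogous quantity for $w''$ are $<\Psi/\beta'$. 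By the ball analogue of Lemma \ref{almost-gradient-vector} — an almost-closed, almost $\Delta^{f}$-coclosed, $C^0$-bounded field is $L^2$-close to the gradient of an $f$-harmonic function with $L^2$-small Hessian, the Hessian bound coming from the Bochner argument used for (\ref{hessian-integral}) in Lemma \ref{harmonic-estimate} — one obtains $f$-harmonic $\theta',\theta''$ on $B_p(\tfrac12)$ with $\int|\nabla\theta'-w'|^2$, $\int|\nabla\theta''-w''|^2$, $\int|\text{hess}\,\theta'|^2$, $\int|\text{hess}\,\theta''|^2$ all $<\Psi$. Then $h_1,\dots,h_{2n-2},\theta',\theta''$ are $2n$ $f$-harmonic functions whose gradients have Gram matrix $L^2$-close to $I_{2n}$ and $L^2$-small Hessians; iterating the integral splitting Lemma \ref{split-integral} $2n$ times exhibits $B_p(\tfrac18)$ as a $\Psi$-Gromov--Hausdorff approximation of a ball in $\mathbb R^{2n}$, which combined with the cone structure of $B_p(l)$ gives $d_{GH}(B_p(\tfrac18),B_0(\tfrac18))<\Psi(\tau,\eta,\tfrac1l;v)$ — the first alternative. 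In both cases the error can be made $<\epsilon$ by choosing $\tau,\eta,1/l$ small depending on $\beta(\epsilon)$, which proves the proposition.

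The main obstacle is that, in contrast with Section 5, only an $L^1$-bound on the Bakry--\'Emery Ricci tensor is available here (condition (iv) of Lemma \ref{epsilon-regularity}) and no two-sided curvature bound, so there is no $C^\infty$-convergence and the entire dichotomy must be executed quantitatively through $L^2$-integral Hessian and gradient estimates, never pointwise; concretely, the technical heart is establishing the ball version of Lemma \ref{almost-gradient-vector} with error depending only on $\tau,\eta,1/l$ and the structural constant $\beta'$, and verifying that the $2n$-fold iteration of Lemma \ref{split-integral} preserves this dependence and the resulting radius. By comparison, the linear-algebra step — that $CC^{T}\approx I$ places $C$ near the compact space of orthogonal complex structures, so that the rotation $O$ exists with controlled error — is routine.
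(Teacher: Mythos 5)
Your argument is correct and follows the same overall dichotomy as the paper's proof: measure in $L^2$ how far $\mathbf J W$ is from $W=\mathrm{span}\{\nabla h_i\}$, rotate the coordinates of $\mathbb R^{2n-2}$ when the deviation is small, and manufacture new almost-splitting directions when it is not. Where you genuinely depart from the paper is in closing the second alternative. The paper extracts a \emph{single} unit field in $W^\perp$ (the normalized projection of some $\mathbf J\nabla h_i$), runs it through Lemma \ref{almost-gradient-vector} and Lemma \ref{split-integral} to split off one extra line, and must then invoke a separate parity step (Proposition \ref{prop-even-dim}, or the topological argument of Theorem 6.2 in [CC2]) to upgrade an almost-splitting of $\mathbb R^{2n-1}$ to $\mathbb R^{2n}$. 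You instead note that the averaged matrix $C=(\langle\mathbf J\nabla h_i,\nabla h_j\rangle)$ is exactly skew-symmetric, so the eigenvalues of $I-CC^{T}$ occur in pairs; any definite failure of Case A therefore yields \emph{two} orthogonal almost-parallel fields in $W^\perp$ simultaneously, and the Euclidean conclusion follows without the parity detour. This is a genuine streamlining -- it internalizes the mechanism behind Proposition \ref{prop-even-dim} (an odd-dimensional $W$ forces $d(W,\mathbf J W)\ge 1$) into the main dichotomy -- and your Case A, which approximates $C$ by an exact orthogonal complex structure and diagonalizes, is just a more explicit form of the paper's Gram--Schmidt step. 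Two points that you and the paper both leave implicit do need to be written out: Lemma \ref{almost-gradient-vector} is stated on annuli, so the ball version you invoke must actually be established (your identification of this as the technical heart is accurate, and the paper is equally cavalier here); and a literal $2n$-fold iteration of Lemma \ref{split-integral} shrinks the radius by $8^{-2n}$, so one should instead use the map $(h_1,\dots,h_{2n-2},\theta',\theta'')$ directly as a Gromov--Hausdorff approximation, as in the degree argument of Lemma \ref{volume-estimate-4}.
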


\begin{proof}
Roughly speaking, if the space spanned by $\nabla h_i$ is not almost $\mathbf{J}$ invariant, we can find a vector field nearly perpendicular to these $\nabla h_i$,  and  it  satisfies  the condition (\ref{conditions})  in Lemma \ref{almost-gradient-vector}.
Then  by Lemma \ref{split-integral},  $B_p(1)$ will be almost split off along a new line.  This  implies that $B_p(\frac{1}{8})$ is  close to an  Euclidean ball.

 Let  $V$  be a $(4n-4)$-dimensional line space  spanned by $\nabla h_i,\mathbf{J}\nabla h_i$  with the $L^2$-inner product,
 $$(b_i,b_j)_{L^2}=\int_{B_p(1)}  \langle b_i, b_j\rangle d\text{v}.$$
 Then $\mathbf{J}$ induces an complex structure on $V$ such that the inner product is $\mathbf{J}$-invariant. We introduce a distance  in   Grassmanian $G(2n,k)$ as follows,
\begin{align}
{\rm d}(\Lambda_1,\Lambda_2)^2=\mathop{\sum}_j \|{\rm pr}_{\Lambda_2}^{\perp}(e_j)\|_{L^2}^2
\end{align}
for any  two $k$-dimensional subspaces  $\Lambda_1,\Lambda_2$ in $\mathbb{R}^{2n}$,
 where $e_i$ is an  unit orthogonal basis of $\Lambda_1$ and $pr_{\Lambda_2}^{\perp}$ is the compliment of orthogonal  projection to $\Lambda_2$.
 First  we suppose that
$${\rm d}(W,\mathbf{J} W)^2 < \Psi,$$
where $W={\rm span}\{\nabla h_i|i=1,2,...,2n-2\}$.   Then by  the Gram-Schmidt process,  one can find a unit orthogonal basis $w_i$ of $W$ such that
 $$\|\mathbf{J}w_i-w_{n-1+i}\|_{L^2} < \Psi.$$
 It is equivalent to that there exists a matrix $a_{ij}\in GL(2n-2,\mathbb{R})$ which is nearly orthogonal such that
$$w_i=\Sigma_j a_{ij}\nabla h_j. $$
Thus  by  changing  an orthogonal basis in $\mathbb{R}^{2n-2}$,  (\ref{almost-complex})  will be  true.

Secondly,   we suppose that
$${\rm d}(W,\mathbf{J}W) >\delta_0.$$
This implies that   there exists some $j$ such that
  $$ \|{\rm pr}_W^\perp(\mathbf{J}\nabla h_i)\|_{L^2}=\|\mathbf{J}\nabla h_i-{\rm pr}_W(\mathbf{J}\nabla h_i) \|_{L^2}>  \frac{\delta_0}{2n}.$$
Let
\begin{align}\label{vector}
X=\frac{{\rm pr}_W^\perp(\mathbf{J}\nabla h_i)}{\|{\rm pr}_W^{\perp}(\mathbf{J}\nabla h_i)\|_{L^2}}
\end{align}
Then ${\rm pr}_W^\perp(\mathbf{J}\nabla h_i)$ is perpendicular to  $W$ with $\|{\rm pr}_W^\perp(\mathbf{J}\nabla h_i) \|_{L^2}=1$ and it  satisfies  the condition (\ref{conditions})  in Lemma \ref{almost-gradient-vector}.
  Thus  we see that there
exists a $f$-harmonic function $\theta$  which satisfies the conditions (\ref{gradient-C^0-app}), (\ref{gradient-condition-app}) and  (\ref{hessian-condition-app}) in Lemma \ref{split-integral}.   As a consequence,   $B_p(\frac{1}{8})$  will  almost  spilt off along a new  line associated to the coordinate function $\theta$.   Since $X\in W^{\perp}$,
$B_p(\frac{1}{8})$  in fact  split off $\mathbb{R}^{2n-1}$ almost.
But  the late implies that $B_p(\frac{1}{8})$ is close to an Euclidean ball in  the Gromov-Hausdorff
topology  by using a topological argument as in   Theorem 6.2 in  [CC2] or by   the following Proposition \ref{prop-even-dim}
for K\"ahler manifolds.

\end{proof}

\begin{prop}\label{prop-even-dim}
Let $Y$ be  a  limit space  of a sequence of K\"ahler manifolds in Theorem  \ref{dimension-k}.
 Then
 $$\mathcal{S}(Y)=\mathcal{S}_{2k+1}=\mathcal{S}_{2k}.$$
\end{prop}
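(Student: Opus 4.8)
The plan is to establish the even-codimension phenomenon for Kähler Gromov--Hausdorff limits. Combined with Theorem~\ref{dimension-k} (which in the present situation, complex dimension $n$ and real dimension $2n$, already gives $\mathcal{S}(Y)=\mathcal{S}_{2n-2}$), it suffices to prove $\mathcal{S}_{2k+1}=\mathcal{S}_{2k}$ for every $k$. Since $\mathcal{S}_{2k}\subseteq\mathcal{S}_{2k+1}$ is automatic, the only thing to rule out is a point $y\in\mathcal{S}_{2k+1}\setminus\mathcal{S}_{2k}$, i.e.\ a point admitting a tangent cone $T_yY$ which splits off $\mathbb{R}^{2k+1}$ isometrically while no tangent cone at $y$ splits off $\mathbb{R}^{2k+2}$. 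The idea, as in Proposition~\ref{J-invariant-property}, is to use the parallel complex structure $\mathbf{J}$ on the approximating Kähler manifolds together with the fact that a $(2k+1)$-dimensional subspace, being odd-dimensional, cannot be $\mathbf{J}$-invariant.

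First I would realize the tangent cone concretely: by a diagonal argument $T_yY=\lim_i(M_i,\lambda_i^2 g_i;q_i)$ for suitable $\lambda_i\to\infty$ and $q_i\to y$, and after this rescaling the defining inequalities become those of Theorem~\ref{splitting-theorem} with $\epsilon_i\to0$, the complex structure being unchanged by scaling. The $\mathbb{R}^{2k+1}$ factor of $T_yY$ is produced by $2k+1$ families of almost lines through $q_i$; exactly as in Proposition~\ref{proof-splitting} and the estimates of Lemma~\ref{harmonic-estimate} and (rescaled) Lemma~\ref{harmonic-estimate-annual-2}, these give $f_i$-harmonic functions $h^{(i)}_1,\dots,h^{(i)}_{2k+1}$ on $B_{q_i}(3)$ with $|\nabla h^{(i)}_j|$ bounded (Proposition~\ref{gradient-esti}), with $L^2$-almost orthonormal gradients, and with $\int_{B_{q_i}(1)}\sum_j|\text{hess}\,h^{(i)}_j|^2\, e^{-f_i}\,d\text{v}\to0$. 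Let $W_i$ be the $(2k+1)$-dimensional subspace spanned by $\nabla h^{(i)}_1,\dots,\nabla h^{(i)}_{2k+1}$ with respect to the $L^2$ inner product on $B_{q_i}(1)$. Because $\mathbf{J}$ is a genuine parallel complex structure while $\dim W_i=2k+1$ is odd, $W_i$ cannot be $\mathbf{J}$-invariant, and in fact $d(W_i,\mathbf{J}W_i)\ge\delta_0=\delta_0(n)>0$ uniformly: if this Grassmannian distance were small, then $\mathbf{J}W_i$ would be $L^2$-close to $\mathbf{J}(\mathbf{J}W_i)=W_i$, hence $L^2$-close to an honest complex subspace, which is impossible for dimension reasons; quantitatively one passes to a point where the $\nabla h^{(i)}_j$ are pointwise almost orthonormal and notes that there $\mathbf{J}$ is the standard complex structure, so the Grassmannian distance between a $(2k+1)$-plane and its $\mathbf{J}$-image is bounded below by a universal constant.

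Consequently we are forced into the alternative of Proposition~\ref{J-invariant-property} that yields a new almost-gradient direction: for a suitable unit combination $v$ of the $\nabla h^{(i)}_j$ the vector field $X_i=pr^{\perp}_{W_i}(\mathbf{J}v)/\|pr^{\perp}_{W_i}(\mathbf{J}v)\|_{L^2}$ satisfies $\|X_i\|_{L^2}=1$, is $L^2$-orthogonal to $W_i$, has bounded $C^0$-norm (from Proposition~\ref{gradient-esti}), and has $\int_{B_{q_i}(1)}|\nabla X_i|^2 e^{-f_i}\,d\text{v}\to0$ (from the Hessian estimates on the $h^{(i)}_j$) --- that is, it meets hypothesis~\eqref{conditions} of Lemma~\ref{almost-gradient-vector}. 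By Lemma~\ref{almost-gradient-vector} there is then an $f_i$-harmonic $\theta_i$ on a slightly smaller ball with $\nabla\theta_i$ $L^2$-close to $X_i$, with $|\nabla\theta_i|^2$ $L^2$-close to $1$, and with small $L^2$-Hessian, so that Lemma~\ref{split-integral} applies to the $(2k+2)$-tuple $(h^{(i)}_1,\dots,h^{(i)}_{2k+1},\theta_i)$. Since $X_i\perp_{L^2}W_i$ and the $h^{(i)}_j$ already give the $\mathbb{R}^{2k+1}$-splitting, the Triangular Lemmas~\ref{cheeger-lemma} and \ref{cheeger-lemma-2}, used as in the proof of Proposition~\ref{proof-splitting}, show $(h^{(i)}_1,\dots,h^{(i)}_{2k+1},\theta_i)$ is a $\Psi(\epsilon_i)$-Gromov--Hausdorff approximation of $B_{q_i}(\tfrac18)$ onto a ball in $\mathbb{R}^{2k+2}\times X'_i$; letting $i\to\infty$ shows $T_yY$ splits off $\mathbb{R}^{2k+2}$ isometrically, contradicting $y\in\mathcal{S}_{2k+1}$. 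This gives $\mathcal{S}_{2k+1}=\mathcal{S}_{2k}$ for all $k$, and together with $\mathcal{S}(Y)=\mathcal{S}_{2n-2}$ from Theorem~\ref{dimension-k} the asserted chain of equalities follows.

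I expect the main obstacle to be making the uniform lower bound $d(W_i,\mathbf{J}W_i)\ge\delta_0$ and the verification of~\eqref{conditions} for $X_i$ completely rigorous with only $L^2$ control on the gradients and Hessians of the $h^{(i)}_j$: one must move carefully between the $L^2$-span $W_i$ and the pointwise spans on the large good set produced by the segment inequality of Lemma~\ref{equ-seg}. A second delicate point is keeping track of the transversality of the new direction $\theta_i$ all the way through the Gromov--Hausdorff limit, so that a full extra Euclidean factor is genuinely gained rather than merely part of the old $\mathbb{R}^{2k+1}$ being re-derived --- this is precisely where the $L^2$-orthogonality $X_i\perp_{L^2}W_i$ and the joint (multi-function) version of the splitting argument of Proposition~\ref{proof-splitting} must be invoked with care.
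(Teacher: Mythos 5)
Your proposal is correct and follows essentially the same route as the paper's own proof: the paper also takes the $2k+1$ $f$-harmonic splitting functions, observes that the odd-dimensional $L^2$-span $W$ of their gradients forces $d(W,\mathbf{J}W)\ge 1$ (the $L^2$ inner product being $\mathbf{J}$-invariant), and then invokes the second alternative of Proposition~\ref{J-invariant-property} — i.e.\ Lemma~\ref{almost-gradient-vector} and Lemma~\ref{split-integral} applied to the normalized projection of $\mathbf{J}\nabla h_j$ onto $W^{\perp}$ — to split off one more line. The extra detail you supply (rescaling to realize the tangent cone, verifying hypothesis~(\ref{conditions}), and the transversality of the new direction) is exactly what the paper leaves implicit.
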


\begin{proof}

We  suffice  to show that if a tangent cone $T_yY$ at a point $y\in Y$ can split off $\mathbb{R}^{2k+1}$,  $T_yY$ can split off $\mathbb{R}^{2k+2}$.   Let  $h_i$  be   $2k+1$  $f$-harmonic functions  which  approximate $2k+1$  distance functions with
different directions as constructed in Section 2 and Section 3.   Then as in the proof of Proposition \ref{J-invariant-property},   we consider a  linear space $V=\text{span}\{\nabla h_i,\mathbf{J}\nabla h_i\}$ with $L^2$-inner product.  Since the dimension of $W=\text{span}\{\nabla h_i\}$ is odd, we have
$${\rm d}(W,\mathbf{J}W)\geq 1.$$
Thus  $T_yY$ will  split off a new  line.  The  proposition  is proved.

\end{proof}

\vskip3mm

%\newpage

\end{document}